

\documentclass[12pt]{amsart}
\usepackage{amsthm,amsmath,a4,amssymb,url,verbatim,enumerate}

\vfuzz2pt 
\hfuzz2pt 
\newtheorem{thm}{Theorem}[section]
\newtheorem{cor}[thm]{Corollary}
\newtheorem{lem}[thm]{Lemma}
\newtheorem{prop}[thm]{Proposition}
\theoremstyle{definition}
\newtheorem{defn}[thm]{Definition}
\theoremstyle{remark}
\newtheorem{rem}[thm]{Remark}
\newtheorem{ex}[thm]{Example}
\numberwithin{equation}{section}

\newcommand{\Hom}{\textnormal{Hom}}
\newcommand{\End}{\textnormal{End}}
\newcommand{\Aut}{\textnormal{Aut}}

\newcommand{\Ker}{\textnormal{Ker}}

\newcommand{\lp}{(\!(}
\newcommand{\rp}{)\!)}

\newcommand{\N}{\mathbf{N}}
\newcommand{\C}{\mathbf{C}}
\newcommand{\K}{\mathbf{K}}
\newcommand{\Z}{\mathbf{Z}}
\newcommand{\R}{\mathbf{R}}
\newcommand{\Q}{\mathbf{Q}}
\newcommand{\F}{\mathbf{F}}
\DeclareMathOperator\Ell{W}
\DeclareMathOperator\Con{C}
\newcommand{\grp}{\mathrm{grp}}
\DeclareMathOperator\Dist{\mathcal{E}}
\DeclareMathOperator\GL{GL}

\begin{document}

\address{CNRS -- Laboratoire Math\'ematique Jean Leray -- Universit\'e de Nantes}

\email{ycornulier@univ-nantes.fr}
\subjclass[2000]{Primary 13C05; Secondary 13E05, 22B05, 22D05}

\title[LC modules over abelian groups and metabelian groups]{Locally compact modules over abelian groups and compactly generated metabelian groups}
\author{Yves Cornulier}%
\date{November 19, 2023}


\begin{abstract}
We perform a general study of the structure of locally compact modules over compactly generated abelian groups. We obtain a d\'evissage result for such modules of the form ``compact-by-sheer-by-discrete", and then study more specifically the sheer part. The main typical example of a sheer module is a polycontractable module, i.e., a finite direct product of modules, each of which is contracted by some group element. We show that every sheer module has a ``large" polycontractable submodule, in a suitable sense.

We apply this to the study of compactly generated metabelian groups. For instance, we prove that they always have a maximal compact normal subgroup, and we extend the Bieri-Strebel characterization of compactly presentable metabelian groups from the discrete case to this more general setting.
\end{abstract}
\maketitle

\section{Introduction}

We fix a locally compact abelian (LCA) group $A$, which will often be assumed to be compactly generated. General structure of LCA groups ensures that every compactly generated LCA group has a cocompact lattice isomorphic to $\Z^d$ for some $d$. Therefore, the reader can assume $A=\Z^d$; this is not a significant restriction.

We consider locally compact (LC) $A$-modules $M$, that is, $M$ is an LCA-group endowed with a continuous homomorphism $A\to\Aut(M)$ (continuity is equivalent to continuity of the action $A\times M\to M$). We aim at providing general results on the structure of such modules $M$. They will then be illustrated in the study of compactly generated LC metabelian groups. One extreme case is when $M$ is discrete. In this case, it can be handled with classical commutative algebra. The ``opposite" case is when $M$ is compact: such modules then inherit a description, namely as Pontryagin duals of discrete modules. Recall that the Pontryagin dual of an LCA group $M$ is $\hat{M}=\Hom(M,\R/\Z)$ (continuous group homomorphisms), and that this defines a contravariant self-equivalence of the category of LCA groups, notably exchanging discrete and compact modules. Functoriality ensures that it is compatible with group actions, and thus it induces a contravariant self-equivalence of the category of $A$-modules (there is a minor continuity verification for the action, see \S\ref{cont-pont}). In particular, denoting by $N^\bot$ the orthogonal $\{\chi\in\hat{M}:\chi|_N=0\}$ for a closed submodule $N$ of $M$, orthogonality is an order-reversing isomorphism between closed submodules of $M$ and those of $\hat{M}$. 

The purpose of the forthcoming definitions is to ``isolate" the ``essential part" of $M$ that is ``in between" ---~roughly speaking, that has ``few" compact submodules and discrete quotients ---~and to do it precisely, so as to avoid such quotation marks (this will be done in Definition \ref{d-sheer}, Theorem \ref{fsheer}). The first definition in this purpose is thus:

\begin{defn}
The {\bf polycompact radical} $\Ell(M)$ (or $\Ell_A(M)$ in case of ambiguity) of $M$ is the sum of all compact submodules. The {\bf codiscrete radical} $\Omega(M)$ is the intersection of all open submodules.
\end{defn}

Note that modding out by the closure of the polycompact radical, and taking the codiscrete radical are ``exchanged" by Pontryagin duality (see Proposition \ref{ellomega} for details). A first fact is the following:

\begin{thm}\label{eclo}
If $A$ is compactly generated, then $\Ell(M)$ is closed for every LC-module $M$, and every compact subset of $\Ell(M)$ is contained in a compact submodule.
\end{thm}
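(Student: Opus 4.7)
The goal is to prove the following unified strong statement $(\star)$: every compact subset $C$ of $\overline{\Ell(M)}$ is contained in some compact submodule. Applied to singletons, $(\star)$ gives $\overline{\Ell(M)} \subseteq \Ell(M)$, so $\Ell(M)$ is closed; then $(\star)$ is exactly the second assertion of the theorem. As a preliminary observation, the sum $K_1+K_2$ of two compact submodules is again a compact submodule, being the image of $K_1\times K_2$ under the continuous addition map (with $A$-invariance preserved). Thus the family $\mathcal{K}$ of compact submodules is directed by inclusion, $\Ell(M)=\bigcup_{K\in\mathcal{K}} K$, and equivalently $x\in \Ell(M)$ iff $\overline{\langle x\rangle_A}$ is compact. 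Finite subsets of $\Ell(M)$ therefore automatically lie in a compact submodule (a sum of cyclic ones), so it remains to treat infinite compact subsets and to control the closure.

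To prove $(\star)$, I would first reduce to $A=\Z^d$ using the cocompact lattice from the structure of compactly generated LCA groups: if $K$ is a compact $\Z^d$-submodule of $M$ and $C\subseteq A$ is a compact transversal for $A/\Z^d$, then the closed subgroup generated by $C\cdot K$ is a compact $A$-submodule containing $K$, so $\Ell_{\Z^d}(M)=\Ell_A(M)$ and $(\star)$ for either action is equivalent. I would then induct on $d$. The base case $d=0$ is the classical statement that in an LCA group $M\cong \R^n\times M_0$ (with $M_0$ possessing a compact-open subgroup), the sum of all compact subgroups is the maximal compact subgroup of $M_0$ and is therefore closed.

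For the inductive step, set $A'=\langle\alpha_1,\ldots,\alpha_{d-1}\rangle\cong \Z^{d-1}$ and $L:=\Ell_{A'}(M)$, closed in $M$ by induction and equipped with all the information of $(\star)$ at level $d-1$. Because $A$ is abelian, $\alpha_d$ preserves $L$ setwise; applying the theorem in the $d=1$ case to the LC-module $(L,\alpha_d|_L)$ yields a closed $\alpha_d$-polycompact radical inside $L$, which one identifies with $\Ell_A(M)$. To promote a compact $\alpha_d$-submodule $K\subseteq L$ to a compact $A$-submodule of $M$, one uses the inductive hypothesis $(\star)$ at level $d-1$ to embed $K$ into a compact $A'$-submodule $K'\subseteq M$, and then saturates under $\alpha_d$: the $\alpha_d$-orbit $\{\alpha_d^n K'\}_{n\in\Z}$ stays in a fixed compact subset of $L$ (being forced in by the boundedness of $\alpha_d$-orbits on $K$ and a second invocation of $(\star)$ at level $d-1$), so its sum is a compact $A$-submodule.

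The principal obstacle is the $d=1$ case itself: given an automorphism $\alpha$ of an LCA group $N$, show that the sum of $\alpha$-invariant compact subgroups is closed and contains every compact subset of its closure inside a single compact $\alpha$-invariant subgroup. For this I would split $N$ via Pontryagin's decomposition into a vector part (where the analysis reduces to the spectrum of $\alpha$ on $\R^n$ and no nontrivial $\alpha$-invariant compact subgroup survives unless $\alpha$ fixes a subspace pointwise) and a part containing a compact-open subgroup (treated via a Willis-style tidy-subgroup analysis of the contraction and parabolic subgroups of $\alpha$). The delicate point is the \emph{uniformity} in $(\star)$: controlling the size of $\overline{\langle x\rangle_\alpha}$ as $x$ ranges over a compact set, rather than just one point at a time. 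This is precisely where the compactness of the generating set $\{-1,0,1\}$ of $\Z$ enters, bounding the $\alpha$-orbits of a neighborhood of $0$ uniformly in terms of a single compact submodule.
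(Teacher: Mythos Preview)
Your approach differs substantially from the paper's. The paper does not induct on the rank of $A$. Instead, it picks an open compactly generated submodule $N\subseteq M$, forms the semidirect product $N\rtimes A$ (a compactly generated LC group), and applies a group-theoretic black box: in any compactly generated LC group $G$, the union $\Ell_{\grp}(G)$ of all compact normal subgroups is closed and every compact subset of it lies in a single compact normal subgroup (Theorem~\ref{gwclo}, quoted from \cite{C15}, itself resting on Trofimov's work). One then reads off $\Ell_A(N)=\Ell_{\grp}(N\rtimes A)\cap N$, hence $\Ell_A(M)$ is locally closed, hence closed. The whole proof is a few lines modulo the citation.

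Your strategy, by contrast, has two genuine gaps.

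\textbf{The $d=1$ case is the whole difficulty, and you have not proved it.} ``Willis-style tidy-subgroup analysis'' is a gesture, not an argument: tidiness gives you $V_{++}$, $V_{--}$ and contraction structure, but extracting closedness of $\Ell_\alpha(N)$ and the uniform compactness statement $(\star)$ from this is exactly the nontrivial content of Theorem~\ref{gwclo} in the rank-one abelian case. The natural way to do your $d=1$ case is to form $N\rtimes\Z$ and invoke that same black box --- at which point the induction on $d$ has bought nothing, since the paper's argument already handles all $d$ in one stroke via $N\rtimes A$.

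\textbf{The saturation step in the induction does not work as written.} You embed the compact $\alpha_d$-invariant $K$ in a compact $A'$-submodule $K'$ and then assert that $\{\alpha_d^n K'\}_{n\in\Z}$ stays in a fixed compact set ``by boundedness of $\alpha_d$-orbits on $K$''. But boundedness on $K$ says nothing about $K'\smallsetminus K$; there is no reason the $\alpha_d$-iterates of $K'$ remain bounded. The fix is simple: take $K'$ to be the \emph{closed $A'$-submodule generated by $K$} (which is compact by $(\star)$ at level $d-1$, being contained in some compact $A'$-submodule). Since $K$ is $\alpha_d$-invariant and $\alpha_d$ commutes with $A'$, this minimal $K'$ is automatically $\alpha_d$-invariant, so no saturation is needed.

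\textbf{Minor:} your $d=0$ claim that $\Ell_{\{0\}}(M_0)$ is ``the maximal compact subgroup of $M_0$'' is false --- take $M_0=\Q_p$, where $\Ell(M_0)=\Q_p$ and there is no maximal compact subgroup. The correct statement is that $\Ell_{\{0\}}(M_0)$ is the preimage of the torsion subgroup of the discrete quotient $M_0/U$ (for $U$ compact open), hence open, hence closed; and $(\star)$ holds because finitely many torsion elements generate a finite subgroup. Also, your reduction to $\Z^d$ (``the closed subgroup generated by $C\cdot K$ is compact'') is not as immediate as you suggest: the closed subgroup generated by a compact set need not be compact. It does work here, but requires an argument (e.g., showing the stabiliser of $K$ in $A$ is open, so the $A$-orbit of $K$ is finite).
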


This is easily deduced from a group-theoretic fact, namely, in a compactly generated LC group, the subgroup generated by all compact normal subgroups is closed. The latter is a nontrivial fact, proved in \cite{C15}, which relies on work of Trofimov, who essentially proved it for totally disconnected compactly generated LC groups. See \S\ref{pric}.


We now alter these definitions because of the following shortcomings:\begin{itemize}\item $\Ell(M)$ need not be compact;\item $\Ell(M)$ need not be contained in $\Omega(M)$ (e.g., if $M$ is finite, $\Omega(M)=\{0\}$ and $\Ell(M)=M$).\end{itemize}

\begin{defn}\label{d-sheer}
A module $M$ is {\bf compact parafinite} if it is compact with no nonzero finite quotient. Define the {\bf reduced polycompact radical} $\Ell^\flat(M)$ as the sum of all compact parafinite submodules of $M$.

A module $M$ is {\bf purely discrete} if it is discrete with no nonzero finite submodule. Define the {\bf augmented codiscrete radical} $\Omega^\sharp(M)$ as the intersection of all open submodules with purely discrete quotient.

A module $M$ is {\bf sheer} if $\Ell^\flat(M)=\{0\}$ and $\Omega^\sharp(M)=M$.
\end{defn}

Clearly, $0\subseteq \Ell^\flat(M)\subset \Ell(M)$ and $\Omega(M)\subseteq \Omega^\sharp(M)\subseteq M$; note that the latter are also exchanged by orthogonality in Pontryagin duality: $\Ell(M)^\bot=\Omega(\hat{M})$ and $\Ell^\flat(M)^\bot=\Omega^\sharp(\hat{M})$. In particular, a module is sheer if and only if its Pontryagin dual is sheer. A first observation is the following.

\begin{thm}[Canonical filtration with sheer subquotient]\label{fsheer}
We have the inclusions $0\subseteq \Ell^\flat(M)\subseteq \Omega^\sharp(M)\subseteq M$. If $A$ is compactly generated, then $\Ell^\flat(M)$ is compact parafinite, $M/\Omega^\sharp(M)$ is purely discrete, and the subquotient $\Omega^\sharp(M)/\Ell^\flat(M)$ is sheer.
\end{thm}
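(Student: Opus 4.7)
The plan is to first extract from Theorem \ref{eclo} the key structural fact that $\Ell(M)$ admits a compact open $A$-submodule $K^\star$, then to observe that every compact parafinite submodule of $M$ is forced to lie inside $K^\star$ (so that $\Ell^\flat(M)$ is bounded), and finally to handle the remaining statements by Pontryagin duality and a short subquotient argument.

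For the inclusion $\Ell^\flat(M)\subseteq \Omega^\sharp(M)$: given a compact parafinite submodule $N$ and an open submodule $U$ with purely discrete quotient $M/U$, the finite group $N/(N\cap U)$ embeds into the purely discrete $M/U$, hence vanishes, so $N\subseteq U$; summing on the left and intersecting on the right yields the inclusion.

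The main step is the compact parafiniteness of $\Ell^\flat(M)$. Using the LCA structure theorem, write $\Ell(M)\cong\R^n\times L$ with $L$ admitting a compact open subgroup; the vector part $\R^n$ is characteristic and hence an $A$-submodule. If $n>0$, a compact ball $B\subset\R^n$ would, by Theorem \ref{eclo}, sit inside some compact $A$-submodule $K$ of $M$, but then $B\subseteq K\cap\R^n$, a compact $A$-subgroup of $\R^n$, which must be zero---contradiction. Hence $n=0$ and $\Ell(M)$ has a compact open subgroup $K_0$, which by Theorem \ref{eclo} lies in some compact $A$-submodule $K^\star$ of $M$; since $K^\star$ contains the open $K_0$, it is open in $\Ell(M)$. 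This existence of $K^\star$ is the main obstacle; the rest is formal. Now any compact parafinite submodule $N\subseteq M$ lies in $\Ell(M)$, and $N+K^\star$ is a compact $A$-submodule in which $K^\star$ is open, so $N/(N\cap K^\star)\cong(N+K^\star)/K^\star$ is a finite $A$-quotient of $N$ and therefore zero by parafiniteness; thus $N\subseteq K^\star$. Since finite sums of compact parafinite submodules are themselves compact parafinite (compactness is automatic, and any finite quotient vanishes on restriction to each summand), $\Ell^\flat(M)$ is a directed union of such and lies in the compact $K^\star$, so $\overline{\Ell^\flat(M)}$ is compact. Moreover $\overline{\Ell^\flat(M)}$ is parafinite: any finite $A$-quotient vanishes on every compact parafinite summand, hence on the dense $\Ell^\flat(M)$, hence by density on $\overline{\Ell^\flat(M)}$. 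So $\overline{\Ell^\flat(M)}$ is itself a compact parafinite submodule and equals $\Ell^\flat(M)$.

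The statement that $M/\Omega^\sharp(M)$ is purely discrete then follows by Pontryagin duality applied to $\hat M$: the above shows $\Ell^\flat(\hat M)$ is compact parafinite, and under the identity $\Ell^\flat(\hat M)^\bot=\Omega^\sharp(M)$ noted in the introduction, compactness and parafiniteness of $\Ell^\flat(\hat M)$ translate, respectively, into openness of $\Omega^\sharp(M)$ and absence of a nonzero finite submodule in the dual $M/\Omega^\sharp(M)=\widehat{\Ell^\flat(\hat M)}$. Finally, for sheerness of $M':=\Omega^\sharp(M)/\Ell^\flat(M)$: any compact parafinite $N'\subseteq M'$ pulls back to $N\subseteq \Omega^\sharp(M)$ that is compact (extension of compacts) and parafinite (a finite quotient must kill $\Ell^\flat(M)$ by its parafiniteness and then descend to a finite quotient of $N'$, which vanishes), so $N\subseteq \Ell^\flat(M)$ and $N'=0$; and any open $U'\subseteq M'$ with purely discrete quotient pulls back to an open $U\subseteq M$ (since $\Omega^\sharp(M)$ is open) whose quotient $M/U$ is an extension of $\Omega^\sharp(M)/U$ by $M/\Omega^\sharp(M)$, both purely discrete, hence purely discrete itself (a finite submodule maps to zero in both pieces), forcing $U\supseteq \Omega^\sharp(M)$ and $U'=M'$.
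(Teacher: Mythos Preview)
Your proof is correct and follows essentially the same strategy as the paper's: use Theorem~\ref{eclo} to trap $\Ell^\flat(M)$ inside a compact open $A$-submodule of $\Ell(M)$, then finish by Pontryagin duality; the paper does this slightly more directly (working inside $\overline{\Ell^\flat(M)}$ and taking the closed submodule generated by a compact neighborhood of $0$, without invoking the LCA structure theorem), and for sheerness it appeals to ``lower-sheer passes to submodules'' plus duality rather than your explicit pull-back argument, but these are cosmetic differences.

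One small correction: the claim that ``the vector part $\R^n$ is characteristic'' in $\Ell(M)\cong\R^n\times L$ is false in general (e.g.\ in $\R\times\R/\Z$ the automorphism $(x,y)\mapsto(x,y+x)$ moves the $\R$ factor), so $\R^n$ need not be an $A$-submodule. Fortunately your argument does not actually use this: $K\cap\R^n$ is a compact \emph{subgroup} of $\R^n$ regardless of $A$-invariance, and compact subgroups of $\R^n$ are trivial, which is all you need to conclude $n=0$. Just drop the word ``characteristic'' and the phrase ``$A$-subgroup'' there.
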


See \S\ref{cfss} for the proof. The first assertion is quite straightforward, while the second one makes use of Theorem \ref{eclo}.

Theorem \ref{fsheer} reduces most of the study of modules to that of sheer modules, so we proceed to describe structural results about them. Central in their description is the notion of contracting automorphism. Recall that an automorphism $\alpha$ of a LC group $G$ is {\bf contracting} if $\alpha^n(x)$ tends to 1, uniformly for $x$ in compact subsets of $G$.  

\begin{defn}
A LC $A$-module $M$ is said to be {\bf contractable} if there exists $\alpha\in A$ acting as a contracting automorphism of the LCA-group $M$. It is said to be {\bf polycontractable} if it splits as a finite topological direct sum of contractable modules (not necessarily with the same contracting element).

A LC $A$-module is said to be {\bf amorphic} if it is both sheer and compact-by-discrete. This means that it is profinite-by-(discrete locally finite). (Here both ``locally finite" and ``profinite" refer to the module structure and not just the underlying LCA group.)

A LC $A$-module is said to be {\bf Euclidean} if the underlying LCA group is isomorphic to $\R^d$ for some $d$. It is said to be
{\bf distal Euclidean} if it is Euclidean and the action of $A$ is by matrices with only complex eigenvalues of modulus 1.
\end{defn}

All these notions are invariant under Pontrayagin duality. For instance, if $p$ is prime and $M=\Q_p$ with some action $i:A\to\Q_p^*$, then $M$ is sheer, and either $i(A)$ consists of elements of norm one and $M$ is amorphic, or else $M$ is contractable.

Polycontractable modules are sheer and they constitute the main source of sheer modules, and, in the sense, the most ``interesting" ones. Indeed, it readily follows from the definition that $\Ell(M)=\{0\}$ and $\Omega(M)=M$ when $M$ is contractable.
Also, Euclidean modules and amorphic modules are sheer. The following theorem indicates that each sheer module has a canonical ``large" polycontractable submodule.

\begin{thm}\label{sheerdec}
Suppose that $A$ is compactly generated. Every sheer module has a unique decomposition $P\oplus E\oplus V$, with $P$ polycontractable, $E$ amorphic, $V$ distal Euclidean.
\end{thm}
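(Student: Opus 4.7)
My approach is to construct each summand canonically inside $M$ and then verify the direct-sum decomposition; the main technical obstacle will be securing an $A$-equivariant splitting.

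\textbf{Euclidean identity component.} Sheerness forces the identity component $M^0$ to be a vector group. Indeed, by the structure theory of connected LCA groups, $M^0\cong \R^n\oplus C$ where $C$ is the maximal compact connected subgroup; $C$ is characteristic (so $A$-invariant), and it admits no nontrivial continuous homomorphism to a finite discrete group, hence is compact parafinite, whence $C\subseteq \Ell^\flat(M)=\{0\}$. Define $V\subseteq M^0$ as the real subspace whose complexification $V_\C\subseteq M^0\otimes\C$ is the sum of the simultaneous generalized eigenspaces of the $A$-action corresponding to characters $\chi:A\to \C^\times$ with $|\chi|\equiv 1$. This $V$ is the distal Euclidean summand.

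\textbf{Polycontractable piece.} Define $P$ as the closure of the sum of all contractable $A$-submodules of $M$. Since $A$ is compactly generated, $\Hom(A,\R)$ is finite-dimensional, and the ``contraction directions''---indexed by the non-unit log-moduli $\log|\chi|$ of characters acting on the complement of $V$ in $M^0$, together with the contraction subgroups $U_\alpha^-(M)=\{x\in M:\alpha^n x\to 0\}$ supplying the totally disconnected contribution---fall into finitely many polyhedral cells, each admitting a common $\alpha\in A$ that contracts all of it. This exhibits $P$ as a finite topological direct sum of contractable $A$-submodules, so $P$ is polycontractable; moreover $V\cap P=\{0\}$ since distality forbids contraction.

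\textbf{Amorphic complement and $A$-equivariant splitting.} The desired $E$ is an $A$-invariant complement to $V\oplus P$ in $M$. It will be amorphic: by construction $M/(V\oplus P)$ contains no contraction direction, so every element has relatively compact $A$-orbit, and Theorem~\ref{eclo} then places each element in a compact $A$-submodule, making $E$ compact-by-discrete. Sheerness of $E$ (inherited from $M$) combined with the compact-by-discrete structure gives the amorphic property. The principal technical difficulty is producing the $A$-equivariant splitting: for the extension by the contractable factors of $P$, a telescoping sum $-\sum_{n\ge 0}\alpha^n c$ with contracting $\alpha$ trivializes any cocycle $c$; for the extension by the distal Euclidean $V$, a subtler argument using vanishing of extension cohomology for TD-by-distal-Euclidean LCA modules is needed.

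\textbf{Uniqueness.} $V$ is characterized as the distal part of $M^0$. Any contractable $A$-submodule projects trivially both to $E$ (a contractable compact or discrete module vanishes) and to $V$ (distality forbids contraction), so $P$ equals the closed sum of all contractable submodules of $M$. Finally, $E$ is determined as the preimage in $M$ of the maximal discrete locally finite submodule of $M/\Ell(M)$, since $\Ell(M)$ coincides with the compact (profinite) part of $E$ (the other summands satisfy $\Ell=\{0\}$), and the images of $P$ and $V$ in $M/\Ell(M)$ contain no nonzero discrete locally finite submodule.
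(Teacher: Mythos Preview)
Your outline has the right shape, but the existence argument contains real gaps at exactly the points you flag as difficult.

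The splitting step is the most serious. You need an $A$-equivariant complement $E$ to $V\oplus P$ inside $M$, and you offer only a sketch: the telescoping formula $-\sum_{n\ge 0}\alpha^n c$ addresses cocycles with values in a single contractable summand, but for the distal Euclidean part you merely assert that ``a subtler argument using vanishing of extension cohomology \dots\ is needed'' without supplying one. That is not a detail; it is the core of the proof. One step earlier, your claim that $M/(V\oplus P)$ is amorphic also needs work: you argue ``no contraction direction $\Rightarrow$ relatively compact orbits,'' but taking $P$ to be the closure of the sum of all contractable submodules of $M$ does not obviously prevent the \emph{quotient} $M/(V\oplus P)$ from acquiring new contractable submodules, and in any case passing from ``no contractable submodule'' to ``every orbit is bounded'' in a totally disconnected module is nontrivial. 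Finally, the assertion that $P$ itself is polycontractable (``contraction directions fall into finitely many polyhedral cells'') is too vague to count as a proof; the various $U_\alpha^-(M)$ can interact in complicated ways, and closing up their sum does not obviously yield a finite direct sum of contractable pieces.

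The paper avoids all of this by building the decomposition iteratively rather than canonically-then-splitting. After Corollary~\ref{semidec1} separates the Euclidean and totally disconnected parts, one fixes a finite symmetric set $S\subset A$ with $\overline{\langle S\rangle}$ cocompact and, for each $\alpha\in S$ in turn, applies the single-automorphism decomposition $M=\Con_\alpha(M)\oplus\Con^{\le 0}_\alpha(M)$ of Lemma~\ref{singledec} (proved via Willis theory and Lemma~\ref{compactive1}). Each step is already a genuine direct sum, so no cohomological splitting is ever needed; after exhausting $S$, the residual piece $B_n$ has every $\alpha\in S$ acting with bounded orbits \emph{by construction}, and Lemma~\ref{lemdecompo} upgrades this to amorphic over all of $A$. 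Uniqueness is then a one-line consequence of Lemma~\ref{homzero} (no nonzero homomorphisms between any two of the three types), which is more direct than your characterization of $E$ via $\Ell(M)$.
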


See \S\ref{desheer}. The uniqueness part is not difficult. The existence part relies on the case $A=\Z$, where it takes a more precise form, see Lemma \ref{singledec}. The latter makes use of Willis theory \cite{Wil}, which deals with automorphisms of (possibly non-abelian) totally disconnected locally compact groups. Here we take advantage of the module being an abelian group to obtain these more precise results.

\begin{cor}\label{wcompact-i}
Suppose that $A$ is compactly generated. Then for every compactly generated LC $A$-module $M$, the polycompact radical $\Ell(M)$ is compact.
\end{cor}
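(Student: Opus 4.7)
The plan is to combine the canonical filtration of Theorem \ref{fsheer} with the decomposition of Theorem \ref{sheerdec} in order to confine $\Ell(M)$ inside the amorphic summand, which compact generation of $M$ will then force to be compact.

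First I would show $\Ell(M)\subseteq\Omega^\sharp(M)$. Any compact submodule $C\subseteq M$ has compact, hence finite, image in the discrete quotient $M/\Omega^\sharp(M)$; but that quotient is purely discrete by Theorem \ref{fsheer} and so has no nonzero finite submodule, forcing the image to vanish. Since $\Ell(M)$ is closed by Theorem \ref{eclo}, this yields $\Ell(M)\subseteq\Omega^\sharp(M)$. I then pass to the sheer subquotient $S=\Omega^\sharp(M)/\Ell^\flat(M)$ and apply Theorem \ref{sheerdec} to decompose it topologically as $S=P\oplus E\oplus V$. A contracting automorphism uniformly shrinks compact sets to $0$, so any compact submodule of a contractable module is trivial; hence $\Ell(P)=0$. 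Likewise $\Ell(V)=0$ since $\R^d$ has no nontrivial compact subgroup. Projecting an arbitrary compact submodule of $S$ onto each direct summand therefore kills its $P$- and $V$-components, so every compact submodule of $S$ lies in $E$; in particular the closed subgroup $\Ell(M)/\Ell^\flat(M)\subseteq S$ is a closed $A$-submodule of $E$.

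To finish, I would exploit that $M$ is compactly generated. Since $M/\Omega^\sharp(M)$ is discrete, $\Omega^\sharp(M)$ is open in $M$, hence itself compactly generated, and this passes to the quotient $S$; projecting a compact generating set through the continuous projection $S\to E$ makes $E$ compactly generated. Writing the amorphic module as $0\to E_0\to E\to E/E_0\to 0$ with $E_0$ profinite and $E/E_0$ discrete and locally finite in the module sense, the quotient $E/E_0$ is compactly generated and discrete, hence finitely generated as an $A$-module, and being locally finite it must then be finite. So $E$ is compact, and $\Ell(M)/\Ell^\flat(M)$, as a closed submodule of $E$, is compact; combined with the compactness of $\Ell^\flat(M)$ (Theorem \ref{fsheer}) this yields compactness of $\Ell(M)$.

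The most delicate point I anticipate is in the second step: confirming that the decomposition in Theorem \ref{sheerdec} is a genuine \emph{topological} direct sum, so that the projections are continuous $A$-module maps and the set-theoretic inclusion $\Ell(M)/\Ell^\flat(M)\subseteq E$ really is a closed inclusion for the subspace topology on $E$. Everything else is essentially bookkeeping with the structural theorems already established.
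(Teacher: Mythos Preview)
Your strategy is the same as the paper's: confine $\Ell(M)$ inside the amorphic summand $E$ of the sheer subquotient, then use compact generation of $M$ to force $E$ to be compact. The structural steps (Steps 1--3) are fine, and the worry you flag about the decomposition being topological is not an issue, since Theorem \ref{sheerdec} explicitly gives a topological direct sum.

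The actual delicate point is the sentence ``$\Omega^\sharp(M)$ is open in $M$, hence itself compactly generated.'' This is not automatic: open subgroups of compactly generated locally compact groups need not be compactly generated, and for $A$-modules the inference needs justification. What makes it work here is noetherianity: pass to a cocompact lattice $B\simeq\Z^d$ in $A$, observe that $M$ is compactly generated as a $B$-module, and then use that $\Z[B]$ is noetherian to deduce that any $B$-submodule of the discrete finitely generated $\Z[B]$-module $M/N_0$ (for $N_0$ an open compactly generated $B$-submodule of $\Omega^\sharp(M)$) is finitely generated. The paper organizes this slightly differently: rather than first showing $\Omega^\sharp(M)$ is compactly generated and then projecting to $E$, it directly looks at the discrete quotient $M/(P\oplus E'\oplus V)$ (with $E'$ a compact open submodule of $E$), passes to the lattice $B$, and invokes noetherianity of $\Z[B]$ to get $E/E'$ finitely generated, hence finite by local finiteness. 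Either packaging works, but you should make the noetherianity step explicit; without it the chain ``open $\Rightarrow$ compactly generated $\Rightarrow$ $E$ compactly generated $\Rightarrow$ $E/E_0$ finite'' has a genuine gap at the first implication. Note also that Corollary \ref{scgcg} cannot be cited here, since its proof depends on the present corollary.
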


\begin{ex}[Being sheer does not pass to submodules or quotients]\label{exzp1}
Here are some examples of polycontractable, and hence sheer, modules $M$ over the group $A=\Z$ with a non-sheer closed submodule $N$, namely a cocompact lattice. In each case, the quotient $M/N$ is compact parafinite and hence is not sheer either. The first example is totally disconnected, and the third is Euclidean.
\begin{enumerate}
\item The module $\F_p\lp t\rp\times\F_p\lp t^{-1}\rp$, where the positive generator of $\Z$ acts on both factors by multiplication by $t$, is polycontractable. The module $\F_p[t,t^{-1}]$ can be viewed as a discrete cocompact submodule of the former, by the obvious natural diagonal embedding.


\item Similarly, the module $\R\times\Q_p$, where the positive generator acts on both factors by multiplication by $p$, is polycontractable. It admits the diagonal embedding of $\Z[1/p]$ as a discrete submodule. In this case, the quotient, called ``solenoid", is connected.

\item Similarly, again with $A=\Z$ acting on $\R^2$ through powers of the matrix $\begin{pmatrix}2& 1\\1&1\end{pmatrix}$, $\R^2$ is a sheer module, but its submodule $\Z^2$ is not, since it is purely discrete.
\end{enumerate}
\end{ex}

\begin{thm}\label{montotypic-i}
Suppose that $A$ is compactly generated. Let $M$ be a sheer LC $A$-module. Then $M$ has a canonical decomposition $\big(\bigoplus_{i=1}^nP_i\big)\oplus E\oplus V$, with $P_i\neq\{0\}$ contractable monotypic (in the sense that all its simple quotients are isomorphic), $E$ amorphic, $V$ distal Euclidean. Moreover every closed sheer submodule $N$ splits with respect this decomposition, namely $N=\big(\bigoplus_{i=1}^nN\cap P_i\big)\oplus (N\cap E)\oplus (N\cap V)$.
\end{thm}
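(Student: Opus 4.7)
\emph{Existence.} The plan is to start from the decomposition $M = P\oplus E\oplus V$ furnished by Theorem \ref{sheerdec} and to refine the polycontractable factor $P$. Since $P$ is, by definition, a finite topological direct sum of contractable modules, it suffices to decompose a single contractable module $Q$ as a direct sum of monotypic contractable submodules. First split $Q = Q^{\mathrm{td}}\oplus Q^{\mathrm{eu}}$ into totally disconnected and Euclidean parts (the connected component of $Q$ coincides with $Q^{\mathrm{eu}}$, and the splitting is $A$-invariant). On $Q^{\mathrm{eu}}\cong\R^d$, fix $\alpha\in A$ contracting $Q$ and decompose $\R^d$ into the real generalized eigenspaces of $\alpha$; each piece is $A$-invariant by commutativity of $A$ and monotypic because its simple quotients are controlled by a single eigenvalue class. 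On the totally disconnected side, Pontryagin dualize: $\widehat{Q^{\mathrm{td}}}$ is a discrete $A$-module which is topologically finitely generated as a consequence of contractibility, so commutative algebra supplies a canonical primary decomposition, which dualizes back to a topological direct sum of $Q^{\mathrm{td}}$ into monotypic contractable submodules. Regrouping the monotypic pieces across all contractable summands of $P$ by isomorphism type of simple quotient produces the desired $P = \bigoplus_{i=1}^{n} P_i$; canonicity is obtained by characterizing $P_i$ as the maximal monotypic contractable submodule of $M$ of type $i$.

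\emph{Splitting.} For the splitting claim, fix a closed sheer submodule $N\subseteq M$, apply Theorem \ref{sheerdec} to $N$ intrinsically to get $N = N^{\mathrm{pc}}\oplus N^{\mathrm{am}}\oplus N^{\mathrm{eu}}$, and apply the existence part above to $N^{\mathrm{pc}}$ to get $N^{\mathrm{pc}} = \bigoplus_k N^{\mathrm{pc}}_k$ with each $N^{\mathrm{pc}}_k$ monotypic contractable. The crux is to show that $N^{\mathrm{eu}}\subseteq V$, $N^{\mathrm{am}}\subseteq E$, and each $N^{\mathrm{pc}}_k\subseteq P_{i(k)}$ for a unique $i(k)$. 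These inclusions are verified by projecting along the decomposition of $M$ and invoking a ``hom vanishing'' principle between heterogeneous sheer pieces: a continuous homomorphism from a distal Euclidean module into an amorphic module vanishes (continuous image of $\R^d$ in a totally disconnected group is trivial); from a distal Euclidean module into a polycontractable module vanishes by the distal-versus-contracting spectral incompatibility; the amorphic-into-distal-Euclidean direction is dual; and a morphism between monotypic contractable modules of distinct types must be zero because its image would have simple quotients of both types simultaneously. Once the inclusions are in hand, summing gives $N = \bigoplus_i(N\cap P_i)\oplus(N\cap E)\oplus(N\cap V)$.

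The main obstacle I anticipate is the totally disconnected case in the existence step: descending the algebraic primary decomposition of the discrete dual to a topological direct sum of closed submodules of $Q^{\mathrm{td}}$ while preserving $A$-invariance, monotypicity, and contractibility all at once, which requires careful use of Pontryagin duality and of the continuity of the $A$-action (and of $A$ being compactly generated). The final hom vanishing between monotypic pieces of distinct types is also delicate: it is the topological analogue of the standard commutative-algebra fact that primary modules with distinct associated simples admit no nonzero morphism, and requires ``monotypic of type $i$'' to be inherited under passage to closed submodules in a strong enough form to prevent a nonzero module from being simultaneously of two distinct types.
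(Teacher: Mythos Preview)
Your existence argument contains a genuine error in the totally disconnected case. You assert that for a contractable totally disconnected module $Q^{\mathrm{td}}$, the Pontryagin dual $\widehat{Q^{\mathrm{td}}}$ is discrete. This is false: a nonzero contractable module is never compact (a compact module carrying a contracting automorphism is trivial), so its dual is never discrete. Concretely, $\Q_p$ and $\F_p\lp t\rp$ are self-dual as LCA groups, and Pontryagin duality exchanges $\alpha$-contractive with $\alpha^{-1}$-contractive. Thus you cannot reduce to a primary decomposition of a discrete module in this way, and the step you flagged as ``the main obstacle'' is in fact not salvageable along those lines.

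The paper's route supplies the missing ingredient and is genuinely different from your sketch: for a contractable module $M$, one proves (Theorem \ref{afl1}) that the closure $R$ of the image of $\Z[A]$ in $\End_A(M)$ is a commutative \emph{artinian} ring, that $M$ has finite length over $R$, and crucially that every $R$-submodule of $M$ is automatically closed, so that closed $A$-submodules coincide with $R$-submodules. The monotypic decomposition of a contractable module is then nothing but the primary decomposition of the finite-length $R$-module $M$, and the splitting of polycontractable submodules follows from the standard fact that primary components are respected by submodules (Proposition \ref{monotypic}). Establishing Theorem \ref{afl1} is the real work; it uses that $\End_A(M)$ is locally compact when $M$ is contractable (Lemma \ref{homlc1}) together with a minimal-counterexample argument.

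Your hom-vanishing approach to the splitting is sound in spirit and matches Lemma \ref{homzero} for the $P$/$E$/$V$ trichotomy, but the assertion that ``monotypic of type $S$'' passes to closed submodules and quotients again rests on exactly the artinian-ring picture above; without it you do not know that a closed submodule of a monotypic contractable module remains contractable of the same type. A separate minor gap: in the Euclidean case, decomposing into generalized eigenspaces of a \emph{single} contracting $\alpha$ need not yield $A$-monotypic pieces, since two simple $A$-quotients can share the same $\alpha$-eigenvalue yet differ on other elements of $A$; this is repairable by iterating over a finite generating set, but as written it is incomplete.
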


Theorem \ref{montotypic-i} gives a reasonable understanding of sheer closed submodules of sheer modules. It mostly reduces to understanding submodules of contractable (monotypic) modules. These ones are tackled by the following (see Theorem \ref{afl1}).

\begin{thm}
Let $M$ be a contractable $A$-module. Let $R$ be the closure of the image of the group ring $\Z[A]$ in $\End_A(M)$. Then $R$ is an artinian commutative ring, $M$ is a module of finite length over $R$, and the closed $A$-submodules of $M$ are exactly the $R$-submodules of $M$ (i.e., these submodules are automatically closed).
\end{thm}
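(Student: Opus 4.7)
My plan proceeds in three stages: first, show $M$ has finite length as a closed $A$-module; second, deduce that $R$ is a commutative Artinian ring and $M$ is of finite length over $R$; third, show that every $R$-submodule is automatically closed.

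By the structure theory of contractable LCA groups, decompose canonically $M = M_\R \oplus M_{\mathrm{td}}$ into $A$-stable Euclidean and totally disconnected parts, both contractable; the element $\beta := i(\alpha)$ acts on $M_\R \cong \R^n$ with all eigenvalues of absolute value $<1$, and on $M_{\mathrm{td}}$ such that there exists a $\beta$-invariant compact open subgroup $U$ with $\beta U \subsetneq U$, $\bigcap_n \beta^n U = 0$, $\bigcup_n \beta^{-n}U = M_{\mathrm{td}}$ (via Baumgartner--Willis-type analysis of contraction groups). On $M_\R$ every closed $A$-submodule is automatically an $\R$-vector subspace (since a $\beta$-invariant discrete subgroup would be a nonzero discrete submodule of a contractable module, impossible), giving length at most $n$. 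On $M_{\mathrm{td}}$, closed $A$-submodules $N$ correspond bijectively (via $N \mapsto N\cap U$, with inverse $N_0 \mapsto \bigcup_k \beta^{-k} N_0$) to closed $\langle\beta\rangle$-invariant subgroups of $U$, and $U$ is of finite length as a module over the compact commutative ring $R_0 := \overline{\Z[\beta]} \subseteq \End_A(U)$; hence $M_{\mathrm{td}}$ has finite $A$-length.

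Each simple composition factor $S_i$ is a simple contractable $A$-module, hence (by topological Schur plus the classification of non-discrete locally compact division rings) one-dimensional over a topological division ring $K_i = \End_A(S_i)$ with local-field center $Z(K_i)$. The ring $R$ is commutative: the closure of the commutative subring $\Z[A]$ in the separately continuous topological ring $\End_A(M)$ is commutative by the usual two-step limit argument $ab = \lim a_n b = \lim b a_n = ba$. The map $R \to \prod_i Z(K_i)$ induced by the composition factors has image a closed subring of a finite product of local fields (hence Artinian), and its kernel consists of elements annihilating every simple factor, which by the finite composition series act nilpotently on $M$. Since $R$ is faithful on a finite-length module, $R$ is of Krull dimension zero with nilpotent nilradical, hence commutative Artinian; $M$ is correspondingly of finite length over $R$.

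For the final assertion that $R$-submodules coincide with closed $A$-submodules, the forward direction is immediate from the definition of $R$ as a closure of $\Z[A]$. For the converse, the Artinian decomposition $R = \prod_j R_j$ into local Artinian rings lifts via orthogonal idempotents to a canonical splitting $M = \bigoplus_j R_j M$ of $M$ into closed monotypic summands $M_j := R_j M$, so it suffices to prove every $R_j$-submodule $N \subseteq M_j$ is closed. Since $R_j$ is Noetherian, $N$ is finitely generated as $N = R_j x_1 + \cdots + R_j x_k$, and each $R_j x_i$ is the continuous image of $R_j$ acting on $x_i$. The main obstacle I anticipate is verifying closedness of each $R_j x_i$: on the Euclidean summand this is immediate since $R_j x_i$ is an $\R$-linear subspace, but on the totally disconnected summand I expect to need a compact-open filtration of $R_j$ by powers of its maximal ideal $\mathfrak{m}_j$, combined with the invariance of an appropriate compact open subgroup of $M_j$ under $R_0$, to realize $R_j x_i$ as an ascending union $\bigcup_k \beta^{-k}(\text{compact $R_0$-orbit})$ that stabilizes in an open way and is therefore closed.
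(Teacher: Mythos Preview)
Your overall strategy reverses the paper's order: the paper establishes closedness of $R$-submodules \emph{first} (by a minimal-counterexample argument), and only then deduces finite length and the Artinian property. Your reversal is not a priori unreasonable, but Step~3 as written has a genuine gap.

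The problem is the claim that each cyclic orbit $R_j x$ is closed. Writing it as an ascending union $\bigcup_k \beta^{-k}(\text{compact})$ does not help: an ascending union of compact sets need not be closed, and there is no reason for such a union to ``stabilize in an open way'' in general. What the paper actually uses here is that the orbit map $r\mapsto rm$ from $\End_A(M)$ to $M$ is \emph{proper}. This is established by the observation that multiplication by $\alpha$ is a contracting automorphism of the locally compact ring $\End_A(M)$ itself (since $M\oplus M$ is contractable, $\Hom_A(M,M)$ is locally compact, and $\alpha$ contracts it), together with a short limit argument. Crucially, that properness argument requires $M$ to be topologically simple (to pass from $\phi m=0$ to $\phi=0$), which is why the paper first runs a minimal-length counterexample reduction: a non-closed submodule of minimal-length $M$ forces $M$ to have topological length at most~$2$, and the length-$2$ case is ruled out by analysing the idempotents of $R_0$. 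Your sketch contains neither the properness ingredient nor any substitute for it.

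There is also a secondary gap in Step~1: the assertion that the compact open subgroup $U$ has finite length over $R_0=\overline{\Z[\beta]}\subseteq\End_A(U)$ is not justified (compactness of $R_0$, even if true, does not by itself bound chains of submodules). The paper instead bounds chains of closed $\alpha$-invariant subgroups directly by the Haar-measure index $[\Omega:\alpha(\Omega)]$: for $H\subsetneq H'$ one has $n_H<n_{H'}$, giving the bound $\log_2 n_M$. You could cite this to repair Step~1, but Step~3 still needs the properness idea above.
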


(Here, the group ring is defined regardless of the topology on $A$.) These modules are thus completely governed by abstract commutative algebra. Example \ref{exzp1} shows that there are non-sheer submodules of sheer modules occurring as cocompact lattices. The following theorem (see Theorem \ref{corenv}) shows that taking cocompact lattices is essentially the only way to do so.

\begin{thm}
Suppose that $A$ is compactly generated. Let $M$ be a sheer $A$-module, and $N$ a closed submodule. Then there are unique sheer closed submodules $N^\boxminus$, $N^\boxplus$ of $M$ with $N^\boxminus\subseteq N\subseteq N^\boxplus$, with $N/N^\boxminus$ a cocompact lattice in $N^\boxplus/N^\boxminus$.
\end{thm}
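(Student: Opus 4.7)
The plan is to define $N^\boxminus$ and $N^\boxplus$ using the canonical filtration of Theorem~\ref{fsheer} applied both to $N$ and to $N^\bot\subseteq\hat M$, verify the cocompact-lattice condition, and derive uniqueness from the extremal characterization of $\Omega^\sharp$.

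Since $M$ is sheer, every closed submodule $L\subseteq M$ inherits $\Ell^\flat(L)\subseteq\Ell^\flat(M)=0$. So Theorem~\ref{fsheer} applied to $N$ yields that $\Omega^\sharp(N)$ is sheer and $N/\Omega^\sharp(N)$ is purely discrete; I set $N^\boxminus:=\Omega^\sharp(N)$. Dually, $\hat M$ is sheer, so the same reasoning gives $\Omega^\sharp(N^\bot)$ sheer in $\hat M$ with $N^\bot/\Omega^\sharp(N^\bot)$ purely discrete; I set $N^\boxplus:=\Omega^\sharp(N^\bot)^\bot\subseteq M$. Then $N\subseteq N^\boxplus$ by reversal of orthogonality, and $N^\boxplus/N$ is the Pontryagin dual of the purely discrete $N^\bot/\Omega^\sharp(N^\bot)$, hence compact parafinite. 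Moreover $N/N^\boxminus$ is purely discrete in $N^\boxplus/N^\boxminus$ with compact cokernel $N^\boxplus/N$, making it a (torsion-free) cocompact lattice.

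The main obstacle is verifying $N^\boxplus$ is sheer, equivalently (via Pontryagin) that $\hat M/K$ is sheer for $K:=\Omega^\sharp(N^\bot)$. The identity $\Omega^\sharp(\hat M/K)=\hat M/K$ follows by pulling back open submodules to $\hat M$ and using $\Omega^\sharp(\hat M)=\hat M$. For $\Ell^\flat(\hat M/K)=0$, take a compact parafinite $C\subseteq\hat M/K$ and lift to $\tilde C\subseteq\hat M$. One shows $\tilde C\cap N^\bot=K$ (the quotient embeds into both the compact parafinite $C$ and the purely discrete $N^\bot/K$, hence is zero), and that $\tilde C$ is itself sheer: any open $W\subsetneq\tilde C$ with $\tilde C/W$ purely discrete must contain $K$ by sheerness of $K$, and then $W/K$ is an open submodule of $C$, which has no proper open submodule since $C$ is compact parafinite, forcing $W=\tilde C$. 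To conclude $C=0$, I would invoke Theorem~\ref{montotypic-i} to split $\tilde C$ and $K$ along the canonical decomposition $\hat M=\bigoplus\hat P_i\oplus\hat E\oplus\hat V$; the contractable and distal Euclidean summands admit no nonzero compact submodules, so the corresponding summands of $C$ vanish, and the amorphic summand is handled directly from its profinite-by-(locally finite discrete) structure.

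Uniqueness follows from the extremal characterization of $\Omega^\sharp$. If $(U,W)$ is another valid pair, the cocompact-lattice condition forces $N/U$ to be purely discrete, so $U$ is sheer and open in $N$ with $N/U$ purely discrete. Hence $U$ is among the open submodules defining $\Omega^\sharp(N)$, giving $N^\boxminus\subseteq U$; conversely, for any $V$ open in $N$ with $N/V$ purely discrete, $U\cap V$ is open in $U$ and $U/(U\cap V)\hookrightarrow N/V$ is purely discrete, so sheerness $\Omega^\sharp(U)=U$ forces $U\subseteq V$, and intersecting over $V$ gives $U\subseteq N^\boxminus$. The dual argument yields $W=N^\boxplus$.
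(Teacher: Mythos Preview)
Your construction of $N^\boxminus=\Omega^\sharp(N)$ and of $N^\boxplus$ via Pontryagin duality matches the paper exactly, as does your verification that $N^\boxminus$ is sheer, that $N/N^\boxminus$ is purely discrete, and that $N^\boxplus/N$ is compact parafinite.

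For the sheerness of $N^\boxplus$, however, you take a substantial detour. The paper simply invokes Theorem~\ref{sheersub}: once $K=\Omega^\sharp(N^\bot)$ is known to be sheer inside the sheer module $\hat M$, Theorem~\ref{sheersub} gives immediately that $\hat M/K$ is sheer, hence $N^\boxplus=K^\bot$ is sheer. Your argument instead lifts a hypothetical compact parafinite $C\subseteq\hat M/K$, shows $\tilde C$ is sheer, and then splits $\tilde C$ and $K$ via Theorem~\ref{montotypic-i} to kill each summand of $C$. This does work (with minor repairs: the summands of $C$ are \emph{quotients} of contractable or Euclidean modules, not submodules, so the right reason they vanish is that such quotients remain contractable or Euclidean and hence cannot be nontrivially compact), but it is circuitous. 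The ``sheer submodules split'' clause of Theorem~\ref{montotypic-i} is itself essentially what is proved in establishing Theorem~\ref{sheersub}, so you are using Theorem~\ref{sheersub} in disguise and then adding an unnecessary case analysis.

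Your uniqueness argument has a genuine gap. The step ``the cocompact-lattice condition forces $N/U$ to be purely discrete'' is unjustified and in fact false as stated: take $A=\Z$ acting on $M=\Q_p$ by multiplication by $p$, $N=\Z_p$, $U=p\Z_p$, $W=p^{-1}\Z_p$. All of $U,N,W$ are sheer, and $N/U\simeq\Z/p\Z$ is a cocompact lattice in $W/U\simeq\Z/p^2\Z$, yet $N/U$ is not purely discrete and $U\neq N^\boxminus=\Z_p$. What the paper actually proves (Theorem~\ref{corenv}) is the extremal characterization: $N^\boxminus$ is the \emph{maximal} sheer submodule of $N$ and $N^\boxplus$ the \emph{minimal} sheer overmodule of $N$ in $M$, both via Proposition~\ref{omush} and its Pontryagin dual. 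The introduction's phrase ``cocompact lattice'' is shorthand for the precise conditions ``$N/N^\boxminus$ purely discrete and $N^\boxplus/N$ compact parafinite''; with that reading, uniqueness is exactly this extremality, and your second half of the uniqueness paragraph (showing $U\subseteq N^\boxminus$ from sheerness of $U$) is then the correct direction, while the reverse inclusion needs the stronger hypothesis rather than the bare lattice condition.
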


Let us now describe applications to metabelian locally compact groups (\S\ref{metab-cg}). 



\begin{thm}
Let $G$ be a compactly generated metabelian group. Then for every ascending sequence $(H_n)$ of closed normal subgroups, there exists $n$ such that $H_n$ is cocompact in $\overline{\bigcup_k H_k}$. In particular, 
\begin{enumerate}
\item\label{mxc} $G$ has a maximal compact normal subgroup;
\item every closed normal subgroup is compactly generated as normal subgroup.
\end{enumerate}
\end{thm}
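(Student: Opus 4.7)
The plan is to exploit the metabelian structure to split the chain condition into two simpler ones. Fix a closed abelian normal subgroup $N\triangleleft G$ with $Q:=G/N$ abelian; since $G$ is compactly generated, so is $Q$, and conjugation makes $N$ into a LC $Q$-module. Let $\pi\colon G\to Q$ denote the quotient map. Given an ascending sequence $(H_n)$ of closed normal subgroups of $G$ with $H:=\overline{\bigcup_n H_n}$, I look at two derived ascending chains: $(\overline{\pi(H_n)})$ of closed subgroups of $Q$, and $(H_n\cap N)$ of closed $Q$-submodules of $N$ (normality of both $H_n$ and $N$ in $G$ makes $H_n\cap N$ a $Q$-submodule). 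Cocompactness of $H_n$ in $H$ will then follow from cocompactness in both chains, combined through the short exact sequences $1\to H_n\cap N\to H_n\to\pi(H_n)\to 1$ and $1\to H\cap N\to H\to\pi(H)\to 1$, after passing to $n$ large enough that $\pi(H_n)$ is cocompact in $\pi(H)$ (so that $H=H_n\cdot(H\cap N)$ up to a compact piece).

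For the $Q$-side I would use the elementary fact that in a compactly generated LCA group $Q\cong\R^a\oplus\Z^b\oplus K$ (with $K$ compact), every ascending chain of closed subgroups stabilizes modulo cocompact: the $\R$-dimensions and $\Z$-ranks of the images in $\R^a\oplus\Z^b$ can grow only finitely often, and once stabilized, further enlargement is by commensurability. For the $N$-side, which is the main technical input, I would prove: \emph{in any LC $A$-module $M$ over a compactly generated LCA group $A$, every ascending chain $(M_n)$ of closed $A$-submodules whose union has compactly $A$-generated closure has some $M_n$ cocompact in $\overline{\bigcup_n M_n}$.} I would deduce this from the canonical filtration of Theorem~\ref{fsheer}, reducing to three cases: (i) purely discrete modules, where a compactly $A$-generated module is finitely generated over $\Z[A]$, which is Noetherian by Hilbert's basis theorem (using a cocompact $\Z^d\subseteq A$); (ii) compact parafinite modules, dual to case (i) by Pontryagin duality, where ascending chains become descending chains in a purely discrete Noetherian module; and (iii) sheer modules, handled through the decomposition of Theorem~\ref{montotypic-i} into contractable monotypic summands, an amorphic summand, and a distal Euclidean summand, each of which is of finite length over an appropriate artinian ring (or a finite-dimensional real vector space). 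The compact generation hypothesis needed to apply this lemma to $H\cap N$ is transferred from the compact generation of $G$.

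For the corollaries, applying the main statement to an ascending chain of compact normal subgroups $K_n$ forces the closure of their union to itself be compact, so by Zorn's lemma there is a maximal compact normal subgroup, giving (\ref{mxc}). For assertion~(2), I would exhaust a closed normal subgroup $H$ by the ascending union of the normal closures $H_n$ of finite subsets of $H$; some $H_n$ is then cocompact in $H$, and adjoining to the defining finite set a compact lift of a compact generating set of $H/H_n$ produces a compact subset of $H$ whose normal closure in $G$ is $H$. The main obstacle is the module lemma above, which is really where the paper's structural apparatus---the filtration $\Ell^\flat\subseteq\Omega^\sharp$, the sheer decomposition, and the artinian commutative algebra behind contractable modules---carries the load.
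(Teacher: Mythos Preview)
Your overall strategy and the module-side ingredients are close to the paper's, and your derivations of (1) and (2) from the main statement are fine. But the ``combining'' step has a genuine gap. Your module lemma only yields $H_n\cap N$ cocompact in $V:=\overline{\bigcup_k (H_k\cap N)}$, not in $H\cap N$; nothing prevents $V\subsetneq H\cap N$, since an element of $H\cap N$ is a limit of elements of $\bigcup_k H_k$ that need not themselves lie in $N$. Likewise $\pi(H_n)$ need not be closed in $Q$, so your short exact sequences are not sequences of locally compact groups and cocompactness does not pull back from $Q$ in the way you suggest. So ``cocompact on the $N$-side plus cocompact on the $Q$-side'' does not assemble to ``$H_n$ cocompact in $H$'' as written.

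The paper bridges this with a centrality trick you omit. After the module step, replace $H_n$ by $VH_n$: this is closed (because $V/(H_n\cap N)$ is compact), contains $H_n$ cocompactly, and has $(VH_n)\cap N=V$ for all large $n$. Modding out by $V$, one has $[G,VH_n]\subseteq N\cap VH_n=V$, so each $VH_n/V$, and hence $H/V=\overline{\bigcup_n VH_n/V}$, is \emph{central} in $G/V$. A separate lemma (closed central subgroups of compactly generated metabelian LC groups are compactly generated, itself proved via the module-theoretic Corollary~\ref{scgcg}) then makes $H/V$ a compactly generated LCA group, and the abelian chain argument finishes inside $H/V$ rather than inside $Q$. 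One smaller issue in your module lemma: the amorphic summand in the sheer decomposition is \emph{not} of finite length (e.g.\ $\Z_p$ with trivial action), so case (iii) as stated fails; the paper instead mods out the compact submodule $\Ell(M)$ first (Corollary~\ref{maxcp}) to remove the amorphic part, and then uses the sheer core/envelope machinery of Theorem~\ref{corenv} together with the bound on chains in polycontractable modules.
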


Actually (\ref{mxc}) is proved directly, making uses of applications in both senses: groups to modules, then modules to groups. Namely, if $G$ is a compactly generated LC group, then $\Ell(G)$, the union of all compact normal subgroups, is closed. This fact is used in the study of modules (Theorem \ref{eclo}). Next structural results on LC modules give rise to the compactness result for modules, Corollary \ref{wcompact-i}, which in turns implies its analogue for groups, (\ref{mxc}) above.

We now describe a more elaborate application. Let $G$ be a compactly generated metabelian group. Let $M$ be a closed normal abelian subgroup such that $Q=G/M$ is abelian. For a character $\chi\in\Hom(Q,\R)$, define $Q_\chi=\{g\in Q:\chi(g)\ge 0\}$. Define the Bieri-Strebel invariant as
\[\Gamma(G)=\{0\}\cup\{\chi\in\Hom(Q,\R):\; M\text{ is not comp. gen. as }Q_\chi\text{-module}\}.\]

This was defined by Bieri and Strebel \cite{BS} when $G$ is discrete. They actually rather defined its complement. But the convention here is much more convenient, for the reason that, after identifying $\Hom(Q,\R)$ to a subspace of $\Hom(G,\R)$ it does not depend on the choice of $M$ (while the complement does: just consider $G=\Z$ and the choices of $M=\{0\}$ vs $M=\Z$ to see the issue with taking the open complement). The subset $\Gamma(G)$ shares properties with the a spectrum of an operator, or a set of weights. It is invariant under positive homotheties. The following theorem extends the case of discrete metabelian groups from \cite{BS}.

\begin{thm}\label{metab-cp}
Let $G$ be a compactly generated metabelian LC group. Then $G$ is compactly presentable if and only $\Gamma(G)$ contains no line through the origin, or equivalently does not contain any two opposite nonzero characters. 
\end{thm}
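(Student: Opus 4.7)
The plan is to follow the classical Bieri-Strebel strategy, adapted to the LC setting via the structural results proved earlier in the paper. First I reduce to a convenient form: using the existence of a maximal compact normal subgroup from (\ref{mxc}), I may quotient it out and assume that $G$ has no nontrivial compact normal subgroup. Then, exploiting that $Q$ has a cocompact lattice isomorphic to $\Z^d$ and that compact presentability is invariant under passage to cocompact closed subgroups, I may assume $Q = \Z^d$, so that $G = M \rtimes \Z^d$ with $M$ a closed normal LC $\Z^d$-module. The invariant $\Gamma(G)$ is then entirely encoded in the $\Z^d$-module structure on $M$, and the Pontryagin/sheer machinery developed in the paper applies directly to $M$.

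For the sufficiency direction, assume $\Gamma(G)$ contains no line through the origin. By compactness of the unit sphere in $\Hom(Q,\R)$, I can find finitely many characters $\chi_1,\dots,\chi_k \in \Hom(Q,\R) \smallsetminus \Gamma(G)$ such that the open half-spaces $\{\psi:\psi(\chi_i)>0\}$ already cover this sphere. By definition of $\Gamma(G)$, the module $M$ is compactly generated as a $Q_{\chi_i}$-module for each $i$. I then assemble a compact presentation of $G$ from compact generating sets of each of these $Q_{\chi_i}$-modules and compact generators of $Q$, with relations encoding: the group law of $Q$, the abelian law on $M$, the action of a compact generating set of $Q_{\chi_i}$ on the chosen compact generators of $M$, and bounded-length identifications between the various generating sets arising because the half-spaces cover $Q$. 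This is the LC analogue of the Bieri-Strebel construction; the verification that relations of bounded length suffice reduces to the finiteness of the covering, together with the fact that any element of $Q$ can be written as a bounded-length product of elements each lying in some $Q_{\chi_i}$.

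For the necessity direction, suppose that some nonzero $\chi_0 \in \Hom(Q,\R)$ satisfies $\pm\chi_0 \in \Gamma(G)$, and extend $\chi_0$ to a continuous character $\tilde\chi_0 \colon G \to \R$. Assume for contradiction that $G$ is compactly presentable. The goal is to invoke an LC analogue of the Bieri-Strebel $\Sigma^1$-type criterion: if $G$ is compactly presentable and $\tilde\chi_0$ is a character for which neither half-subgroup $G_{\pm\tilde\chi_0}$ is compactly generated, then a Mayer-Vietoris style obstruction forces infinitely many independent relators, contradicting compact presentability. Since by hypothesis $M$ is not compactly generated over $Q_{\chi_0}$ nor over $Q_{-\chi_0}$, the subgroups $M \rtimes (Q \cap Q_{\pm\chi_0})$ fail to be compactly generated, producing the desired contradiction.

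I expect the main obstacle to be precisely the LC version of the Bieri-Strebel $\Sigma^1$ criterion used in the necessity direction. In the discrete case, it is proved by explicit combinatorial analysis of van Kampen diagrams over a compact presentation, or equivalently via HNN-decomposition over the kernel of $\chi_0$; carrying this over to the LC setting requires handling connected and totally disconnected phenomena uniformly and keeping control of the topology on $M$. The sheer decomposition of Theorem \ref{sheerdec} should help by isolating the polycontractable summand of $M$ (the source of nonzero characters in $\Gamma(G)$) from the amorphic and distal Euclidean parts, which contribute no obstruction and can be absorbed using the abstract commutative-algebraic description of contractable modules obtained earlier.
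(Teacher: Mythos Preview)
Your necessity argument has a genuine gap. The HNN/Mayer--Vietoris obstruction you invoke only works when the character $\chi_0$ has \emph{discrete} image in $\R$ (so that the kernel of $\chi_0$ is open and one can speak of an ascending HNN extension). An arbitrary nonzero $\chi_0\in\Gamma(G)\cap-\Gamma(G)$ may have dense image, and then there is no obvious way to run the argument. The paper addresses exactly this point: it first proves that $\Gamma_Q(M)$ is a \emph{rational} polyhedral cone (Proposition~\ref{ratpo}, using Bieri--Groves for the discrete quotient and Proposition~\ref{upsheg} for the sheer part), so that if $\Gamma(G)\cap-\Gamma(G)\neq\{0\}$ one can pick $\chi_0$ surjecting onto $\Z$, and then invoke the LC version of the first Bieri--Strebel theorem \cite[Cor.~8.C.4]{CH}. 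Without rationality, your contradiction does not get off the ground.

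Your sufficiency argument follows a genuinely different route from the paper's, and in its current form is too vague. The paper does \emph{not} attempt a direct ``covering by half-spaces and assemble relators'' construction in the LC setting. Instead it reduces to the totally disconnected case, mods out $\Ell(M)$ so that $\Omega^\sharp(M)$ is polycontractable, and then inducts on the number of monotypic contractable factors: at each step one monotypic factor $V$ is peeled off using the group-theoretic Lemmas~\ref{condn}--\ref{condn2} (which exploit the compact open $Q_{\chi_0}$-submodule of $V$ from Lemma~\ref{gamono}), and the base case $k=0$ is the discrete Bieri--Strebel theorem. Your covering argument could perhaps be made to work, but ``bounded-length identifications between the various generating sets'' hides exactly the nontrivial content; in particular you would still need something like Theorem~\ref{chisous} (that $\Gamma$ is monotone under passage to closed submodules) and the explicit module-theoretic structure of the polycontractable part. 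Also, your reduction to $G=M\rtimes\Z^d$ is unjustified: passing to the preimage of a lattice in $Q$ gives a cocompact subgroup, but there is no reason for the resulting extension to split.
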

 
The main ``typically non-discrete" case of Theorem \ref{metab-cp} is when $M$ (closed normal abelian subgroup of $G$ with abelian quotient $Q=G/M$) is polycontractable, namely $M=\bigoplus_{i\in I}M_i$, with $M_i$ contractable indecomposable (hence either totally disconnected or connected). Let $\theta_i:Q\to\R$ be the logarithm of the homomorphism describing the action of $Q$ on the Haar measure of $M_i$. Let $J$ be the set of $i$ such that $M_i$ is totally disconnected. Then $\Gamma(M)=\{0\}\cup\bigcup_{j\in J}\R_{\ge 0}\theta_j$.

In the discrete case, $\Gamma$ is known to be a polyhedral cone, not necessarily reduced to a finite union of half-lines. Proving Thereom \ref{metab-cp} thus has two first special cases: the sheer case (easily reducing to the polycontractable case), and the discrete case (due to Bieri-Strebel). Then additional work is needed to ``put things together", relying on the previous structural results.

\medskip

{\bf Acknowledgement.} I thank P-E.\ Caprace and G.\ Willis for encouraging me to write this material down.

\section{Basic facts}

\subsection{Pontryagin duality}\label{cont-pont}

Continuity of the action defining a LC $A$-module $M$ is equivalent to continuity of $A\to\Aut_{\mathrm{LCA}}(M)$. Recall that the topology of the latter is defined by: $\alpha_i\to\alpha$ if the map $(\alpha_i,\alpha_i^{-1}):M^2\to M^2$ converges to $(\alpha,\alpha^{-1})$, uniformly on compact subsets of $M$. To check that continuity is preserved when passing to Pontryagin duality, we just need to check the following.

\begin{prop}
Let $M$ be an LCA group. Then the action of $\Aut(M)$ on the Pontryagin dual $\hat{M}=\Hom(M,\R/\Z)$ is continuous.
\end{prop}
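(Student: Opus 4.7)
The plan is to verify continuity at a fixed point $(\alpha_0,\chi_0)\in\Aut(M)\times\hat M$, against a basic compact-open neighborhood of the value $\chi_0\circ\alpha_0^{-1}$. Such a neighborhood is specified by a compact set $K\subseteq M$ and a neighborhood $U$ of $0$ in $\R/\Z$, and I want to find neighborhoods $V$ of $\alpha_0$ and $N$ of $\chi_0$ so that
\[\chi(\alpha^{-1}(x))-\chi_0(\alpha_0^{-1}(x))\in U\qquad\text{for all }x\in K,\;(\alpha,\chi)\in V\times N.\]
By the very definition of the topology on $\Aut(M)$, both $\alpha\mapsto\alpha$ and $\alpha\mapsto\alpha^{-1}$ are continuous into $C(M,M)$ equipped with uniform convergence on compacta, so I may set $\beta=\alpha^{-1}$, $\beta_0=\alpha_0^{-1}$, and run the argument with $\beta$ varying near $\beta_0$ in that topology.

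The key device is the two-term splitting, which uses that $\chi_0$ is a group homomorphism:
\[\chi(\beta(x))-\chi_0(\beta_0(x))=\bigl[\chi(\beta(x))-\chi_0(\beta(x))\bigr]+\chi_0\bigl(\beta(x)-\beta_0(x)\bigr).\]
Pick a neighborhood $U'$ of $0$ in $\R/\Z$ with $U'+U'\subseteq U$, and a compact neighborhood $L$ of $\beta_0(K)$ in $M$. Uniform convergence of $\beta$ to $\beta_0$ on $K$ gives, for $\alpha$ close enough to $\alpha_0$, both $\beta(x)\in L$ and $\beta(x)-\beta_0(x)\in\chi_0^{-1}(U')$, uniformly in $x\in K$; the second condition makes sense because $\chi_0^{-1}(U')$ is an open neighborhood of $0$ in $M$, and it forces the second bracket above to lie in $U'$. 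For the first bracket I take $N$ to be the compact-open neighborhood of $\chi_0$ determined by $(L,U')$: then $\chi(y)-\chi_0(y)\in U'$ for every $y\in L$, and in particular for $y=\beta(x)\in L$. Summing, the left-hand side lies in $U'+U'\subseteq U$, as required.

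The only genuinely delicate point is that $\chi$ is evaluated at the moving point $\beta(x)$, so a naive use of the compact-open topology at the fixed compact $\beta_0(K)$ will not suffice. The purpose of introducing the compact thickening $L\supseteq\beta_0(K)$ is exactly to absorb that movement and reduce every estimate to statements on fixed compact sets; everything else in the verification follows by unpacking the two definitions.
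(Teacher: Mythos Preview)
Your proof is correct and follows essentially the same route as the paper's: both verify continuity directly against the compact-open topology using that $\alpha^{-1}(x)-\alpha_0^{-1}(x)\to 0$ uniformly on compacta. The only difference is cosmetic: the paper first uses the group structure of $\Aut(M)$ to reduce to $\alpha_0=\mathrm{id}$, which collapses your two-term splitting to the single term $\chi(\alpha^{-1}(x)-x)$ and obviates the thickening $L$, at the cost of then needing uniformity over compact sets of characters $\chi$ (to land in $\Aut(\hat M)$); your version trades that uniformity step for the thickening trick.
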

\begin{proof}
It is enough to prove that if $(\alpha_i)$ is a net in $\Aut(M)$ converging to the identity, then so does $(\hat{\alpha}_i)$. We have, for $\chi\in \hat{M}$ and $x\in M$
\[\hat{\alpha}_i(\chi)(x)=\chi(\alpha_i^{-1}(x))\]
and thus 
\[\hat{\alpha}_i(\chi)(x)-\chi(x)=\chi(\alpha_i^{-1}(x)-x).\]
Then $\alpha_i^{-1}(x)-x$ converges to $0$, uniformly for $x$ in compact subsets. Hence $\chi(\alpha_i^{-1}(x)-x)$ converges to $0$, uniformly for $(x,\chi)$ in compact subsets. This means that $\hat{\alpha}_i(\chi)$ converges to $\chi$, uniformly for $\chi$ in compact subsets. In turn, this means that $\hat{\alpha}_i$ converges to the identity, uniformly on compact subsets. The same argument with $\alpha_i^{-1}$ yields that ${\hat{\alpha}_i}^{-1}$ also converges to the identity uniformly on compact subsets. So $\hat{\alpha}_i$ tends to the identity in the topology of $\Aut(\hat{M})$.
\end{proof}

Let us provide, as a sample, the following fact of Pontryagin duality.

\begin{prop}\label{ellomega}Let $M$ be an $A$-module. The Pontryagin dual of the inclusion map $\Ell(M)\to M$ is (canonically identified with) the quotient map $\hat{M}\to\hat{M}/\Omega(\hat{M})$. In $\hat{M}$, the orthogonal of $\Ell(M)$ is $\Omega(\hat{M})$.
\end{prop}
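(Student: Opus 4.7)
The plan is to reduce everything to the standard orthogonality dictionary of Pontryagin duality, adapted to the $A$-module setting. The key facts I will use are: (i) for any closed $A$-submodule $N\subseteq M$, the orthogonal $N^\bot$ is a closed $A$-submodule of $\hat M$, (ii) biorthogonality gives an order-reversing bijection between closed submodules of $M$ and closed submodules of $\hat M$, (iii) $N$ is compact iff $N^\bot$ is open (since $N^\bot$ is canonically the Pontryagin dual of $M/N$, and $N$ compact means $M/N$ has compact Pontryagin dual… wait, more precisely: $N$ compact iff $\hat M/N^\bot\cong\hat N$ is discrete iff $N^\bot$ is open), and (iv) for any subset $S\subseteq M$, $S^\bot=\overline{\langle S\rangle}^\bot$, since characters are continuous. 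The fact that the action on $\hat M$ is continuous (hence that submodules dualize to submodules) was checked in the previous proposition.

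First I establish the second assertion, $\Ell(M)^\bot=\Omega(\hat M)$. Write $\Ell(M)=\sum_{K}K$ where $K$ ranges over compact $A$-submodules of $M$. By (iv) the orthogonal of this sum equals the orthogonal of its closed span, which by the standard identity (orthogonal of a sum is the intersection of the orthogonals) equals $\bigcap_K K^\bot$. By (iii) each $K^\bot$ is an open submodule of $\hat M$; conversely, by the bijection (ii), every open submodule $U$ of $\hat M$ arises as $K^\bot$ for a (unique) compact submodule $K=U^\bot$ of $M$. Hence
\[
\Ell(M)^\bot=\bigcap_{K\text{ cpt. submod.}}K^\bot=\bigcap_{U\text{ open submod. of }\hat M}U=\Omega(\hat M).
\]

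For the first assertion, I invoke the general Pontryagin-duality fact that the dual of a closed inclusion $\overline{\Ell(M)}\hookrightarrow M$ is the restriction map $\hat M\to\widehat{\overline{\Ell(M)}}$, which is surjective (every character of a closed subgroup of an LCA group extends to the ambient group) with kernel $\overline{\Ell(M)}^\bot=\Ell(M)^\bot$. Combining with the computation just done, this kernel equals $\Omega(\hat M)$, so the dual map factors as the canonical quotient $\hat M\twoheadrightarrow\hat M/\Omega(\hat M)$, followed by a topological isomorphism with $\widehat{\overline{\Ell(M)}}$; this is exactly the claimed identification.

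The only mild subtlety, and what I would flag as the main point to double-check, is that $\Ell(M)$ need not be closed in general (closedness is precisely the content of Theorem \ref{eclo}, available only when $A$ is compactly generated), so the ``inclusion map $\Ell(M)\to M$" must be read through the closure; but this costs nothing because orthogonality is insensitive to closure. Everything else is a routine transcription of Pontryagin's theorem into the equivariant setting, using the earlier verification that duality respects the $A$-action.
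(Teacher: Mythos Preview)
Your argument is correct. You prove the orthogonal identity $\Ell(M)^\bot=\Omega(\hat M)$ first, via the lattice dictionary (compact $\leftrightarrow$ open, sup $\leftrightarrow$ inf), and then read off the dual of the inclusion from it; this is precisely the ``Alternatively,\ldots'' paragraph in the paper, just run in the reverse order. The paper's primary argument is organized differently: it first characterizes $\Ell(M)\hookrightarrow M$ by a universal property (every map from a compact module factors through it), dualizes that property to obtain the universal property of $\hat M\to\hat M/\Omega(\hat M)$, and then deduces the orthogonal identity as a corollary. Your direct orthogonality computation is arguably cleaner and avoids the categorical detour; the universal-property phrasing, on the other hand, makes the canonical nature of the identification transparent without any computation. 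You are also more careful than the paper about the point that $\Ell(M)$ need not be closed here (no compact-generation hypothesis on $A$), and your remark that orthogonality is insensitive to closure is exactly the right fix.
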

\begin{proof}
The above inclusion map $i:\Ell(M)\to M$ satisfies the following universal property: for every compact $A$-module $N$ and continuous homomorphism $f:N\to M$, there exists a unique $A$-module homomophism $g:N\to\Ell(M)$ such that $f=i\circ g$. Therefore, by Pontryagin duality, the projection map $p:\hat{M}\to\widehat{\Ell(M)}$ satisfies the following universal property: for every continuous discrete $A$-module $N$ (continuous meaning that the $A$-action is continuous, i.e., point stabilizers are open) and every continuous $A$-module homomorphism $f:\hat{M}\to N$, there exists a unique $A$-module homomorphism $g:\widehat{\Ell(M)}\to N$ such that $f=g\circ p$. But clearly the quotient $\hat{M}\to\hat{M}/\Omega(\hat{M)}$ also satisfies this universal property. The result follows.

Recall that the orthogonal of a submodule $N$ of $M$ is $N^\bot=\{\chi\in\hat{M}:\chi|_N=0\}$. There is a canonical isomorphism $\hat{N}\simeq \hat{M}/N^\bot$. The second fact therefore follows from the first. Alternatively, a submodule is compact if and only if its orthogonal is open. Orthogonality is an order-reversing isomorphism between the lattices of submodules of $M$ and $\hat{M}$. Then $\Ell(M)$, as supremum (in this lattice) of the set of all compact submodule of $M$, is mapped to the infimum of the set of all open submodules of $\hat{M}$, and this is $\Omega(\hat{M})$.
\end{proof}

\subsection{Lower sheer, upper sheer modules}

Since ``sheer" involves two conditions, it is useful to separate them.

\begin{defn}
An $A$-module $M$ is {\bf lower-sheer} if $\Ell^\flat(M)=\{0\}$, and {\bf upper-sheer} if $\Omega^\sharp(M)=M$. It is {\bf sheer} if is both lower-sheer and upper-sheer.
\end{defn}

Note that the classes of lower-sheer and upper-sheer $A$-modules are exchanged under Pontryagin duality.

Note that lower-sheer passes to closed submodules, and upper-sheer passes to quotients. However sheer passes neither to submodules nor quotients (Example \ref{exzp1}).

\subsection{The polycompact radical is closed}\label{pric}

We use the following

\begin{thm}\label{gwclo}
Let $G$ be a compactly generated locally compact group. Then the union $\Ell_\grp(G)$ of all compact normal subgroups of $G$, is closed, and every compact subset of $\Ell_\grp(G)$ is contained in a compact normal subgroup of $G$.
\end{thm}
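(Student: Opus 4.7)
The plan is to reduce to the totally disconnected case, where the theorem is essentially Trofimov's (and established in this form in \cite{C15}). First, $\Ell_\grp(G)$ is automatically a subgroup and a directed union of compact normal subgroups: the family $\mathcal{F}$ of compact normal subgroups of $G$ is directed under inclusion, since the product $K_1K_2$ of two such subgroups is compact (image of $K_1\times K_2$ under multiplication) and normal. Hence $\Ell_\grp(G)=\bigcup\mathcal{F}$.

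Next I would peel off the connected part via Iwasawa--Yamabe: the connected group $G^0$ has a unique maximal compact normal subgroup $K_0$, characteristic in $G^0$ and hence normal in $G$, so $K_0\in\mathcal{F}$, and every $L\in\mathcal{F}$ satisfies $L\cap G^0\subseteq K_0$. After replacing $G$ by $G/K_0$, I may assume $K_0=\{1\}$; then every $L\in\mathcal{F}$ meets $G^0$ trivially, so the projection $\pi\colon G\to G/G^0$ restricts to a continuous injection $\Ell_\grp(G)\hookrightarrow\Ell_\grp(G/G^0)$. The quotient $G/G^0$ being compactly generated and totally disconnected, Trofimov's theorem applies to it: $\Ell_\grp(G/G^0)$ is closed in $G/G^0$ and every compact subset lies in a single compact normal subgroup.

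Now I would pull these conclusions back to $G$. Care is needed, because $\pi\bigl(\Ell_\grp(G)\bigr)$ can be strictly smaller than $\Ell_\grp(G/G^0)$: for instance, for $G=\R\rtimes C_2$ with $C_2$ acting by negation, one checks $\Ell_\grp(G)=\{1\}$ whereas $\Ell_\grp(G/G^0)=C_2$. So the reduction is not a mere pullback. The transfer should use the finer content of Trofimov's theorem, namely that $\Ell_\grp(G/G^0)$ is actually a directed union of compact \emph{open} subgroups of itself, normal in $G/G^0$: combined with the injectivity of $\pi|_{\Ell_\grp(G)}$, this upgrades the continuous injection to a homeomorphism onto a closed subgroup of $\Ell_\grp(G/G^0)$, from which closedness of $\Ell_\grp(G)$ in $G$ follows. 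The compact-subset assertion then drops out by pulling back, through this homeomorphism, a compact normal subgroup of $G/G^0$ containing the image $\pi(C)$ of any given compact $C\subseteq\Ell_\grp(G)$.

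The main obstacle is precisely this topological transfer: ensuring that $\pi|_{\Ell_\grp(G)}$ is a homeomorphism onto its closed image, rather than merely a continuous injection. This is the technical heart of the result, and is the input provided by \cite{C15} building on Trofimov; an honest proof would invoke that machinery rather than attempt to rederive it.
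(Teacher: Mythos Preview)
The paper does not actually prove this theorem: its ``proof'' consists entirely of a citation to \cite[Theorem 2.8]{C15}, together with the historical remark that the totally disconnected case is essentially Trofimov's (via the Cayley--Abels graph construction). Your proposal is aligned with this: you sketch the reduction-to-totally-disconnected strategy in more detail than the paper bothers to, and then---honestly---defer the technical core to the same reference \cite{C15}. So there is no divergence of approach to compare; both the paper and you treat this result as an external input.

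That said, your sketch of the transfer step is not quite right as stated. You propose to show that $\pi|_{\Ell_\grp(G)}$ is a homeomorphism onto a closed subgroup of $\Ell_\grp(G/G^\circ)$ and then conclude that $\Ell_\grp(G)$ is closed in $G$. But closedness of the image in $G/G^\circ$ does not by itself yield closedness of $\Ell_\grp(G)$ in $G$: the map $\pi$ has non-compact kernel $G^\circ$, so $\pi^{-1}$ of a closed set is closed, but $\Ell_\grp(G)$ is not the full preimage of its image. What one actually needs (and what the argument in \cite{C15} supplies) is a local statement: a compact neighbourhood of $1$ in $\Ell_\grp(G)$, obtained by intersecting with a suitable compact subset of $G$ and using that the image in $G/G^\circ$ sits inside a single compact normal subgroup. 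This gives local compactness of $\Ell_\grp(G)$ in the subspace topology, hence closedness. You correctly identify this as the crux and correctly decline to rederive it; just be aware that the ``homeomorphism onto closed image'' formulation is not the mechanism.
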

\begin{proof}
This is \cite[Theorem 2.8]{C15}. Let us point out that the proof in \cite{C15}, published in 2015, has been slightly corrected in the 2017 arXiv version at \url{https://arxiv.org/abs/1306.4194}. Let us also mention that the main case, that of $G$ totally disconnected, was essentially due to Trofimov. More precisely, he obtained the case of cocompact subgroups of automorphisms of locally finite connected graphs. The totally disconnected case follows by the ``Cayley-Abels graph" construction \cite[Prop.\ 2.E.9]{CH}. 
\end{proof}


\begin{thm}\label{closed-pr}
If $A$ is compactly generated, then $\Ell(M)$ is closed for every LC $A$-module $M$, and every compact subset of $\Ell(M)$ is contained in a compact submodule.
\end{thm}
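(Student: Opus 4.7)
The plan is to reduce to the group-theoretic Theorem \ref{gwclo} by working inside the semidirect product $G = M \rtimes A$. The immediate obstacle is that $G$ itself need not be compactly generated, since $M$ may fail to be compactly generated as an $A$-module. To circumvent this, one localizes on a compact set of interest and builds a compactly generated closed subgroup of $G$ tailored to it. The second obstacle is then to convert the conclusions of Theorem \ref{gwclo}, which concern compact normal subgroups of that auxiliary group, back into compact $A$-submodules of $M$.

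The key observation is that a compact $A$-submodule of $M$ is the same thing as a compact $A$-invariant subgroup of $M$; since $M$ is abelian and normal in $G$, any such subgroup is automatically normal in $G$. In particular, for any $x \in \Ell(M)$ and any compact submodule $K \ni x$, the closed $A$-submodule $N_x := \overline{\langle A \cdot x \rangle}$ of $M$ generated by $x$ lies inside $K$, hence is itself compact. Fix once and for all a compact symmetric generating set $S$ of $A$ containing the identity.

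For the second assertion, given a compact subset $C \subseteq \Ell(M)$, let $G_1$ be the closed subgroup of $G$ generated by the compact set $C \cup S$; then $G_1$ is compactly generated. For each $c \in C$, the compact submodule $N_c$ is contained in $G_1$ (because $c \in G_1$ and $A \subseteq G_1$) and is normal in $G_1$, so $c \in \Ell_\grp(G_1)$. Thus $C \subseteq \Ell_\grp(G_1)$, and Theorem \ref{gwclo} yields a compact normal subgroup $L \triangleleft G_1$ containing $C$. The intersection $L \cap M$ is then compact, normalized by $A \subseteq G_1$, and a subgroup of the abelian group $M$, hence a compact $A$-submodule of $M$ containing $C$.

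For closedness of $\Ell(M)$, take $x \in \overline{\Ell(M)}$ and fix a compact neighborhood $V$ of $x$ in $M$. Build $G_1$ from $V \cup S$ in the same way. The argument above gives $V \cap \Ell(M) \subseteq \Ell_\grp(G_1)$, and since $V$ is a neighborhood of $x$ we have $x \in \overline{V \cap \Ell(M)}$; Theorem \ref{gwclo} also asserts that $\Ell_\grp(G_1)$ is closed in $G_1$ (hence in $M$), so $x \in \Ell_\grp(G_1)$. Applying the same extraction $L \cap M$ to a compact normal subgroup containing $x$ then exhibits a compact submodule of $M$ in which $x$ sits, so $x \in \Ell(M)$. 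The two delicate points, that $N_x$ is automatically trapped inside the small group $G_1$ and that $L \cap M$ is $A$-invariant, are both handled by the semidirect product structure; everything else is routine.
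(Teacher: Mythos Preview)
Your proof is correct and follows the same route as the paper: pass to a compactly generated subgroup of $M \rtimes A$, apply Theorem~\ref{gwclo}, and intersect the resulting compact normal subgroup with $M$. The only difference is organizational---the paper fixes one open compactly generated $A$-submodule $N$ (generated by a compact set with nonempty interior) and works in $N\rtimes A$ throughout, deducing closedness from the single observation that $\Ell(M)\cap N=\Ell(N)$ is closed in the open set $N$, whereas you rebuild an auxiliary group for each compact set or limit point.
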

\begin{proof}

Let $N$ be the $A$-submodule generated by compact subset of $M$ with nonempty interior. Thus $N$ is an open compactly generated submodule of $M$. So $N\rtimes A$ is a compactly generated LC group, and hence $\Ell_\grp(N\rtimes A)$ is closed by Theorem \ref{gwclo}. We directly see from the definition that $\Ell_\grp(N\rtimes A)\cap N$ equals $\Ell(N)$. Thus $\Ell(N)$ is closed in $N$. It also follows from the definition that $\Ell(M)\cap N=\Ell(N)$. Thus the subgroup $\Ell(M)$ is locally closed, and hence it is closed. 

For the additional statement, let $K$ be a compact subset of $\Ell(M)$. Above, we can choose $N$ to contain $K$. Then the above reference also ensures that $K$ is contained in a compact normal subgroup $L$ of $N\rtimes A$. Then $L\cap N$ is a compact submodule of $M$ containing $K$.
\end{proof}

\subsection{Canonical filtration with sheer subquotient}\label{cfss}

\begin{thm}\label{cafi}
Suppose that $A$ is compactly generated. We have the inclusions $0\subseteq \Ell^\flat(M)\subseteq \Omega^\sharp(M)\subseteq M$. If $A$ is compactly generated, then $\Ell^\flat(M)$ is compact parafinite, $M/\Omega^\sharp(M)$ is purely discrete, and subquotient $\Omega^\sharp(M)/\Ell^\flat(M)$ is sheer.
\end{thm}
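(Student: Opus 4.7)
The plan is to handle the four assertions in sequence, with the technical crux being the compactness of $\Ell^\flat(M)$. For the inclusions, only $\Ell^\flat(M)\subseteq\Omega^\sharp(M)$ is nontrivial: for $N$ compact parafinite and $U$ an open submodule with $M/U$ purely discrete, the image of $N$ in $M/U$ is compact and discrete, hence finite, hence zero (as $M/U$ has no nonzero finite submodule), so $N\subseteq U$.

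The key assertion is that $\Ell^\flat(M)$ itself is compact parafinite. First, I would verify that the class of compact parafinite $A$-modules is closed under extensions: given $0\to N'\to N\to N''\to 0$ with $N'$, $N''$ compact parafinite, a finite quotient $N/L$ forces $N'\subseteq L$ (as $N'\to N/L$ has finite image, which vanishes by parafiniteness), and then $N/L=N''/(L/N')$ is a finite quotient of $N''$, hence zero. The same pullback argument shows that the closure of any sum of compact parafinite submodules is parafinite. The main task is then to exhibit a single compact $A$-submodule $K$ of $M$ containing every compact parafinite submodule. For this, I would produce a compact open $A$-submodule of $\Ell(M)$. Decomposing the identity component of $M$ as $V\oplus C$ with $V$ a vector subgroup and $C$ compact connected, any $x\in\Ell(M)$ lies in a compact $A$-submodule whose projection to $V$ is a compact subgroup of $\R^n$, hence zero; therefore $\Ell(M)_0\subseteq C$ is compact, so $\Ell(M)/\Ell(M)_0$ is totally disconnected locally compact and, by van Dantzig's theorem, admits a compact open subgroup. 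Its preimage $K_1$ in $\Ell(M)$ is then open and compact. Applying Theorem \ref{closed-pr} inside a compactly generated open $A$-submodule of $M$ containing $K_1$, we enlarge $K_1$ to a compact $A$-submodule $K$ of $M$, which remains open in $\Ell(M)$ since it contains the open subgroup $K_1$. The quotient $\Ell(M)/K$ is thus discrete, so any compact parafinite $N\subseteq M$ has finite, hence zero, image in it, whence $N\subseteq K$. Consequently $\Ell^\flat(M)\subseteq K$, so $\overline{\Ell^\flat(M)}$ is a closed subset of the compact set $K$, hence compact; being parafinite, it is a compact parafinite $A$-submodule, hence contained in $\Ell^\flat(M)$. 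We conclude $\Ell^\flat(M)=\overline{\Ell^\flat(M)}$ is compact parafinite.

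The purely discrete claim for $M/\Omega^\sharp(M)$ then follows by applying this result to $\hat M$: $\widehat{M/\Omega^\sharp(M)}\cong\Omega^\sharp(M)^\bot=\Ell^\flat(\hat M)$ is compact parafinite, so $M/\Omega^\sharp(M)$ is purely discrete. Finally, for $Q=\Omega^\sharp(M)/\Ell^\flat(M)$ to be sheer, any compact parafinite submodule $P/\Ell^\flat(M)\subseteq Q$ makes $P$ an extension of compact parafinite by compact parafinite, hence itself compact parafinite, hence contained in $\Ell^\flat(M)$, so $P/\Ell^\flat(M)=0$; dually, any open submodule $U/\Ell^\flat(M)\subseteq Q$ with purely discrete quotient yields, using the pure discreteness of $M/\Omega^\sharp(M)$ and extension-closure of pure discreteness, that $M/U$ is purely discrete, forcing $\Omega^\sharp(M)\subseteq U$ and so $U=\Omega^\sharp(M)$. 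The main obstacle throughout is the production of the compact open $A$-submodule $K\subseteq\Ell(M)$, relying both on the LCA structure of $M$ (to see that $\Ell(M)_0$ is compact) and on the Trofimov-style Theorem \ref{closed-pr}.
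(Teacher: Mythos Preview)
Your argument is correct. The one place where it diverges substantively from the paper is in establishing that $\Ell^\flat(M)$ is compact.

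The paper works intrinsically inside $N=\overline{\Ell^\flat(M)}$: it takes any compact subset of $N$ with nonempty interior, and Theorem \ref{closed-pr} (applied to this compact subset, which lies in $\Ell(M)$) immediately yields a compact $A$-submodule $K$ that is open in $N$. Then $N/K$ is a discrete module generated by images of compact parafinite submodules, i.e.\ by finite parafinite modules, hence is zero; so $N=K$ is compact. No appeal to the LCA structure theorem or to van Dantzig is needed.

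Your route instead produces a compact open $A$-submodule of the larger $\Ell(M)$: you invoke the splitting off of a Euclidean factor to see $\Ell(M)_0$ is compact, then van Dantzig on $\Ell(M)/\Ell(M)_0$, and only then Theorem \ref{closed-pr} to promote the resulting compact open subgroup $K_1$ to a compact $A$-submodule $K$. This is sound, but the detour through LCA structure is unnecessary: the paper's observation that any compact neighbourhood of $0$ in $N$ already sits in $\Ell(M)$ (so Theorem \ref{closed-pr} applies directly) short-circuits it. One small imprecision in your write-up: the ``projection to $V$'' is only literally defined on $M^\circ$, so you should either use the global splitting $M\cong\R^n\times M_1$ (with $M_1$ containing a compact open subgroup) to project all of $M$, or restrict attention to $x\in\Ell(M)_0\subseteq M^\circ$ and project $L\cap M^\circ$; either fix is immediate. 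Also, passing to ``a compactly generated open $A$-submodule of $M$'' before applying Theorem \ref{closed-pr} is harmless but superfluous, since that theorem has no hypothesis on $M$.

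For the sheerness of the subquotient, you argue both halves directly (via closure under extensions of compact parafinite and of purely discrete modules), whereas the paper proves lower-sheerness directly and then obtains upper-sheerness by Pontryagin duality. Both are equally short; your version has the mild advantage of not needing to identify $\widehat{\Omega^\sharp(M)/\Ell^\flat(M)}$ with the corresponding subquotient of $\hat M$.
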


\begin{proof}
If $N_1$ is a compact parafinite submodule and $N_2$ is an open submodule with purely discrete quotient, the projection of $N_1$ on $M/N_2$ is compact discrete, hence finite; since $N_1$ is parafinite, it is trivial. This shows that $N_1\subseteq N_2$. Since this holds for all $N_1,N_2$, we deduce the inclusion $\Ell^\flat(M)\subseteq \Omega^\sharp(M)$.

Now assume that $A$ is compactly generated, and let us show that the reduced polycompact radical is compact for every LC $A$-module. By Pontryagin duality, openness of the augmented codiscrete radical follows for every LC $A$-module.

Define $N$ as the closure of $\Ell^\flat(M)$. Let $K$ be the closure of the submodule generated by some compact subset of $N$ with nonempty interior. By Proposition \ref{closed-pr}, $K$ is compact; by construction, $K$ is open in $N$. Since $N$ is generated by its compact parafinite submodules, this is also true for the discrete module $N/K$, which is thus generated by its finite parafinite submodules. Since a finite parafinite module is trivial, this means that $N=K$. Thus $N$ is compact, and being topologically generated by parafinite submodules, it is parafinite. Thus $N=\Ell^\flat(M)$. Hence $\Ell^\flat(M)$ is compact; it is clearly parafinite. By Pontryagin duality (applied to $\hat{M}$), we deduce that $\Omega^\flat(M)$ is open, with purely discrete quotient.

Given that $\Ell^\flat(M)$ is compact parafinite and the extension of two compact parafinite modules is compact parafinite, we see that $M/\Ell^\flat(M)$ is lower-sheer, and hence so is $\Omega^\sharp(M)/\Ell^\flat(M)$. By Pontryagin duality, the latter is therefore also upper-sheer.
\end{proof}

\subsection{Extensions of sheer modules}

\begin{prop}\label{extsheer}
An extension of two sheer (resp.\ lower-sheer, resp.\ upper-sheer) modules is also sheer (resp.\ lower-sheer, resp.\ upper-sheer).
\end{prop}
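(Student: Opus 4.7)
The plan is to reduce the three assertions to a single one. Since ``sheer'' means lower-sheer and upper-sheer simultaneously, the sheer case follows at once from the two one-sided cases applied to the same extension. Moreover, Pontryagin duality sends a short exact sequence $0 \to M' \to M \to M'' \to 0$ to a short exact sequence $0 \to \widehat{M''} \to \widehat{M} \to \widehat{M'} \to 0$, and, as already noted just before the statement, exchanges the classes of lower-sheer and upper-sheer modules. So it suffices to treat the lower-sheer case.

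For that, I would first record the elementary closure property that a continuous surjection $\pi : N \to N'$ between compact modules preserves parafiniteness: any continuous surjection $N' \twoheadrightarrow F$ onto a finite module composes with $\pi$ to yield a finite quotient of $N$, which must vanish when $N$ is parafinite, and hence $F = \{0\}$. Granting this, let $N \subseteq M$ be a compact parafinite submodule in an extension $0 \to M' \to M \to M'' \to 0$ with $M'$ and $M''$ both lower-sheer. The image of $N$ in $M''$ is then compact parafinite, so vanishes by lower-sheerness of $M''$; thus $N \subseteq M'$. But $N$ is now a compact parafinite submodule of the lower-sheer module $M'$, so $N = \{0\}$. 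Hence $\Ell^\flat(M) = \{0\}$, meaning $M$ is lower-sheer.

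Dualizing the same argument yields the upper-sheer case, and combining the two yields the sheer case. There is no substantive obstacle here: everything reduces to the trivial closure properties of the classes of compact parafinite modules (under continuous quotients of compact modules) and of purely discrete modules (under submodules of discrete modules), both of which follow immediately from the definitions.
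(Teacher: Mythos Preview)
Your proof is correct and essentially mirrors the paper's argument: the paper handles the upper-sheer case directly (via ``every homomorphism to a purely discrete module is zero'' passing to extensions) and deduces lower-sheer by Pontryagin duality, whereas you handle lower-sheer directly and dualize for upper-sheer. These are precisely dual arguments, so the approach is the same up to which side you choose to do by hand.
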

\begin{proof}
Upper-sheer means that every homomorphism to a purely discrete module is zero, and this clearly passes to extensions. The lower-sheer case follows by Pontryagin duality. The sheer case follows from the other two cases.
\end{proof}

\begin{prop}\label{omush}
Suppose that $A$ is compactly generated. The submodule $\Omega^\sharp(M)$ is the largest upper-sheer submodule of $M$, and $\Omega^\sharp(\Omega^\sharp(M))=\Omega^\sharp(M)$.

The quotient $M/\Ell^\beta(M)$ is the largest lower-sheer quotient module of $M$, and $\Ell^\beta(M/\Ell^\beta(M))=\{0\}$.
\end{prop}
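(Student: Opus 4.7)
The plan is to treat the four assertions as two dual pairs, exploiting that the idempotence statements $\Omega^\sharp(\Omega^\sharp(M))=\Omega^\sharp(M)$ and $\Ell^\flat(M/\Ell^\flat(M))=\{0\}$ are exactly the statements that $\Omega^\sharp(M)$ is upper-sheer and that $M/\Ell^\flat(M)$ is lower-sheer; once these are known, the maximality (resp.\ finality) assertions need only a one-line factorization argument. I would establish the $\Omega^\sharp$ case directly; the $\Ell^\flat$ case admits a parallel treatment, or alternatively can be obtained by applying the first case to $\hat M$ together with Proposition \ref{ellomega}.

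For the first pair, set $N:=\Omega^\sharp(M)$. By Theorem \ref{cafi}, $N$ is open in $M$ and $M/N$ is purely discrete. I would first verify that $N$ is upper-sheer: given an open $A$-submodule $U\subseteq N$ with $N/U$ purely discrete, $U$ is open in $M$ and $M/U$ fits in an extension $0\to N/U\to M/U\to M/N\to 0$ of purely discrete modules. Checking that this extension is itself purely discrete is immediate---discreteness is trivial, and any finite submodule $F\subseteq M/U$ meets the purely discrete $N/U$ trivially, hence injects into the purely discrete $M/N$, forcing $F=0$. Thus $U$ is one of the open submodules whose intersection defines $\Omega^\sharp(M)$, so $U\supseteq N$ and hence $U=N$, proving $\Omega^\sharp(N)=N$. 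For maximality, given a closed upper-sheer submodule $L\subseteq M$, the composition $L\hookrightarrow M\twoheadrightarrow M/N$ has open kernel $L\cap N$ (target discrete) and image contained in the purely discrete $M/N$, hence a purely discrete quotient $L/(L\cap N)$. Upper-sheerness of $L$ then forces $L\cap N=L$, i.e.\ $L\subseteq N$.

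For the dual pair, I would argue directly. Given a continuous surjective $A$-module homomorphism $q:M\twoheadrightarrow Q$ with $Q$ lower-sheer, $q(\Ell^\flat(M))$ is compact (Theorem \ref{cafi}) and has no nonzero finite quotient (such a quotient would pull back to a finite quotient of the parafinite $\Ell^\flat(M)$), so it lies in $\Ell^\flat(Q)=\{0\}$, giving the required factorization through $M/\Ell^\flat(M)$. For $\Ell^\flat(M/\Ell^\flat(M))=\{0\}$, let $C$ be the preimage in $M$ of $\Ell^\flat(M/\Ell^\flat(M))$; by Theorem \ref{cafi} applied to $M/\Ell^\flat(M)$, the quotient $C/\Ell^\flat(M)$ is compact parafinite, so $C$ sits in an extension of two compact parafinite modules. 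This extension is again compact parafinite (compactness is clear; a finite quotient of $C$ vanishes on the parafinite $\Ell^\flat(M)$ and thus factors through the parafinite $C/\Ell^\flat(M)$, hence is trivial). Therefore $C\subseteq\Ell^\flat(M)$, as required.

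I do not anticipate a genuine obstacle. The only mildly delicate points are the two closure-under-extension properties (for \emph{purely discrete} and for \emph{compact parafinite}), each of which is a short diagram chase; the substantive content---that $M/\Omega^\sharp(M)$ is purely discrete and that $\Ell^\flat(M)$ is compact parafinite---is already packaged in Theorem \ref{cafi}, where compact generation of $A$ is used.
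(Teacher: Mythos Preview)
Your proof is correct and follows essentially the same approach as the paper: both establish the idempotence of $\Omega^\sharp$ via the closure of purely discrete modules under extensions (invoking Theorem \ref{cafi} for the two subquotients), and both deduce maximality from the fact that an upper-sheer module has trivial image in any purely discrete quotient. The only cosmetic differences are that the paper proves maximality before idempotence and dispatches the $\Ell^\flat$ statement in one line by Pontryagin duality, whereas you reverse the order and spell out the dual argument directly; also, the precise duality $\Ell^\flat(M)^\bot=\Omega^\sharp(\hat M)$ is stated in the introduction rather than in Proposition \ref{ellomega}, so your parenthetical citation is slightly off.
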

\begin{proof}
Let $N$ be an upper-sheer closed submodule of $M$. Then the image of $N$ into every purely discrete quotient of $M$ is zero, so $N\subseteq \Omega^\sharp(M)$. Then $M/\Omega^\sharp(\Omega^\sharp(M))$ is extension of $\Omega^\sharp(M)/\Omega^\sharp(\Omega^\sharp(M))$ by $M/\Omega^\sharp(M)$, both of which are purely discrete (by Theorem \ref{cafi}), and hence is purely discrete. So $\Omega^\sharp(\Omega^\sharp(M)\subseteq \Omega^\sharp(M)$, whence the two are equal, which means that $\Omega^\sharp(M)$ itself is upper-sheer.

The second statement follows, by Pontryagin duality.
\end{proof}

\begin{prop}
Suppose that $A$ is compactly generated. Let $M$ be an LC module and $N$ a submodule. Then $\Omega^\sharp(N)\subseteq\Omega^\sharp(M)$.
\end{prop}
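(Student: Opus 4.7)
The plan is to argue directly from the definition of $\Omega^\sharp$, by showing that any open submodule of $M$ with purely discrete quotient pulls back along $N \hookrightarrow M$ to an open submodule of $N$ with purely discrete quotient. Concretely, let $U$ be any open $A$-submodule of $M$ with $M/U$ purely discrete. Then $U \cap N$ is an open $A$-submodule of $N$ (since $N$ carries the subspace topology, and $A$-invariance is preserved under intersection), so $N/(U \cap N)$ is discrete. The canonical map $N/(U \cap N) \hookrightarrow M/U$ is an injective $A$-module homomorphism, and any nonzero finite submodule of $N/(U \cap N)$ would inject into $M/U$, contradicting pure discreteness. Hence $N/(U \cap N)$ is purely discrete.

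From this, $\Omega^\sharp(N) \subseteq U \cap N \subseteq U$. Intersecting over all open submodules $U$ of $M$ with purely discrete quotient yields $\Omega^\sharp(N) \subseteq \Omega^\sharp(M)$, as desired.

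An alternative route, perhaps more in keeping with the preceding proposition, is to invoke Proposition \ref{omush}: iterating gives $\Omega^\sharp(\Omega^\sharp(N)) = \Omega^\sharp(N)$, so $\Omega^\sharp(N)$ is an upper-sheer closed submodule of $M$, and Proposition \ref{omush} identifies $\Omega^\sharp(M)$ as the largest upper-sheer closed submodule of $M$, giving the inclusion for free. This latter approach uses the compact generation hypothesis on $A$ (through Proposition \ref{omush}), whereas the direct argument above does not formally need it --- but stating the result under the same standing hypothesis is harmless.

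The main obstacle is essentially nil: everything is formal once one observes that pure discreteness passes to submodules, which is immediate from the definition. The only mild care needed is ensuring $U \cap N$ is treated with the correct topology (subspace topology in $N$, coinciding with the preimage topology), which is where closedness of $N$ in $M$ (implicit in calling $N$ an LC submodule) is used so that $\Omega^\sharp(N)$ is a well-defined closed submodule of $M$.
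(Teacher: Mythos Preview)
Your second route is exactly the paper's proof: it invokes Proposition \ref{omush} to see that $\Omega^\sharp(N)$ is upper-sheer, hence has trivial image in the purely discrete module $M/\Omega^\sharp(M)$, yielding the inclusion.

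Your first route is genuinely different and more elementary. The paper's argument needs compact generation of $A$ twice (once to know $\Omega^\sharp(N)$ is upper-sheer via Proposition \ref{omush}, once to know $M/\Omega^\sharp(M)$ is purely discrete via Theorem \ref{cafi}), whereas your direct argument---pulling back each open $U\subseteq M$ with purely discrete quotient to $U\cap N$, and observing that pure discreteness passes to the submodule $N/(U\cap N)\hookrightarrow M/U$---uses nothing beyond the definition and works for arbitrary $A$. So your first proof actually strengthens the statement by removing the hypothesis, while the second (and the paper's) stays within the framework already set up and illustrates how Proposition \ref{omush} is meant to be used. Both are correct; the direct one buys a slightly sharper result, the other buys conceptual coherence with the surrounding text.
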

\begin{proof}
By Proposition \ref{omush}, $\Omega^\sharp(N)$ is upper-sheer. Hence the image of $\Omega^\sharp(N)$ in the purely discrete module $M/\Omega^\sharp(M)$ is zero, which means that $\Omega^\sharp(N)\subseteq\Omega^\sharp(M)$.
\end{proof}

\subsection{Structure of distal modules}

\begin{prop}[Conze-Guivarch \cite{CGu}]\label{cogu}
Let $G$ be a group. Let $V$ be a distal Euclidean $G$-module. If $V$ is irreducible (as real representation), then the image of $G$ in $\GL(V)$ has compact closure. Hence, in general, there is an upper-triangular decomposition such that the image of $G$ in $\GL$ of the diagonal blocks has compact closure.\qed
\end{prop}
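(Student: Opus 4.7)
The plan is to handle the two assertions separately: the irreducible case is the substance of the Conze-Guivarc'h theorem being cited \cite{CGu}, and the general upper-triangular statement then follows by a routine induction on $\dim V$ using the fact that the distal Euclidean condition passes to sub- and quotient modules.

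For the irreducible case, let $H$ denote the closure of the image of $G$ in $\GL(V)$, and let $H^Z \subseteq \GL(V)$ be its real Zariski closure, a real algebraic subgroup. A subspace of $V$ is $G$-invariant iff it is cut out by polynomial conditions on the matrix entries, and such conditions are preserved under Zariski closure, so $V$ remains irreducible under $H^Z$. The condition of having all complex eigenvalues of modulus one propagates to $H^Z$ via the Jordan decomposition inside the algebraic group (a real-algebraic split torus all of whose real points have unit-modulus eigenvalues must be compact, ruling out any hyperbolic factor). Apply the Levi decomposition $H^{Z,\circ} = L \ltimes U$: the unipotent radical $U$ is normal in $H^{Z,\circ}$, and its fixed subspace $V^U$ is nonzero (every unipotent linear action has fixed vectors) and $H^{Z,\circ}$-invariant; irreducibility then forces $V^U = V$, i.e.\ $U = \{1\}$. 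The remaining reductive part $L$ has all elements with eigenvalues on the unit circle, hence contains no non-trivial split torus and no non-compact simple factor, and is therefore compact. A short argument with the finite component group promotes compactness from $H^{Z,\circ}$ to all of $H^Z$, and a fortiori to the closed subgroup $H$.

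For the general statement, induct on $d = \dim V$; the case $d=0$ and the irreducible case are already handled. Otherwise choose a nonzero proper $G$-invariant subspace $W \subsetneq V$. Both $W$ and $V/W$ are distal Euclidean $G$-modules, since the complex spectrum acting on each of them is contained in the spectrum on $V$, and each has strictly smaller dimension. The inductive hypothesis supplies upper-triangular decompositions of $W$ and of $V/W$ whose diagonal blocks have compact image in $\GL$; concatenating these filtrations produces the required filtration of $V$. The main obstacle is of course the irreducible case, whose crux is converting the a priori purely analytic eigenvalue-modulus condition into an algebraic structural statement, which is precisely what the Zariski closure and Levi decomposition accomplish; once that is granted, the reduction to the general case is essentially formal.
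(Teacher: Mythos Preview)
The paper gives no proof here: the proposition is marked with a bare \qed and attributed to Conze--Guivarc'h. Your inductive reduction of the general statement to the irreducible case is correct and routine.

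For the irreducible case, your sketch has a real gap. You assert that ``the condition of having all complex eigenvalues of modulus one propagates to $H^Z$'' and then use this to say the reductive Levi $L$ ``has all elements with eigenvalues on the unit circle''. But the unit-modulus-spectrum locus is not Zariski closed --- in $SL_2(\R)$ it is $\{|\mathrm{tr}|\le 2\}$, which even has nonempty interior --- so Zariski density of $\Gamma$ does not by itself transfer the spectral hypothesis to $H^Z$. The Jordan decomposition tells you that the semisimple part of each $\gamma\in\Gamma$ lies in some compact torus, but for non-abelian $G$ these tori need not sit inside a common algebraic subgroup, and you give no mechanism for producing an element of $\Gamma$ (rather than of $L$) with an eigenvalue off the unit circle in case $L$ carries a split torus. (When $G$ is abelian --- the only case the paper actually needs --- the semisimple parts commute and lie in a single anisotropic subtorus, so the step can be salvaged; as written for arbitrary $G$ it is incomplete.)

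The standard direct argument bypasses algebraic groups entirely. By Burnside/Jacobson density, irreducibility forces the image of $G$ to span $\End_D(V)$ linearly, where $D=\End_G(V)\in\{\R,\C,\mathbf{H}\}$. Pick a basis $g_1,\dots,g_m$ of $\End_D(V)$ from the image of $G$; since each product $gg_i$ again has unit-modulus spectrum, one has $|\mathrm{tr}(gg_i)|\le\dim_\R V$ for every $g$ in the image. Nondegeneracy of the trace pairing then bounds the image of $G$ in $\End(V)$, hence gives compact closure in $\GL(V)$.
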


The following consequence is well-known.

\begin{cor}\label{ccogu}
Let $G$ be a group and $V$ a distal Euclidean $G$-module. Then no $G$-orbit $\neq\{0\}$ accumulates at zero.
\end{cor}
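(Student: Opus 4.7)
The plan is to reduce to the compact-closure case provided by Proposition \ref{cogu} by projecting to an appropriate diagonal block of the upper-triangular decomposition. So I first invoke Proposition \ref{cogu} to fix a $G$-invariant flag
\[0=V_0\subsetneq V_1\subsetneq\cdots\subsetneq V_k=V\]
such that for each $i$ the image of $G$ in $\GL(V_i/V_{i-1})$ has compact closure $K_i$.

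Next, suppose for contradiction that there is $v\neq 0$ and a sequence $(g_n)$ in $G$ with $g_nv\to 0$. Let $i$ be the smallest index with $v\in V_i$; then $v\notin V_{i-1}$, so the image $\bar v$ of $v$ in the quotient $V_i/V_{i-1}$ is nonzero. Since $V_{i-1}$ is closed and the quotient map is continuous, $g_nv\to 0$ in $V$ forces $g_n\bar v\to 0$ in $V_i/V_{i-1}$.

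Now the orbit of $\bar v$ under the image of $G$ in $\GL(V_i/V_{i-1})$ is contained in $K_i\cdot\bar v$, which is compact. Hence $0$ lies in the closure $K_i\cdot\bar v$, meaning $k\bar v=0$ for some $k\in K_i$. But $K_i\subseteq\GL(V_i/V_{i-1})$ consists of invertible transformations, contradicting $\bar v\neq 0$.

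There is essentially no obstacle once Proposition \ref{cogu} is in hand; the only point worth emphasizing is that the orbit of a nonzero vector under a compact subgroup of $\GL$ cannot accumulate at the origin (because that orbit is compact and contained in the complement of $0$). The rest is just descent to the first block on which $v$ has a nontrivial component.
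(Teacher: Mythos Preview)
Your proof is correct and follows essentially the same approach as the paper's: both reduce to an irreducible subquotient where $G$ acts through a group with compact closure, and then observe that a nonzero orbit under a compact subgroup of $\GL$ cannot accumulate at $0$. The only cosmetic difference is that the paper first passes to the span $W$ of the orbit and then to an irreducible quotient of $W$, whereas you fix the full flag on $V$ from Proposition~\ref{cogu} and project to the first block on which $v$ is nontrivial; the underlying idea is identical.
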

\begin{proof}
Let by contradiction $X\neq\{0\}$ be an orbit accumulating at zero. Let $W$ be the span of the given orbit, and $W'$ an irreducible quotient of $W$. Then the image $X'$ of $X$ in $W'$ is also an orbit accumulating at zero, and spans $W'\neq\{0\}$. Since $G$ has a compact closure in $\GL(W')$ by Proposition \ref{cogu}, a nonzero orbit cannot accumulate at zero. This is a contradiction.  
\end{proof}

\subsection{No homomorphism between modules of different kinds}

Uniqueness in Theorem \ref{sheerdec} is based on the following lemma.

\begin{lem}\label{homzero}
Let $C$ be a polycontractable $A$-module, $E$ an amorphic one, $V$ a distal Euclidean one. Then every homomorphism between any two distinct elements of $\{C,E,V\}$ is zero.
\end{lem}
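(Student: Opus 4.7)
The plan is to verify each of the six ordered pairs $(X,Y)$ with $X,Y\in\{C,E,V\}$ distinct. Since $C$ is a finite direct sum of contractable modules and homomorphisms split accordingly, I reduce to the case when $C$ is contractable and fix $\alpha\in A$ contracting $C$. The structural inputs I will use are: $E$ amorphic (sheer compact-by-discrete) has a compact open $A$-submodule $K$ which is profinite as $A$-module (hence a basis of neighborhoods of $0$ by $A$-invariant open submodules $K_i$ with $K/K_i$ finite), and $E/K$ is discrete and locally finite (every element has finite $A$-orbit); and $V\simeq\R^d$ satisfies Corollary~\ref{ccogu}, that no nonzero $A$-orbit accumulates at $0$.

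The four pairs involving $V$ are straightforward. For $f\colon C\to V$, the orbit $\alpha^n(f(x))=f(\alpha^n(x))$ tends to $0$, and Corollary~\ref{ccogu} forces $f(x)=0$. For $f\colon V\to C$, decompose $C=C_{\mathrm{eu}}\oplus C_{\mathrm{td}}$ into Euclidean and totally disconnected contractable summands; connectedness of $V$ kills the projection to $C_{\mathrm{td}}$, while the projection to $C_{\mathrm{eu}}$ is a linear $A$-intertwiner between a source with modulus-$1$ eigenvalues of $\alpha$ and a target with modulus-$<1$ eigenvalues, forcing it to vanish by comparing eigenvalues across $V/\ker f\cong\mathrm{im}\,f$. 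For $f\colon V\to E$, the connected image lies in the identity component of $E$, which is trivial since $E/K$ is discrete and $K$ is totally disconnected. For $f\colon E\to V$, the compact subgroup $f(K)$ of $\R^d$ is zero, so $f$ factors through $E/K$; each element there has finite $A$-orbit, hence finite image in the torsion-free $V$, which must be zero.

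The remaining $C\leftrightarrow E$ pair is the main obstacle, handled by two successive peels. For $f\colon C\to E$, I first show $f(C)\subseteq K$: the preimage $f^{-1}(K)$ is open in $C$, so $\alpha^n(x)\to 0$ lies in it for large $n$, whence $\alpha^n(\bar f(x))=\bar f(\alpha^n(x))=0$ in $E/K$; since $\alpha$ acts as an automorphism of $E/K$, injectivity on the discrete module gives $\bar f(x)=0$. Then for $y=f(x)\in K$ I have $\alpha^n(y)=f(\alpha^n(x))\to 0$ in $K$; applying the same injectivity argument in each finite quotient $K/K_i$ gives $y\in K_i$ for all $i$, hence $y\in\bigcap_i K_i=\{0\}$. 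For $f\colon E\to C$, the image $f(K)$ is a compact $A$-submodule of $C$, in particular $\alpha$-stable; since $\alpha$ contracts $C$ uniformly on the compact $f(K)$, the identity $\alpha^n(f(K))=f(K)$ forces $f(K)$ into every neighborhood of $0$, so $f(K)=\{0\}$. The induced map $\bar f\colon E/K\to C$ sends each element, which has finite $A$-orbit in the locally finite $E/K$, to an element of $C$ with finite $\alpha$-orbit converging to $0$; hence some $\alpha^n\bar f(e+K)=0$, and injectivity of $\alpha$ forces $\bar f(e+K)=0$, concluding $f=0$.
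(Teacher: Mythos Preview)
Your proof is correct and follows the same six-case strategy as the paper, but several cases are executed differently. The paper disposes of $\Hom(C,V)$ and $\Hom(C,E)$ in one stroke by observing that no nonzero $A$-orbit in $V$ or $E$ accumulates at $0$; your two-peel argument for $C\to E$ is an explicit unpacking of this fact for $E$. Likewise, the paper handles $\Hom(E,V)$ and $\Hom(E,C)$ together by noting that $E$ is the union of its compact $A$-submodules while $V$ and $C$ have none, which is slicker than your separate torsion/contraction arguments. For $\Hom(V,C)$ the paper takes a genuinely different route: rather than your Siebert decomposition plus eigenvalue comparison, it analyzes the closure $H$ of the image as a connected contractable LCA group, uses the maximal-compact-by-$\R^k$ structure of connected LCA groups, and observes that $H/W\simeq\R^k$ is both distal (as a quotient of $V$) and contractable, hence zero, so $H$ is compact contractable, hence zero. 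Both arguments are short; yours is arguably more elementary in that it avoids the general structure theory of connected LCA groups.
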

\begin{proof}
It is no restriction to assume that $C$ is contractable.

$\Hom(V,C)=0$. Let $f:V\to C$ be a homomorphism, and $H$ the closure of the image; it is also contractable. Also, $H$ is connected. So, by general structure of LCA groups, it has a maximal compact subgroup $W$ and the quotient $H/W$ is isomorphic to $\R^k$ for some $k$. Then $H/W$ is a quotient module of $V$ and hence is distal as well; since it is contractable we deduce $H/W=\{0\}$, so $H=W$ is compact. Also, $W$ is compact and contractable, hence trivial. So $H=\{0\}$.
  
$\Hom(V,E)=0$. This is clear since $V$ is connected and amorphic implies totally disconnected.

$\Hom(C,V)=0$, $\Hom(C,E)=0$. This is clear since no $A$-orbit in $V$ or $E$ accumulates at zero (for $V$, this is Corollary \ref{ccogu}).

$\Hom(E,V)=0$, $\Hom(E,C)=0$. This is clear since $V$ and $C$ have no nonzero compact submodule, and $E$ is the union of its compact submodules.
\end{proof}

\subsection{Amorphic modules}

\begin{lem}\label{amorphicsub}
Every closed submodule or quotient of an amorphic module is amorphic.
\end{lem}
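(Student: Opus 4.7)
The plan is to use the alternative formulation given with the definition of ``amorphic'' in the introduction: amorphic means profinite-by-(discrete locally finite) as an $A$-module. So I would write an amorphic $M$ as an extension $0\to K\to M\to D\to 0$ with $K$ a profinite $A$-module and $D$ a discrete locally finite $A$-module, and then exhibit an analogous extension for any closed submodule or quotient of $M$.

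For a closed submodule $N\subseteq M$, I would use the exact sequence $0\to N\cap K\to N\to N/(N\cap K)\to 0$. Its first term is a closed $A$-submodule of $K$, and such a submodule of a profinite $A$-module is again profinite, since the trace on $N\cap K$ of a neighborhood basis of $0$ in $K$ by $A$-open submodules of finite index is again such a basis in $N\cap K$ (with finite quotients embedding into those in $K$). Its third term embeds into $M/K=D$, and any $A$-submodule of a discrete locally finite $A$-module is discrete locally finite, because a finitely $A$-generated subsubmodule is finitely $A$-generated in the ambient module and hence finite. So $N$ is profinite-by-(discrete locally finite), i.e., amorphic.

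For a quotient $M/N$ with $N$ closed, I would use the extension $0\to (K+N)/N\to M/N\to M/(K+N)\to 0$, noting first that $K+N$ is closed in $M$ because $K$ is compact and $N$ is closed. Its first term is topologically isomorphic to $K/(K\cap N)$ (a continuous bijection from a compact group onto a Hausdorff one is a homeomorphism), a quotient of the profinite $A$-module $K$ and hence profinite; its third term is a quotient of $D$ and hence discrete locally finite. Alternatively, since Pontryagin duality exchanges profinite and discrete locally finite $A$-modules, the class of amorphic modules is self-dual, and the quotient case can be deduced formally from the submodule case. I do not anticipate any substantive obstacle here; the only care needed is the routine verification that the module-theoretic notions of ``profinite'' and ``discrete locally finite'' pass to closed submodules and to quotients, respectively.
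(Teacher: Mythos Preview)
Your proof is correct and follows essentially the same approach as the paper: both exploit the profinite-by-(discrete locally finite) description, intersect with the compact open (profinite) submodule for the submodule case, and invoke Pontryagin self-duality of amorphic modules to handle quotients. Your additional direct argument for quotients via $(K+N)/N$ is a harmless bonus not present in the paper.
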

\begin{proof}
Since being amorphic is invariant under Pontryagin duality, it is enough to consider submodules. Namely, let $M$ be amorphic, $\Omega$ a compact open submodule, $N$ a closed submdule. Since $\Omega$ is profinite, so is $N\cap\Omega$, and the discrete quotient $N/(N\cap\Omega)$ being a submodule of the locally finite discrete module $M/\Omega$, it is locally finite as well. Hence $N$ is amorphic.
\end{proof}

\subsection{Facts on LCA groups}

For $M$ an LCA-group, we can view it as module over the trivial group. In particular, $\Ell(M)$ is just the union of all compact subgroups. When $M$ is an $A$-module, we can also view it as plain group, and we can thus write $\Ell_A(M)$ and $\Ell_{\{0\}}(M)$ to specify the module structure. We always have $\Ell_A(M)\subseteq\Ell_{\{0\}}(M)$. 

Similarly, with self-explanatory notation, we have $\Ell^\flat_{\{0\}}(M)\subseteq\Ell^\flat_A(M)$ and $\Omega^\sharp_A(M)\subseteq\Omega^\sharp_{\{0\}}(M)$. In particular, every sheer $A$-module is sheer as LCA group.


\begin{lem}\label{mctm1}
Let $M$ be an LCA-group. The natural homomorphism $M^\circ\times \Ell_{\{0\}}(M)\to M$ defined by $(x,y)\mapsto x-y$, is proper with open image. Its kernel is the diagonal of $\Ell(M^\circ)$. Its discrete cokernel is a torsion-free abelian group, and is the largest such quotient for $M$.
\end{lem}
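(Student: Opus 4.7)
The plan is to use the structure theorem for LCA groups to write $M$ as a topological direct sum $\R^n \oplus M_0$, where $M_0$ admits a compact open subgroup $K$. This splitting is respected by the map $\phi\colon(x,y)\mapsto x-y$: one has $M^\circ=\R^n\oplus M_0^\circ$ with $M_0^\circ\subseteq K$ compact connected, and $\Ell_{\{0\}}(M)=\{0\}\oplus\Ell_{\{0\}}(M_0)$ since $\R^n$ has no nontrivial compact subgroup. Thus $\phi$ becomes the product of $\mathrm{id}_{\R^n}$ and the analogous map $\phi_{M_0}$, and all four assertions reduce to the case where $M$ itself has a compact open subgroup, so that $M^\circ$ is compact. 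I work in this reduced setting below.

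The kernel and image are then immediate. The image contains the open subgroup $K+M^\circ$, hence is open. A pair $(x,y)$ lies in $\ker\phi$ precisely when $x=y\in M^\circ\cap\Ell_{\{0\}}(M)$; this intersection equals $\Ell(M^\circ)$, because a compact subgroup of $M$ contained in the closed subgroup $M^\circ$ is the same as a compact subgroup of $M^\circ$, and vice versa.

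The main obstacle is properness. The key preliminary is that $\Ell_{\{0\}}(M)$ is closed in $M$, which Theorem \ref{eclo} does not directly provide since $M$ need not be compactly generated. To establish it, given $x\in\overline{\Ell_{\{0\}}(M)}$, I pick an open compactly generated subgroup $U\subseteq M$ containing $x$: then $U\cap\Ell_{\{0\}}(M)=\Ell_{\{0\}}(U)$ is closed in $U$ by Theorem \ref{eclo} applied with trivial $A$, and any net in $\Ell_{\{0\}}(M)$ converging to $x$ eventually lies in $U$, so $x\in\Ell_{\{0\}}(U)\subseteq\Ell_{\{0\}}(M)$. Granted this, for $L\subseteq M$ compact and $(x,y)\in\phi^{-1}(L)$, we have $y=x-(x-y)\in M^\circ-L$, which is compact since $M^\circ$ is; thus $y$ lies in the closed subset $\Ell_{\{0\}}(M)\cap(M^\circ-L)$ of a compact set, hence is confined to a compact set. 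Combined with $x\in M^\circ$ compact, this shows $\phi^{-1}(L)$ is a closed subset of a compact product, and therefore compact.

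For the cokernel, note that in the reduced case $M^\circ\subseteq\Ell_{\{0\}}(M)$, so $M^\circ+\Ell_{\{0\}}(M)=\Ell_{\{0\}}(M)$, and the cokernel $M/\Ell_{\{0\}}(M)$ is discrete (since $\Ell_{\{0\}}(M)$ contains the compact open subgroup $K$ and is thus open). For torsion-freeness, suppose $m\in M$ satisfies $km\in\Ell_{\{0\}}(M)$; then $km$ lies in some compact subgroup $C$, and $\langle m\rangle+C$ projects onto a cyclic group of order dividing $k$ in $M/C$, making it a finite extension of the compact group $C$, hence compact, which places $m$ in $\Ell_{\{0\}}(M)$. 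Finally, for the universal property, a continuous homomorphism $f\colon M\to T$ into a discrete torsion-free abelian group kills $M^\circ$ (as a connected subgroup of a discrete group) and kills $\Ell_{\{0\}}(M)$ (each element maps into a finite subgroup of $T$, which is zero by torsion-freeness), and so $f$ factors uniquely through $M/(M^\circ+\Ell_{\{0\}}(M))$.
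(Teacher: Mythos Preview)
Your proof is correct and takes a genuinely different route from the paper's. You invoke the LCA structure theorem $M\cong\R^n\oplus M_0$ to split off the Euclidean factor and reduce to the case where $M^\circ$ is compact; from there each assertion is handled by a short explicit computation. The paper instead argues more structurally: openness of the image is shown by inheritance from the Lie case (via ``compact-by-Lie''), properness is deduced from the general principle that a continuous homomorphism between $\sigma$-compact LC groups with compact kernel and closed image is proper, and torsion-freeness of the cokernel is proved by contradiction, pulling back a putative torsion element to a connected-by-elliptic subgroup and appealing to divisibility of $M^\circ$ in the Lie setting. Your approach is more hands-on and arguably more transparent; the paper's avoids fixing a splitting and stays closer to intrinsic properties.

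One small remark: your invocation of Theorem \ref{eclo} to get $\Ell_{\{0\}}(M)$ closed is slightly mis-aimed. That theorem assumes the \emph{acting group} $A$ is compactly generated, not the module; with $A=\{0\}$ this hypothesis is vacuous, so the theorem already gives closedness of $\Ell_{\{0\}}(M)$ for every LCA group $M$ with no localisation to $U$ needed. In fact, in your reduced setting it is even simpler: since the compact open subgroup $K$ lies in $\Ell_{\{0\}}(M)$, the latter is open and hence closed. This does not affect the correctness of your argument, only its economy.
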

\begin{proof}
Let us first check that it has an open image, i.e., that $M^\circ+\Ell(M)$ is open. This property is trivial when $M$ is connected. It is inherited from open subgroups, and from quotient by compact subgroups. Since $M$ is compact-by-Lie, it is therefore inherited by $M$. 

For properness, first assume that $M$ is $\sigma$-compact. The kernel is compact (namely the diagonal of $\Ell(M^\circ)$) and its image is open by the previous paragraph, hence is closed. So properness follows. In general, properness follows from being proper on every $\sigma$-compact open subgroup.

The second assertion (on the kernel) is straightforward.

Let us now prove the third assertion; we can suppose that $\Ell(M^\circ)=\{0\}$. First assume that $M/M^\circ$ is elliptic, in which case we have to show $M=M^\circ+\Ell(M)$. Modding out by a compact subgroup, we can suppose that $M$ is Lie. Then the divisible group $M^\circ$ has a complement in $M$ as abstract group. This complement is a discrete subgroup, hence is a complement, and is actually locally finite, thus equals $\Ell(M)$. 

Now, let us prove the second assertion in general. If by contradiction $M/M^\circ \Ell(M)$ is not torsion-free, it has a nontrivial finite subgroup. Let $N$ be its inverse in $M$. Then $N/N^\circ \Ell(N)$ is finite and nontrivial. So $N$ is connected-by-elliptic, implying, by the previous paragraph, $N=N^\circ \Ell(N)$, contradiction.

Finally, clearly every homomorphism to a torsion-free discrete group is trivial on both $M^\circ$ and $\Ell(M)$, whence the ``largest" assertion.
\end{proof}


\begin{cor}\label{semidec1}
Let $M$ be a sheer LCA-group (that is, $\Ell(M^\circ)=\{0\}$ and $M$ has no nonzero torsion-free discrete quotient). Then $M$ is the topological direct product of the Euclidean group $M^{\circ}$ and the totally disconnected group $\Ell_{\{0\}}(M)$. In addition, this is the only decomposition of $M$ as topological direct product of connected and totally disconnected LCA groups. In particular, whenever $M$ is also an $A$-module, this is also a decomposition as $A$-module.\qed
\end{cor}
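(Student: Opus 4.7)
The plan is to derive everything directly from Lemma \ref{mctm1}. Under the two sheer hypotheses, the kernel of the map $\varphi\colon M^\circ \times \Ell_{\{0\}}(M) \to M$, $(x,y)\mapsto x-y$, which is the diagonal copy of $\Ell(M^\circ)$, collapses to zero; and the image $M^\circ + \Ell_{\{0\}}(M)$ must coincide with $M$, since the corresponding discrete torsion-free quotient, identified in Lemma \ref{mctm1} as the largest one, vanishes. Lemma \ref{mctm1} says $\varphi$ is proper with open image, so a proper continuous bijection is automatically a homeomorphism, giving the topological decomposition $M \cong M^\circ \times \Ell_{\{0\}}(M)$. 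The factor $M^\circ$ is a connected LCA group with $\Ell(M^\circ)=\{0\}$, hence is isomorphic to $\R^d$ by the standard structure theorem. The factor $\Ell_{\{0\}}(M)$ is totally disconnected, because any connected subgroup it contains lies in $M^\circ \cap \Ell_{\{0\}}(M) \subseteq \Ell(M^\circ) = \{0\}$.

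For uniqueness, I would argue as follows. Suppose $M = C \oplus T$ topologically, with $C$ connected and $T$ totally disconnected. Connectedness of $C$ forces $C \subseteq M^\circ$; conversely, the quotient $M \to M/C \cong T$ carries the connected $M^\circ$ to a connected subgroup of the totally disconnected $T$, hence to zero, so $M^\circ \subseteq C$. Thus $C = M^\circ$. For the other factor, $\Ell_{\{0\}}(M) = \Ell(C) \oplus \Ell(T) = \Ell(T)$ since $\Ell(C)\subseteq \Ell(M^\circ) = \{0\}$, while the existence part already gave $M = M^\circ + \Ell_{\{0\}}(M)$; projecting this identity onto the $T$-coordinate yields $T = \Ell(T) = \Ell_{\{0\}}(M)$. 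This pins down both factors uniquely.

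The $A$-module statement is then formal: both $M^\circ$ and $\Ell_{\{0\}}(M)$ are defined purely in terms of the underlying LCA group $M$, so they are preserved by every continuous automorphism of $M$, and in particular by the action of each $a\in A$. I expect no serious obstacle here; all of the substantive work has already been absorbed into Lemma \ref{mctm1}. The only moderately delicate point is the step from continuous bijection to homeomorphism, and this is handed to us by the assertion in Lemma \ref{mctm1} that $\varphi$ is proper.
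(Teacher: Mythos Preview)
Your proof is correct and follows essentially the same route as the paper: both deduce the decomposition directly from Lemma \ref{mctm1}, using $\Ell(M^\circ)=\{0\}$ for injectivity and the vanishing of the torsion-free discrete quotient for surjectivity, then argue uniqueness via $C=M^\circ$ and $T=\Ell_{\{0\}}(M)$. Your uniqueness argument differs only cosmetically (you use $\Ell_{\{0\}}(M)=\Ell(C)\oplus\Ell(T)$ and project, whereas the paper observes that $T$, as a totally disconnected upper-sheer quotient of $M$, satisfies $T=\Ell_{\{0\}}(T)$); both reach $T=\Ell_{\{0\}}(M)$ by the same mechanism.
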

\begin{proof}
This is a particular case of Lemma \ref{mctm1}: in this case the map $M^\circ\times \Ell_{\{0\}}(M)\to M$ of Lemma \ref{mctm1} is an isomorphism: injectivity holding because the kernel $\Ell(M^\circ)$ is trivial, surjectivity holding because $M$ is upper-sheer. If $M=C\times T$ is another decomposition ($C$ connected, $T$ totally disconnected), clearly $C=M^\circ$. Also $T$ being totally disconnected and upper-sheer, we have $T=\Ell_{\{0\}}(T)$, so $T\subseteq \Ell_{\{0\}}(M)$ and then $M=C+T$ ensures $T=\Ell_{\{0\}}(M)$.
\end{proof}

\subsection{Cocompact subgroups}

The following is classical, see for instance \cite[Prop.\ 4.C.11(2)]{CH}.

\begin{prop}\label{cgcoco}
Let $G$ be an LC group and $H$ a closed cocompact subgroup. Then $G$ is compactly generated if and only $H$ is compactly generated.\qed
\end{prop}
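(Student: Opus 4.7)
The plan is to handle both implications using a fixed compact ``fundamental domain'' for $G/H$: since $H$ is closed and cocompact, there is a compact set $C \subseteq G$ with $1 \in C$ and $HC = G$. Both directions are built from this choice together with a compact generating set on the relevant side.

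The easy direction is ``$H$ compactly generated $\Rightarrow$ $G$ compactly generated''. If $S$ is a compact symmetric generating set of $H$, then $S \cup C$ is compact, and the subgroup it generates contains $H = \langle S \rangle$ together with $C$, hence contains $HC = G$.

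For the reverse direction, I would start from a compact symmetric generating set $U \ni 1$ of $G$ and take as candidate compact generating set for $H$ the intersection $T := H \cap C U C^{-1}$, which is compact as the intersection of the closed set $H$ with the compact set $C U C^{-1}$. To show $T$ generates $H$, the classical telescoping trick applies: given $h \in H$, write $h = u_1 \cdots u_n$ with $u_i \in U$; form partial products $g_i = u_1 \cdots u_i$ (so $g_0 = 1$, $g_n = h$); use $HC = G$ to decompose each as $g_i = h_i c_i$ with $h_i \in H$ and $c_i \in C$, choosing $c_0 = c_n = 1$ (hence $h_0 = 1$, $h_n = h$). Then each consecutive ratio satisfies $h_i^{-1} h_{i+1} = c_i \, u_{i+1} \, c_{i+1}^{-1} \in H \cap C U C^{-1} = T$, and the identity $h = (h_0^{-1} h_1)(h_1^{-1} h_2) \cdots (h_{n-1}^{-1} h_n)$ expresses $h$ as a word of length $n$ in $T$.

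The main (and only mild) obstacle is the bookkeeping on coset representatives in the reverse direction: one has to pick the boundary representatives $c_0, c_n$ consistently so that the telescoping genuinely begins at $1$ and ends at $h$, while simultaneously ensuring every intermediate ratio lands in the specific compact symmetric set $C U C^{-1}$. Once this is set up, everything is formal, which is presumably why the paper treats Proposition \ref{cgcoco} as classical and cites \cite{CH} rather than reproving it.
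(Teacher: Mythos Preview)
Your argument is correct and is exactly the classical proof; the paper itself does not give a proof but simply marks the proposition with \qed and cites \cite[Prop.\ 4.C.11(2)]{CH}, so there is nothing further to compare.
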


Although we only the next proposition in the abelian case, we write it, for reference, in more generality.

\begin{prop}\label{fgco}
Let $G$ be a compactly generated LC group. Then $G$ has a finitely generated (f.g.) subgroup with cocompact closure.
\end{prop}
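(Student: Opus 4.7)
The strategy is to produce, by a single compactness argument, a finite set $F\subseteq G$ whose generated subgroup $H=\langle F\rangle$ already satisfies $HS=G$ for some compact symmetric generating neighborhood $S$ of $1$. Once this is established, $H$ is a finitely generated cocompact subgroup, and hence its closure $\overline H$ is also cocompact, as required.

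First I would fix a compact symmetric $S$ with $1\in S^\circ$ and $\langle S\rangle=G$; such a set exists for any compactly generated locally compact group. The key observation is that the product $SS$ is compact, and that the family $\{gS^\circ:g\in G\}$ is an open cover of $G$ (each $g$ lies in $gS^\circ$ since $1\in S^\circ$). Restricting this cover to the compact set $SS$ and extracting a finite subcover yields a finite set $F\subseteq G$ with $SS\subseteq FS^\circ\subseteq FS$.

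The second step is a short combinatorial argument. Setting $H=\langle F\rangle$, we have $HF=H$ because $F\subseteq H$, and therefore $HSS\subseteq HFS=HS$. A one-line induction on $k$ then gives $HS^k\subseteq HS$ for every $k\geq 1$. Combined with $G=\bigcup_{k\geq 1}S^k$, this yields $G=HG=\bigcup_k HS^k\subseteq HS$, so $G=HS$, as wanted.

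I do not anticipate any serious obstacle: the entire proof is a direct compactness argument, and in particular no appeal to Kakutani--Kodaira, structure theory (Gleason--Yamabe, Cayley--Abels), or a connected/totally-disconnected dichotomy is needed, even in the non-second-countable setting; the only mild subtlety is the existence of a compact symmetric generating set containing a neighborhood of~$1$, which is standard.
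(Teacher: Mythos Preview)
Your proof is correct and takes a genuinely different, more elementary route than the paper's. The paper proceeds by structure theory: it first handles connected Lie groups (which have finitely generated dense subgroups), then totally disconnected groups via the Cayley--Abels graph construction, then combines these when $\Ell(G^\circ)$ is trivial, and finally mods out by the compact normal subgroup $\Ell(G^\circ)$ in the general case. You instead give a single compactness argument valid uniformly for all compactly generated LC groups: cover the compact set $SS$ by finitely many translates $fS^\circ$, and the finite set $F$ of translating elements generates a subgroup $H$ with $HS=G$, whence $\overline{H}$ is cocompact. Your argument is shorter and avoids all structure theory; the paper's approach, by contrast, is more constructive in each structural piece (for instance, in the totally disconnected case it produces a subgroup acting transitively on the Cayley--Abels graph), which can be useful when one needs that extra information.
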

\begin{proof}
1) If $G$ is connected and Lie, then it has a f.g.\ dense subgroup.

2) If $G$ is totally disconnected, let $X$ be a Cayley-Abels for $G$, that is, a nonempty connected graph of finite valency with a vertex-transitive action of $G$ with compact vertex stabilizers. It exists \cite[Prop.\ 2.E.9]{CH}. Fix a vertex $x_0$ of $X$ and let $S$ be a finite subset of $G$ such that the neighbors of $x_0$ are the $sx_0$ for $s\in S$. Let $\Gamma$ be the subgroup generated by $S$. Then $\Gamma$ acts transitively on $X$. Indeed, if $x\in\Gamma x_0$, writing $x=\gamma x_0$, each neighbor of $x$ has the form $\gamma s x_0$ for some $s\in S$, and hence also belongs to $\Gamma x_0$. If $K$ is the stabilizer of $x_0$, we then have $G=\Gamma K$, so $\overline{\Gamma}$ is cocompact. 

3) If $\Ell(G^\circ)$ is trivial, then $G^\circ$ is Lie and hence has a dense f.g.\ subgroup, and since $G/G^\circ$ has f.g.\ subgroup with cocompact closure, so does $G$.

4) In general, the previous case applies to $G/\Ell(G^\circ)$. Lifting a f.g.\ subgroup of the latter, we obtain a f.g.\ subgroup $\Gamma$ such that $\overline{\Gamma}\Ell(G^\circ)$ is cocompact. Thus $\overline{\Gamma}$ is cocompact.
\end{proof}

\section{One automorphism}

We recall that an automorphism $\phi$ of a locally compact group $G$ is {\bf contracting} if $\lim_{n\to\infty}\phi(x)=1$ for every $x\in G$. In this case, the convergence is uniform on compact subsets of $G$ \cite[Lemma 1.4(iv)]{Si}. The automorphism is said to be {\bf compacting} if there exists a {\bf compacting subset}, that is, a compact subset $\Omega\subseteq G$ with $\alpha(\Omega)\subseteq\Omega$ and $G=\bigcup_{n\ge 0}\alpha^{-n}(\Omega)$. Note that such $\Omega$ has nonempty interior (by Baire's theorem). If $M$ is totally disconnected, $\Omega$ can be chosen to be a compact open subgroup. If $\alpha$ is compacting with compacting subset $\Omega$, the intersection $\bigcup_{n\ge 0}\alpha^n(\Omega)$ is a compact subgroup, called limit subgroup of $\alpha$; it is independent of the choice of $\Omega$.

Observe that $\alpha$ is contracting if and only every compact neighborhood of $1$ is a compacting subset, if and only $\alpha$ is compacting with limit subgroup reduced to $\{1\}$.

For $\alpha\in A$, we say that $A$ is $\alpha$-contractive (resp.\ $\alpha$-compactive) if $\alpha$ acts on $A$ as a contracting (resp.\ compacting) automorphism. If $A$ is $\alpha$-compactive with limit subgroup $L$, then $L$ is a compact $A$-submodule and $M/L$ is $\alpha$-contractive.

We say that $M$ is {\bf $\alpha$-contractable} if there exists $\alpha\in A$ such that $M$ is $\alpha$-contractive.



\subsection{Willis' theory}\label{wilt1}

Let $G$ be a totally disconnected locally compact group and $\alpha$ a topological automorphism of $G$.

If $V$ is a compact open subgroup of $G$, define $V^\pm=\bigcap_{n\ge 0}\alpha^{\pm n}(V)$; these are compact subgroups of $V$; note that $\alpha^{\pm 1}(V^\pm)\supset V^\pm$.


\begin{thm}[Willis, Theorem 1 in \cite{Wil}]\label{t_tidy1}
For every $G$ and $\alpha$ there exists a compact open subgroup $V$ such that 
\begin{enumerate}
\item $V=V^+V^-=V^-V^+$;
\item the subgroups $V^{++}=\bigcup_{n\ge 0}\alpha^{n}(V^+)$ and $V^{--}=\bigcup_{n\ge 0}\alpha^{-n}(V^-)$ are closed subgroups of $G$. In particular, $V^{++}$ is $\alpha^{-1}$-compactive and $V^{--}$ is $\alpha$-compactive.
\end{enumerate}
\end{thm}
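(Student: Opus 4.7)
The plan is to adapt Willis's original tidying procedure, producing $V$ in two successive steps: first achieving the algebraic decomposition (1) (``tidy above''), then modifying $V$ to secure the closedness in (2) (``tidy below''). Throughout, by van Dantzig's theorem, compact open subgroups of $G$ form a neighborhood basis of $1$, and all arguments will take place in this filter.

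First I would fix an arbitrary compact open subgroup $U_0$ of $G$ and consider the integer-valued quantity $s(U)=[\alpha(U):\alpha(U)\cap U]$ for compact open $U$. Since $s$ takes values in the positive integers, it attains a minimum $s(\alpha)$ on the filter of compact open subgroups contained in $U_0$. To realize this minimum concretely, I would iterate the ``averaging'' operation $U\mapsto\bigcap_{k=0}^{n}\alpha^{k}(U)$ for increasing $n$, and show that the value of $s$ is non-increasing along this process and hence eventually stabilizes to $s(\alpha)$; write $V_0$ for the resulting subgroup. A symmetric computation using $\alpha^{-1}$ yields analogous stabilization on the negative side.

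For such a minimizing subgroup $V_0$, a direct index-counting argument (using the minimality of $s$ to rule out strict inclusions) shows that $\alpha(V_0^+)\supseteq V_0^+$ and $\alpha^{-1}(V_0^-)\supseteq V_0^-$, and that the multiplication map $V_0^+\times V_0^-\to V_0$ is surjective, so $V_0=V_0^+V_0^-=V_0^-V_0^+$, giving property (1). This ``tidy above'' step is essentially linear algebra over the finite quotients $V_0/(V_0\cap \alpha^{n}(V_0))$ and is the straightforward half.

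For property (2), I would analyze the set $T$ of elements $g\in G$ whose two-sided orbit $\{\alpha^{n}(g):n\in\Z\}$ visits $V_0$ infinitely often in both directions; these ``returning elements'' form the precise obstruction to $V_0^{++}$ being closed, because the closure of $V_0^{++}$ minus $V_0^{++}$ itself is built from such returns. One shows that the closure of $T$ is a compact $\alpha$-invariant subgroup, and that replacing $V_0$ by $V:=V_0\cdot T$ (or by a carefully chosen subgroup containing $T$) preserves the tidy-above property (1) while making $V^{++}$ closed; closedness of $V^{--}$ then follows by applying the same argument to $\alpha^{-1}$, whose set of returning elements coincides with $T$.

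The main obstacle is this second step: both producing the subgroup $T$ (in particular showing $\overline{T}$ is compact) and verifying that the modification $V_0\rightsquigarrow V$ does not destroy tidiness above. This requires delicate control of how forward and backward orbits of $\alpha$ interact with the filter of compact open subgroups, and is the technical heart of the Willis tidying theorem; the rest of the argument is bookkeeping around the scale function $s(\alpha)$.
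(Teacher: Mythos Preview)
The paper does not prove this theorem: it is quoted as Willis's result from \cite{Wil}, and the only additional remark is that Willis stated it for conjugation automorphisms, whereas the general case follows by passing to the semidirect product $G\rtimes\langle\alpha\rangle$ and applying the original statement there. So there is nothing to compare your argument to in this paper.

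Your sketch is a reasonable high-level outline of Willis's original two-step tidying procedure (tidy above, then tidy below), but as written it is only a plan, not a proof. Several of the steps you label as ``direct'' or ``bookkeeping'' are in fact the substance of the argument: for instance, the claim that minimizing $s(U)$ along $U\mapsto\bigcap_{k=0}^n\alpha^k(U)$ already forces $V_0=V_0^+V_0^-$ is not straightforward index-counting (Willis needs a careful inductive argument to show every $v\in V_0$ factors), and in the tidy-below step you assert that $\overline{T}$ is compact and that multiplying by it preserves tidiness above, but both of these are precisely the delicate points. If you intend to actually supply a proof rather than cite \cite{Wil}, these steps need to be filled in; otherwise, following the paper, a citation together with the semidirect-product reduction suffices.
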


The theorem is stated only for conjugation automorphisms in \cite{Wil} but, as observed in subsequent papers, the general statement follows by considering the semidirect product $G\rtimes\langle\alpha\rangle$.


\subsection{Closedness of the set of contracted points}

Now and in the sequel, when we consider $\Z[\alpha^{\pm 1}]$-module, it is understood that the acting group is the infinite cyclic group $A=\langle\alpha\rangle$.

Let $M$ be a locally compact $\Z[\alpha^{\pm 1}]$-module. Denote by $\Con_\alpha(M)$ the set of elements $x$ in $M$ such that $\lim_{n\to +\infty}\alpha^n(x)=0$. This is a possibly non-closed submodule (see Remark \ref{nonclosed}).

\begin{prop}\label{uac}
If $M$ is lower-sheer as $\Z[\alpha^{\pm 1}]$-module (i.e. $\Ell^\sharp_{\Z[\alpha^{\pm 1}]}(M)=\{0\}$), then $\Con_\alpha(M)$ is closed, and is an $\alpha$-contractive module.
\end{prop}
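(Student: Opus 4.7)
\emph{Reduction.} The plan is to replace $M$ by $N := \overline{\Con_\alpha(M)}$; it is a closed $\alpha$-submodule, hence lower-sheer (by an earlier observation in this section), and $\Con_\alpha(N) = \Con_\alpha(M)\cap N = \Con_\alpha(M)$. Thus it suffices to show: if $\Con_\alpha(M)$ is dense in the lower-sheer module $M$, then $\alpha$ is contracting on all of $M$. From now on I assume this density. The identity component $M^\circ$ has a maximal compact subgroup which is connected, hence has no nonzero finite quotient, hence is a compact parafinite $\alpha$-submodule; by $\Ell^\flat(M) = \{0\}$ it is trivial, so $M^\circ\cong\R^d$. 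On $\R^d$ the contraction subgroup is the stable eigensubspace, which is closed, and density forces it to be all of $\R^d$; so $\alpha$ is a linear contraction there. One then reduces to the totally disconnected case.

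\emph{Key Lemma: $\Con_\alpha(L) = 0$ for every compact $\alpha$-submodule $L\subseteq M$.} I would prove this by transfinite induction. Given $k\in\Con_\alpha(L)$, set $L_0 := \overline{\{\alpha^n k:n\in\Z\}}$, a compact $\alpha$-invariant subgroup of $L$. Define a descending transfinite chain $(L_\lambda)$: at successors $\lambda+1$, pick a nonzero finite $\alpha$-quotient $\pi:L_\lambda\to F_\lambda$ (which exists because $L_\lambda$ is a compact $\alpha$-submodule of the lower-sheer $M$, hence lower-sheer as an $\alpha$-module, hence not parafinite if nonzero), and set $L_{\lambda+1} := \ker(\pi)$; at limits, take intersections. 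For every finite $\alpha$-module $F$ one has $\Con_\alpha(F) = 0$ because $\alpha|_F$ has finite order; so $\pi(k) = \lim_n \alpha^n\pi(k) = 0$ in $F_\lambda$, giving $k\in L_{\lambda+1}$. The chain strictly decreases as long as it is nonzero, so by compactness of $L_0$ it must terminate at $0$, whence $k = 0$.

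\emph{Willis and finale.} Apply Theorem~\ref{t_tidy1} to $M$ with $\alpha$ and pick a tidy compact open $V$. Set $K := V^+\cap V^- = \bigcap_{n\in\Z}\alpha^n(V)$, a compact $\alpha$-submodule; then $V^{--}$ is the closed $\alpha$-compactive submodule with limit $K$. Any $x\in\Con_\alpha(M)$ satisfies $\alpha^n(x)\in V$ eventually, hence $\alpha^{n_0}(x)\in V^-$ for some $n_0$, whence $x\in V^{--}$; combined with density and closedness of $V^{--}$, this gives $V^{--} = M$. By the Key Lemma, $K\cap\Con_\alpha(M) = 0$, so the quotient map $q:M\to M/K$ restricts to an injection of $\Con_\alpha(M)$ into the $\alpha$-contractive module $M/K$. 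The main obstacle is promoting this to surjectivity, equivalently, producing an $\alpha$-equivariant section of $q$. Given $\bar y\in M/K$ and any lift $y\in M$, write $\alpha^n(y) = k_n+r_n$ with $k_n\in K$ and $r_n\to 0$; the relation $k_{n+1}-\alpha k_n = \alpha r_n - r_{n+1}\to 0$ is an ``approximate cocycle'' in $K$. Using the residually-finite structure of $K$ furnished by the Key Lemma---in each finite $\alpha$-quotient $F$ of $K$ the relation becomes exact for $n$ large, so that $\alpha^{-n}k_n$ stabilizes in $F$---a compatibility/diagonal argument produces $l\in K$ with $\alpha^n l - k_n\to 0$ in $K$, so $y-l\in\Con_\alpha(M)$. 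Surjectivity combined with trivial kernel yields $M = \Con_\alpha(M)\oplus K$ as a topological direct sum; density of $\Con_\alpha(M)$ in $M$ (with $K$ compact and closed) forces $K = 0$, so $M = \Con_\alpha(M)$ is $\alpha$-contractive.
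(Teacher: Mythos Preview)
Your overall strategy---reduce to $\Con_\alpha(M)$ dense, invoke Willis to get $V^{--}=M$, then attack the limit group $K$---matches the paper's, but your endgame diverges substantially and contains a genuine gap.

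Your Key Lemma ($\Con_\alpha(L)=0$ for every compact $\alpha$-submodule $L$) is correct, though the justification needs tightening: $L_0$ should be the closed \emph{subgroup} generated by the orbit, and the transfinite chain terminates by cardinality, not ``by compactness''. More importantly, the Key Lemma does \emph{not} furnish the residual finiteness of $K$ that your diagonal argument needs. What you need is that the intersection of all open $\alpha$-submodules of $K$ is trivial; this is true (dually: an upper-sheer discrete $\Z[\alpha^{\pm1}]$-module is locally finite, which one proves by showing that the sum $L$ of all finite submodules has purely discrete quotient, forcing $L$ to be everything), but it is a separate argument, not a consequence of your transfinite descent, which at limit stages produces submodules that need not be open in $K$.

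The fatal gap is the passage ``surjectivity combined with trivial kernel yields $M=\Con_\alpha(M)\oplus K$ as a topological direct sum''. You have only established an algebraic direct sum: the bijection $q|_{\Con_\alpha(M)}:\Con_\alpha(M)\to M/K$ is continuous, but you have no reason to know it is open, since $\Con_\alpha(M)$ is not yet known to be closed (that is precisely the statement under proof), so no open-mapping theorem applies. Without the topological splitting, the sentence ``density forces $K=0$'' has no force: density of $C$ with $C\cap K=0$ and $C+K=M$ does not by itself imply $K=0$.

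The paper sidesteps both issues with a short device you are missing. Rather than splitting, it observes that if $K\neq 0$ then (by lower-sheerness) $K$ has a proper open $\alpha$-submodule; modding out reduces to $K$ finite. Then one finds a compact open $\Omega$ with $\Omega\cap K=0$ and, using that $\alpha$ contracts modulo $K$, a compact open $W\subseteq\Omega$ with $\alpha(W)\subseteq W$. The set $W+(K\smallsetminus\{0\})$ is then open, forward $\alpha$-invariant, and avoids $0$, hence disjoint from $\Con_\alpha(M)$; density of $\Con_\alpha(M)$ forces it to be empty, so $K=0$. This argument uses only density (not lower-sheerness) after the reduction, and involves no splitting or continuity of sections.
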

\begin{proof}

We can assume that $\Con_\alpha(M)$ is dense and we have to show that $M$ is $\alpha$-contractive.

Let $V$ be a compact open subgroup, write $V^-=\bigcap_{n\ge 0}\alpha^{-n}(V)$ (so $\alpha(V^-)\subseteq V^-$) and $V^{--}=\bigcup_{n\ge 0}\alpha^{-n}(V^-)$. For each $x\in \Con_\alpha(M)$, the element $\alpha^n(x)$ belongs to $V$ for $n$ large enough, say $n\ge n_0$. Hence $\alpha^{n_0}(x)\in V^-$, and therefore $x\in V^{--}$. This proves that $\Con_\alpha(M)\subseteq V^{--}$.

Willis' theorem (Theorem \ref{t_tidy1}) ensures that there exists $V$ such that $V^{--}$ is closed; we assume $V$ has this property. Then the density of $\Con_\alpha(M)$ ensures that $V^{--}=M$. Note that by Baire's theorem, this implies that $V^-$ has nonempty interior, hence is open.

Define $F=\bigcap_{n\ge 0}\alpha^n(V^-)$. Then $L$ is a compact submodule and $\alpha$ induces a contracting automorphism of $M/L$. To conclude, it remains to show that $F=\{0\}$.
Since $\Ell^\flat(M)=0$, to show that $F=\{0\}$ it is enough to check that $F$ has no proper open submodule. Otherwise, modding out, we can suppose that $F$ is finite, and let us deduce that $F=\{0\}$.

Let $\Omega$ be a compact open subgroup of $M$ with $\Omega\cap F=\{0\}$. Since $\alpha$ is contracting modulo $F$ there exists a compact open subgroup $K$ of $\Omega$ with $\alpha(K)\subseteq K+F$, and $K$ can be chosen arbitrary small. In particular, $K$ can be chosen to be contained in $\alpha^{-1}(\Omega)$. So $\alpha(K)\subseteq (K+F)\cap \Omega=K$.
Write $F'=F\smallsetminus\{0\}$. Since $\alpha(F')=F'$, we deduce $\alpha(K+F')\subseteq K+F'$. It follows that $K+F'\cap\Con_\alpha(M)=\emptyset$. By density of $\Con_\alpha(M)$, we deduce that the open subset $K+F'$ is empty, and hence $F=\{0\}$.
\end{proof}

\begin{rem}\label{nonclosed}
Without lower-sheerness of $M$, closedness of $\Con_\alpha(M)$ can fail, even when $M$ is totally disconnected or connected. Let $B$ be a nontrivial finite abelian group, and $M=B^\Z$, endowed with the shift automorphism $f\mapsto (n\mapsto f(n+1))$. Then $\Con_\alpha(M)$ is the set of eventually (at $+\infty$) zero sequences, and thus is not closed. Also, the automorphism of the 2-torus $\R^2/\Z^2$ induced by the matrix $\begin{pmatrix}2&1\\1&1\end{pmatrix}$ has a non-closed contraction subgroup. These examples are classical.
\end{rem}

\begin{lem}\label{compactive1}
Let $M$ be a lower-sheer locally compact $\Z[\alpha^{\pm 1}]$-module. Suppose that $M$ is $\alpha$-compactive. Then $M$ is sheer, $M=\Con_\alpha(M)\times \Ell(M)$ and $\Ell(M)$ is compact.
\end{lem}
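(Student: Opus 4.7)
My plan is to establish four assertions in order, from which the lemma follows: (1) $L := \bigcap_{n\ge 0}\alpha^n(\Omega) = \Ell(M)$ is compact; (2) $M$ is sheer; (3) $C := \Con_\alpha(M)$ is closed, $\alpha$-contractive, and $C\cap L=\{0\}$; and (4) $C+L=M$. Once (3) and (4) are in hand, the natural continuous bijection $C\times L\to M$ is a homeomorphism by the open-mapping theorem for LCA groups, giving the topological direct product.

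For (1), any compact $A$-submodule $K\subseteq M$ is $\alpha$-invariant (both $\alpha$ and $\alpha^{-1}$ preserve it), so from $K\subseteq\alpha^{-m}\Omega$ (valid for some $m$ by compactness of $K$ together with $M=\bigcup_n\alpha^{-n}\Omega$) we obtain $K=\alpha^m(K)\subseteq\Omega$ and iteratively $K\subseteq\alpha^n\Omega$ for every $n\ge 0$; thus $K\subseteq L$, while $L$ itself is a compact $A$-submodule, so $\Ell(M)=L$. For (2), lower-sheerness is given, and upper-sheerness follows because any continuous $A$-homomorphism $f\colon M\to D$ into a purely discrete $D$ has $f(\Omega)$ compact and discrete, hence finite, with $\alpha$ acting bijectively on this finite set; therefore $f(M)=\bigcup_n\alpha^{-n}f(\Omega)=f(\Omega)$ is finite, and being a finite submodule of a purely discrete module forces $f=0$. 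For (3), Proposition \ref{uac} gives that $C$ is closed and $\alpha$-contractive; then $K:=C\cap L$ is a compact $A$-submodule of $C$ on which $\alpha^n\to 0$ uniformly (by uniform convergence on compacta in the contractive $C$), and a Pontryagin-dual character argument (for $\chi\in\widehat K$, the relation $\chi\circ\alpha^n\to 1$ uniformly on $K$ forces $\chi\circ\alpha^n=1$ eventually in the discrete group $\widehat K$, whence $\chi$ vanishes on $\alpha^n(K)=K$, i.e.\ $\chi=1$) yields $\widehat K=0$ and $K=\{0\}$.

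The main obstacle is (4). My plan is to construct, for each $x\in M$, an element $\ell(x)\in L$ with $x-\ell(x)\in C$. The quotient $M/L$ is $\alpha$-contractive, so $\alpha^n\bar x\to 0$ there; lifting this to a specific $\ell\in L$ with $\alpha^n(x-\ell)\to 0$ in $M$ is the delicate point. On the connected component $M^\circ$, writing $M^\circ\cong\R^a\oplus L^\circ$ with $\alpha$ in block form $\alpha(r,k)=(\beta r,\,g(r)+\gamma k)$, where $\beta$ is contracting on $\R^a$, $\gamma\in\Aut(L^\circ)$, and $g\colon\R^a\to L^\circ$ is a continuous homomorphism, one sets $\ell(r,k):=k+\sum_{j\ge 0}\gamma^{-1-j}g(\beta^j r)$. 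This series converges absolutely in the compact group $L^\circ$ because $g(\beta^j r)\to 0$ exponentially (as $\beta$ contracting and $g$ continuous with $g(0)=0$) while $\gamma^{-1-j}$ preserves a bi-invariant metric; one checks that $\ell$ is a continuous $A$-equivariant homomorphism with $\ell|_{L^\circ}=\mathrm{id}$, so $M^\circ = L^\circ\oplus\ker\ell$ with $\ker\ell\subseteq C$. For the totally disconnected quotient $M/M^\circ$, an analogous application of (1) and the same convergence argument shows it is itself $\alpha$-contractive (or splits analogously). The hardest technical point will be gluing these two constructions compatibly across the extension $0\to M^\circ\to M\to M/M^\circ\to 0$ to produce a globally defined continuous $A$-equivariant projection $M\to L$, from which the decomposition $M=C\oplus L$ follows.
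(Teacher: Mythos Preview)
Your steps (1)--(3) are essentially correct; one quibble in (1): the inclusion $K\subseteq\alpha^{-m}\Omega$ does not follow from compactness of $K$ alone, since the $\alpha^{-n}\Omega$ need not be open. A cleaner route is to observe that the image of $K$ in the $\alpha$-contractive quotient $M/L$ is compact and $\alpha$-invariant, hence trivial.

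The real gap is in step (4). You are working harder than necessary on the connected part: since $M$ is sheer as an $A$-module it is sheer as an LCA group, so Corollary~\ref{semidec1} already splits $M=M^\circ\times\Ell_{\{0\}}(M)$ as topological $A$-modules with $M^\circ$ Euclidean. In particular $L^\circ=\{0\}$, your series is vacuous, $M^\circ\subseteq C$ automatically, and no gluing across the extension $0\to M^\circ\to M\to M/M^\circ\to 0$ is needed at all. (As an aside, your claim that $\gamma^{-1-j}$ preserves a bi-invariant metric is not valid for arbitrary automorphisms of compact abelian groups; it is harmless here only because $L^\circ=\{0\}$.) Everything therefore reduces to the totally disconnected factor, and there your ``analogous convergence argument'' does not go through: there is no linear structure on which to run a series, and for $x\in\Omega$ the sequence $\alpha^n x$ lies in the shrinking compacta $\alpha^n\Omega$ but has no reason to converge in $M$, so there is no evident way to produce $\ell\in L$ with $\alpha^n(x-\ell)\to 0$. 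The equality $M=\Con_\alpha(M)+L$ in the totally disconnected $\alpha$-compactive case is exactly the nontrivial input the paper imports from Baumgartner--Willis \cite[Corollary~3.17]{BW}, whose proof uses the structure of tidy subgroups and is not a routine limiting argument. Without citing that result or supplying a genuine substitute, surjectivity of $C\times L\to M$ is not established.
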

\begin{proof}
If $N$ is a discrete quotient of $M$, then $N$ is also $\alpha$-compactive, and clearly this implies that $N$ is a locally finite module. Thus $M$ is upper-sheer, and hence sheer. In particular, it is sheer as LCA-group.

By Theorem \ref{closed-pr}, $\Ell(M)$ is closed, and has a compact open submodule $\Omega$. Then $\Ell(M)/\Omega$ is $\alpha$-compactive and discrete, hence finite. Hence $\Ell(M)$ is compact.

Now let us prove the decomposition statement. By Corollary \ref{semidec1}, we can suppose that $M$ is either Euclidean or totally disconnected. The Euclidean case is clear: in this case, since $M$ has no nonzero compact submodule, it is $\alpha$-contractive.

Assume $M$ totally disconnected. By Proposition \ref{uac}, $\Con_\alpha(M)$ is closed.
Consider the natural homomorphism $f:\Con_\alpha(M)\times \Ell(M)\to M$. It is proper, by compactness of $\Ell(M)$. Its kernel is therefore compact, and hence its projection to $\Con_\alpha(M)$ is a compact $\alpha$-contractive module, hence trivial. Thus the kernel is contained in $\Ell(M)$ on which the homomorphism is clearly injective. Finally, surjectivity of $f$ is given by \cite[Corollary 3.17]{BW}.
\end{proof}


\subsection{A lemma on contraction minus identity}

For a $\Z[\alpha^{\pm 1}]$-module, recall that it is understood that $A=\langle\alpha\rangle$. To be polycontractable just means to be the direct sum of an $\alpha$-contractive and an $\alpha^{-1}$-contractive module.

\begin{lem}\label{difpro1}
Let $M$ be a polycontractable locally compact $\Z[\alpha^{\pm 1}]$-module. Then the multiplication by $(\alpha-1)$ is an automorphism of the topological module $M$.
\end{lem}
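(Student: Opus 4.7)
\medskip

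My plan is to split $M = M_+ \oplus M_-$ with $M_+$ being $\alpha$-contractive and $M_-$ being $\alpha^{-1}$-contractive, so that it suffices to show that $\alpha - 1$ is a topological automorphism of each summand. On $M_-$, the identity $\alpha - 1 = -\alpha(\alpha^{-1} - 1)$ together with the fact that $\alpha$ is already a topological automorphism of $M_-$ reduces the problem to the analogous statement for $\alpha^{-1}$ on an $\alpha^{-1}$-contractive module. So without loss of generality $M$ itself is $\alpha$-contractive. Since a contractable module is sheer, Corollary \ref{semidec1} gives $M = M^{\circ} \times T$ with $M^{\circ}$ Euclidean and $T$ totally disconnected, both $\alpha$-invariant and $\alpha$-contractive, and I will treat these two cases separately.

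For the Euclidean summand $M^{\circ} \cong \R^{d}$, contractivity of $\alpha$ forces all complex eigenvalues of $\alpha$ to have modulus strictly less than $1$, so every eigenvalue of $\alpha - 1$ is nonzero; hence $\alpha - 1$ is a linear automorphism of $\R^{d}$, and in particular a topological automorphism.

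For the totally disconnected summand $T$, I first produce a compact open subgroup $V \subseteq T$ satisfying $\alpha(V) \subseteq V$ and $\bigcap_{n \ge 0} \alpha^{n}(V) = \{0\}$. Starting from any compact open subgroup $U$, uniform contraction on $U$ gives $N$ with $\alpha^{n}(U) \subseteq U$ for all $n \ge N$, and then $V = U + \alpha(U) + \cdots + \alpha^{N-1}(U)$ is an open subgroup, compact as a finite sum of compacts, and $\alpha$-stable because $\alpha^{N}(U) \subseteq U \subseteq V$; the intersection $\bigcap_{n} \alpha^{n}(V) = \{0\}$ is immediate from contractivity since any open neighborhood of $0$ eventually contains $\alpha^{n}(V)$. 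Now the series $\sum_{n \ge 0} \alpha^{n}(y)$ converges in $V$ for every $y \in V$: the partial-sum difference $\sum_{n=M+1}^{N} \alpha^{n}(y)$ lies in the subgroup $\alpha^{M+1}(V)$, and the $\alpha^{M+1}(V)$ form a neighborhood basis of $0$. The convergence is uniform on $V$, so $\phi : V \to V$, $y \mapsto \sum_{n \ge 0} \alpha^{n}(y)$, is a continuous group homomorphism, and a telescoping computation (using $\alpha^{N+1}(y) \to 0$) gives $(1 - \alpha) \circ \phi = \phi \circ (1 - \alpha) = \mathrm{id}_{V}$. Since $T = \bigcup_{k \ge 0} \alpha^{-k}(V)$ (every element is eventually pushed into $V$) and $V$ is open, $\phi$ extends uniquely to a continuous homomorphism on all of $T$ inverting $1 - \alpha$, for instance by the formula $\phi(y) = \sum_{n=0}^{k-1} \alpha^{n}(y) + \alpha^{-k}\bigl(\phi(\alpha^{k}(y))\bigr)$ for any $k$ with $\alpha^{k}(y) \in V$.

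The main obstacle is not bijectivity in itself but the \emph{topological} invertibility on the totally disconnected part: pointwise convergence of the geometric series is not enough, and one needs a filtration by $\alpha$-stable compact open subgroups whose intersection is trivial in order to obtain uniform convergence and hence a continuous inverse. The construction of $V$ above is exactly engineered to provide this; an alternative would be to check that $1 - \alpha$ is a continuous bijection and apply the open mapping theorem using that polycontractable modules are $\sigma$-compact, but the explicit series keeps the argument elementary and self-contained.
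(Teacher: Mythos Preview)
Your argument is correct and takes a genuinely different route on the totally disconnected part. One small slip: in your extension formula, drop either the partial sum or the factor $\alpha^{-k}$ --- the correct expression is $\tilde\phi(y)=\sum_{n=0}^{k-1}\alpha^n(y)+\phi(\alpha^k(y))$ (equivalently $\tilde\phi(y)=\alpha^{-k}\phi(\alpha^k(y))$, both equal $\sum_{n\ge 0}\alpha^n(y)$), not the combination you wrote. This is a typo, not a gap: the extension is well-defined because $\phi$ commutes with $\alpha$, and continuity is immediate on each open piece $\alpha^{-k}(V)$.

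The paper proceeds differently in the totally disconnected case. Instead of writing down the Neumann series for $(1-\alpha)^{-1}$, it chooses a compact open $\Z[\alpha]$-submodule $\Omega$, proves the equivalence $v\in\Omega \Leftrightarrow (\alpha-1)v\in\Omega$, and uses this to show that $\alpha-1$ is an \emph{isometry} for the complete ultrametric $d(x,y)=\exp(\inf\{n:\alpha^n(x-y)\in\Omega\})$; hence $\alpha-1$ is injective with closed image. Surjectivity then falls out for free: on the quotient $M/(\alpha-1)M$, $\alpha$ acts as the identity and as a contraction, so the quotient is trivial. Your approach is more constructive and arguably more elementary --- you exhibit the inverse explicitly and never need the key equivalence $v\in\Omega\Leftrightarrow (\alpha-1)v\in\Omega$. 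The paper's approach, on the other hand, yields the stronger structural fact that $\alpha-1$ is isometric for a natural metric, and the surjectivity step via the quotient is a one-liner.
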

\begin{proof}
It is enough to assume that $M$ is either $\alpha$-contractive or $\alpha^{-1}$-contractive. If the $\alpha$-contractive case works, the other case follows since $\alpha^{-1}-1=-\alpha^{-1}(\alpha-1)$. So assume that $M$ is $\alpha$-contractive.

By Siebert's theorem \cite[Proposition 4.2]{Si} (or Corollary \ref{semidec1}), we can suppose that $M$ is either Euclidean or totally disconnected. The Euclidean case being straightforward from linear algebra, assume that $M$ is totally disconnected.

Let $\Omega$ be a compact open $\Z[\alpha]$-submodule. Observe that for every $v\in M$, we have
\begin{equation}\label{eqeq1}v\in\Omega\quad\Leftrightarrow\quad\alpha v-v\in\Omega.\end{equation} The implication $\Rightarrow$ is clear. Let by contradiction $v$ is a counterexample to $\Leftarrow$, namely $\alpha v-v\in\Omega$ and $v\notin\Omega$. Considering $n$ maximal such that $\alpha^nv\notin \Omega$ (which exists by contractivity), and define $w=\alpha^nv$. Then $w\notin\Omega$ and $\alpha w\in\Omega$. Since $\alpha v-v\in\Omega$ and $\alpha\Omega\subseteq\Omega$, we have $\alpha^n(\alpha v-v)=\alpha w-w\in\Omega$. Hence $w\in\Omega$, a contradiction.


Consider the ultrametric on $M$ defined by $d(x,y)=\exp(\inf\{n:\alpha^n(x-y)\in\Omega\})$. It is complete, addition-invariant and defines the topology of $M$. Moreover, by (\ref{eqeq1}), the multiplication by $(\alpha-1)$ is an isometric embedding of $(M,d)$ into itself; in particular its image is closed, by completeness. It remains to check that it is surjective: indeed, if we mod out by the image, we obtain a module on which $\alpha$ is contracting but acts as the identity, so we obtain the trivial module.
\end{proof}

\subsection{Decomposition under a single automorphism}

\begin{lem}\label{singledec}
Let $M$ be a sheer LC $\Z[\alpha^{\pm 1}]$-module. Then $M$ decomposes as a topological $A$-module:
\[M=\Con_\alpha(M)\oplus\Con_{\alpha^{-1}}(M)\oplus \Ell(M)\oplus \Dist(M),\]
where $\Dist(M)$ is the largest distal subspace of the Euclidean $A$-module $M^\circ$.
\end{lem}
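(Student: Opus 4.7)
The plan is to reduce to the Euclidean and totally disconnected cases by applying Corollary \ref{semidec1}, and then handle each separately. Since $M$ is sheer as an $A$-module it is sheer as an LCA-group, so Corollary \ref{semidec1} yields a topological $A$-module decomposition $M = M^\circ \oplus N$ with $N := \Ell_{\{0\}}(M)$ totally disconnected. Because $M^\circ \cong \R^d$ contains no nontrivial compact subgroup, $\Ell(M) = \Ell(N)$, and $\Dist(M) \subseteq M^\circ$ by definition. It thus suffices to prove $M^\circ = \Con_\alpha(M^\circ) \oplus \Con_{\alpha^{-1}}(M^\circ) \oplus \Dist(M)$ and $N = \Con_\alpha(N) \oplus \Con_{\alpha^{-1}}(N) \oplus \Ell(N)$.

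For the Euclidean piece, $\alpha$ acts as an $\R$-linear automorphism of $\R^d$; complexify and take the generalized eigenspace decomposition grouped by modulus of eigenvalue ($<1$, $=1$, $>1$). Descending to the real form, standard linear algebra identifies the three $\alpha$-invariant summands with $\Con_\alpha(M^\circ)$, $\Dist(M)$ (by the definition of distal Euclidean), and $\Con_{\alpha^{-1}}(M^\circ)$ respectively. This is the routine part.

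For the totally disconnected piece, apply Willis' Theorem \ref{t_tidy1} to $(N,\alpha)$ and choose a tidy compact open subgroup $V$. Since $N$ is abelian, $V = V^+ + V^-$, and $V^{++}$, $V^{--}$ are closed $A$-submodules of $N$, respectively $\alpha^{-1}$-compactive (with compacting subgroup $V^+$) and $\alpha$-compactive (with $V^-$). Both are lower-sheer, being closed submodules of the lower-sheer $N$, so Lemma \ref{compactive1} gives
\[V^{++} = \Con_{\alpha^{-1}}(V^{++}) \oplus \Ell(V^{++}), \qquad V^{--} = \Con_\alpha(V^{--}) \oplus \Ell(V^{--}).\]
The contraction summands coincide with the full contraction subgroups of $N$: if $\alpha^{-n}x \to 0$ then $\alpha^{-n_0}x \in V^+$ for some $n_0$, whence $x \in V^{++}$, and symmetrically for $\alpha$.

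To assemble, one uses that $N = V^{++} + V^{--}$ (the tidy decomposition at the heart of the Baumgartner--Willis theory, in the form referenced via \cite[Cor.~3.17]{BW} in the proof of Lemma \ref{compactive1}) to obtain $N = \Con_\alpha(N) + \Con_{\alpha^{-1}}(N) + \Ell(N)$. The sum is direct because $\Con_\alpha(N) \cap \Con_{\alpha^{-1}}(N)$ is simultaneously $\alpha$- and $\alpha^{-1}$-contractive hence trivial, and $\Con_{\alpha^{\pm 1}}(N) \cap \Ell(N)$ is a compact $\alpha^{\pm 1}$-contractive submodule hence trivial. Compactness of $\Ell(N)$ makes the natural sum map proper, upgrading the algebraic bijection to a topological isomorphism. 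The main obstacle is exactly this assembly step: ensuring that $V^{++} + V^{--}$ exhausts $N$ and that one gets a topological (not merely algebraic) direct sum; here sheerness is essential, both through Lemma \ref{compactive1} (which forces the ``compacting'' pieces of $V^{\pm\pm}$ to split off compactly) and through Corollary \ref{semidec1} (which cleanly separates the connected and totally disconnected parts).
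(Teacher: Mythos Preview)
Your reduction via Corollary \ref{semidec1} and the Euclidean analysis match the paper's argument. The gap is in the totally disconnected surjectivity step.

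The assertion $N = V^{++} + V^{--}$ is false in general, and it is not what \cite[Cor.~3.17]{BW} says. Willis' theorem only yields that $V^{++} + V^{--}$ contains the open subgroup $V = V^+ + V^-$; it need not exhaust $N$. Concretely, take $N = \Q_p$ with $\alpha$ acting trivially: this is sheer (indeed amorphic), yet any tidy $V$ is a compact open subgroup with $V^{++} = V^{--} = V \neq \Q_p$. In this example $\Con_{\alpha^{\pm 1}}(N) = 0$ and $\Ell(N) = \Q_p$, so the desired decomposition does hold, but not for the reason you give. The reference \cite[Cor.~3.17]{BW} is invoked in Lemma \ref{compactive1} only for a \emph{compactive} automorphism, where it provides $M = \Con_\alpha(M) + \Ell(M)$; it does not globalize to $N = V^{++} + V^{--}$.

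The paper's route to surjectivity is genuinely different and uses sheerness more heavily. From Willis one only gets that $T := \Con_\alpha(N) + \Con_{\alpha^{-1}}(N) + \Ell(N)$ is open. Upper-sheerness then forces the discrete quotient $N/T$ to be locally finite, so every $x$ satisfies $(\alpha^k - 1)x \in T$ for some $k$. One then invokes Lemma \ref{difpro1} (invertibility of $\alpha^k - 1$ on the polycontractable summand $P = \Con_\alpha(N) \oplus \Con_{\alpha^{-1}}(N)$) to absorb the $P$-component and reduce to showing a certain element has compact orbit closure, hence lies in $\Ell(N)$. This step is the real content missing from your sketch. Relatedly, your appeal to ``compactness of $\Ell(N)$'' for properness is unjustified here: compactness of $\Ell$ for compactly generated modules (Corollary \ref{maxcp}) is proved \emph{downstream} of this lemma, and the $\Q_p$ example above shows $\Ell(N)$ need not be compact. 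The paper instead argues properness by restricting to $\sigma$-compact open submodules.
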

\begin{proof}
For a direct product of an Euclidean and a totally disconnected module, all given subspaces split accordingly. Hence Corollary \ref{semidec1} reduces to the cases when $M$ is either Euclidean or totally disconnected (to apply the corollary, note that $M$ being a sheer $A$-module, it is a sheer LCA group).

The case when $M$ is Euclidean is just plain linear algebra: the subspaces in the decomposition are then as follows. The sum of characteristic subspaces of $\alpha$ with respect to complex eigenvalues of modulus $>1$ (resp.\ $<1$, resp.\ $=1$) is $\Con_\alpha(M)$ (resp.\ $\Con_{\alpha^{-1}}(M)$, resp.\ $\Dist(M)$), and $\Ell(M)=\{0\}$.

Now assume that $M$ is totally disconnected. We then have to prove that $M=\Con_\alpha(M)\oplus\Con_{\alpha^{-1}}(M)\oplus \Ell(M)$.
By Willis' theorem (Theorem \ref{t_tidy1}), there are closed submodules $N_+$, $N_-$ such that $N_+$ is $\alpha$-compactive, $N_-$ is $\alpha^{-1}$-compactive, and $N_++N_-$ is open. Therefore, by Lemma \ref{compactive1}, there are closed submodules $C_+$, $C_-$, and a compact submodule $\Omega$ such that $C_+$ is $\alpha$-contractive, $C_-$ is $\alpha^{-1}$-contractive, $\Omega$ is compact, and $C_++C_-+\Omega$ is open. Note that $C_\pm=\Con_\pm(M)$, and $\Omega\subseteq\Ell(M)$.

Consider the sum homomorphism $f:\Con_\alpha(M)\oplus\Con_{\alpha^{-1}}(M)\oplus\Ell(M)\to M$. Let $K$ be its kernel.
Let $s=(c_+,c_-,z)$ be an element of $K$, in this decomposition. Then $c_++c_-+z=0$ in $M$. Thus, for every $n$, $0=\alpha^n(c_+)+\alpha^n(c_-)+\alpha^n(z)$. Then, for $n\to\infty$, $\alpha^n(c_++z)$ is bounded, while $\alpha^n(c_-)$ tends to infinity unless $c_-=0$. This forces $c_-=0$. Similarly, letting $n$ tend to $-\infty$ ensures $c_+=0$. Hence $c_+=c_-=0$, so $z=-c_+-c_-=0$ showing that $K=\{0\}$, that is, $f$ is injective. All this argument showing that $f$ is injective with open image is true for some $\sigma$-compact submodule of $M$. Hence $f$ is proper.


It remains to see that $f$ is surjective. Write $T=\Con_\alpha(M)+\Con_{\alpha^{-1}}(M)+\Ell(M)$, which is the image of $f$ and which we already proved to be open; let us show that $M=T$. Since $M$ is sheer, the discrete quotient $M/T$ is locally finite as $A$-module.

Take $x\in M$. Since the discrete module $M/T$ is a locally finite module (because $M$ is sheer), for some $k\ge 1$ we have $\alpha^kx=x$ in $M/T$, i.e.\ $(\alpha^k-1)x\in T$. Write $P=\Con_\alpha(M)\oplus\Con_{\alpha^{-1}}(M)$, so that $T=P\oplus\Ell(M)$.
We can decompose $(\alpha^k-1)x\in T$ according to this decomposition. By Lemma \ref{difpro1}, we can write $(\alpha^k-1)x=(\alpha^k-1)u+v$ with $u\in P$ and $v\in \Ell(T)$. Set $y=x-u$, so that $(\alpha^k-1)y=v$. Modulo the closed submodule $N$ generated by $v$ (which is compact), we have $\alpha^ky=y$, and therefore, modulo $N$, the submodule generated by $y$ is actually generated, as a group, by the finite set $\{y,\dots,\alpha^{k-1}y\}$. Since $M$ is elliptic as an LCA-group, we deduce that the closed submodule of $M$ generated by $y$ is compact, and therefore $y\in \Ell(M)$. So $x=u+y\in P+\Ell(M)\subseteq T$.
\end{proof}


\section{Structure of sheer modules}

\subsection{Decomposition of sheer modules}\label{desheer}
\begin{thm}\label{sheerdec1}
Suppose that $A$ is compactly generated. Every sheer module has a unique decomposition $P\oplus E\oplus V$, with $P$ polycontractable, $E$ amorphic, $V$ distal Euclidean.
\end{thm}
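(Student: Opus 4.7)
\textbf{Uniqueness} follows immediately from Lemma~\ref{homzero}: in two decompositions $M=P\oplus E\oplus V = P'\oplus E'\oplus V'$, each cross-type composition (such as $P\hookrightarrow M\twoheadrightarrow E'$) is a continuous $A$-module homomorphism between modules of two distinct types among polycontractable, amorphic, distal Euclidean, and hence vanishes. This forces $P\subseteq P'$, and the other inclusions follow by symmetry.

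\textbf{Existence, reduction to two cases.} Corollary~\ref{semidec1} supplies a topological $A$-module splitting $M = M^\circ\oplus T$ with $M^\circ$ Euclidean and $T=\Ell_{\{0\}}(M)$ totally disconnected. Since amorphic modules are totally disconnected and distal Euclidean modules are connected, the target decomposition must respect this splitting. It therefore suffices to produce (a) a splitting $M^\circ = P_1\oplus V$ with $P_1$ polycontractable and $V$ distal Euclidean, and (b) a splitting $T = P_2\oplus E$ with $P_2$ polycontractable and $E$ amorphic.

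\textbf{Iteration via Lemma~\ref{singledec}.} Fix generators $\alpha_1,\ldots,\alpha_d$ of a cocompact lattice $\Z^d\subseteq A$. In both cases the plan is to verify at each step that the current module (or residue) is sheer as a $\Z[\alpha_i^{\pm1}]$-module, apply Lemma~\ref{singledec} with $\alpha_i$ to peel off $\alpha_i^{\pm1}$-contractable summands (all $A$-stable by commutativity of $A$), and iterate on the residue. For (a), after $d$ iterations the Euclidean residue $V$ has the property that each $\alpha_i$ acts with only modulus-$1$ eigenvalues. To promote this to $A$-distality, I simultaneously upper-triangularize the abelian family $\rho(A)\subseteq\GL(V\otimes\C)$ in a common invariant flag, whose diagonal entries define continuous characters $\chi_j:A\to\C^*$ with $\chi_j(\alpha)$ running through the eigenvalues of $\rho(\alpha)$. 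Each $|\chi_j|:A\to\R_{>0}$ is a continuous group homomorphism trivial on the cocompact subgroup $\langle\alpha_1,\ldots,\alpha_d\rangle$, hence factors through the compact quotient $A/\langle\alpha_1,\ldots,\alpha_d\rangle$ and is therefore identically $1$; so $V$ is $A$-distal. For (b), the natural candidate is $E:=\Ell_A(T)$. It is closed by Theorem~\ref{eclo}; lower-sheer (inherited from $T$); upper-sheer (any continuous $A$-map $f$ from $E$ to a purely discrete module sends each compact submodule $K\subseteq E$ to a finite submodule of a purely discrete module, which is $0$, and $E$ is the sum of such $K$); and compact-by-discrete (intersect a compact open subgroup of $T$ with $E$ to get a compact open subgroup of $E$, then apply Theorem~\ref{eclo} to enlarge it to a compact open $A$-submodule $K$, with $E/K$ discrete locally finite because every $A$-orbit in $E$ is relatively compact). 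Thus $E$ is amorphic. After $d$ iterations, the residue equals $\bigcap_i\Ell_{\Z[\alpha_i]}(T)=\Ell_A(T)=E$, and the accumulated contractable summands form $P_2$.

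\textbf{Main obstacle.} The delicate technical point is verifying, at each iteration, that the module (or the intermediate residue) is sheer as $\Z[\alpha_i^{\pm1}]$-module so that Lemma~\ref{singledec} applies; the nontrivial direction is lower-sheerness. The key claim is that any compact parafinite $\Z[\alpha_i^{\pm1}]$-submodule $N$ of a sheer $A$-module $M$ is zero. One first uses compact generation of $A$ together with Theorem~\ref{eclo} to confine $N$ to the amorphic piece $\Ell_A(M)$; then, writing $E=K\rtimes$(discrete locally finite) with $K$ a compact open $A$-submodule profinite as $A$-module, the projection $N\to E/K$ is a finite $\alpha_i$-quotient of $N$, so vanishes by parafiniteness, giving $N\subseteq K$; finally, since $K$ is profinite as $A$-module, the image of $N$ in every finite $A$-quotient of $K$ is a finite $\alpha_i$-quotient of $N$, again vanishing, so $N\subseteq\bigcap_U U = 0$.
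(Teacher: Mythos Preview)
Your overall strategy matches the paper's: uniqueness via Lemma~\ref{homzero}, existence via iterated application of Lemma~\ref{singledec} over a finite generating family, with the Euclidean and totally disconnected parts separated via Corollary~\ref{semidec1}. You also correctly isolate a subtlety the paper leaves implicit, namely that a sheer $A$-module must be checked to be sheer over each cyclic $\langle\alpha_i\rangle$ before Lemma~\ref{singledec} can be invoked.

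There are, however, two gaps.

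\emph{First (minor).} In your ``main obstacle'', step~1 (``use Theorem~\ref{eclo} to confine $N$ to $\Ell_A(M)$'') does not follow from Theorem~\ref{eclo}: that theorem only controls compact subsets already known to lie inside $\Ell_A(M)$, whereas your $N$ is merely $\alpha_i$-invariant. The clean fix bypasses your steps~1--4 entirely: apply Theorem~\ref{cafi} to the compactly generated group $\langle\alpha_i\rangle$ to get that $\Ell^\flat_{\langle\alpha_i\rangle}(M)$ is compact parafinite over $\langle\alpha_i\rangle$; it is $A$-invariant since $A$ is abelian; and it is parafinite over $A$, because any finite $A$-quotient is in particular a finite $\langle\alpha_i\rangle$-quotient and hence zero. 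Thus $\Ell^\flat_{\langle\alpha_i\rangle}(M)\subseteq\Ell^\flat_A(M)=0$, so $M$ is lower-sheer over $\langle\alpha_i\rangle$; upper-sheerness follows by Pontryagin duality.

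\emph{Second (substantial).} The assertion ``the residue equals $\bigcap_i\Ell_{\Z[\alpha_i]}(T)=\Ell_A(T)$'' is not justified, and this is precisely where the work lies. The inclusion $\Ell_A(T)\subseteq B_d$ is easy (compact $A$-submodules have no contractable projection), and one can indeed show $B_d=\bigcap_i\Ell_{\langle\alpha_i\rangle}(T)$ using the $A$-equivariant projections. But the reverse inclusion $B_d\subseteq\Ell_A(T)$ amounts to producing a compact open $A$-submodule of $B_d$, and knowing $B_d=\Ell_{\langle\alpha_i\rangle}(B_d)$ for each $i$ \emph{separately} does not yield this: an element with relatively compact $\langle\alpha_i\rangle$-orbit for each $i$ need not a priori have relatively compact $\langle\alpha_1,\ldots,\alpha_d\rangle$-orbit. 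The paper's Lemma~\ref{lemdecompo} fills this by induction on the generators: given a compact open $\langle\alpha_1,\ldots,\alpha_k\rangle$-submodule $\Omega\subseteq B_d$, one uses Theorem~\ref{eclo} for the $\langle\alpha_{k+1}\rangle$-action (valid since $\Omega\subseteq B_d=\Ell_{\langle\alpha_{k+1}\rangle}(B_d)$) to enclose $\Omega$ in a compact $\langle\alpha_{k+1}\rangle$-submodule, whence the closed subgroup generated is a compact open $\langle\alpha_1,\ldots,\alpha_{k+1}\rangle$-submodule. Your independent verification that $\Ell_A(T)$ is amorphic is correct but does not close this gap: it becomes useful only once $B_d=\Ell_A(T)$ is known, which is exactly the point at issue.
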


We need to keep track of the construction in the proof. Therefore we introduce some definition. For a sheer $A$-module $M$ and $\alpha\in A$, define $\Con^{\le 0}_\alpha(M)=C_{\alpha^{-1}}(M)\oplus \Ell_{\Z[\alpha^{\pm 1}]}(M)\oplus \Dist_{\Z[\alpha^{\pm 1}]}(M)$. So Lemma \ref{singledec} says that $M=\Con_\alpha(M)\oplus\Con^{\le 0}_\alpha(M)$. If $M$ is a sheer $A$-module and $\alpha\in A$, this is an $A$-module decomposition.

Now let $(\alpha_i)_{i\ge 1}$ be a sequence in $A$. For a sheer LC $A$-module $M$ and define $B_0=M$, and by induction $C_i=\Con_{\alpha_i}(B_{i-1})$ and $B_i=\Con^{\le 0}_{\alpha_i}(B_{i-1})$, so that $B_{i-1}=C_i\oplus B_i$. Then for each $n$, we have $M=C_1\oplus\dots \oplus C_n\oplus B_n$.

\begin{lem}\label{lemdecompo}
Suppose that for some $n$, we have $S=\{\alpha_i:i\le n\}$ satisfies the following: $S=S^{-1}$, and $\overline{\langle S\rangle}$ is cocompact in $A$ (this exists as soon as $A$ is compactly generated).
 Then $\Ell(B_n)$ is an amorphic $A$-module and $\Dist(B_n)=B_n^\circ$ is a distal Euclidean $A$-module. 
\end{lem}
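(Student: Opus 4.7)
By construction $B_{i-1}=C_i\oplus B_i$ as topological $A$-modules (Lemma~\ref{singledec}), so by induction $B_n$ is a direct $A$-summand of $M$, hence sheer as an $A$-module and a fortiori as an LCA-group. Moreover $\Con_{\alpha_i}(B_i)\subseteq B_i\cap\Con_{\alpha_i}(B_{i-1})=B_i\cap C_i=\{0\}$, and since $B_n\subseteq B_i$ we get $\Con_{\alpha_i}(B_n)=0$ for every $i\le n$; because $S=S^{-1}$, this yields $\Con_\alpha(B_n)=0$ for every $\alpha\in S$. Applying Corollary~\ref{semidec1}, decompose $B_n=B_n^\circ\oplus \Ell_{\{0\}}(B_n)$ as $A$-modules, with $B_n^\circ$ Euclidean and $\Ell_{\{0\}}(B_n)$ totally disconnected.

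For the distal Euclidean assertion, each $\alpha\in S$ acts on $B_n^\circ$ with no contracted vector in either direction, so its complex eigenvalues on $B_n^\circ$ all have modulus~$1$. Let $\rho\colon A\to\GL(B_n^\circ)$ be the representation and let $D\subseteq\GL(B_n^\circ)$ be the closed set of matrices with all eigenvalues of modulus $1$. Since $\rho(A)$ is abelian, $D\cap\rho(A)$ is a subgroup (commuting distal matrices multiply to distal ones via simultaneous triangularization), and closedness of $D$ with $\rho(S)\subseteq D$ gives $\rho(\overline{\langle S\rangle})\subseteq D$. The cocompactness of $\overline{\langle S\rangle}$ in $A$ makes $\rho(\overline{\langle S\rangle})$ cocompact in $\rho(A)$, so there is a compact $E\subseteq\rho(A)$ with $\rho(A)=(D\cap\rho(A))\cdot E$. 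For any $\beta\in A$ and $n\in\Z$, write $\rho(\beta)^n=d_ne_n$ with $d_n\in D\cap\rho(A)$ and $e_n\in E$. By abelianness $d_n$ and $e_n$ commute and simultaneously triangularize over $\C$, so each eigenvalue of $\rho(\beta)^n$ is a product of a modulus-$1$ eigenvalue of $d_n$ and an eigenvalue of $e_n$ uniformly bounded in modulus by the compactness of $E$. Thus $|\lambda|^n$ is bounded uniformly in $n\in\Z$ for every eigenvalue $\lambda$ of $\rho(\beta)$, forcing $|\lambda|=1$ and $\rho(\beta)\in D$. So $A$ acts distally on $B_n^\circ$, which is then distal Euclidean, and $\Dist(B_n)=B_n^\circ$.

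For the amorphic assertion, $B_n^\circ$ is torsion-free, so $\Ell(B_n)\subseteq\Ell_{\{0\}}(B_n)$ is totally disconnected. Pick a compact open subgroup $K\subseteq\Ell(B_n)$ by van Dantzig; by Theorem~\ref{eclo}, $K$ lies in a compact $A$-submodule $\Omega\subseteq\Ell(B_n)$, which is open since it contains $K$. The discrete quotient $\Ell(B_n)/\Omega$ is locally finite: each element lifts to some $x\in\Ell(B_n)$ whose $A$-orbit spans a compact $A$-submodule (Theorem~\ref{eclo} again) projecting to a compact discrete, hence finite, submodule. Thus $\Ell(B_n)$ is compact-by-(discrete locally finite). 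It is lower-sheer (a compact parafinite submodule of $\Ell(B_n)$ would be one of $B_n$, hence zero) and upper-sheer (any morphism to a purely discrete module annihilates the compact $\Omega$, and then, element by element, annihilates the finite orbits in $\Ell(B_n)/\Omega$). Hence $\Ell(B_n)$ is sheer and compact-by-discrete, i.e.\ amorphic.

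The main obstacle is the Euclidean step: promoting distality from the cocompact subgroup $\overline{\langle S\rangle}$ to the whole $A$. The argument depends essentially on abelianness of $A$ (both to make $D\cap\rho(A)$ a subgroup and to simultaneously triangularize $d_n$ with $e_n$) together with cocompactness (to keep $\rho(\beta)^n$ at bounded distance from $D$), yielding the uniform eigenvalue bound needed to conclude.
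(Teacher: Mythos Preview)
Your Euclidean argument is correct, though more laborious than the paper's one-liner. The paper simultaneously triangularizes all of $\rho(A)$ at once, obtaining diagonal characters $\omega:A\to\C^*$; then $\log|\omega|$ is a continuous homomorphism $A\to\R$ that vanishes on $S$ (since $S=S^{-1}$), hence on $\overline{\langle S\rangle}$, hence on all of $A$ because every continuous homomorphism from the compact quotient $A/\overline{\langle S\rangle}$ to $\R$ is zero. Your route via writing $\rho(\beta)^n=d_ne_n$ reaches the same conclusion but with extra bookkeeping.

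The amorphic part, however, has a genuine gap. What you prove --- that $\Ell_A(B_n)$ is amorphic --- is a general fact valid for \emph{any} lower-sheer $A$-module over a compactly generated $A$: it uses only Theorem~\ref{eclo} and the definition of $\Ell_A$, and never touches the hypotheses $S=S^{-1}$ or cocompactness of $\overline{\langle S\rangle}$. That should be a red flag. The actual content of the lemma (and what Theorem~\ref{sheerdec1} needs) is that $\Ell_A(B_n)$ equals the \emph{entire} totally disconnected part $\Ell_{\{0\}}(B_n)$, so that $B_n=\Ell_A(B_n)\oplus B_n^\circ$ is a decomposition into amorphic $\oplus$ distal Euclidean. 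You never establish this: a priori $\Ell_A(B_n)$ could be a proper submodule of $\Ell_{\{0\}}(B_n)$ (think of $\Q_p$ with $\Z$ acting by multiplication by $p$: totally disconnected, sheer, yet $\Ell_A=0$).

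The paper's proof supplies exactly this missing step. From $\Con_\alpha(B_n)=\Con_{\alpha^{-1}}(B_n)=0$ for each $\alpha\in S$ and Lemma~\ref{singledec}, one gets that $B_n$ is amorphic over $\Z[\alpha^{\pm 1}]$ for every $\alpha\in S$ individually. The work is then to upgrade this: an induction over symmetric subsets $T\subseteq S$ produces a compact open $\langle S\rangle$-submodule of (the totally disconnected part of) $B_n$; finally, since the stabilizer in $A$ of this submodule is open and contains a subgroup with cocompact closure, it has finite index, so a finite sum of translates yields a compact open $A$-submodule. That is where both hypotheses on $S$ are actually used, and it is precisely what your argument omits.
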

\begin{proof}
Using Corollary \ref{semidec1}, we can assume that $M$ is either totally disconnected or Euclidean (since the decomposition is preserved at each step).

We start with the easier Euclidean case: we have to prove that $B_n$ is distal. Complexify and triangulate the $A$-action on the real vector space $B_n$. Consider a diagonal coefficient, given by a continuous homomorphism $\omega:A\to\C^*$. Then by construction, for each $\alpha\in S$ we have $|\omega(\alpha)|\le 1$. Since $S=S^{-1}$, we deduce that $S\subseteq\Ker(|\omega|)$. Hence the homomorphism $\log|\omega|:A\to \R$ factors through the compact group $A/ \overline{\langle S\rangle}$, and hence it is zero. Thus $|\omega|=1$, that is, $B_n$ is distal.

Now suppose that $M$ is totally disconnected. For every $\alpha\in S$, the module $B_n$ is amorphic over $\Z[\alpha^{\pm 1}]$. For $T\subseteq S$ symmetric, we prove by induction on $|T|$ that there is a compact open $\langle T\rangle$-submodule; the case $T$ empty is clear. More precisely, suppose that $T$ has maximal cardinal such that this holds, with a compact open $\langle T\rangle$-submodule $\Omega$, and by contradiction choose $\alpha\in S\smallsetminus T$. Since $B_n$ is amorphic over $\Z[\alpha^{\pm 1}]$, the union $\bigcup_{n\in\Z}\alpha^nT$ has compact closure. Hence the closure of the additive subgroup by this union is compact and invariant under $T\cup\{s,-s\}$, contradiction. 

Thus $M$ is an amorphic $\langle S\rangle$-module. Let $\Omega$ be a compact open $\langle S\rangle$-submodule. Its stabilizer in $A$ is open and contains the subgroup $\langle S\rangle$ with cocompact closure, and hence is a finite index subgroup $B$ of $A$. Therefore the sum $\sum_{a\in A}a\Omega$ is a finite sum, and thus is a compact open $A$-submodule.

The existence of $(\alpha_i)$ is ensured by Proposition \ref{fgco}: find a finite family $(\alpha_1,\dots,\alpha_m)$ generating a dense subgroup of a cocompact subgroup. Then set $n=2m$, $\alpha_{m+i}=\alpha_i$ for $1\le i\le m$, and $\alpha_i=1$ for $i>2m$.
\end{proof}


\begin{proof}[Proof of Theorem \ref{sheerdec1}]
Lemma \ref{homzero} ensures uniqueness: if $P_1\oplus E_1\oplus V_1$ is another decomposition, considering projections $P_1\to E$, $P_1\to V$, the lemma then implies these are zero, so $P_1\subseteq P$ and similarly $E_1\subseteq E$, $V_1\subseteq V$, whence equality holds. 

For the existence, apply Lemma \ref{lemdecompo} (for a suitable sequence $(\alpha_i)$, which exists since $A$ is compactly generated). Then, with the notation preceding the lemma, $M=C_1\oplus\dots \oplus C_n\oplus B_n$ with each $C_i$ contractable, and the lemma says that $B_n$ is the direct sum of an amorphic and a distal Euclidean module.
\end{proof}

\begin{cor}\label{maxcp}
Let $A$ be compactly generated. Then for every compactly generated LC $A$-module $M$, $\Ell(M)$ is compact. That is, $M$ has a maximal compact submodule.
\end{cor}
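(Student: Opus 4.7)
The plan is to apply Theorem \ref{sheerdec1} not to $M$ directly, but to the canonical sheer subquotient, and to bookkeep where $\Ell(M)$ sits in the filtration from Theorem \ref{fsheer}. Set $M_1 = M/\Ell^\flat(M)$. Since $\Ell^\flat(M)$ is compact by Theorem \ref{fsheer}, $M_1$ is again compactly generated, and Proposition \ref{omush} gives $\Ell^\flat(M_1)=\{0\}$. By Theorem \ref{fsheer}, $\Omega^\sharp(M_1)$ is open in $M_1$ and sheer (its quotient by $\Ell^\flat(M_1)=\{0\}$ is sheer). The auxiliary LCA input I would invoke is that an open subgroup of a compactly generated LCA group is itself compactly generated; this follows at once from the Pontryagin structure theorem $M_1 \simeq \R^a \times \Z^b \times K$ with $K$ compact, since an open subgroup automatically contains $\R^a$ and a compact open subgroup $W$ of $K$, and its image in $\Z^b \times (K/W)$ lies in a finitely generated abelian group. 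Thus $\Omega^\sharp(M_1)$ is compactly generated.

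Apply Theorem \ref{sheerdec1} to the sheer, compactly generated module $\Omega^\sharp(M_1)$ to obtain a decomposition $\Omega^\sharp(M_1) = P \oplus E \oplus V$, each summand compactly generated as a topological direct factor. The crucial observation is that the amorphic piece $E$ must be compact: if $\Omega\subseteq E$ is a compact open $A$-submodule, then $E/\Omega$ is discrete and locally finite as an $A$-module, and compact generation of $E$ forces $E/\Omega$ to be finitely generated; a finitely generated locally finite $A$-module is finite (finite generators each lie in a finite $A$-submodule, and a finite sum of finite submodules is finite). Hence $E/\Omega$ is finite and $E$ is compact.

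Next I identify $\Ell(M_1)$ with $E$. Any compact $A$-submodule of $M_1$ lies in $\Omega^\sharp(M_1)$, since its image in the purely discrete module $M_1/\Omega^\sharp(M_1)$ is compact discrete hence finite, and a purely discrete module has no nonzero finite submodule. Within $\Omega^\sharp(M_1)=P\oplus E\oplus V$, the polycontractable and distal Euclidean summands contain no nonzero compact $A$-submodule, while $E$ is the union of its compact $A$-submodules. Therefore $\Ell(M_1)=E$ is compact. Finally, the quotient map $\pi\colon M\to M_1$ sends compact submodules to compact submodules, so $\pi(\Ell(M))\subseteq \Ell(M_1)$, and the preimage $\pi^{-1}(\Ell(M_1))$ is an extension of the compact groups $\Ell^\flat(M)$ and $\Ell(M_1)$, hence compact. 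Since $\Ell(M)$ is a closed subgroup (Theorem \ref{closed-pr}) of this compact set, it is compact.

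The only nonroutine step is the LCA structural fact that open subgroups of compactly generated LCA groups are compactly generated; this is where I expect a reader to pause, since the excerpt records only the cocompact case (Proposition \ref{cgcoco}). Either a brief citation to a structure-theorem reference or the three-line argument from the decomposition $\R^a\times\Z^b\times K$ would suffice. All other steps are direct applications of Theorems \ref{fsheer}, \ref{sheerdec1}, and \ref{closed-pr}, together with the elementary ``finitely generated locally finite implies finite'' lemma.
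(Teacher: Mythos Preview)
Your proof is correct and follows the same skeleton as the paper's: mod out by $\Ell^\flat(M)$, decompose the sheer open submodule $\Omega^\sharp$ as $P\oplus E\oplus V$ via Theorem \ref{sheerdec1}, show the amorphic piece $E$ is compact, and conclude $\Ell(M)=E$. The one substantive difference is in how you establish compactness of $E$. The paper passes to a cocompact lattice $B\simeq\Z^d$ in $A$, observes that the discrete quotient $M/(P\oplus E'\oplus V)$ is a finitely generated $\Z[B]$-module (via Proposition \ref{cgcoco} applied to the semidirect product), and then invokes noetherianity of $\Z[B]$ to force the locally finite submodule $E/E'$ to be finitely generated, hence finite. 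You instead use the Pontryagin structure theorem for compactly generated LCA groups to see that the open subgroup $\Omega^\sharp(M_1)$, and hence its direct factor $E$, is compactly generated as an abelian group, so $E/\Omega$ is a finitely generated abelian group which is locally finite as an $A$-module and therefore finite. Your route is a bit more elementary in that it avoids the lattice-and-noetherianity maneuver altogether; the paper's route stays purely at the module level and never appeals to the global structure of compactly generated LCA groups. One cosmetic point: in your last paragraph you do not actually need to invoke Theorem \ref{closed-pr} for closedness, since the preimage $\pi^{-1}(\Ell(M_1))$ is itself a compact $A$-submodule of $M$ and hence is already contained in (and thus equal to) $\Ell(M)$.
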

\begin{proof}
Modding out by the compact submodule $\Ell^\flat(M)$, we can suppose that $M$ is lower-sheer. By Theorem \ref{sheerdec1}, write $N=\Omega^\sharp(M)=P\oplus E\oplus V$ with $E=\Ell(N)$, $P$ polycontractable, $V$ distal Euclidean. Let $E'$ be a compact open submodule of $E$. Then $Q=M/(P\oplus E'\oplus V)$ is a discrete $A$-module, with the locally finite submodule $E'/E$.

Let $B$ be a cocompact lattice in $A$. Then since $Q\rtimes A$ is a compactly generated LC group, so is its cocompact lattice $Q\rtimes B$ (see Proposition \ref{cgcoco}). Hence $Q$ is a finitely generated $B$-module. By noetherianity, we deduce that $E/E'$ is also a finitely generated $B$-module, since it is also locally finite as $B$-module, it is finite. Thus $E$ is compact.

Since the projection of $\Ell(M)$ in the purely discrete group $M/N$ is trivial, we deduce that $\Ell(M)=\Ell(N)=E$ is compact.
\end{proof}


\subsection{Sheer submodules and quotients}

\begin{thm}\label{sheersub}
Suppose that $A$ is compactly generated. Let $M$ be a sheer $A$-module and $N$ a closed submodule. Then $N$ is sheer if and only if $M/N$ is sheer.
\end{thm}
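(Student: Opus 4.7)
The plan is first to reduce to a single direction by Pontryagin duality: sheerness is self-dual, and the dual of the inclusion $N \hookrightarrow M$ is the quotient map $\hat{M} \twoheadrightarrow \hat{M}/N^{\bot} \cong \hat{N}$, while $N^{\bot} \cong \widehat{M/N}$. Thus the equivalence for $(M, N)$ coincides with the one for $(\hat{M}, N^{\bot})$, and it suffices to prove that if $N$ is sheer, then $M/N$ is sheer.

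Next, I decompose $M = P \oplus E \oplus V$ via Theorem~\ref{sheerdec1} and, since $N$ is sheer, likewise $N = N' \oplus N'' \oplus N'''$ (polycontractable, amorphic, distal Euclidean summands). Lemma~\ref{homzero} forces each summand of $N$ into the corresponding summand of $M$, and uniqueness of the decomposition in $M$ identifies $N' = N \cap P$, $N'' = N \cap E$, $N''' = N \cap V$. Hence $M/N = P/N' \oplus E/N'' \oplus V/N'''$, and it remains to check each factor is sheer.

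The amorphic factor $E/N''$ is amorphic by Lemma~\ref{amorphicsub}, hence sheer. For the Euclidean factor, note that $N'''$, being a closed subgroup of the Lie group $V \cong \R^d$, decomposes topologically as $(N''')^\circ \oplus L$ with $L$ discrete; the quotient $N'''/(N''')^\circ \cong L$ is torsion-free, hence purely discrete, so upper-sheerness of $N'''$ forces $L = 0$. Thus $N''' = (N''')^\circ \cong \R^c$ is a vector subspace, and $V/N''' \cong \R^{d-c}$ is distal Euclidean, hence sheer.

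The main obstacle is the polycontractable factor $P/N'$, where $N'$ is polycontractable. My plan is to mimic the iterative construction in the proof of Theorem~\ref{sheerdec1}: fix a sequence $(\alpha_i)_{1 \le i \le n}$ in $A$ as in Lemma~\ref{lemdecompo}, and apply Lemma~\ref{singledec} step by step to decompose both $P$ and $N'$ into the pieces $C_i = \Con_{\alpha_i}$ of the $i$-th step. Because $P$ and $N'$ are polycontractable, the trailing ``amorphic-plus-distal-Euclidean'' piece $B_n$ vanishes in each (by uniqueness in Theorem~\ref{sheerdec1}), giving $P = \bigoplus_i C_i(P)$ and $N' = \bigoplus_i C_i(N')$ with each $C_i(\cdot)$ an $\alpha_i$-contractable module. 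The key compatibility $C_i(N') = N' \cap C_i(P)$ will be proved by induction on $i$: the identity $\Con_\alpha(N') = N' \cap \Con_\alpha(P)$ is immediate from the pointwise definition, and each summand of $\Con^{\le 0}_\alpha(N') = \Con_{\alpha^{-1}}(N') \oplus \Ell_{\Z[\alpha^{\pm 1}]}(N') \oplus \Dist_{\Z[\alpha^{\pm 1}]}(N')$ embeds into the corresponding summand of $\Con^{\le 0}_\alpha(P)$ directly from the intrinsic definitions, so by uniqueness the $N'$-decomposition nests inside the $P$-decomposition. Consequently $P/N' = \bigoplus_i C_i(P)/(N' \cap C_i(P))$ is a direct sum of $\alpha_i$-contractable quotients, hence polycontractable and sheer. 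Combining the three sheer factors via Proposition~\ref{extsheer} completes the argument.
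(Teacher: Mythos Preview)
Your proof is correct and follows essentially the same route as the paper's. The only difference is organizational: you first invoke Theorem~\ref{sheerdec1} on both $M$ and $N$ to split off the polycontractable, amorphic, and distal Euclidean summands, and then re-run the iterative $\Con_{\alpha_i}$ construction of Lemma~\ref{lemdecompo} on the polycontractable piece alone; the paper instead applies Lemma~\ref{lemdecompo} directly to $M$ and $N$ from the outset, so that the amorphic and distal Euclidean parts sit inside the trailing term $B_n$ and are handled in one stroke. Both arguments rest on the same key compatibility $C_i(N)=N\cap C_i(M)$ (which you prove carefully by induction, while the paper asserts it ``follows from the definition''), and both conclude by observing that a quotient of an $\alpha_i$-contractable module is $\alpha_i$-contractable. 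A minor remark: in your treatment of the Euclidean factor, since $N'''$ is already Euclidean (hence connected) by Theorem~\ref{sheerdec1}, the appeal to upper-sheerness to kill the discrete part $L$ is correct but unnecessary.
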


As far as we tried, this is not a plain consequence of the definition. The proof uses Theorem \ref{sheerdec1}, and, more specifically, its more explicit form given in Lemma \ref{lemdecompo}, which outputs a decomposition with some specific properties for the module and its submodule simultaneously. Indeed, one main issue is that a quotient of a polycontractable module need not be polycontractable. However, this is true for a quotient of a contractable module, and we need to be able to exploit this.

\begin{proof}
Because of Pontryagin duality, it is enough to prove one direction. Suppose that $N$ is sheer. 

We apply Lemma \ref{lemdecompo} simulatenously to $M$ and $N$. Namely $M=C_1\oplus\dots \oplus C_n\oplus B_n$, $B_n=\Ell(B_n)\oplus B_n^\circ$ with $\Ell(B_n)=\Ell(M)$ amorphic and $B_n^\circ$ distal Euclidean, and $C_i=\Con_{\alpha_i}(C_i\oplus \dots\oplus C_n\oplus B_n)$. Consider the same decomposition for $N$, namely $N=C'_1\oplus\dots \oplus C'_n\oplus B'_n$. Then from the definition it follows that $C'_i=C_i\cap N$. Also $\Ell(B'_n)\subseteq\Ell(M)=B_n$, and $(B'_n)^\circ\subseteq B_n^\circ$. So $M/N=C''_1\oplus\dots\oplus C''_n\oplus B''_n$ with $C''_i=C_i/C'_i$ contracted by $\alpha_i$ and $B''_n=B_n/B'_n$. Clearly $B_n^\circ/(B'_n)^\circ$ is distal Euclidean, and $\Ell(B_n)/\Ell(B'_n)$ being quotient of the amorphic module $\Ell(B_n)$, has to be amorphic as well (Lemma \ref{amorphicsub}). Hence $M/N$ is sheer.
\end{proof}

\subsection{Sheer core and envelope}

\begin{thm}\label{corenv}
Suppose that $A$ is compactly generated. Let $M$ be a fixed sheer LC $A$-module. For a submodule $N$ of $M$, write $N^\boxminus=\Omega^\sharp(N)$  ({\bf sheer core of $N$}) and let $N^\boxplus$ ({\bf sheer envelope of $N$ in $M$}) be the inverse image of $\Ell^\flat(M/N)\subseteq M/N$ in $M$. Then $N^\boxminus\subseteq N\subseteq N^\boxplus$, $N/N^\boxminus$ is purely discrete, and $N^\boxplus/N$ is compact parafinite. 
The submodule $N^\boxminus$ is sheer and maximal for this property among submodules of $N$; the submodule $N^\boxplus$ is sheer and minimal for this property among submodules containing $N$. If $N_1\subseteq N_2$ then $N_1^\boxminus\subseteq N_2^\boxminus$ and  $N_1^\boxplus\subseteq N_2^\boxplus$.
\end{thm}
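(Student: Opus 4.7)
The plan is to deduce all assertions from Theorem~\ref{cafi} (canonical filtration), Theorem~\ref{sheersub} (sheerness passes between closed submodules and quotients of a sheer $M$), and Proposition~\ref{omush} (description of $\Omega^\sharp$ as the largest upper-sheer submodule). The inclusions $N^\boxminus\subseteq N\subseteq N^\boxplus$ are immediate from the definitions. Applying Theorem~\ref{cafi} to $N$ gives that $N/N^\boxminus=N/\Omega^\sharp(N)$ is purely discrete, and applying it to $M/N$ gives that $N^\boxplus/N\cong\Ell^\flat(M/N)$ is compact parafinite. The sheer core $N^\boxminus$ is lower-sheer as a closed submodule of the lower-sheer $N$ (itself a closed submodule of the sheer $M$), and upper-sheer by Proposition~\ref{omush}, hence sheer; by the same proposition, it contains every upper-sheer submodule of $N$, in particular every sheer one, giving the maximality statement.

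The heart of the argument is the sheerness of $N^\boxplus$. First I would observe that $M/N$ is upper-sheer: any purely discrete quotient of $M/N$ is a purely discrete quotient of $M$, which must vanish since $M$ is sheer. Hence $\Omega^\sharp(M/N)=M/N$, and the sheer subquotient of Theorem~\ref{cafi} collapses to $(M/N)/\Ell^\flat(M/N)=M/N^\boxplus$, which is therefore sheer. By Theorem~\ref{sheersub}, $N^\boxplus$ itself is sheer. For minimality, given any sheer closed submodule $N''\subseteq M$ with $N\subseteq N''$, Theorem~\ref{sheersub} gives that $M/N''$ is sheer (in particular $\Ell^\flat(M/N'')=\{0\}$); the projection $M/N\twoheadrightarrow M/N''$ sends the compact parafinite submodule $\Ell^\flat(M/N)$ into $\Ell^\flat(M/N'')=\{0\}$, so $\Ell^\flat(M/N)\subseteq N''/N$, i.e., $N^\boxplus\subseteq N''$.

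For monotonicity, if $N_1\subseteq N_2$, the monotonicity of $\Omega^\sharp$ already established in the proposition following Proposition~\ref{omush} gives $N_1^\boxminus\subseteq N_2^\boxminus$, while the minimality established above, applied to $N_2^\boxplus$ viewed as a sheer submodule of $M$ containing $N_1$, yields $N_1^\boxplus\subseteq N_2^\boxplus$. The only step requiring real input is the sheerness of $N^\boxplus$: this is where Theorem~\ref{sheersub} is essential, and the key (but short) observation is that a quotient of a sheer module by any closed submodule remains upper-sheer, which is what allows the three-term canonical filtration of $M/N$ to collapse to the sheer quotient $M/N^\boxplus$.
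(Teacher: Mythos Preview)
Your argument is correct and uses the same essential ingredients as the paper (Theorem~\ref{cafi}, Proposition~\ref{omush}, Theorem~\ref{sheersub}), but the route for the envelope assertions differs. The paper proves only the sheer-core statements directly and then obtains all envelope statements by Pontryagin duality: under orthogonality, $(N^\boxplus)^\bot=(N^\bot)^\boxminus$ in $\hat{M}$, so sheerness, minimality, and monotonicity for $\boxplus$ are read off from the corresponding $\boxminus$ facts in the dual (with Theorem~\ref{sheersub} invoked to pass sheerness back through the orthogonal). You instead argue directly: the observation that upper-sheerness passes to quotients forces $\Omega^\sharp(M/N)=M/N$, collapsing the canonical filtration of $M/N$ so that $M/N^\boxplus$ is sheer, and then Theorem~\ref{sheersub} gives sheerness of $N^\boxplus$; minimality and monotonicity follow cleanly from this. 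Your approach is self-contained and avoids the duality bookkeeping, while the paper's approach halves the work by exploiting the symmetry of the situation. Either way, Theorem~\ref{sheersub} is the non-formal input.
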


\begin{rem}
While $N\mapsto N^\boxminus$ is intrinsic to $N$, the assignment $N\mapsto N^\boxplus$ definitely depends on $M$. This may sound strange in view of Pontryagin duality, but rather, the dual statement is that the assignment $M/N\mapsto M/N^\boxplus$ only depends on $M/N$.
\end{rem}

\begin{proof}[Proof of Theorem \ref{corenv}]
It is enough to prove all assertions about the sheer core, since those about the sheer envelope follow by Pontryagin duality (using Theorem \ref{sheersub} in some cases). 

The assertion $N^\boxminus\subseteq N$ is trivial. The $N^\boxminus$ is sheer: it is lower-sheer as submodule of the sheer module $M$, and is upper-sheer by Proposition \ref{omush}. That $N/N^\boxminus$ is purely discrete is part of Theorem \ref{cafi}. If $N_1\subseteq N_2$, then by Proposition \ref{omush} applied successively in $N_1$ and $N_2$, the submodule $N_1^\boxminus$ is upper-sheer, and hence contained in $N_2^\boxminus$.
\end{proof}

\subsection{Chains of submodules}

We start with the following lemma. Exceptionally, we write it in the non-commutative setting, since the proof is not longer than the case of modules.

\begin{lem}\label{boundchainc}
Let $M$ be a contractable LC $A$-module. Then there is a bound (depending only on $M$) on the length of chains of closed submodules of $M$. More generally, if $G$ is an LC-group with a contracting automorphism $\alpha$, there is a bound on the length of chains of $\alpha$-invariant subgroups ($H$ $\alpha$-invariant meaning $\alpha(H)=H$).
\end{lem}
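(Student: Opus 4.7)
The strategy is to reduce to the Euclidean and totally disconnected cases, then bound chain length separately in each. I would first invoke Corollary~\ref{semidec1} (in the module case, or Siebert's theorem for the general group case) to decompose $M = M^\circ \oplus M_{td}$, with $M^\circ$ Euclidean and $M_{td}$ totally disconnected, both $\alpha$-invariant. Any $\alpha$-invariant closed submodule $N$ is itself contractable, hence splits as $N = (N\cap M^\circ) \oplus (N\cap M_{td})$, so a chain in $M$ gives chains in each factor whose lengths sum to at least that of the original chain, reducing to each factor separately.

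\textbf{Euclidean case.} For $M^\circ \cong \R^d$ and $H$ a closed $\alpha$-invariant subgroup, its identity component $H^\circ$ is a subspace, also $\alpha$-invariant. For any $\lambda \in H$, the sequence $\alpha^n(\lambda) \to 0$ must eventually lie in $H^\circ$ (since $H/H^\circ$ is discrete); as $\alpha(H^\circ)=H^\circ$ this forces $\lambda \in H^\circ$. So $H=H^\circ$ is a subspace, and chains have length at most $d+1$.

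\textbf{Totally disconnected case.} Pick a compact open subgroup $V$ with $\alpha(V)\subseteq V$ and $\bigcap_n\alpha^n(V)=\{0\}$, and set $p=[V:\alpha(V)]<\infty$. For each $\alpha$-invariant closed submodule $H$, define $d(H)=[H\cap V:\alpha(H\cap V)]$. Because $\alpha(H)=H$, we have $\alpha(H\cap V)=H\cap\alpha(V)$, so the natural map $(H\cap V)/\alpha(H\cap V) \hookrightarrow V/\alpha(V)$ is injective, giving $d(H)\mid p$. I claim that for any strict $\alpha$-invariant inclusion $H_0\subsetneq H_1$ one has $d(H_0)<d(H_1)$; granting this, the $d$-values along any chain are distinct divisors of $p$, which bounds the chain length by $\Omega(p)+1$.

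\textbf{Proof of the key claim and main obstacle.} The same kernel computation shows that the inclusion induces an injection $(H_0\cap V)/\alpha(H_0\cap V)\hookrightarrow(H_1\cap V)/\alpha(H_1\cap V)$. If $d(H_0)=d(H_1)$, this map between finite sets of equal size is a bijection, so $H_1\cap V=(H_0\cap V)+\alpha(H_1\cap V)$. Applying $\alpha$ and using $\alpha(H_0\cap V)\subseteq H_0\cap V$, one obtains by induction $H_1\cap V=(H_0\cap V)+\alpha^n(H_1\cap V)$ for every $n\ge 1$. Since $\alpha^n(H_1\cap V)\to\{0\}$ uniformly (compactness plus contraction) and $H_0\cap V$ is closed, every element of $H_1\cap V$ is a limit of elements of $H_0\cap V$, hence lies in $H_0\cap V$. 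Thus $H_0\cap V=H_1\cap V$, and since $H_i=\bigcup_n\alpha^{-n}(H_i\cap V)$, we conclude $H_0=H_1$, a contradiction. The non-abelian case is essentially identical, with $(H\cap V)/\alpha(H\cap V)$ read as a coset space and $+$ replaced by group multiplication (using that $H_0\cap V$ is a subgroup). The main obstacle is exactly this claim: the argument crucially exploits that $\alpha(H)=H$ (not merely $\alpha(H)\subseteq H$), so that $H\cap\alpha(V)$ equals $\alpha(H\cap V)$ and the ``spreading'' identity can be cascaded through all powers of~$\alpha$.
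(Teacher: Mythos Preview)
Your proof is correct and follows essentially the same approach as the paper: the same scale invariant $d(H)=[H\cap V:\alpha(H\cap V)]$ (the paper's $n_H$), the same injectivity of coset spaces giving monotonicity and divisibility, and the same Siebert reduction to the connected and totally disconnected cases. The only cosmetic difference is in the equality step: where you iterate $H_1\cap V=(H_0\cap V)\,\alpha^n(H_1\cap V)$ and pass to the limit inside $G$, the paper instead passes to the coset space $G/H$ and observes that the image of $\Omega$ there is a compact $\alpha$-invariant set, hence the basepoint---but these are two phrasings of the same computation.
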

\begin{proof}
First case: $G$ is totally disconnected. This case is actually covered by \cite[Theorem 3.3]{GW}; let us provide a much shorter argument. Let $\alpha^{-1}$ multiply the Haar measure of $G$ by $n_G$. Then $n_M$ is a positive integer, namely it equals the index $[\Omega:\alpha(\Omega)]$ for some/every compact open subgroup $\Omega$ of $G$ such that $\alpha\Omega\subseteq\Omega$. Now let us check that if $H$ is an $\alpha$-invariant closed subgroup then $n_H\le n_G$, and if $n_H=n_G$ then $H=G$. Then $n_G=[G:\alpha(G)]$ and $n_H=[\Omega\cap H:\alpha(\Omega\cap H)]$. The inequality immediately follows; if it is an equality we deduce that $\Omega=\alpha(\Omega)(\Omega\cap H)$. It follows that the image of $\Omega$ in $G/H$ is a compact $\alpha$-invariant subset. Since $\alpha$ acts as a contraction (to the base-point) on the coset space $G/H$, we deduce that this is a point, i.e.\ $\Omega\subseteq H$. Thus $H$ is open, and since the discrete coset space $G/H$ admits a contraction, we deduce that it is reduced to a point and thus $H=G$. The bound immediately follows (e.g., $\log_2(n_G)$ is such a bound).

Second case: $G$ is connected, namely a $d$-dimensional simply connected nilpotent Lie group. Then since every contractable subspace is a real subspace, we have the bound $d$.

The general case then follows from the Siebert's decomposition \cite{Si}.
\end{proof}

\begin{cor}\label{chain-poly}
Let $M$ be a polycontractable LC $A$-module. Then there is a bound (depending only on $M$) on the length of chains of {\bf sheer} closed submodules of $M$.
\end{cor}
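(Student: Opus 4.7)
The plan is to reduce the bound to Lemma \ref{boundchainc} via the canonical decomposition of Theorem \ref{montotypic-i}. Since a polycontractable module is sheer, Theorem \ref{montotypic-i} applies to $M$ itself, giving a canonical decomposition $M=\bigoplus_{i=1}^n P_i\oplus E\oplus V$ with $P_i$ contractable monotypic, $E$ amorphic, $V$ distal Euclidean. I would first observe that for polycontractable $M$ we must have $E=V=0$: by Lemma \ref{homzero}, every homomorphism from a contractable module into an amorphic or distal Euclidean one is zero, so projecting any contractable summand of $M$ to $E$ or $V$ yields zero.

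Next, the ``moreover'' part of Theorem \ref{montotypic-i} says that any sheer closed submodule $N\subseteq M$ splits compatibly: $N=\bigoplus_{i=1}^n(N\cap P_i)$. So given a strictly increasing chain of sheer closed submodules
\[
N_1\subsetneq N_2\subsetneq\cdots\subsetneq N_\ell,
\]
each term decomposes coordinate-wise, and at every step $N_j\subsetneq N_{j+1}$ there must exist an index $i(j)$ such that $N_j\cap P_{i(j)}\subsetneq N_{j+1}\cap P_{i(j)}$ (otherwise all intersections agree and the direct sum decomposition would force $N_j=N_{j+1}$).

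Now for each $i$, the sequence $(N_j\cap P_i)_{j=1}^\ell$ is a chain of closed submodules of the contractable module $P_i$; by Lemma \ref{boundchainc} there is a bound $b_i$ (depending only on $P_i$, hence only on $M$) on the length of such chains. Therefore the number of strict inclusions in the $i$-th coordinate sequence is at most $b_i-1$, and summing over all $i$ gives
\[
\ell - 1 \;\le\; \sum_{i=1}^n (b_i - 1),
\]
which is the desired bound depending only on $M$.

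The argument is essentially a bookkeeping reduction, so there is no real obstacle once Theorem \ref{montotypic-i} and Lemma \ref{boundchainc} are in hand. The only subtle point worth spelling out is that the sheerness hypothesis on each $N_j$ is what makes Theorem \ref{montotypic-i} applicable to give the splitting $N_j=\bigoplus(N_j\cap P_i)$; without it, Example \ref{exzp1} shows that closed submodules of polycontractable modules need not split and the chain-length bound could fail.
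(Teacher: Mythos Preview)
Your argument is correct and follows the same strategy as the paper: decompose $M$ into contractable summands so that every sheer closed submodule splits coordinatewise, then invoke Lemma \ref{boundchainc} on each summand and add the bounds. The paper's one-line proof cites Lemma \ref{lemdecompo} rather than Theorem \ref{montotypic-i}; the two decompositions differ only in granularity (yours is the finer monotypic one), and the splitting of sheer submodules is the same phenomenon in both.

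Two small points are worth flagging. First, Theorem \ref{montotypic-i} is stated under the standing hypothesis that $A$ is compactly generated, whereas Corollary \ref{chain-poly} carries no such hypothesis; this is harmless because a polycontractable $M$ is already polycontractable over the subgroup generated by finitely many contracting elements, so one may replace $A$ by that finitely generated subgroup before applying your decomposition. Second, in the paper's logical order Theorem \ref{montotypic-i} (via Proposition \ref{monotypic}) is established \emph{after} Corollary \ref{chain-poly}, so your citation is a forward reference; there is no circularity (Proposition \ref{monotypic} rests on Lemma \ref{lemdecompo}, Theorem \ref{afl1} and Lemma \ref{boundchainc}, not on Corollary \ref{chain-poly}), but this is why the paper appeals directly to the earlier Lemma \ref{lemdecompo}.
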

\begin{proof}
Given Lemma \ref{lemdecompo}, this follows from the contractable case, namely Lemma \ref{boundchainc}.
\end{proof}

\begin{rem}
Corollary \ref{chain-poly} does not extend to general closed submodules. For instance, we have the bi-infinite sequence of lattices $3^n\Z[1/2]$ ($n\in\Z$) in $\R\times\Q_2$ (viewed as $\Z[\alpha^{\pm 1}]$-module, $\alpha$ acting by multiplication by $2$).
\end{rem}




An LC $A$-module is {\bf topologically characteristically simple} if $M\neq \{0\}$ and the only closed $\Aut(M)$-invariant closed submodules are $\{0\}$ and $M$ ($\Aut(M)$ being the group of topological $A$-module automorphisms). It is {\bf topologically simple} if $M\neq\{0\}$ and its only closed submodules are $\{0\}$ and $M$. Of course topologically simple implies topologically characteristically simple.

\begin{thm}
Let $M$ be a topologically characteristically simple LC $A$-module. Then exactly one of the following holds:
\begin{enumerate}
\item\label{ca1} $M$ is finite;
\item\label{ca2} $M$ is compact infinite;
\item\label{ca3} $M$ is discrete infinite; 
\item\label{ca4} $M$ is Euclidean and contractable;
\item\label{ca5} $M$ is Euclidean distal;
\item\label{ca6} $M$ is contractable and totally disconnected;
\item\label{ca7} $M$ is amorphic, neither discrete nor compact. In this case, $M$ is not topologically simple.
\end{enumerate}
\end{thm}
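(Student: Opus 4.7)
The plan is to apply the canonical invariants $\Ell^\flat(M)$, $\Omega^\sharp(M)$, the connected component $M^\circ$, and the sheer decomposition of Theorem~\ref{montotypic-i}: each of these yields a characteristic closed submodule of $M$, so by hypothesis each equals $\{0\}$ or $M$.

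First, if $\Ell^\flat(M)=M$, then $M$ is compact parafinite and, being nonzero, infinite, giving case (\ref{ca2}). If $\Omega^\sharp(M)=\{0\}$, then $M\cong M/\Omega^\sharp(M)$ is purely discrete and, being nonzero, discrete infinite, giving case (\ref{ca3}). Otherwise $M$ is sheer; if moreover $M$ is finite, we are in case (\ref{ca1}).

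Assume now $M$ is sheer and infinite. By Theorem~\ref{montotypic-i}, $M$ decomposes canonically as $(\bigoplus_{i=1}^n P_i)\oplus E\oplus V$, and the uniqueness assertion makes each summand a characteristic submodule of $M$, so exactly one of them equals $M$. If $V=M$, then $M$ is distal Euclidean, case (\ref{ca5}). If some $P_i=M$, then $M$ is monotypic contractable; since a connected contractable LCA group has no compact subgroup other than $\{0\}$ and hence (by Corollary~\ref{semidec1}) is Euclidean, the characteristic dichotomy $M^\circ\in\{\{0\},M\}$ places $M$ in case (\ref{ca4}) (Euclidean) or case (\ref{ca6}) (totally disconnected). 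Finally, if $E=M$, then $M$ is amorphic; picking any compact open submodule $\Omega$, the three possibilities $\Omega=\{0\}$, $\Omega=M$, $\{0\}\ne\Omega\ne M$ place $M$ in case (\ref{ca3}), (\ref{ca2}), or (\ref{ca7}) respectively, and in the last one $\Omega$ is a proper nonzero closed submodule witnessing that $M$ is not topologically simple.

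The main obstacle is ensuring that each of these canonical constructions produces \emph{characteristic} submodules. This holds for $\Ell^\flat$ and $\Omega^\sharp$ by construction, for $M^\circ$ as a topological invariant, and for the sheer decomposition by the uniqueness assertion of Theorem~\ref{montotypic-i}. Once this is in hand, the classification reduces to bookkeeping against the invariants already supplied by the paper.
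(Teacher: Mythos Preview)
Your proof is correct and rests on the same principle as the paper's: every canonical construction ($\Ell^\flat$, $\Omega^\sharp$, $M^\circ$, the sheer decomposition) yields a characteristic closed submodule, which by hypothesis must be $\{0\}$ or $M$. The organization differs, however. The paper first splits on connectedness: in the connected case it uses $\Ell(M)$ to isolate the compact case and then, for the Euclidean case, a direct eigenvalue argument (taking $\Ker(\alpha-z)$ or $\Ker((\alpha-z)(\alpha-\bar z))$ for a non-unimodular eigenvalue $z$) to show that non-distal implies contractable, explicitly noting that this part does not require $A$ to be compactly generated. Only in the totally disconnected case does the paper invoke $\Ell^\flat$, $\Omega^\sharp$ and Theorem~\ref{sheerdec1}. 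You instead apply $\Ell^\flat$, $\Omega^\sharp$ globally, then the finer monotypic decomposition of Theorem~\ref{montotypic-i}, and only afterward split the contractable summand by $M^\circ$. Your route is slightly cleaner in that it avoids the ad hoc eigenvalue computation and handles the finite case (\ref{ca1}) more explicitly; the paper's route has the minor advantage of making visible that the connected case is elementary and independent of compact generation of $A$. Both arguments implicitly use that a characteristically simple polycontractable module is already contractable---you get this for free from the monotypic summands $P_i$ being characteristic, while the paper's appeal to Theorem~\ref{sheerdec1} leaves this step tacit.
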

\begin{proof}
By assumption, $M\neq\{0\}$. Clearly, $M$ is either connected or totally disconnected.

If $M$ is connected, let us conclude (we will not need the assumption on $A$). Then $\Ell(M)$ is a compact characteristic subgroup, hence either is $M$ or trivial. In the first case, $M$ is compact (hence Case (\ref{ca2})), and otherwise, $M$ is Euclidean. If $M$ is not distal (Case (\ref{ca5})), let us check that $M$ is contractable (Case (\ref{ca4})). Indeed, this means that there is some $\alpha$ acting with an eigenvalue $z$ of modulus $\neq 1$. If $z$ is real (resp.\ non-real), the $\Ker(\alpha-z)$ (resp. $\Ker((\alpha-z)(\alpha-\bar{z}))$ a nontrivial subspace, invariant under $\Aut(M)$, hence equals $M$ which is then contractable.

Now suppose that $M$ is totally disconnected. Then $\Omega^\flat(M)$ is either zero or $M$, and in the first case, Theorem \ref{cafi} (which uses that $A$ is compactly generated) ensures that $M$ is purely discrete (Case (\ref{ca3})). Similarly $\Ell^\flat(M)$ is either zero or $M$, and in the second case, $M$ is compact paracompact (Case (\ref{ca2})). Otherwise, $M$ is sheer. Using that $A$ is compactly generated (and that $M$ is totally disconnected), Theorem \ref{sheerdec1} then ensures that $M$ is either contractable (Case (\ref{ca6})) or amorphic.

If $M$ is amorphic, then it is either compact (Case (\ref{ca2})) discrete (Case (\ref{ca3})), or none (Case (\ref{ca7})). In the latter case, it has a proper open submodule that is infinite, so is not topologically simple.
\end{proof}

\begin{rem}
By a simple commutative algebra exercise, if $M$ is discrete (Cases (\ref{ca1}) or (\ref{ca2})), the kernel of $A\to\Aut(M)$ is an open subgroup $B$, and there is a prime ideal $P$ of $\Z[A/B]$ (with $A/B$ acting faithfully on $\Z[A/B]/P$, i.e., $A/B\cap 1+P=\{1\}$) such that $M$ is a nonzero vector space over the field $\mathrm{Frac}(\Z[A/B])$.

Case (\ref{ca7}) is not much harder to describe: for some prime $p$, $M$ is a $\Z_p$-module and either $M$ is a finite-dimensional vector space over $\Q_p$, or, for some infinite set $I$, $M$ has an open subgroup isomorphic to $\Z_p^I$ and $pM$ is dense in $M$.

If $M$ is Euclidean, one can check that $M$ is a real or complex vector space with a scalar action, which, in the complex case, is given by a continuous homomorphism $A\to\C^*$ whose image is not contained in $\R^*$.
\end{rem}

\begin{thm}\label{ascending}
Suppose that $A$ is compactly generated. Let $M$ be a compactly generated $A$-module. Then there is a compact submodule $W$ of $M$, namely $W=\Ell(M)$, such that for every ascending sequence $(M_n)$ of closed submodules of $M$, for large enough $n$, $M_n$ is cocompact in $\overline{\bigcup_i M_i}$.
\end{thm}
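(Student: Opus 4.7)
Let $N := \overline{\bigcup_i M_i}$; the goal is to show $N/M_n$ is compact for all sufficiently large $n$. The strategy is to reduce through the canonical three-step filtration $0 \subseteq \Ell^\flat(M) \subseteq \Omega^\sharp(M) \subseteq M$ of Theorem \ref{cafi}, handling the compact bottom, the sheer middle, and the purely discrete top in turn. The existence of the compact submodule $W = \Ell(M)$ follows from Corollary \ref{maxcp}.

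The first ingredient is a noetherian lemma: a compactly generated discrete $A$-module $D$ is noetherian. Indeed, $D$ is finitely generated (being compactly generated and discrete); the common stabilizer of a finite generating set is open, so the action factors through a compactly generated discrete, hence finitely generated abelian, quotient $A/A_0$; then $D$ is finitely generated over the noetherian ring $\Z[A/A_0]$. Applying this to $M/\Omega^\sharp(M)$ (compactly generated and purely discrete by Theorem \ref{cafi}), the images of the $M_n$ stabilize, so for large $n$ one has $M_n + \Omega^\sharp(M) = N + \Omega^\sharp(M)$, whence $N/M_n \cong (N \cap \Omega^\sharp(M))/(M_n \cap \Omega^\sharp(M))$. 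Quotienting out by the compact $\Ell^\flat(M)$ also preserves cocompactness, so one may assume $M$ is lower-sheer; then $\Omega^\sharp(M)$ is upper-sheer by Proposition \ref{omush} and lower-sheer as a submodule of $M$, hence sheer.

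Next, I apply Theorem \ref{sheerdec1} to decompose $\Omega^\sharp(M) = P \oplus E \oplus V$ with $P$ polycontractable, $E$ amorphic, $V$ distal Euclidean; moreover $E \subseteq \Ell(M) = W$ is compact. I analyze the three factors in parallel: in $E$ (compact) cocompactness of any closed submodule is automatic; in $V$ (finite-dimensional over $\R$) an ascending chain of closed subgroups has ascending connected components and bounded free ranks, both stabilizing, so the chain becomes cocompact in its closure. The main obstacle is the polycontractable factor $P$: here, by the theorem stated right after Example \ref{exzp1}, closed submodules of a contractable module correspond to submodules of a finite-length module over an artinian ring, so ascending chains of closed submodules stabilize in a single contractable factor. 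For $P = P_1 \oplus \dots \oplus P_k$ with each $P_i$ contractable, one projects and combines, exploiting the compatible decomposition of sheer submodules from Theorem \ref{montotypic-i}, the sheer core/envelope of Theorem \ref{corenv}, and boundedness of sheer chains from Corollary \ref{chain-poly}. The principal technical difficulty is that an individual $M_n$ need not be sheer, so the factor-wise analysis must be conducted via the ascending chain of sheer cores $M_n^\boxminus$ (whose polycontractable part is bounded by Corollary \ref{chain-poly}) together with the noetherian lemma applied to the purely discrete quotient $M_n/M_n^\boxminus$; combining the contributions from $P$, $E$, $V$ finally yields that $(N\cap\Omega^\sharp(M))/(M_n\cap\Omega^\sharp(M))$, and hence $N/M_n$, is compact for large $n$.
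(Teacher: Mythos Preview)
Your reductions (noetherianity of the discrete top $M/\Omega^\sharp(M)$, modding out by $\Ell^\flat(M)$, and the decomposition $\Omega^\sharp(M)=P\oplus E\oplus V$) match the paper's, but the final step has a genuine gap: you pass to the sheer \emph{cores} $M_n^\boxminus$, whereas the paper uses the sheer \emph{envelopes} $(M_n\cap\Omega^\sharp(M))^\boxplus$. This is not cosmetic. From Theorem~\ref{corenv}, the envelope satisfies $N^\boxplus/N$ compact, which is exactly the cocompactness you need; the core only gives $N/N^\boxminus$ purely discrete, and your proposed ``noetherian lemma applied to $M_n/M_n^\boxminus$'' has no ambient noetherian module to live in --- those quotients sit inside the sheer (typically non-discrete) module $\Omega^\sharp(M)/K$, not inside $M/\Omega^\sharp(M)$.

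A concrete failure: take $A=\Z$ acting by multiplication by $2$ on $M=\R\times\Q_2$, and $M_n=3^{-n}\Z[1/2]$ (diagonally embedded). This is a strictly ascending chain of cocompact lattices with dense union. Each $M_n$ is purely discrete, so $M_n^\boxminus=\{0\}$ for all $n$; your core chain stabilises at $K=\{0\}$, and then $M_n/M_n^\boxminus=M_n$ is a strictly ascending chain of discrete submodules of the non-discrete $\R\times\Q_2$ --- no noetherian argument applies. By contrast $(M_n)^\boxplus=\R\times\Q_2$ for every $n$, so the envelope chain stabilises immediately at $L=M$, and $L/M_n$ is compact by Theorem~\ref{corenv}. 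The paper then sets $M'_n=L+M_n$, checks $M'_n\cap\Omega^\sharp(M)=L$ is constant, and finishes by noetherianity of $M/\Omega^\sharp(M)$ applied to the images of the $M'_n$. Replacing $\boxminus$ by $\boxplus$ in your last paragraph, and dropping the appeal to noetherianity of $M_n/M_n^\boxminus$, would repair the argument.
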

\begin{proof}
By Corollary \ref{maxcp}, $\Ell(M)$ is compact. Hence, for the first assertion, there is no restriction in assuming that $\Ell(M)$ is trivial.

The module $\Omega^\sharp(M)$ is sheer. Let us use the notation $\boxplus$ relative to the ambient module $\Omega^\sharp(M)$. Since $\Ell(M)=\{0\}$, we can write $\Omega^\sharp(M)=P\oplus V$ with $P$ polycontractable and $V$ distal Euclidean. Hence, by Corollary \ref{chain-poly}, the sequence $(M_n\cap \Omega^\sharp(M))^\boxplus$ is stationary, say, equal to a certain submodule $L$. There is no restriction in assuming that $(M_n\cap \Omega^\sharp(M))^\boxplus=L$ for all $n$. By Theorem \ref{corenv}, $(M_n\cap \Omega^\sharp(M))^\boxplus$ contains $M_n\cap \Omega^\sharp(M)$ as closed cocompact subgroup. Hence $L\cap M_n$ is closed cocompact in $L$. It follows that $M'_n=L+M_n$ is closed and contains $M_n$ as closed cocompact subgroup.

We claim that $(M'_n)$ is stationary. We first observe that $M'_n\cap \Omega^\sharp(M)=L$. The inclusion $\subseteq$ is trivial. If $x\in M'_n\cap \Omega^\sharp(M)$, write it as $y+z$, $y\in L$, $z\in M_n$. Since $x\in\Omega^\sharp(M)$, we have $z\in \Omega^\sharp(M)$, so $z\in M_n\cap \Omega^\sharp(M)\subseteq L$ and hence $x\in L$. The observation ensures that $M'_n\cap\Omega^\sharp(M)$ is stationary. Then the claim follows because the projection of $M'_n$ in the discrete quotient is ascending, hence stationary by noetherianity.

Write $M'_n=M'$ for large $n$. Then $M_n$ is cocompact in $M'$ for large enough $n$. The result follows. 
\end{proof}

\begin{cor}\label{scgcg}
Let $A$ be compactly generated. Then every closed submodule of a compactly generated LC $A$-module is compactly generated.
\end{cor}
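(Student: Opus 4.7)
The plan is to exhaust $N$ by an increasing sequence of compactly generated closed $A$-submodules of $M$ and then apply Theorem \ref{ascending} to conclude that one of them is already cocompact in $N$.

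First, since $M$ is compactly generated as $A$-module, it is compactly generated as LC group (because $A$ is compactly generated, so $M\rtimes A$ is compactly generated, and hence so is its cocompact, indeed normal, subgroup $M$); in particular $M$ is $\sigma$-compact, and therefore so is the closed subgroup $N$. Write $N=\bigcup_{n\ge 1}K_n$ with $K_n$ compact and $K_n\subseteq K_{n+1}$. Let $N_n$ be the closed $A$-submodule of $M$ topologically generated by $K_n$. Then $N_n$ is a compactly generated closed $A$-submodule of $M$ contained in $N$, $(N_n)$ is ascending, and $\overline{\bigcup_n N_n}\supseteq\overline{\bigcup_n K_n}=N$; since each $N_n$ sits inside $N$, equality holds.

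Next, apply Theorem \ref{ascending} to the compactly generated $A$-module $M$ and the ascending sequence $(N_n)$: there exists $n$ such that $N_n$ is cocompact in $\overline{\bigcup_i N_i}=N$. Thus $N$ has a compactly generated closed cocompact $A$-submodule.

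It remains to upgrade this to compact generation of $N$ itself. Since $N_n$ is compactly generated as $A$-module and $A$ is compactly generated, the semidirect product $N_n\rtimes A$ is a compactly generated LC group; it is closed and cocompact in $N\rtimes A$ because $(N\rtimes A)/(N_n\rtimes A)\cong N/N_n$ is compact. Proposition \ref{cgcoco} then gives that $N\rtimes A$ is compactly generated, and hence $N$ is compactly generated as $A$-module.

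No step presents a serious obstacle, the substantive input being Theorem \ref{ascending}; the only things to check are the routine facts that topological generation by a compact set qualifies as compact generation for the module, and that compact generation of an LC $A$-module is equivalent to compact generation of the LC group $N\rtimes A$ whenever $A$ itself is compactly generated.
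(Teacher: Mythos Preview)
Your approach is essentially the paper's: build an ascending sequence of compactly generated closed submodules exhausting $N$, apply Theorem \ref{ascending} to get one that is cocompact, and then pass to $N$. The paper arranges its $N_n$ to be open in $N$ (starting from an open compactly generated $N_0$), so that cocompactness yields finite index and the last step is immediate; your variant via $N_n\rtimes A\subseteq N\rtimes A$ and Proposition \ref{cgcoco} is a perfectly good substitute.

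There is, however, a genuine slip in your first paragraph. You write that $M$ is the ``cocompact, indeed normal, subgroup'' of $M\rtimes A$, and deduce from Proposition \ref{cgcoco} that $M$ is compactly generated as an LC group. But $(M\rtimes A)/M\cong A$, which is compactly generated, not compact; so $M$ is \emph{not} cocompact in $M\rtimes A$, and indeed $M$ need not be compactly generated as an LC group (e.g.\ $M=\Q_p$, $A=\Z$ acting by multiplication by $p$). Fortunately you only use the weaker conclusion that $M$ is $\sigma$-compact, and this is immediate: $M\rtimes A$ is compactly generated, hence $\sigma$-compact, and $M$ is a closed subgroup. With this correction the argument goes through.
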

\begin{proof}
Let $M$ be a compactly generated LC module. Let $N$ be a closed submodule. Let $N_0$ be an open, compactly generated closed submodule of $N$. Since $M$ is $\sigma$-compact, so is $N$, and hence the discrete quotient $N/N_0$ is countable. Hence there is an ascending sequence $(N_n)$ of compactly generated submodules of $N$ with $N_0$ as previously defined, and $\bigcup N_i=N$. By Theorem \ref{ascending}, for some $n$, $N_n$ is cocompact in $N$. Since $N_n$ is compactly generated and cocompact in $N$, it follows that $N$ is compactly generated (Proposition \ref{cgcoco}).
\end{proof}

\section{Fine structure of contractable modules}

\begin{lem}\label{homlc1}
Let $M,M'$ be LC $A$-modules. Suppose $M\oplus M'$ is contractable.
Then $\Hom_{A}(M,M')$ (the group of continuous homomorphisms) is locally compact for the compact-open topology.
\end{lem}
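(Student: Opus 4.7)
The plan is to exhibit an explicit compact neighborhood of $0$ in $\Hom_A(M,M')$; since the compact-open topology makes this an abelian topological group (as $M'$ is abelian), translation-invariance then yields local compactness everywhere. Since $M\oplus M'$ is contractable, fix $\alpha\in A$ acting as a contracting automorphism of $M\oplus M'$; restricting to the $A$-submodules $M$ and $M'$, the same $\alpha$ acts contractingly on each. Choose compact neighborhoods $\Omega\subseteq M$ and $\Omega'\subseteq M'$ of $0$ satisfying $\alpha(\Omega)\subseteq\Omega$ and $\alpha(\Omega')\subseteq\Omega'$, which is always possible by replacing an arbitrary compact neighborhood by $\bigcup_{k=0}^{N}\alpha^k(\cdot)$ for $N$ large enough and invoking uniform contraction on compacts.

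The candidate compact neighborhood of $0$ is
\[V=\{f\in\Hom_A(M,M'):f(\Omega)\subseteq(\Omega')^\circ\},\]
which is open in the compact-open topology and contains $0$. To show $\overline{V}$ is compact, I would apply Ascoli's theorem to $V\subseteq C(M,M')$, checking the two usual hypotheses: pointwise relative compactness and equicontinuity.

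For pointwise relative compactness, given $x\in M$, contraction of $\alpha$ on $M$ produces some $n\ge 0$ with $\alpha^n(x)\in\Omega$; then for every $f\in V$, $A$-equivariance gives $f(x)=\alpha^{-n}(f(\alpha^n x))\in\alpha^{-n}(\overline{\Omega'})$, a fixed compact set depending on $x$ but not $f$. For equicontinuity, since each $f$ is a homomorphism it suffices to check equicontinuity at $0$: given an open neighborhood $W\ni 0$ in $M'$, contraction of $\alpha$ on $M'$ produces $n$ with $\alpha^n(\Omega')\subseteq W$; then $U:=\alpha^n(\Omega^\circ)$ is an open neighborhood of $0$ in $M$ and for any $y=\alpha^n(z)\in U$ with $z\in\Omega^\circ$, $A$-equivariance gives $f(y)=\alpha^n(f(z))\in\alpha^n(\Omega')\subseteq W$, uniformly in $f\in V$. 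Finally, $\Hom_A(M,M')$ is closed in $C(M,M')$ for the compact-open (and even pointwise) topology, since both the homomorphism relation and $A$-equivariance are defined by finitely many simultaneous point-evaluations. Hence $\overline{V}$ lies inside $\Hom_A(M,M')$ and is compact there.

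The main subtlety I anticipate is that the two Ascoli conditions pull in opposite directions with respect to the contraction: equicontinuity genuinely requires contraction on the \emph{target} $M'$, while pointwise relative compactness requires it on the \emph{source} $M$. This is precisely what justifies phrasing the hypothesis as contractability of $M\oplus M'$ under a single $\alpha$, rather than contractability of just one factor.
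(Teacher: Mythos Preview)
Your proof is correct. The approach differs from the paper's in its packaging, though the underlying ideas are the same. The paper first shows that the basic neighborhood $V(K,K')$ has compact closure in the \emph{pointwise} topology on ${M'}^M$, then argues that every element of this closure is a locally bounded homomorphism and hence continuous (this is where contraction on $M'$ enters), and finally invokes a result of Corson--Glicksberg \cite{CGl} (based on Baire's theorem) to upgrade pointwise compactness to compact-open compactness. You instead apply Ascoli's theorem directly: your pointwise relative compactness step matches the paper's ``locally bounded'' observation (both using contraction on $M$), and your equicontinuity step is exactly the paper's ``locally bounded $\Rightarrow$ continuous'' argument rephrased (both using contraction on $M'$). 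Your route is more self-contained, avoiding the external reference; the paper's route makes more explicit the passage through the pointwise topology. Your closing remark that the two Ascoli hypotheses pull in opposite directions nicely explains why contractability of $M\oplus M'$ (rather than of one factor alone) is the right hypothesis.
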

\begin{proof}
The assumption means that there exists $\alpha\in A$ acting as a contraction on both $M$ and $M'$, we fix such an $\alpha$. Since $\Hom_{A}(M,M')$ is a topological group, it is enough to find a compact neighborhood of 0.

The zero homomorphism has a basis of neighborhoods of the form $V(K,K')$, where $K$ is a compact symmetric subset of $M$, $K'$ is a compact neighborhood of 0 in $M'$, where $V(K,K')$ is the set of continuous homomorphisms $M\to M'$ mapping $K$ into $K'$.

We first observe the following: if $K$ generates $M$ as $A$-module, then $V(K,K')$ is compact has a compact closure $V'$ in ${M'}^M$ (the set of maps $M\to M'$ endowed with the pointwise convergence topology, ignoring the topology on $M$). And moreover the closure of $V(K,K')$ consists of maps that are locally bounded (the image of every compact subset has compact closure).
Indeed if $P$ is a compact subset of $M$, then $P\subseteq \bigcup_{i=1}^n\alpha_iK$ for some finite family $(\alpha_i)_{1\le i\le n}$, and hence for every $f\in V(K,K')$ we have $f(P)\subseteq \bigcup_{i=1}^n\alpha_iK'$. (The existence of $K$ only makes use of the fact that $M$ is compactly generated, which follows from being contractable.)

Next, we show that every element $f$ of the closure $V'$ is continuous. Namely, we show that, assuming the existence of $\alpha$ as above, every locally bounded homomorphism $f$ is continuous. Indeed, if $(x_j)$ is a net tending to 0 in $M$, we can write $x_j=\alpha^{k_j}(y_j)$ with $(y_j)$ bounded and $k_j\to +\infty$. So $f(x_j)=\alpha^{k_j}f(y_j)\in \alpha^{k_j}(f(K))$. Since $f(y_j)$ is bounded, while $\alpha^{k_j}$ converges uniformly on bounded subsets to the constant $1$. Hence $f(x_j)$ tends to $0$.


Thus $V(K,K')$ is compact in the pointwise convergence topology. This implies it is also compact in the compact-open topology, by a general argument only based on the Baire theorem, see \cite{CGl}.
\end{proof}

\begin{thm}\label{afl1}
Let $M$ be a contractable $A$-module. Let $R$ be a closed $A$-subalgebra of $\End_{A}(M)$. Then every $R$-submodule of $M$ is closed and in particular $M$ has finite length over $R$. The underlying ring $R$ is an artinian ring (i.e., $R$ has finite length as $R$-module).
\end{thm}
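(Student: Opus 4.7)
The argument splits into three stages, with the main technical hurdle in the last.

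\emph{Stage 1 (setup).} Apply Lemma~\ref{homlc1} with $M'=M$ to get that $\End_A(M)$ is locally compact for the compact-open topology, so the closed subalgebra $R$ is locally compact. Moreover, $R$ is itself a contractable $A$-module under left multiplication: for the contracting $\alpha\in A$, any $r\in R$, and compact $K\subset M$, $(\alpha^n r)(K)=\alpha^n(r(K))\to 0$, so $\alpha^n r\to 0$ in the compact-open topology. Applying Lemma~\ref{boundchainc} to both $M$ and $R$, and using that $R$ being an $A$-subalgebra contains the image of $A$, we obtain bounds on the length of chains of closed left ideals of $R$ and of closed $R$-submodules of $M$, since such are in particular closed $A$-submodules.

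\emph{Stage 2 (base case).} I proceed by induction on the chain bound to show every $R$-submodule of $M$ is closed. In the topologically simple case, for $0\ne x\in M$ the closure $\overline{Rx}$ is a nonzero closed $R$-submodule equal to $M$, so $Rx$ is dense. The orbit map factors through a continuous $A$-equivariant injection $\bar\phi_x:R/\ann_R(x)\hookrightarrow M$ of contractable $A$-modules. Using Corollary~\ref{semidec1}, reduce to the Euclidean case (immediate from linear algebra) and the totally disconnected case. In the latter, Pontryagin duality turns $\bar\phi_x$ into a continuous $A$-equivariant homomorphism $\hat M\to\widehat{R/\ann_R(x)}$ of discrete $A$-modules, whose image is dense and, the codomain being discrete, hence full. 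A surjection of discrete $A$-modules dualizes to a closed embedding, so $\bar\phi_x$ is a closed embedding; $Rx$ is closed and, by density, equals $M$. Hence every nonzero $R$-submodule of a topologically simple $M$ equals $M$.

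\emph{Stage 3 (induction and conclusion).} For chain bound at least $2$, pick a simple closed $R$-submodule $M_1\subsetneq M$ with projection $\pi:M\to M/M_1$; by induction on $M_1$ and $M/M_1$, all their $R$-submodules are closed. Given $N\subset M$ an $R$-submodule, $N\cap M_1$ and $\pi(N)$ are closed, so $N+M_1=\pi^{-1}(\pi(N))$ is closed. If $M_1\subset N$, then $N=N+M_1$ is closed. Otherwise $N\cap M_1=0$, and one must show that the algebraic direct sum $N+M_1=N\oplus M_1$ is topological, using an automatic-continuity property in the spirit of the proof of Lemma~\ref{homlc1}: an $R$-module homomorphism between contractable $A$-modules is continuous once it is locally bounded. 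Once every $R$-submodule of $M$ is closed, the chain bound yields finite $R$-length of $M$. Finally, applying the entire argument to $R$ regarded as a left module over itself (a contractable $A$-module, with $R$ as its own $A$-subalgebra in $\End_A(R)$) shows every left ideal of $R$ is closed, whence $R$ is left artinian of finite length.

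\emph{Main obstacle.} The delicate point is the automatic-continuity claim in Stage 3's case $N\cap M_1=0$: an algebraic $R$-section of a short exact sequence of simple closed $R$-modules inside a contractable $A$-module must be continuous. Once this is in hand, the remaining ingredients---the local compactness of $R$ from Lemma~\ref{homlc1}, the chain bound from Lemma~\ref{boundchainc}, and the Pontryagin-duality argument in the base case---combine transparently to yield the three assertions.
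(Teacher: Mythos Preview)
Your Stage~2 contains a genuine error. The Pontryagin dual of a nontrivial totally disconnected contractable module is \emph{not} discrete: for instance $\widehat{\Q_p}\cong\Q_p$. More generally, $\hat M$ is discrete iff $M$ is compact, and a nonzero contractable module is never compact. So the sentence ``the codomain being discrete, hence full'' has no force, and dualizing the dense injection $\bar\phi_x:R/\ann_R(x)\hookrightarrow M$ simply yields another dense injection $\hat M\hookrightarrow\widehat{R/\ann_R(x)}$ --- you have gained nothing. The missing content is precisely what the paper supplies: a direct properness argument for the orbit map $r\mapsto rm$, exploiting the contraction to write a divergent net $r_i=\alpha^{-k_i}\phi_i$ with $\phi_i$ trapped in a compact shell $\Omega\smallsetminus\alpha(\Omega)$, and then deriving a contradiction from boundedness of $r_im$.

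Your Stage~3 has a second gap, which you yourself flag as the ``main obstacle'' but do not resolve. When $N\cap M_1=0$, you need the algebraic $R$-section $s:(N+M_1)/M_1\to N+M_1$ to be continuous. You invoke ``locally bounded implies continuous'' from the proof of Lemma~\ref{homlc1}, but you give no reason why $s$ should be locally bounded; a priori $s$ is an abstract $R$-linear map with no topological control. (Note that for $A$-linear maps this fails outright: there are discontinuous $\Z[p,p^{-1}]$-linear endomorphisms of $\Q_p$.) To get control you would again need something like the properness of orbit maps, which is the heart of the paper's proof. The paper sidesteps your inductive scheme entirely: it takes a minimal counterexample, shows that a non-simple minimal counterexample forces $R_0$ to be a locally compact field (whence all submodules of $R_0^2$ are closed), and handles the simple case by the properness argument above.
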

\begin{proof}
Let us first show the closedness statement. Here it is enough to suppose that $R$ is the closure $R_0$ of the image of $\Z[A]$ and in particular is commutative.

Let $M$ be a counterexample, of minimal length as a topological module, and $N$ a non-closed submodule. By minimality, $N$ is dense, and does not contain any nonzero closed submodule of $M$.

If $x\in N\smallsetminus\{0\}$ then $N\cap\overline{R_0x}$ is a non-closed submodule of $M$ and hence is dense; in particular $\overline{R_0x}=M$ for every $x\in N$. 

If $M$ is not simple as a topological module, let $L$ be a closed submodule of $M$ distinct from $\{0\},M$. Then by minimality $N\cap L=\{0\}$ and the projection of $N$ on $M/L$ is all of $M/N$; in particular, $M=N\oplus L$ as an abstract module. The latter holds for every nonzero proper submodule $N$ and non-closed submodule $L$. This shows that there is no proper inclusion
\begin{itemize}\item between any two non-closed submodules of $M$;
\item between any two nonzero proper closed submodules of $M$. In particular, $M$ has length $2$ as a topological module.
\end{itemize}

It follows that $M$ also has length $2$ as an abstract module over $R_0$. Since it is a faithful $R_0$-module, we deduce that $R_0$ is artinian. More precisely, the existence of a faithful module that is the direct sum of two simple modules implies that either $R_0$ is a field, or the product of two fields. If by contradiction $R$ is the product of two fields, let $\pi$ and $1-\pi$ be its nontrivial idempotents. Then in the (then unique) direct product decomposition of $M$, the summands are $\Ker(\pi)$ and $\Ker(1-\pi)$, which are closed, a contradiction. Thus $R_0$ is a field.
So $R_0$ is a locally compact field and $M$ is isomorphic to $R_0^2$; in this case every $R_0$-submodule of $R_0^2$ is closed and we also have a contradiction with the assumption that $M$ is not topologically simple. Hence $M$ is simple as a topological module.


Observe that the multiplication by $\alpha$ in $\End_{A}(M)$, and hence in $R_0$, is contracting. Let us show that for every $m\in M$, the multiplication map $\End(M)\to M$ mapping $r$ to $rm$, is a proper map.

Indeed, let $(r_i)$ be a net tending to infinity in $\End(M)$. If $\Omega$ is a compact neighborhood of $0$, we can write $r_i=\alpha^{-k_i}(\phi_i)$, where $\phi_i\in\Omega\smallsetminus\alpha(\Omega)$, and $k_i$ tends to $+\infty$. By contradiction, we can assume that $r_im$ is bounded, and actually tends to an element $m'$ of $M$, and also that $\phi_i$ tends to an element $\phi$ of $\End(M)$. Define $e_i=r_im-m'$, which tends to 0. So $\phi_im-\alpha^{k_i}m'=\alpha^{k_i}e_i$, and hence $\phi_im$ tends to zero. So $\phi m=0$. Since the set of $m$ such that $\phi m=0$ is a closed submodule, we deduce that $\phi=0$, a contradiction. So the multiplication map $\End(M)\to M$ mapping $r$ to $rm$, is a proper map.

Fix a nonzero $m\in N$. We deduce that $R_0m$ is closed for every $m\in N$. Since we proved earlier that $R_0m$ is dense in $M$, we have $M=R_0m\subseteq N$, and thus $N=M$, a contradiction. Therefore the closedness assertion is proved. The second one then follows from Lemma \ref{boundchainc}.


(We do no longer assume $R_0=R$.) Since $R_0$ is commutative and $M$ is a faithful $R_0$-module of finite length, it follows that $R_0$ is an artinian ring. Now $R$ is also a contractable module and applying this to $M=R$, we obtain that $R$ is artinian (i.e.\ has finite length as left module over itself), since it has finite length over $R_0$, both as a left or right module.
\end{proof}

\begin{rem}
For $A=\langle\alpha\rangle$ infinite cyclic, in the case of the polycontractable module $M=\Z/p\Z\lp t\rp\times\Z/p\Z\lp t^{-1}\rp$ (with $\alpha$ acting by multiplication by $t$), the closure of the image of $\Z[\alpha^{\pm 1}]$ in $\End(M)$ is reduced to $\Z/p\Z[t^{\pm 1}]$. In particular, $M$ is not finitely generated over this closure.
\end{rem}

Let $\K$ be a locally compact field. Let $i:A\to\K^*$ be a continuous homomorphism whose image is not contained in the 1-sphere (the maximal compact subgroup of $\K^*$, which is the group of norm-1 element for any compatible multiplicative norm), such that the closed additive subgroup generated by $i(A)$ equals $\K$. The following corollary was obtained by Gl\"ockner and Willis for $A=\Z$ (the case of arbitrary $A$ is not more difficult, although we use here a different language even when $A=\Z$).


\begin{cor}\label{sico}
Each such module is simple contractable, and conversely every simple contractable $A$-module has this form.
\end{cor}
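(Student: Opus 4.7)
The plan is to handle the two directions separately; the converse relies heavily on Theorem~\ref{afl1} and Lemma~\ref{homlc1}.

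For the forward direction, take $M = \K$ with the action $\alpha \cdot x = i(\alpha) x$. Since $i(A)$ is not contained in the 1-sphere, some $\alpha \in A$ satisfies $|i(\alpha)| \neq 1$; replacing $\alpha$ by $\alpha^{-1}$ if needed, $|i(\alpha)| < 1$, and multiplication by $i(\alpha)$ is a contracting automorphism of the LCA-group $\K$, so $M$ is contractable. For topological simplicity, let $N$ be a nonzero closed submodule and pick $x \in N \setminus \{0\}$; then $N \supseteq i(A) x$, and being a closed additive subgroup, $N$ contains the closed additive subgroup generated by $i(A) x$. This last is the image of the closed additive subgroup generated by $i(A)$ (which is $\K$ by hypothesis) under the self-homeomorphism $y \mapsto xy$ of $\K$, so $N = \K$.

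For the converse, let $M$ be a simple contractable $A$-module and $R$ the closure of the image of $\Z[A]$ in $\End_A(M)$; since $A$ is abelian, $R$ is commutative. By Theorem~\ref{afl1}, every $R$-submodule of $M$ is closed, so closed $A$-submodules and $R$-submodules of $M$ coincide, and $M$ is simple as an $R$-module. The action of $R$ on $M$ is faithful by construction; since $R$ is commutative, the annihilator ideal of $M$ in $R$ is both zero (by faithfulness) and maximal (by simplicity), forcing $R$ itself to be a field $\K$. Fixing $m \in M \setminus \{0\}$, the evaluation map $r \mapsto rm$ is a continuous $\K$-linear map $\K \to M$, proper by the argument in the proof of Theorem~\ref{afl1}, surjective by simplicity and injective because $\K$ is a field; hence it is a topological isomorphism identifying $M$ with $\K$ acted on by scalar multiplication.

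To conclude, I need to verify that $\K$ is a locally compact field in the standard sense and to extract the character $i$. Lemma~\ref{homlc1} applied with $M' = M$ (noting that $M \oplus M$ is still contractable) shows $\End_A(M)$ is locally compact, so its closed subring $R = \K$ inherits local compactness. The classical fact that a locally compact Hausdorff topological ring which is a field is automatically a topological field (continuous inversion) then ensures that $\K$ is a locally compact field. Define $i : A \to \K^*$ as the composition of the structural map $A \to R$ with the identification $R = \K$; it lands in $\K^*$ since each $\alpha$ acts invertibly, and is continuous. The image of $\Z[A]$ is dense in $R = \K$ by construction, so the additive subgroup generated by $i(A)$ is dense in $\K$. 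Finally, contractability of $M$ forces some $\alpha$ to act as a contracting scalar, i.e.\ $|i(\alpha)| < 1$, so $i(A)$ is not contained in the 1-sphere.

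The main anticipated obstacle is the passage from $R = \K$ being a locally compact commutative Hausdorff topological ring that happens to be a field, to genuinely being a locally compact field (with continuous inversion). I handle this by invoking the classical Jacobson--Kaplansky--Pontryagin result as a black box; a direct verification along the lines of that classification would require additional work.
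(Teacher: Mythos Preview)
Your proof is correct and follows essentially the same route as the paper: both directions match, and for the converse both rely on Lemma~\ref{homlc1} for local compactness and Theorem~\ref{afl1} (including the properness argument in its proof) to identify the relevant ring as a field and to make $r\mapsto rm$ a topological isomorphism. The only differences are cosmetic: you take $\K$ to be the closure $R$ of the image of $\Z[A]$ (automatically commutative) rather than $\End_A(M)$ as the paper does---these coincide---and you are more explicit than the paper about continuous inversion and about verifying the hypotheses on $i$.
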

\begin{proof}
Given such data, if $W\subseteq\K$ is a nonzero closed submodule and $w\in W\smallsetminus\{0\}$, then $w^{-1}W$ is also a nonzero submodule, hence contains $i(A)$, and hence equals $\K$ by assumption. Hence $W=\K$ and $\K$ is a simple $A$-module. Since some element of $i(A)$ has norm $<1$, it acts as a contraction on $\K$.

Conversely, let $M$ be a simple contractable module. Then $\K=\End_A(M)$ is a locally compact ring (Lemma \ref{homlc1}), and is also, by Theorem \ref{afl1} an artinian ring with a faithful simple module, and hence is a field. Let $i$ be the canonical map $A\to\K$. Fixing any nonzero $m\in M$, the map $\K\to M$ mapping $r\mapsto rm$ yields the desired isomorphism.
\end{proof}

\begin{prop}
Let $M,S$ be nonzero contractable $A$-modules, with $S$ simple. Then $S$ is isomorphic to a quotient of $M$ if and only if it is isomorphic to a submodule of $M$, if and only if it is isomorphic to a subquotient of $M$.
\end{prop}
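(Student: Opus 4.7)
The implications ``$S$ is a quotient of $M$'' $\Rightarrow$ ``$S$ is a subquotient'' and ``$S$ is a submodule'' $\Rightarrow$ ``$S$ is a subquotient'' are tautological, so the content is to show that being a subquotient forces $S$ to be realized both as a submodule and as a quotient of $M$.

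The approach is to apply Theorem~\ref{afl1} with $R$ equal to the closure of the image of $\Z[A]$ in $\End_A(M)$. Then $R$ is an artinian commutative ring acting faithfully on $M$, $M$ has finite length over $R$, and the $R$-submodules of $M$ coincide with the closed $A$-submodules: the ``$R$-submodule is closed'' direction is built into Theorem~\ref{afl1}, and conversely any closed $A$-submodule is $\Z[A]$-stable, hence (being closed in $M$, and since $R$ is the closure of $\Z[A]$ in the compact-open topology on $\End_A(M)$) also $R$-stable. Decompose $R = \prod_{j=1}^{k} R_j$ into artinian local factors with maximal ideals $\mathfrak{m}_j$; the central idempotents $e_j \in R \subseteq \End_A(M)$ are continuous, so $M = \bigoplus_{j=1}^{k} M_j$ topologically with $M_j = e_j M$ closed and nonzero (by faithfulness of the $R$-action).

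If $S \cong N/L$ with $L \subseteq N$ closed $A$-submodules, then $N$ and $L$ are $R$-submodules and $N/L$ is simple both as $A$-module and as $R$-module (via the correspondence above). Applying Jordan--H\"older to $M$ viewed as a finite-length $R$-module, $S$ is isomorphic as $R$-module---hence as $A$-module---to some $R_j/\mathfrak{m}_j$; after relabeling, take $j = 1$.

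For the quotient realization, $\mathfrak{m}_1$ is nilpotent and $M_1 \neq 0$, so iterating $\mathfrak{m}_1 M_1 = M_1$ would give $M_1 = \mathfrak{m}_1^n M_1 = 0$; hence $M_1/\mathfrak{m}_1 M_1$ is a nonzero vector space over $R_1/\mathfrak{m}_1 \cong S$, which admits $S$ as a one-dimensional quotient. Composing with the projection $M \twoheadrightarrow M_1$ gives $S$ as a quotient of $M$. For the submodule realization, let $n \ge 1$ be minimal with $\mathfrak{m}_1^n = 0$; faithfulness of $R_1$ on $M_1$ ensures $\mathfrak{m}_1^{n-1} M_1 \neq 0$, and this subspace is annihilated by $\mathfrak{m}_1$, so it is a nonzero $R_1/\mathfrak{m}_1$-vector space containing a copy of $S$, which embeds as an $A$-submodule of $M_1 \subseteq M$. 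The main potential obstacle is not the final commutative-algebra step but the setup: checking that closed $A$-submodules and $R$-submodules coincide, and that $A$-module isomorphism of simple subquotients matches $R$-module isomorphism; once this topological/algebraic dictionary is in place, the rest is a standard idempotent-plus-Nakayama reduction over an artinian commutative ring.
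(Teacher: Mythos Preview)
Your proof is correct and follows the same route as the paper's one-line argument (``This follows from the analogous statement for finitely generated modules over artinian rings''): you invoke Theorem~\ref{afl1} to translate closed $A$-submodules into $R$-submodules of a finite-length module over a commutative artinian ring, then run the standard idempotent-and-Nakayama reduction. The one point you flag but do not fully verify---that abstract $R$-isomorphism of two simple closed subquotients of $M$ upgrades to topological $A$-isomorphism---is easily filled (each such subquotient is a one-dimensional topological vector space over the nondiscrete locally compact field $R/\mathfrak m_1$, the annihilator being closed as an intersection of point-annihilators, hence is topologically $\cong R/\mathfrak m_1$), and the paper's proof leaves this equally implicit.
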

\begin{proof}
This follows from the analogous statement for finitely generated modules over artinian rings.
\end{proof}

\begin{defn}\label{d_monotypic}
The {\bf support} of a polycontractable $A$-module is the set of its isomorphic classes simple contractable subquotients (or equivalently, simple contractable submodules, or quotients).

The polycontractable module is {\bf monotypic} if its support is a singleton; the unique simple module in this singleton is called its {\bf type}. Two polycontractable modules are {\bf disjoint} if their supports are disjoint.
\end{defn}

\begin{prop}
Two polycontractable modules $M,M'$ are disjoint if and only if $\mathrm{Hom}_A(M,M')=\{0\}$.\qed
\end{prop}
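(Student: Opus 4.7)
The plan is to reduce to the monotypic case via Theorem \ref{montotypic-i} and then exploit the preceding proposition together with Theorem \ref{afl1}. Using Theorem \ref{montotypic-i}, I would write $M=\bigoplus_k P_k$ and $M'=\bigoplus_l P'_l$ with each $P_k$, $P'_l$ monotypic contractable of respective types $S_k$, $S'_l$, so that $\mathrm{supp}(M)=\{S_k\}$ and $\mathrm{supp}(M')=\{S'_l\}$. The homomorphism group $\mathrm{Hom}_A(M,M')$ then splits as $\bigoplus_{k,l}\mathrm{Hom}_A(P_k,P'_l)$, so everything reduces to the statement that for monotypic contractable modules $P$, $P'$ of types $S$, $S'$, one has $\mathrm{Hom}_A(P,P')=0$ unless $S\cong S'$, with nonzero homomorphisms existing when $S\cong S'$.

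For the easy direction, suppose $S$ lies in both supports, so $S\cong S_k\cong S'_l$ for some $k,l$. By the preceding proposition applied to the contractable modules $P_k$ and $P'_l$, $S$ is both a quotient of $P_k$ and a submodule of $P'_l$ (monotypicity gives one inclusion, the proposition gives the other); composing a surjection $M\twoheadrightarrow P_k\twoheadrightarrow S$ with an inclusion $S\hookrightarrow P'_l\hookrightarrow M'$ yields a nonzero element of $\mathrm{Hom}_A(M,M')$.

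For the converse, suppose $f\colon M\to M'$ is nonzero; then some component $f_{kl}\colon P_k\to P'_l$ is nonzero, and I aim to show $S_k\cong S'_l$. The (a priori abstract) image $N=f_{kl}(P_k)$ is a nonzero $A$-module of finite length, being a quotient of the finite-length module $P_k$ (Theorem \ref{afl1}). Pick any simple $A$-submodule $T$ of $N$: as an abstract simple $A$-submodule of the contractable module $P'_l$, Theorem \ref{afl1} forces $T$ to be closed in $P'_l$, so $T$ represents a class in $\mathrm{supp}(P'_l)=\{S'_l\}$ and $T\cong S'_l$. On the other hand, the preimage $f_{kl}^{-1}(T)\subseteq P_k$ is closed (again by Theorem \ref{afl1}), hence $T$ is a simple quotient of the closed submodule $f_{kl}^{-1}(T)$ of $P_k$, i.e.\ a simple subquotient of $P_k$, and monotypicity gives $T\cong S_k$. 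Thus $S_k\cong S'_l$ lies in both supports. The main subtlety to be careful with is precisely this topological point: an abstract nonzero homomorphism between contractable modules need not have closed image, but the simple submodule $T$ extracted from that image is nonetheless automatically closed, because Theorem \ref{afl1} ensures that every $A$-submodule of a contractable module is closed. This is what allows $T$ to be recognized as a genuine (topological) subquotient of both sides simultaneously.
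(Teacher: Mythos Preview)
Your overall strategy is sound, but the converse direction has a genuine gap: you misread Theorem~\ref{afl1}. That theorem does \emph{not} assert that every abstract $A$-submodule of a contractable module is closed. It says that every $R$-submodule is closed, where $R$ is a \emph{closed} $A$-subalgebra of $\End_A(M)$ --- in particular the closure $R_0$ of the image of $\Z[A]$, not the bare image. Abstract $A$-submodules (i.e.\ $\Z[A]$-submodules) need not be $R_0$-submodules and need not be closed: for $A=\langle\alpha\rangle$ acting on $\Q_p$ by multiplication by~$p$, the subgroup $\Z[1/p]$ is a dense proper abstract $A$-submodule. For the same reason, your claim that the abstract image $N$ has finite length as an $A$-module is unjustified: Theorem~\ref{afl1} gives finite length over $R_0$, and $\Q_p$ certainly does not have finite $\Z[1/p]$-length. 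So neither the existence of your simple abstract submodule $T\subseteq N$, nor its automatic closedness, follows from what you cite.

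The repair needs one extra idea. With $P_k$ contracted by some $\alpha\in A$, observe that $f_{kl}(P_k)\subseteq\Con_\alpha(P'_l)$, and the latter is closed and $\alpha$-contractive by Proposition~\ref{uac} (since $P'_l$ is lower-sheer). Now $P_k$ and $C:=\Con_\alpha(P'_l)$ are both $\alpha$-contractable, so Theorem~\ref{afl1} applies to $P_k\oplus C$: letting $R$ be the closure of the image of $\Z[A]$ in $\End_A(P_k\oplus C)$, a density-plus-continuity argument shows $f_{kl}$ is $R$-equivariant (for $r=\lim r_i$ with $r_i$ in the image of $\Z[A]$, one has $f_{kl}(rp)=\lim f_{kl}(r_ip)=\lim r_if_{kl}(p)=rf_{kl}(p)$). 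Hence $f_{kl}(P_k)$ is an $R$-submodule of $C$ and is therefore \emph{closed}. From here your argument works: the closed image has finite topological length, contains a simple closed submodule $T\cong S'_l$, and $f_{kl}^{-1}(T)$ is a closed submodule of $P_k$ surjecting onto $T$, whence $T\cong S_k$.
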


\begin{prop}\label{monotypic}
Every polycontractable $A$-module $M$ has a unique decomposition $\bigoplus_i M_i$ as finite direct product of pairwise disjoint monotypic contractable submodules. Every polycontractable submodule $N$ decomposes accordingly, i.e., $N=\bigoplus (N\cap M_i)$.
\end{prop}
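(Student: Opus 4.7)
The plan is to reduce to the contractable case by decomposing $M$ via its defining polycontractable structure, apply Theorem \ref{afl1} to each summand, and then regroup by type. Canonicity will then follow from the vanishing of $\Hom$ between disjoint polycontractable modules (the preceding proposition).

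For existence, fix a decomposition $M = \bigoplus_{k=1}^{m} C_k$ into contractable summands. For each $k$, let $R_k$ be the closure of the image of $\Z[A]$ in $\End_A(C_k)$; by Theorem \ref{afl1}, $R_k$ is an artinian commutative ring whose submodules of $C_k$ coincide with the closed $A$-submodules. Decomposing the artinian commutative ring $R_k = \prod_j R_{k,j}$ into local factors and applying the primitive idempotents yields $C_k = \bigoplus_j C_{k,j}$, each $C_{k,j}$ being contractable (as a direct summand of $C_k$) and monotypic: all its simple subquotients are isomorphic to the residue field $S_{k,j}$ of $R_{k,j}$, viewed via Corollary \ref{sico} as a simple contractable $A$-module. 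Grouping the $C_{k,j}$ by isomorphism class of $S_{k,j}$, set
\[
M_T \;=\; \bigoplus_{(k,j)\,:\,S_{k,j}\cong T} C_{k,j}
\]
for each type $T$ that occurs, producing $M = \bigoplus_T M_T$ with $M_T$ polycontractable and monotypic of type $T$. To upgrade $M_T$ from polycontractable to contractable, pick any $\alpha \in A$ contracting the simple module $T$; using the finite filtration of each $C_{k,j}$ by closed $A$-submodules with successive quotients isomorphic to $T$ (existing by Theorem \ref{afl1} and the finite length of $C_{k,j}$ over $R_{k,j}$), I check that $\alpha$ contracts $C_{k,j}$, and hence the finite sum $M_T$.

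For uniqueness, given two decompositions $M = \bigoplus_i M_i = \bigoplus_j M'_j$ into pairwise disjoint monotypic contractable submodules, the composition $M_i \hookrightarrow M \twoheadrightarrow M'_j$ vanishes whenever $M_i, M'_j$ have distinct types, so each $M_i$ lies in the $M'_j$ of matching type, and symmetry gives equality. For the hereditary decomposition, apply existence to the polycontractable submodule $N \subseteq M$ to get $N = \bigoplus_T N_T$. Each projection $N_T \to M_{T'}$ for $T' \neq T$ vanishes by disjointness of types, so $N_T \subseteq M_T$, whence $N_T \subseteq N \cap M_T$. Conversely, $N \cap M_T$ is a closed submodule of the monotypic module $M_T$, hence itself monotypic of type $T$ (or zero) since its simple subquotients are simple subquotients of $M_T$; as a $T$-monotypic submodule of $N$, it projects trivially to each $N_{T'}$ with $T' \neq T$, and so lies in $N_T$. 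Equality $N_T = N \cap M_T$ follows.

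The main obstacle is the upgrade from polycontractable to contractable for each $M_T$: this requires the fact that an element $\alpha$ contracting the residue field $S_{k,j}$ also contracts any finite-length module over the local artinian ring $R_{k,j}$, i.e.\ that contractivity of an LCA automorphism is preserved under finite extensions with contractable subquotients. In the Euclidean case this is elementary eigenvalue analysis, while in the totally disconnected case a short direct argument using the structure of $R_{k,j}$ as an iterated extension of $S_{k,j}$-vector spaces (and the fact that polynomial factors in $n$ cannot defeat the strict contraction $|\bar\alpha|_{S_{k,j}} < 1$) will suffice.
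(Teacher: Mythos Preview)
Your argument is correct and takes a genuinely different route from the paper. The paper first assumes $A$ compactly generated and invokes Lemma~\ref{lemdecompo} to produce a \emph{specific} decomposition $M=C_1\oplus\cdots\oplus C_n$ (via iterated $\Con_{\alpha_i}$) with the built-in feature that every sheer submodule splits along it; the monotypic statement then reduces to artinian ring theory inside each $C_i$. For general $A$ the paper runs a separate limiting argument over compactly generated open subgroups $B\subseteq A$, using Lemma~\ref{boundchainc} to stabilise the decomposition. Your approach is more economical: it works uniformly in $A$, starting from \emph{any} polycontractable splitting of $M$, applying Theorem~\ref{afl1} summand by summand, regrouping by type, and handling both uniqueness and the hereditary statement purely via Hom-vanishing between disjoint types. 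What the paper's route buys is that the very same Lemma~\ref{lemdecompo} decomposition is reused elsewhere (e.g.\ in Theorem~\ref{sheersub}); what yours buys is that the non-compactly-generated case needs no extra work.

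The one step that deserves a firmer justification is your ``upgrade'' from polycontractable to contractable for each $M_T$. Your sketch (``polynomial factors in $n$ cannot defeat $|\bar\alpha|<1$'') tacitly assumes a topological splitting of the filtration of $C_{k,j}$ so that $\alpha$ becomes block-upper-triangular over $T$; such a splitting is not obviously available, especially in positive residue characteristic where $R_{k,j}$ need not be a $T$-algebra. A clean replacement using only the paper's tools: each $C_{k,j}$, being an iterated extension of copies of the $\alpha$-contracted (hence sheer) $\langle\alpha\rangle$-module $T$, is itself sheer over $\langle\alpha\rangle$ by Proposition~\ref{extsheer}; Lemma~\ref{singledec} then gives $C_{k,j}=\Con_\alpha\oplus\Con_{\alpha^{-1}}\oplus\Ell_{\langle\alpha\rangle}\oplus\Dist$, and since $A$ is abelian these summands are closed $A$-submodules. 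Were any of the last three nonzero, it would (by Theorem~\ref{afl1}) admit $T$ as a simple quotient, which is impossible since $T$ is $\alpha$-contracted while those summands are respectively $\alpha^{-1}$-contracted, elliptic, or distal. Hence $C_{k,j}=\Con_\alpha(C_{k,j})$, and the same $\alpha$ contracts every $C_{k,j}$ of type $T$, so $M_T$ is contractable.
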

\begin{proof}
First suppose that $A$ is compactly generated. Lemma \ref{lemdecompo} reduces to when $M$ is contractable. Then the result follows from Theorem \ref{afl1} and the analogous result for finitely generated modules over artinian rings.

In general, there is a compactly generated open subgroup $B$ of $A$ over which $M$ is polycontractable. The resulting decomposition is $A$-invariant. Hence we can suppose (for the existence) that $M$ is monotypic over $B$. Then Lemma \ref{boundchainc} ensures that the number of summands in a direct decomposition of $M$ is bounded: choose a compactly generated $B'\supseteq B$ such that the monotypic disjoint decomposition has the largest possible number of summands. Then for every $B''\supseteq B'$, the monotypic disjoint decomposition is the same over $B''$ and $B'$. To show that this is the monotypic decomposition over $A$, we show the following:

Suppose that $S,S'$ are non-isomorphic simple contractable $A$-modules. Then there is a compactly generated open subgroup $B$ of $A$ such that $S,S'$ are not isomorphic as $B$-modules. If there is no common contracting element, the result is clear. So let $\alpha\in A$ be a common contracting element for $S$ and $S'$, and fix a compactly generated subgroup $B_0$ containing $\alpha$.
Then, for $B$ containing $B_0$, we have the submodule $V_B=\mathrm{Hom}_B(S,S')$ of $S\times S$. If $B\subseteq B'$ then $V_B\supseteq V_{B'}$. By Lemma \ref{boundchainc}, there exists a compactly generated subgroup $B\subseteq A$ containing $B_0$ such that for every compactly generated $B'\subseteq A$ containing $B$, we gave $V_B=V_{B'}$. It follows that $V_A=V_B$. Since $V_A=\{0\}$, we deduce $V_B=0$. 
\end{proof}

Another coarser decomposition is the characteristic one.

\begin{prop}[Characteristic decomposition]\label{chardec1}
Every polycontractable LC $A$-module uniquely decomposes in a unique way as a topological direct sum of modules
$$M=M^\circ\oplus \left(\bigoplus_p M_p\right) \oplus \left(\bigoplus_p N_p\right),$$
where $p$ ranges over primes, only finitely of the summands are nonzero, $p^{n_p}M_p=0$ for some $n_p\ge 1$, $N_p$ is canonically a finite-dimensional vector space over $\Q_p$, and $M^\circ$ is a Euclidean group.
\end{prop}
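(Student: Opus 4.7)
The plan is to first split $M$ into its connected and totally disconnected parts via Corollary \ref{semidec1}, then decompose the totally disconnected part using the commutative artinian ring of continuous $A$-module endomorphisms supplied by Theorem \ref{afl1}, and finally group the resulting local summands according to the characteristic of their residue field.

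Since $M$ is polycontractable, it is sheer, and hence sheer as an LCA-group (by the remarks in the subsection ``Facts on LCA groups''). Corollary \ref{semidec1} then yields a topological $A$-module decomposition $M=M^\circ\oplus M_{\mathrm{td}}$, with $M^\circ$ connected Euclidean and $M_{\mathrm{td}}$ totally disconnected, both polycontractable. The $M^\circ$ summand is thereby identified, so it remains to decompose $M_{\mathrm{td}}$. Let $R$ denote the closure of the image of $\Z[A]$ in $\End_A(M_{\mathrm{td}})$; by Theorem \ref{afl1} it is commutative artinian, so it decomposes as a finite product $R=\prod_{i=1}^n R_i$ of local artinian rings via a complete set of continuous orthogonal idempotents $e_i\in R$. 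Because each $e_i$ is a continuous endomorphism of $M_{\mathrm{td}}$, this yields a topological direct sum $M_{\mathrm{td}}=\bigoplus_i e_i M_{\mathrm{td}}$ of closed $A$-submodules, each an $R_i$-module.

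For each $i$, the nilpotent maximal ideal $\mathfrak{m}_i\subseteq R_i$ is closed (being equal to $\{x:x^N=0\}$ for the nilpotency exponent $N$), so $\kappa_i:=R_i/\mathfrak{m}_i$ is a topological field; by Proposition \ref{monotypic} combined with Corollary \ref{sico} it is the type of the monotypic $R_i$-module $e_i M_{\mathrm{td}}$ and is a totally disconnected locally compact field. Two cases arise. If $\mathrm{char}(\kappa_i)=p>0$, then $p\in\mathfrak{m}_i$, so $p^{n_i}=0$ in $R_i$ for some $n_i$ and $e_i M_{\mathrm{td}}$ is annihilated by $p^{n_i}$; collecting such summands over indices sharing the same $p$ produces $M_p$. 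If $\mathrm{char}(\kappa_i)=0$, then $\kappa_i$ is a finite extension of $\Q_p$ for a unique prime $p$; every prime is a unit in $\kappa_i$ and lifts through the nil ideal to a unit in $R_i$, so $\Q\subseteq R_i$. Using noetherianity of the artinian ring $R_i$, each graded piece $\mathfrak{m}_i^j/\mathfrak{m}_i^{j+1}$ is finite-dimensional over $\kappa_i$ and hence over $\Q_p$; combining this with the local compactness of $R_i$ (inherited from $\End_A(M_{\mathrm{td}})$ via Lemma \ref{homlc1}), a Cohen-style topological lifting along the nilpotent filtration produces a continuous embedding $\Q_p\hookrightarrow R_i$. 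Consequently $e_i M_{\mathrm{td}}$ carries a continuous finite-dimensional $\Q_p$-vector space structure; gathering these summands by $p$ produces $N_p$.

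Finiteness of the decomposition follows from $R$ having only finitely many local factors. Uniqueness holds because the three summand types --- connected Euclidean, bounded $p$-power torsion, and $\ell$-divisible $\ell$-torsion-free with continuous compatible $\Q_p$-action --- have pairwise incompatible intrinsic algebraic natures, so each piece is canonically characterized by the $A$-module structure on $M$ and any alternative splitting must coincide with the constructed one. The main obstacle is the case $\mathrm{char}(\kappa_i)=0$: producing a continuous (as opposed to merely abstract) embedding $\Q_p\hookrightarrow R_i$ requires a careful topological Cohen-type argument leveraging the finite-dimensionality of $R_i$ over $\kappa_i$ together with local compactness and the closedness of the ideal filtration; this is the only step not essentially formal given the structural results established earlier in the paper.
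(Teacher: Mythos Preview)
Your argument has a genuine gap at the point where you invoke Theorem \ref{afl1} for the totally disconnected summand $M_{\mathrm{td}}$. That theorem is stated and proved only for \emph{contractable} modules, and it genuinely fails for merely polycontractable ones: the remark immediately following Theorem \ref{afl1} gives the example $M=\F_p\lp t\rp\times\F_p\lp t^{-1}\rp$ with $\alpha$ acting by $t$, where the closure of $\Z[\alpha^{\pm 1}]$ in $\End_A(M)$ is the Laurent polynomial ring $\F_p[t,t^{-1}]$, which is not artinian, and $M$ is not even finitely generated over it. So your ring $R$ need not be artinian, the idempotent decomposition need not exist, and the rest of the argument does not get off the ground.

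The paper's route avoids this by first invoking Proposition \ref{monotypic} to write $M$ as a finite direct sum of monotypic \emph{contractable} pieces, and only then analyzing each piece. At that stage the argument is also much lighter than your Cohen-style lifting: once the simple quotient $\K$ of a monotypic contractable piece is identified as a nondiscrete locally compact field, one just reads off the characteristic. If $\mathrm{char}\,\K=p$ then $p$ lies in the radical of the (now genuinely artinian) image of $\Z[A]$, so $p^n$ kills the piece. If $\mathrm{char}\,\K=0$ then $\K$ contains the closure of $\Q$, namely $\R$ or $\Q_p$; in the $\Q_p$ case each composition factor is a $\Q_p$-vector space, multiplication by every prime is bijective on each factor and hence on the whole piece, and the piece is therefore a $\Q_p$-vector space of finite dimension by the finite-length statement. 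No topological coefficient-field lifting is needed. Your overall strategy can be salvaged by inserting the reduction to contractable monotypic summands before appealing to Theorem \ref{afl1}, at which point it essentially converges to the paper's proof.
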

\begin{proof}
Uniqueness is clear, so let us focus on existence. Proposition \ref{monotypic} reduces to the case of a monotypic contractable module $M$, with simple quotient $S\simeq\K$, $\K$ a nondiscrete locally compact field endowed with a homomorphism $A\to\K^*$. Then $M$ has a finite composition series with each subquotient isomorphic to $S$. If $\K$ has finite characteristic, say characteristic $p$, then $p^nM=0$ for some $n$, and hence the conclusion holds. If $\K$ has characteristic zero, it is a finite extension of the closure of $\Q$, which is isomorphic to either $\R$ or $\Q_p$ for some $p$. In all cases, the conclusion holds.
\end{proof}

\begin{defn}
We say that a locally compact abelian group $V$ is {\bf characteristically pure} if one of the following holds:
\begin{itemize}
\item $p^nV=\{0\}$ for some prime $p$ and $n\ge 1$. We then say that the characteristic is $(p,p)$;
\item $V$ is a finite-dimensional vector space over $\Q_p$. We then say that the characteristic is $(0,p)$;
\item $V$ is a finite-dimensional real vector space. We then say that the characteristic is $(0,0)$.
\end{itemize}
\end{defn}

The ``characteristic" is of course unique, unless $V=\{0\}$. From Proposition \ref{chardec1} it follows that every indecomposable polycontractable module is characteristically pure.

For $A=\langle\alpha\rangle$ infinite cyclic, a contractable module is contracted by either $\alpha$ or $\alpha^{-1}$. In finite characteristic we have a simple classification result.

\begin{thm}\label{indecchp}
Suppose that $A=\langle\alpha\rangle$ is infinite cyclic. Let $M$ be an indecomposable module of finite characteristic on which $\alpha$ acts as a contraction. Then there exists a prime $p$ and $n\ge 1$ such that $M$ is isomorphic to $(\Z/p^n\Z)\lp t\rp$, with $\alpha$ acting by multiplication by $t$.
\end{thm}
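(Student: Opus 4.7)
The plan is to identify first the ring $R := \overline{\Z[\alpha^{\pm 1}]} \subseteq \End_A(M)$, then transfer the structural result to $M$. By Theorem \ref{afl1}, $R$ is a commutative artinian LC ring, $M$ has finite length over $R$, and all $R$-submodules of $M$ (and of $R$ itself, viewed as contractable module) are closed. Indecomposability of $M$ forbids nontrivial idempotents in $R$, so $R$ is local; let $\mathfrak{m}$ denote its maximal ideal. Faithfulness of $R$ on $M$, together with $p^n M = 0 \neq p^{n-1} M$, gives $p^n = 0 \neq p^{n-1}$ in $R$. The residue field $\K := R/\mathfrak{m}$ is a nondiscrete LC field of characteristic $p$ on which $\bar\alpha$ acts as a topologically nilpotent unit and inside which $\F_p[\bar\alpha, \bar\alpha^{-1}]$ is dense; the classification of LC fields then forces $\K \cong \F_p\lp\bar\alpha\rp$ with $\bar\alpha$ a uniformizer.

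The core step will be to prove that $\mathfrak{m} = pR$, or equivalently that $R/pR$ is a field. This follows from the sublemma: a local artinian LC commutative ring $S$ of characteristic $p$ topologically generated by a topologically nilpotent unit is itself a field. To prove the sublemma I would assume the maximal ideal $\mathfrak{n}$ of $S$ is nonzero and derive a contradiction in $S/\mathfrak{n}^2$. By the Cohen structure theorem in equicharacteristic, $S/\mathfrak{n}^2 \cong \K \oplus \mathfrak{n}/\mathfrak{n}^2$ as $\K$-algebras; writing $\alpha = a + \beta$ with $a \in \K^\times$ and $\beta \in \mathfrak{n}/\mathfrak{n}^2$, the relation $\beta^2 = 0$ yields $\alpha^k = a^k + k a^{k-1}\beta$. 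The continuous extension $\Phi : \F_p\lp X\rp \to S/\mathfrak{n}^2$ with $X \mapsto \alpha$ therefore has image $\{f(a) + f'(a)\beta : f \in \F_p\lp X\rp\}$; via the topological isomorphism $\F_p\lp X\rp \cong \K$ sending $X \mapsto a$, this is the graph of the formal derivation $d/da$ (scaled by $\beta$). In characteristic $p$ the image of $d/da$ is proper in $\K$ (it omits every Laurent coefficient at $a^{p-1+kp}$), so the graph is a closed proper subgroup of $\K \oplus \K\beta \subseteq S/\mathfrak{n}^2$. But it contains the dense subring $\F_p[\alpha, \alpha^{-1}]$, which forces it to equal $S/\mathfrak{n}^2$, a contradiction. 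Hence $\mathfrak{n}/\mathfrak{n}^2 = 0$ and $\mathfrak{n} = 0$ by Nakayama. Applying this to $R/pR$ yields $\mathfrak{m} = pR$.

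Granted $\mathfrak{m} = pR$, the strict $p$-adic filtration $R \supset pR \supset \cdots \supset p^n R = 0$ has each quotient $p^i R / p^{i+1} R$ cyclic over $\K$ and nonzero (Nakayama), hence isomorphic to $\K$. Define $\Psi : (\Z/p^n\Z)\lp t\rp \to R$ by $t \mapsto \alpha$. Convergence of $\sum_i c_i \alpha^i$ for $c_i \in \Z/p^n\Z$ follows because, with $\Omega$ any $\alpha$-stable compact open subgroup of $R$ containing $\Z/p^n\Z$, the terms $c_i \alpha^i$ lie in $\alpha^i \Omega$ and the tails $\sum_{i \geq N+1} c_i \alpha^i$ remain in the shrinking subgroup $\alpha^{N+1}\Omega$, in the spirit of Lemma \ref{difpro1}. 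So $\Psi$ is a continuous ring homomorphism. It is injective, since its kernel is an ideal $(p^k)$ of $(\Z/p^n\Z)\lp t\rp$ and $k < n$ would give $p^k = 0$ in $R$, contradicting $p^{n-1} \neq 0$. The map $\Psi$ respects $p$-adic filtrations and induces at each graded piece the topological isomorphism $\F_p\lp t\rp \xrightarrow{\sim} p^i R/p^{i+1}R$ sending $t \mapsto \bar\alpha$. By induction on $i$, using the five-lemma for abstract bijectivity and the open mapping theorem for $\sigma$-compact LC groups for the topological upgrade, each $(\Z/p^i\Z)\lp t\rp \to R/p^i R$ is a topological isomorphism, so $\Psi$ itself is an isomorphism.

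Finally, $M$ is a finitely generated indecomposable faithful module over the local artinian principal ideal ring $R \cong (\Z/p^n\Z)\lp t\rp$, whose ideals are exactly $(p^k)$ for $0 \leq k \leq n$. By the classical structure theorem for finitely generated modules over principal ideal rings, $M \cong R/(p^k)$ for some $k$; faithfulness (equivalently $p^{n-1} M \neq 0$) forces $k = n$, yielding $M \cong R \cong (\Z/p^n\Z)\lp t\rp$ with $\alpha$ acting by multiplication by $t$. The main obstacle is the characteristic-$p$ sublemma in the second paragraph: the derivation argument, exploiting that $d/dX$ in characteristic $p$ has proper closed image (due to vanishing of derivatives at $p$-th powers), is what rules out nontrivial $\mathfrak{n}/\mathfrak{n}^2$ from topological generation by a single element.
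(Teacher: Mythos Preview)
Your proof is correct in outline and reaches the right conclusion, but it takes a substantially different route from the paper and carries some unnecessary baggage.

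\textbf{Comparison with the paper.} The paper never tries to identify $R$ abstractly. It simply observes that the action $\Z[\alpha^{\pm 1}]\times M\to M$ extends by continuity to an action of $(\Z/p^n\Z)\lp t\rp$ (with $t\mapsto\alpha$), checks by hand that this ring is a principal ideal local ring with ideals exactly $(p^k)$, and then invokes the classification of finitely generated modules over such rings. Your approach instead characterises $R$ from the inside: local artinian, residue field $\F_p\lp\bar\alpha\rp$, then the sublemma forces $\mathfrak m=pR$, and finally $R\cong(\Z/p^n\Z)\lp t\rp$. This is longer but yields the extra information that the natural map $(\Z/p^n\Z)\lp t\rp\to R$ is an isomorphism, which the paper's argument does not state. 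The paper's route is shorter because it never needs the sublemma: the structure of $(\Z/p^n\Z)\lp t\rp$ is verified directly.

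\textbf{Two points to tighten in the sublemma.} First, you invoke the Cohen structure theorem to split $S/\mathfrak n^2\cong\K\oplus\mathfrak n/\mathfrak n^2$ and then treat this as a topological splitting when you speak of the graph being closed. Cohen only gives an algebraic section. However, your own map $\Phi$ already provides a \emph{continuous} section: since $\pi\circ\Phi:\F_p\lp X\rp\to\K$ is a continuous bijection of $\sigma$-compact LC groups, hence a homeomorphism, the composite $\Phi\circ(\pi\circ\Phi)^{-1}:\K\to S/\mathfrak n^2$ is a continuous ring-theoretic splitting of $\pi$, and $\Phi$ is proper (so its image is closed) for the same reason. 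You should use this instead of Cohen. Second, the ``image of $d/da$ is proper in $\K$'' detour is unnecessary and does not cover the case $\beta=0$. The image of $\Phi$ is simply a complement to $V=\mathfrak n/\mathfrak n^2$ (because $\Phi$ is injective and $\pi\circ\Phi$ is bijective), hence proper as soon as $V\neq 0$; the derivative plays no role.
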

\begin{proof}
There exists a prime $p$ and $n\ge 1$ (chosen minimal) such that $p^nA=\{0\}$. Hence we see that $M$ is naturally a module over the ring $(\Z/p^n\Z)\lp t\rp$, with $\alpha$ acting by multiplication by $p$. (More precisely, if we consider the unique ring homomorphism $\Z[\alpha^{\pm 1}]\to (\Z/p^n\Z)\lp t\rp$ mapping $\alpha\mapsto t$, we see that the action $\Z[\alpha^{\pm 1}]\times M\to M$ uniquely factors through a continuous action $(\Z/p^n\Z)\lp t\rp\times M\to M$.)

Now we observe that $R_n=(\Z/p^n\Z)\lp t\rp$ is a principal ideal local ring, with its ideals being the ideals $(p^k)$ for $0\le k\le n$. Indeed, first we see that every element of the form $1+x$ with $x\in t(\Z/p^n\Z)[\![ t]\!]$ is invertible. Now every element $y$ of $R_n\smallsetminus pR_n$ can be written as $y=t^k\lambda(s+1+x)$ with $k\in\Z$, $s$ nilpotent, $\lambda\in(\Z/p^n\Z)^\times$ and $x\in t(\Z/p^n\Z)[\![ t]\!]$. Indeed, $k$ is the smallest degree at which the coefficient of $y$ is not a multiple of $p$, $\lambda$ is the given coefficient. Then $s+(1+x)$, as sum of a nilpotent and an invertible, is invertible.

It follows that every element of $R_n$ can be written as $p^kz$ with $k\ge 0$ and $z$ invertible, showing the desired result.

From the classification of modules over principal ideal rings\footnote{We can, for instance, use the fact that every local principal ideal ring is a quotient of a local PID. Here this can be done explicitly, namely defining $R_\infty$ as the ring of formal series $\sum_{n\in\Z}a_nt^n$ with $a_n\in\Z_p$ and for which the $p$-valuation of $a_n$ tends to $\infty$ when $n$ tends to $-\infty$. Alternatively, the classification of finitely generated modules can be directly established without appealing to the case of domains.}, we deduce that every finitely generated indecomposable $R_n$-module has the form $R_n/(p^k)$ for some $k$. Since $n$ is minimal for the property $p^nM=\{0\}$, we deduce that $M$ is isomorphic to $R_n$ (viewed as the free $R_n$-module of rank one).
\end{proof}

\begin{rem}
By Theorem \ref{indecchp}, there are, up to isomorphism, countably many torsion contractable LC modules over an infinite cyclic group. This is in contrast with characteristic zero: indeed, the action on $\mathbf{Q}_p$ with $\alpha$ acting by an element $x\in\mathbf{Q}_p^*$ of modulus $\neq 1$ defines a simple contractable module $M_x$, and the modules $M_x$ are pairwise non-isomorphic, and there are continuum many.

For $A=\Z^2=\langle \alpha,\beta\rangle$ we also get continuum many simple contractable modules in characteristic $p$, by letting $A$ act on $(\Z/p\Z)\lp t\rp$, $\alpha$ acting by multiplication by $t$ and $\beta$ by a nonzero element $x$. The resulting modules $M_x$ are pairwise non-isomorphic.
\end{rem}


Retrospectively, most of the theory can be reduced to $A=\Z$ by the following proposition.

\begin{prop}
Let $M$ be a polycontractable LC $A$-module. Then there exists $\alpha\in A$ such that $M$ is polycontractable as $\Z[\alpha^{\pm 1}]$-module. More precisely, there exists $n$ and nonzero continuous homomorphisms $f_1,\dots,f_n:A\to\R$ such that every $\alpha\notin\bigcup_{i=1}^n\Ker(f_i)$ works.
\end{prop}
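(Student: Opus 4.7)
My plan is to apply the canonical monotypic decomposition (Proposition \ref{monotypic}) to $M$ and to attach a continuous linear form on $A$ to each monotypic summand; the $f_{i}$ will be these forms, and any $\alpha$ avoiding their kernels will work.

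First, I would write $M=\bigoplus_{i=1}^{n}M_{i}$ with each $M_{i}$ monotypic contractable. By Corollary \ref{sico}, the type of $M_{i}$ is a pair $(\K_{i},i_{i})$, where $\K_{i}$ is a non-discrete locally compact field and $i_{i}\colon A\to\K_{i}^{*}$ a continuous homomorphism. I define $f_{i}\colon A\to\R$ by $f_{i}(\alpha)=\log|i_{i}(\alpha)|_{\K_{i}}$. This is a continuous homomorphism, and it is nonzero because $M_{i}$ being contractable forces the existence of some $\alpha_{0}\in A$ acting on the simple quotient $\K_{i}$ by a scalar of norm $<1$, giving $f_{i}(\alpha_{0})<0$.

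Second, I would prove the key implication: for $\alpha\in A$, if $f_{i}(\alpha)<0$ then $\alpha$ contracts $M_{i}$, and if $f_{i}(\alpha)>0$ then $\alpha^{-1}$ contracts $M_{i}$; hence when $\alpha$ avoids all $\Ker(f_{i})$, the direct sum $M=\bigoplus M_{i}$ is polycontractable as $\Z[\alpha^{\pm 1}]$-module. For this, I would consider $R_{i}=\overline{\Z[A]}\subseteq\End_{A}(M_{i})$, which by Theorem \ref{afl1} is an artinian commutative ring with $M_{i}$ of finite $R_{i}$-length. Using the idempotent decomposition of $R_{i}$ together with monotypicity, I reduce to the case where $R_{i}$ is local with residue field $\K_{i}$ and nilpotent maximal ideal $\mathfrak{m}$, so that $\alpha$ acts on the $\mathfrak{m}$-adic filtration of $M_{i}$ in a block upper-triangular way with scalar diagonal $i_{i}(\alpha)$ on every graded piece. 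From this ``single contracting eigenvalue'' picture I would then conclude that $\alpha$ contracts $M_{i}$ in the LC topology.

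Finally, I would verify that $A\setminus\bigcup_{i}\Ker(f_{i})$ is nonempty. Since $M$ is polycontractable, there is a compactly generated subgroup $A_{0}\subseteq A$ containing contracting elements of all the $M_{i}$, and each $f_{i}$ restricts nontrivially to $A_{0}$. Writing $A_{0}\simeq\R^{a}\times\Z^{b}\times K$ with $K$ compact and $a+b\ge 1$, each $f_{i}|_{A_{0}}$ factors through $\R^{a}\times\Z^{b}$, and a finite union of proper closed subgroups of this group cannot cover it.

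I expect the main obstacle to be the second step, specifically the passage from the upper-triangular, scalar-diagonal picture on the graded pieces of the $\mathfrak{m}$-adic filtration to honest contraction in the LC topology on $M_{i}$: $M_{i}$ need not be a $\K_{i}$-vector space (for instance in mixed characteristic), so Jordan canonical form cannot be invoked directly. The natural route is an induction on $R_{i}$-length, using that the ``interaction terms'' coming from the nilpotent part of $\alpha$ grow only polynomially in $n$ and are dominated by the geometric decay $|i_{i}(\alpha)|^{n}$.
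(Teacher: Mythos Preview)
Your proposal is correct and follows essentially the same route as the paper: monotypic decomposition via Proposition \ref{monotypic}, then $f_i=\log|i_i(\cdot)|$, then the claim that $f_i(\alpha)\neq 0$ forces $\alpha^{\pm 1}$ to contract $M_i$. The paper's proof is terser on your ``obstacle'' step, simply asserting that contraction on the simple quotient $S_i$ implies contraction on the monotypic $M_i$; this is just the fact that an iterated extension of $\alpha$-contractive modules is $\alpha$-contractive, which your induction on $R_i$-length establishes equally well (and your polynomial-versus-geometric heuristic is the right picture). Your explicit nonemptiness argument for $A\smallsetminus\bigcup_i\Ker(f_i)$ is a detail the paper omits.
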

\begin{proof}
Write $M=\bigoplus M_i$ with $M_i$ monotypic. Let $S_i$ be a simple quotient of $M_i$: it can be identified with a local field, with norm $\|\cdot\|_i$, on which $A$ acts through a homomorphism $u_i$ into the multiplicative group, such that the homomorphism $f_i=\log\circ \|\cdot\|_i\circ u_i$ is nonzero. Then any element of $A$ not in the kernel of $f_i$ acts on $S_i$, and hence on $M_i$, as a contraction or the inverse of a contraction. Whence the conclusion.
\end{proof}

Let us now provide a cocompactness criterion.

\begin{thm}\label{coccri}
Suppose that $A$ is compactly generated. Let $M$ be a sheer LC module. Write $M=D\oplus \bigoplus_{i=0}^nM_i$ with $D$ distal Euclidean, $M_0$ amorphic, each $M_i$ polycontractable with the $M_i$ pairwise disjoint. Let $H$ be a closed submodule of $M$.

(1) Suppose that the sheer envelope $H^\boxplus$ of $H$ (see Theorem \ref{corenv}) equals $M$. Then the projection of $H$ on each $M_i$ is dense, and the projection of $H$ on $D$ spans $D$.

(2) Suppose that the projection of $H$ on each $M_i$ is dense, and the projection of $H$ on $D$ spans $D$. Then $H$ is cocompact in $M$.
\end{thm}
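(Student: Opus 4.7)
The strategy runs both parts through the closed submodule $H^\boxplus$: for (1) the hypothesis is $H^\boxplus=M$, while for (2) we analyse $H^\boxplus$ to exhibit $M/H$ as an extension of two compact pieces. The unifying observation: for any closed submodule $N$ of $M$ containing $H$ and any summand $X$ of the decomposition $M=D\oplus M_0\oplus\bigoplus_iM_i$, the composition $M\to X\to X/\overline{\pi_X(N)}$ annihilates $N$, so $X/\overline{\pi_X(N)}$ is a continuous quotient of $M/N$.

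\medskip

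For (1), the hypothesis $H^\boxplus=M$ is, by Theorem \ref{corenv}, equivalent to $M/H$ being compact parafinite, hence so is each $X/\overline{\pi_X(H)}$. I would then examine the three kinds of summands separately. When $X=D$, a closed subgroup of a Euclidean space with compact quotient is cocompact, hence spans $D$. When $X=M_0$ is amorphic, Lemma \ref{amorphicsub} gives that $M_0/\overline{\pi_{M_0}(H)}$ is amorphic; an amorphic compact module is profinite, and a profinite parafinite module must be trivial (every open submodule of a profinite module has finite index, hence equals the whole by parafiniteness, while the intersection of open submodules is $\{0\}$). When $X=M_i$ is contractable (using the canonical monotypic refinement of Theorem \ref{montotypic-i}, after which each $M_i$ is contractable), the contracting element of $A$ also contracts the quotient, and a compact module on which an automorphism acts as a contraction must be zero. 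In all three cases the required density/spanning follows.

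\medskip

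For (2), consider $H^\boxplus$. By Theorem \ref{corenv} it is sheer, contains $H$, and $H^\boxplus/H$ is compact parafinite; by the splitting of sheer closed submodules (Theorem \ref{montotypic-i}), it decomposes as $(H^\boxplus\cap D)\oplus(H^\boxplus\cap M_0)\oplus\bigoplus_i(H^\boxplus\cap M_i)$, so that $\pi_X(H^\boxplus)=H^\boxplus\cap X$ for each summand $X$. Since $\pi_X(H)\subseteq H^\boxplus\cap X$ and $\pi_X(H)$ is dense in $X$ for $X\in\{M_0,M_1,\ldots,M_n\}$, closedness forces $H^\boxplus\cap X=X$ in those cases. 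For $X=D$, $\pi_D(H)\subseteq H^\boxplus\cap D$ spans $D$, so the closed subgroup $H^\boxplus\cap D$ of the Euclidean space $D$ is cocompact. Consequently $M/H^\boxplus\cong D/(H^\boxplus\cap D)$ is compact, and the short exact sequence $0\to H^\boxplus/H\to M/H\to M/H^\boxplus\to 0$ exhibits $M/H$ as an extension of compacts, hence compact.

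\medskip

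The principal obstacle is the distal Euclidean summand: ``spans'' is strictly weaker than ``dense'', so the argument for (2) must accept a possibly nontrivial torus $D/(H^\boxplus\cap D)$ as the $H^\boxplus$-quotient and absorb it only at the final step. A secondary subtlety in (1) is that a polycontractable module can admit a closed submodule with nonzero compact parafinite quotient (a solenoid is the standard example), which is why the contractability step must be applied after the canonical monotypic refinement, where each piece is contractable and the ``compact module contracted by an automorphism is zero'' argument applies.
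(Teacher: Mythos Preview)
Your proof is correct. For part (2) you follow essentially the paper's route---splitting $H^\boxplus$ via Theorem~\ref{montotypic-i} and comparing with the hypothesis---but you miss one simplification: since $H^\boxplus\cap D$ is sheer (as a direct summand of the sheer module $H^\boxplus$) and spans $D$, it must \emph{equal} $D$; a proper cocompact closed subgroup of a Euclidean space has a nonzero torsion-free discrete quotient, hence a purely discrete one, contradicting upper-sheerness. So the paper obtains $H^\boxplus=M$ directly and then quotes Theorem~\ref{corenv}. The torus $D/(H^\boxplus\cap D)$ that you carefully carry along is always trivial; your extra step is harmless but unnecessary.

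For part (1) your approach is genuinely different. The paper forms the submodule $N=V\oplus\bigoplus_{i}\overline{\pi_{M_i}(H)}$ (with $V$ the real span of $\pi_D(H)$), checks that $N$ is sheer, and invokes the \emph{minimality} clause of Theorem~\ref{corenv}: since $H\subseteq N$ and $N$ is sheer, $M=H^\boxplus\subseteq N$, forcing each $\overline{\pi_{M_i}(H)}=M_i$ and $V=D$ in one stroke. You instead use the equivalent characterization ``$H^\boxplus=M$ iff $M/H$ is compact parafinite'' and then eliminate each quotient $X/\overline{\pi_X(H)}$ by type: contractable and compact $\Rightarrow 0$; amorphic and compact parafinite $\Rightarrow$ profinite parafinite $\Rightarrow 0$; Euclidean with compact quotient $\Rightarrow$ spanning. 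The paper's argument is shorter once sheerness of $N$ is granted; yours is more hands-on and makes the role of contractability transparent. Both arguments rely on each $M_i$ (for $i\ge 1$) being \emph{contractable} rather than merely polycontractable---exactly the point of your monotypic refinement and your solenoid remark---and this is equally implicit in the paper's claim that $N$ is sheer.
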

\begin{proof}
(1) Let $H_i$ be the closure of the projection of $H$ in $M_i$, and let $V$ be the span of the projection of $H$ in $D$. Then $V\oplus \bigoplus_{i=0}^nH_i$ is sheer and contains $H$. The assumption implies that it equals $M$, whence the conclusion.

(2) Since $H^\boxplus$ is sheer, the assumptions ensure that $H^\boxplus$ decomposes along the same decomposition: $H^\boxplus=(H^\boxplus\cap D)\oplus \bigoplus_{i=0}^n(H^\boxplus\cap M_i)$.
Since the projections of $H$ are dense and $H\subseteq H^\boxplus$, we have $H^\boxplus\cap M_i=M_i$ for all $i$. Also, since $H^\boxplus\cap D$ spans $D$ and is sheer, we have $H^\boxplus \cap D=D$. Hence $H^\boxplus=M$. Now Theorem \ref{corenv} says that $H^\boxplus/H$ is compact.
\end{proof}


\section{Metabelian groups}

\subsection{Compact generation and ascending sequences}\label{metab-cg}

We begin by the following result, which is very classical in the discrete case, and is a particular case of \cite[Satz 2.1.b]{Ab72}.

\begin{lem}\label{cgcg}
Let $G$ be a compactly generated metabelian LC-group with a closed normal abelian subgroup $M$ such that $A=G/M$ is abelian. Then $M$ is a compactly generated $\Z A$-module.
\end{lem}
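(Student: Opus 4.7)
The plan is as follows. Choose a compact symmetric neighborhood $\Sigma$ of $1$ in $G$ that generates $G$, so that $G=\bigcup_{n\ge 0}\Sigma^n$. Since the quotient $A=G/M$ is abelian, every commutator $[g,h]=ghg^{-1}h^{-1}$ lies in $M$. Hence the continuous image
\[C:=\{[s,t]:s,t\in\Sigma\}\]
is a compact subset of $M$.

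Let $N$ be the closed $\Z A$-submodule of $M$ generated by $C$. Because $N\subseteq M$ is $A$-invariant and $A$ acts on $M$ by conjugation, $N$ is a closed normal subgroup of $G$. In $G/N$ every pair of elements of the image of $\Sigma$ commutes, so the abstract subgroup generated by this image is abelian. But the image of $\Sigma$ generates $G/N$ as an abstract group, since $\Sigma$ does so for $G$. Therefore $G/N$ is abelian, and consequently $\overline{[G,G]}\subseteq N\subseteq M$.

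Now $G/\overline{[G,G]}$ is a compactly generated LCA group, and $M/\overline{[G,G]}$ is a closed subgroup of it. By the structure theorem for compactly generated LCA groups (such a group is isomorphic to $\R^a\times\Z^b\times K$ with $K$ compact, and all closed subgroups of such are of the same form), $M/\overline{[G,G]}$ is itself compactly generated. Lift a compact generating set to a compact set $K'\subseteq M$.

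Finally, consider the closed $\Z A$-submodule $P\subseteq M$ generated by the compact set $C\cup K'$. Since $C\subseteq P$, we have $N\subseteq P$ and hence $\overline{[G,G]}\subseteq P$; since the image of $K'$ topologically generates $M/\overline{[G,G]}$, the image of $P$ in this quotient is all of $M/\overline{[G,G]}$. Thus $P=M$, proving that $M$ is compactly generated as a $\Z A$-module. The only non-routine input is the LCA fact that closed subgroups of compactly generated LCA groups are compactly generated; everything else is the standard commutator trick combined with the abelianization $G\to G/\overline{[G,G]}$. A circular reliance on Corollary \ref{scgcg} (which ultimately depends on this lemma) must be avoided, and this is why I invoke the structure theorem for LCA groups directly rather than its module analogue.
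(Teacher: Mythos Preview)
Your argument is correct. The paper's own proof is a one-line citation: since $G$ is compactly generated and $G/M=A$ is a compactly generated LCA group, hence compactly presentable, the general fact \cite[Prop.~8.A.10(2)]{CH} (kernel of a map from a compactly generated group to a compactly presentable one is compactly generated as a normal subgroup) gives the conclusion immediately.

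Your route is different and more self-contained: instead of invoking compact presentability of $A$ and the general normal-generation lemma, you build the generating set by hand via the commutator trick $C=[\Sigma,\Sigma]$ and then handle the residual piece $M/\overline{[G,G]}$ using only the elementary structure theorem for compactly generated LCA groups. This has the advantage of not importing the machinery of compact presentations, and your explicit remark about avoiding Corollary~\ref{scgcg} shows good attention to the logical order. The paper's approach, on the other hand, is shorter and situates the lemma as an instance of a general principle that the reader may already know. Both are perfectly valid; yours is the kind of argument one would give if writing without access to the cited reference.
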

\begin{proof}Since $G$ is compactly generated and $G/M$ is compactly presentable, the kernel $M$ is compactly generated as normal subgroup (see \cite[Prop.\ 8.A.10(2)]{CH}).
\end{proof}



The following lemma will be superseded by Corollary \ref{ncgme}, but we need it at this point as it will be used to prove the latter.

\begin{lem}\label{centcg}
Let $G$ be a compactly generated metabelian locally compact group.  Let $N$ be a closed central subgroup of $G$. Then $N$ is a compactly generated locally compact group.
\end{lem}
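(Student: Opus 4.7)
The plan is to use the metabelian structure to decompose $N$ into two manageable pieces via the given abelian normal subgroup, and then recombine. Let $M$ be a closed normal abelian subgroup of $G$ with $A=G/M$ abelian. Since $G$ is compactly generated and $A$ is a quotient of $G$, the group $A$ is a compactly generated LCA-group. By Lemma \ref{cgcg}, $M$ is compactly generated as a $\Z[A]$-module.

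Now I would analyze $N$ through the short exact sequence
\[
0\longrightarrow N\cap M\longrightarrow N\longrightarrow N/(N\cap M)\longrightarrow 0.
\]
Since $N$ is central (hence normal) in $G$, the subgroup $N\cap M$ is an $A$-invariant closed subgroup of $M$, i.e., a closed submodule. By Corollary \ref{scgcg}, every closed submodule of a compactly generated LC $A$-module is compactly generated, so $N\cap M$ is compactly generated as an $A$-module. The crucial observation is that, because $N$ is central in $G$, the conjugation action of $A$ on $N\cap M$ is trivial; hence ``compactly generated as an $A$-module'' coincides with ``compactly generated as a topological group'' for $N\cap M$.

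For the quotient, $N/(N\cap M)$ embeds as a closed subgroup of $A$, and closed subgroups of compactly generated LCA-groups are themselves compactly generated (using the structure $A\simeq \R^a\times\Z^b\times K$ with $K$ compact, projection to $\R^a\times\Z^b$ is proper, image is a closed subgroup of $\R^a\times\Z^b$ hence isomorphic to some $\R^c\times\Z^d$, and the kernel is a compact subgroup of $K$). Consequently, $N$ is an extension of a compactly generated LC abelian group by another, and extensions of compactly generated groups by compactly generated groups are compactly generated, which yields the conclusion.

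I do not anticipate a serious obstacle here: the only delicate point is remembering that centrality is what makes the $A$-action on $N\cap M$ trivial, so that module-theoretic compact generation transfers directly to topological compact generation. The argument relies entirely on prior machinery (Lemma \ref{cgcg} for module generation of $M$, Corollary \ref{scgcg} for submodules, and standard LCA structure for closed subgroups of $A$), so no new ideas are required.
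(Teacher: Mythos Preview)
Your argument has a genuine gap at the step where you assert that $N/(N\cap M)$ embeds as a \emph{closed} subgroup of $A=G/M$. In general the image $NM/M$ need not be closed. For a concrete counterexample take $G=\R^2$ (abelian, hence metabelian and with everything central), $M=\Z^2$, and $N$ a line through the origin with irrational slope. Then $N\cap M=\{0\}$, so $N/(N\cap M)\cong\R$, but its image in $A=\R^2/\Z^2$ is the irrational winding line, which is dense and not closed. Hence the map $N/(N\cap M)\to A$ is a continuous injection that is not a closed embedding, and your appeal to ``closed subgroups of compactly generated LCA groups are compactly generated'' does not apply. The conclusion of the lemma happens to hold in this example (since $\R$ is compactly generated), but your reasoning does not establish it.

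The paper avoids this obstacle by enlarging $M$ rather than splitting $N$. Since $N$ is central and $M$ is abelian, the product $MN$ is abelian, and so $M'=\overline{MN}$ is a closed abelian normal subgroup with $G/M'$ abelian (being a quotient of $A$). Now $N\subseteq M'$, so the quotient piece of your short exact sequence becomes trivial. Lemma~\ref{cgcg} gives that $M'$ is compactly generated as $\Z[G/M']$-module, and Corollary~\ref{scgcg} then says its closed submodule $N$ is compactly generated as $\Z[G/M']$-module; centrality of $N$ makes this action trivial, whence $N$ is compactly generated as a group. Your decomposition idea is salvageable precisely by making this replacement $M\rightsquigarrow \overline{MN}$, which is exactly the paper's one-line trick.
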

\begin{proof}
Let $M$ be an abelian closed normal subgroup such that $G/M$ is abelian. The assumptions imply that $M'=\overline{MN}$ is an abelian normal subgroup. Hence by Lemma \ref{cgcg}, $M'$ is compactly generated as $\Z[G/M']$-module. By Corollary \ref{scgcg}, it follows that its closed submodule $N$ is compactly generated as $\Z[G/M']$-module. Since it is central, it is thus abstractly compactly generated.
\end{proof}

\begin{thm}\label{menoeth}
Let $G$ be a compactly generated locally compact metabelian group, $(N_n)$ an ascending sequence (or more generally, filtering net) of closed normal subgroups of $G$. Define $N=\overline{\bigcup N_n}$. Then there exists $n_0$ such that $N_n$ is cocompact in $N$ for all $n\ge n_0$.
\end{thm}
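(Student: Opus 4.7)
The plan is to exploit the extension $1 \to M \to G \to A \to 1$, where $M$ is a closed abelian normal subgroup of $G$ with $A = G/M$ abelian; by Lemma \ref{cgcg} the subgroup $M$ is compactly generated as an $A$-module, and $A$ is a compactly generated LCA group. Write $\pi\colon G \to A$, and set $M_n := N_n \cap M$, $M_\infty := \overline{\bigcup_k M_k}$, $B_n := \overline{\pi(N_n)}$, $B_\infty := \overline{\bigcup_k B_k}$. I apply Theorem \ref{ascending} both to the ascending sequence of closed $A$-submodules $(M_n)$ of $M$, and to the ascending sequence of closed subgroups $(B_n)$ of $A$ (viewed as a compactly generated module over itself with trivial action), to obtain an index $n_0$ such that for all $n \ge n_0$ one has $M_n$ cocompact in $M_\infty$ and $B_n$ cocompact in $B_\infty$.

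Next I use the $M$-cocompactness to simplify. For $n \ge n_0$, compactness of $M_\infty/M_n$ gives $N_n M_\infty = N_n \cdot K$ for a compact $K \subseteq M_\infty$; hence $N_n M_\infty$ is closed in $G$ and $N_n$ is cocompact in $N_n M_\infty$. Since $M_\infty \subseteq N$, it therefore suffices to show $N_n M_\infty$ is cocompact in $N$. Passing to $G/M_\infty$ (still a compactly generated metabelian LC group), a short modular-law computation shows that the intersection of $N_k M_\infty/M_\infty$ with $M/M_\infty$ is trivial for every $k$, so I reduce to the case $M_\infty = 0$, i.e., $N_k \cap M = 0$ for every $k$.

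In this reduced setting, set $L_n := \pi^{-1}(B_n)$ and $L_\infty := \pi^{-1}(B_\infty)$. Both are closed normal in $G$, and $L_n$ is cocompact in $L_\infty$ via $L_\infty/L_n \cong B_\infty/B_n$, with $N \subseteq L_\infty$. I aim to find a compact set $C \subseteq N$ with $N = N_n \cdot C$. Starting from a compact $K_A \subseteq L_\infty$ with $L_\infty = L_n K_A$, I plan to use $\sigma$-compactness of $N$, density of $\pi(N)$ in $B_\infty$, and the open mapping theorem for $\sigma$-compact LC groups to arrange $K_A \subseteq N$; this yields $N = (N \cap L_n)\, K_A$, reducing the problem to showing $N_n$ cocompact in $N \cap L_n$. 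The latter fits into an abstract short exact sequence $0 \to N \cap M \to N \cap L_n \to \pi(N) \cap B_n \to 0$, while $N_n$ injects onto $A_n := \pi(N_n) \subseteq B_n$; so $(N \cap L_n)/N_n$ is an extension of $(\pi(N) \cap B_n)/A_n$ by $N \cap M$.

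The main obstacle I anticipate is showing simultaneously that $N \cap M$ is compact --- it need not vanish even in the reduced setting, since limits of elements of $\bigcup_k N_k$ (none of which lie in $M$) can land in $M$ --- and that the $A$-discrepancy $(\pi(N) \cap B_n)/A_n$ inside the compact group $B_\infty/B_n$ is compact. Compactness of $N \cap M$ will use that it is compactly generated as an $A$-module by Corollary \ref{scgcg}, together with the constraints imposed by $M_\infty = 0$; compactness of $(\pi(N) \cap B_n)/A_n$ will use a Baire-category argument inside the compact group $B_\infty/B_n$, exploiting that $\pi(N)$ is the countable increasing union of the subgroups $A_k$. Combining these with the verification of the surjectivity $\pi(N) + B_n = B_\infty$ implicit in the previous paragraph is the technical heart of the proof.
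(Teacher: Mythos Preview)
Your reduction to the case $M_\infty = 0$ (i.e., $N_k \cap M = \{1\}$ for all $k$) is correct and matches the paper exactly. The gap comes after this reduction. You are missing the key observation that, once $N_k \cap M = \{1\}$, each $N_k$ is \emph{central} in $G$: since $N_k$ is normal and $G/M$ is abelian, $[G, N_k] \subseteq N_k \cap M = \{1\}$. Hence $N = \overline{\bigcup N_k}$ is central too. Now Lemma~\ref{centcg} applies directly: $N$ is a compactly generated LCA group, and the abelian case (Theorem~\ref{ascending} with trivial action) finishes the argument in one line. This is the paper's proof.

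Your alternative route via projections to $A$ has real problems. First, the assertion that ``$\pi(N)$ is the countable increasing union of the subgroups $A_k$'' is unjustified and generally false: $N$ is the \emph{closure} of $\bigcup_k N_k$, and $\pi$ need not send limit points into $\bigcup_k \pi(N_k)$, so $\pi(N)$ can be strictly larger than $\bigcup_k A_k$ and your Baire-category argument does not get started. Second, your plan for showing $N \cap M$ compact (``compactly generated as an $A$-module, together with the constraints imposed by $M_\infty = 0$'') is not an argument; compactly generated certainly does not imply compact, and the only clean route I see to compactness of $N \cap M$ is to first prove $N_n$ cocompact in $N$ (via centrality and Lemma~\ref{centcg}) and then note that $N \cap M$ injects into the compact quotient $N/N_n$---which is circular for your purposes. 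The centrality observation is not a mere shortcut here; it is the missing idea that makes the reduced case tractable.
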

\begin{proof}
First, when $G$ is abelian, the result can be viewed as a particular case of Theorem \ref{ascending} (for the action of the trivial group).

Let $M$ be a closed normal abelian subgroup of $G$ such that $A=G/M$ is abelian. 

We begin with the particular case when $N_n\cap M$ does not depend on $n$, say is equal to $L$. Then modding out by $L$ if necessary, we can suppose that $N_n\cap M=\{1\}$ for all $n$. Since $[G,N_n]\subseteq N_n\cap M$ for all $n$, we deduce that $N_n$ is central for all $n$. Hence $N$ is central in $G$ as well. By Lemma \ref{centcg}, it follows that $N$ is compactly generated as well. By the abelian case, there exists $n_0$ such that $N_n$ is cocompact in $N$ for all $n\ge n_0$.

In general $M_n=N_n\cap M$ and $V=\overline{\bigcup_n M_n}$. Then $(M_n)$ is an ascending sequence of closed $A$-submodules of the compactly generated $A$-module $M$, so Theorem \ref{ascending} ensures that for large $n$, $M_n$ is cocompact in $V$. Write $N=\overline{\bigcup_n VN_n}$. Then $VN_n$ is closed and intersects $M$ in $V$, and for $n$ large enough, $N_n$ is coompact in $VN_n$. Also, the previous paragraph applies: for $n$ large enough, $VN_n$ is cocompact in $N$. Hence, for $n$ large enough, $N_n$ is cocompact in $N$.
\end{proof}


Similarly to Corollary \ref{scgcg}, we deduce:

\begin{cor}\label{ncgme}
If $G$ is a compactly generated locally compact metabelian group, then every normal subgroup $N$ is compactly generated as a normal subgroup.
\end{cor}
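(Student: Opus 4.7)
The plan is to mimic exactly the argument of Corollary \ref{scgcg}, replacing Theorem \ref{ascending} by its group-theoretic counterpart Theorem \ref{menoeth}. So let $N$ be a closed normal subgroup of the compactly generated metabelian LC group $G$. The goal is to produce a compact subset $K\subseteq N$ whose normal closure in $G$ is (dense, hence equal to) $N$.

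First I would check that $N$ is $\sigma$-compact: since $G$ is compactly generated it is $\sigma$-compact, and a closed subgroup of a $\sigma$-compact LC group is itself $\sigma$-compact (write $G=\bigcup K_n$ with $K_n$ compact; then $N=\bigcup (N\cap K_n)$ and each $N\cap K_n$ is compact). Now choose a compact neighborhood $K_0$ of $1$ in $N$, and let $N_0$ be the closure of the normal subgroup of $G$ generated by $K_0$. Since $K_0$ already generates an open subgroup of $N$, $N_0$ is open in $N$ and, by construction, closed and normal in $G$ and compactly generated as normal subgroup.

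Because $N_0$ is open in the $\sigma$-compact group $N$, the discrete quotient $N/N_0$ is countable. Enumerate coset representatives $g_1,g_2,\dots$ of $N_0$ in $N$, and let $N_n$ be the closure of the normal subgroup of $G$ generated by $K_0\cup\{g_1,\dots,g_n\}$. Then $(N_n)$ is an ascending sequence of closed normal subgroups of $G$ with $N_n\subseteq N$ and $\bigcup_n N_n=N$ (every element of $N$ lies in some $g_nN_0\subseteq N_n$), and each $N_n$ is compactly generated as normal subgroup of $G$.

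Apply Theorem \ref{menoeth} to this sequence: there exists $n_0$ such that $N_{n_0}$ is cocompact in $N$. Pick a compact subset $C\subseteq N$ with $N=N_{n_0}C$. Then $K_0\cup\{g_1,\dots,g_{n_0}\}\cup C$ is compact, and its normal closure in $G$ contains $N_{n_0}$ and $C$, hence contains $N_{n_0}C=N$, so it equals $N$. Hence $N$ is compactly generated as a normal subgroup of $G$. The only step requiring any non-trivial input is the one invoking Theorem \ref{menoeth}; everything else is a routine recipe for passing from ``cocompact subobject'' to ``compactly generated object'', exactly as in Corollary \ref{scgcg}.
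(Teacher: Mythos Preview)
Your proof is correct and follows essentially the same route as the paper's: build an ascending sequence of closed normal subgroups $N_n\subseteq N$, each open in $N$ and compactly generated as a normal subgroup of $G$, with union $N$; apply Theorem \ref{menoeth}; then pass from a cocompact $N_{n_0}$ to $N$ itself. The paper phrases the last step as ``$N_{n_0}$ open and cocompact, hence of finite index'', while you write $N=N_{n_0}C$ for a compact $C$ --- these are the same, since $N_{n_0}$ open forces $C$ to meet only finitely many cosets. One small remark: the closures you take when defining $N_n$ are in fact redundant, because the algebraic normal subgroup generated by $K_0\cup\{g_1,\dots,g_n\}$ already contains the neighborhood $K_0$, hence is open in $N$, hence closed in $G$; this is exactly what makes your claim ``its normal closure in $G$ contains $N_{n_0}$'' legitimate.
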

\begin{proof}
Let $N_0$ be an open subgroup of $N$, compactly generated as a normal subgroup. Since $G$ and hence $N$ is $\sigma$-compact, $N/N_0$ is countable, hence choose an ascending sequence $(N_n)$ of open subgroups, each open and compactly generated as normal subgroup, whose union is $N$. By Theorem \ref{menoeth}, there exists $n$ such that $N_n$ is cocompact in $N$. Thus the compactly generated submodule $N_n$ has finite index in $N$, and hence $N$ is compactly generated as well. 
\end{proof}

\begin{thm}\label{cgmmax}
Let $G$ be a compactly generated metabelian locally compact group. Then
\begin{enumerate}
\item\label{tm1} $G$ has a maximal compact normal subgroup $\Ell(G)$, and $\Ell(G/\Ell(G))=1$.
\item\label{tm2} If $W(G)=1$ and $G$ is totally disconnected, then $[G,G]$ is closed.
\end{enumerate}
\end{thm}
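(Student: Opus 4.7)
The strategy is to prove (1) by reducing to the case $\Ell_A(M) = 1$, where $\Ell(G)$ turns out to be central and Lemma \ref{centcg} applies, and to prove (2) by exploiting the polycontractable structure of $\Omega^\sharp(M)$ to force $[G,M]$ to be open in $M$. Throughout, fix a closed normal abelian subgroup $M$ of $G$ with $A := G/M$ abelian; then $A$ is a compactly generated LCA group, by Lemma \ref{cgcg} the module $M$ is compactly generated over $\Z A$, and I write $\pi \colon G \to A$ for the projection.

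For (1), Corollary \ref{maxcp} gives that $\Ell_A(M)$ is compact, and being characteristic in $M$ it is a compact normal subgroup $K_1$ of $G$. The correspondence between compact normal subgroups of $G$ containing $K_1$ and those of $G/K_1$ yields $\Ell(G)/K_1 = \Ell(G/K_1)$, so one may assume $\Ell_A(M) = 1$. Then any compact normal subgroup $K$ of $G$ intersects $M$ in a compact $A$-submodule, which must vanish, so $\Ell(G) \cap M = 1$. For $k \in \Ell(G)$ and $g \in G$, the commutator $[g,k]$ lies in $M$ (as $\pi([g,k]) = 0$ by abelianness of $A$) and in $\Ell(G)$ (by normality), hence in $\Ell(G) \cap M = 1$; thus $\Ell(G)$ is \emph{central} in $G$. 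It is closed by Theorem \ref{gwclo}, so by Lemma \ref{centcg} it is a compactly generated LCA group. Since every element of $\Ell(G)$ lies in some compact normal subgroup of $G$ and hence in a compact subgroup of $\Ell(G)$, the structural decomposition $\Ell(G) \cong \R^a \times \Z^b \times K$ with $K$ compact forces $a = b = 0$, and $\Ell(G) = K$ is compact. The equality $\Ell(G/\Ell(G)) = 1$ follows by observing that any compact normal subgroup of $G/\Ell(G)$ lifts to a compact normal subgroup of $G$ containing $\Ell(G)$, which by maximality must equal $\Ell(G)$.

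For (2), the assumption $\Ell(G) = 1$ forces $\Ell_A(M) = 1$, and $M$ inherits total disconnectedness from $G$. In the decomposition $\Omega^\sharp(M) = P \oplus E \oplus V$ given by Theorem \ref{sheerdec}, the factor $V$ vanishes by total disconnectedness; for $E$, its profinite open submodule is a compact $A$-submodule of $M$ and hence zero, while any element of the residual discrete locally finite part would generate a finite $A$-submodule, again a compact $A$-submodule and hence zero, so $E = 0$. Thus $\Omega^\sharp(M) = P$ is polycontractable, and open in $M$ since $M/\Omega^\sharp(M)$ is discrete. For each contractable summand $M_i$ of $P$ with contracting element $\alpha_i$, Lemma \ref{difpro1} gives $(\alpha_i - 1) M_i = M_i$. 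Summing over $i$, with $I_A$ denoting the augmentation ideal of $\Z A$, one obtains $I_A \cdot P = P$, whence $[G,M] = I_A \cdot M$ contains the open submodule $P$ and is itself open. Consequently $M/[G,M]$ is discrete, so the image of $[G,G] \subseteq M$ there is a subgroup of a discrete group and automatically closed; hence $[G,G]$ itself is closed in $M$ and a fortiori in $G$.

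The delicate step, where I expect the main obstacle to lie, is the identification in (1) that $\Ell(G)$ becomes central after the reduction; this is what converts the topological closedness from Theorem \ref{gwclo} into compact generation via Lemma \ref{centcg}, after which the LCA structure theorem finishes matters immediately. In (2), the key observation is that polycontractability of $\Omega^\sharp(M)$ alone suffices to make $[G,M]$ open, after which the remaining bilinear ``cocycle'' contribution to $[G,G]$ lives trivially in a discrete quotient and requires no further analysis.
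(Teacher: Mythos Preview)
Your proof is correct and follows essentially the same route as the paper's. For (1), both arguments mod out by $\Ell_A(M)$ (via Corollary~\ref{maxcp}), observe that $[G,\Ell(G)]\subseteq M\cap\Ell(G)=1$ forces $\Ell(G)$ to be central, and then invoke Lemma~\ref{centcg} plus LCA structure to conclude compactness; the only cosmetic difference is that the paper takes $M=\overline{[G,G]}$ and works with $N=\overline{\Ell(G)}$ rather than citing Theorem~\ref{gwclo} directly. For (2), both arguments reduce $\Omega^\sharp(M)$ to a polycontractable $P$ and use Lemma~\ref{difpro1} to get $P\subseteq [G,G]$; your detour through $[G,M]=I_A M$ and the discrete quotient $M/[G,M]$ is sound but could be shortened to ``$[G,G]\supseteq P$ is open in $M$, hence closed.''
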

\begin{proof}
Write  $M=\overline{[G,G]}$ and $A=G/M$.

(\ref{tm1}) That $\Ell(G)$ compact implies $\Ell(G/\Ell(G))$ trivial is straightforward (for an arbitrary LC group). Hence we need to prove that $\Ell(G)$ is compact. By Corollary \ref{maxcp}, there is a maximal compact $\Z A$-submodule $\Ell_G(M)$ in $M$; we can mod out and assume it is trivial, so that $M\cap\Ell(G)=\{0\}$. 

Since $[G,\Ell(G)]\subseteq M\cap\Ell(G)$, we deduce that $\Ell(G)$ is central. By Lemma \ref{centcg}, the central subgroup $N=\overline{\Ell(G)}$ is a compactly generated LCA group. By density of $\Ell(N)$ in $N$, we have $\Hom(N,\R)=\{0\}$. Being a compactly generated LCA group with $\Hom(N,\R)=\{0\}$, we see that $N$ is compact, so $N=\Ell(G)$ is compact.


(\ref{tm2}) Since $\Ell(G)$ is trivial, $\Ell(M)$ is trivial as well. In particular, $M$ is lower-sheer. Hence the sheer module $\Omega^\sharp(M)$ can be written as $P\times E$ with $P$ polycontractable, $E=\Ell(M)$ amorphic (Theorem \ref{sheerdec1}). Since $\Ell(M)$ is trivial, we have $E=\{0\}$, so $M=P$ is polycontractable. It follows from Lemma \ref{difpro1} that $M\subseteq [G,G]$. Hence $[G,G]$ is open in $M=\overline{[G,G]}$ and hence $[G,G]$ is closed.
\end{proof}

\subsection{Compact presentability}

Let $Q$ be a finitely generated abelian group and $v:Q\to\R$ an epimorphism. Define the closed submonoid $Q_v=\{q\in Q:v(q)\ge 0\}$. 


\begin{lem}\label{genqgen}
Let $Q$ be an a group, and $M$ a LC group with an action of $Q$, and fix $\chi\in Q^*$. Suppose that $M$ is compactly generated as $Q_\chi$-group, and generated, as $Q$-group by a compact subset $S$ with nonempty interior. Then $M$ is generated by $S$ as $Q_\chi$-group.
\end{lem}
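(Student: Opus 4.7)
The plan is to let $N$ be the subgroup of $M$ generated by $Q_\chi\cdot S=\bigcup_{q\in Q_\chi}qS$, and show $N=M$. By construction $N$ is $Q_\chi$-invariant, and since $S$ has nonempty interior, $N$ is open in $M$. A first key observation is that since $\chi$ takes values in the abelian group $\R$, it factors through the abelianization of $Q$, so $Q_\chi$ is stable under conjugation. This ensures that for any $p\in Q$, the translate $pN$ is still $Q_\chi$-invariant: for $q\in Q_\chi$ and $x\in N$, we have $q(px)=p\cdot(p^{-1}qp)x$, and $p^{-1}qp\in Q_\chi$ since $\chi(p^{-1}qp)=\chi(q)\ge 0$. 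A direct computation shows that $pN$ is the subgroup generated by $\{r\in Q:\chi(r)\ge \chi(p)\}\cdot S$.

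Assuming $\chi\neq 0$ (otherwise $Q_\chi=Q$ and the claim is trivial), I would then pick a sequence $(p_n)$ in $Q$ with $\chi(p_n)\to-\infty$. Since $\chi(p_{n+1})\le\chi(p_n)$ implies $\{r:\chi(r)\ge\chi(p_n)\}\subseteq\{r:\chi(r)\ge\chi(p_{n+1})\}$, the sequence $(p_nN)$ is an increasing chain of open subgroups of $M$. Moreover, for every $q\in Q$ we have $\chi(q)\ge\chi(p_n)$ for $n$ large, so $qS\subseteq p_nN$. Hence $\bigcup_n p_nN$ is a subgroup containing $Q\cdot S$, and since $M$ is generated as a $Q$-group by $S$, this union equals $M$.

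Finally, let $T$ be a compact subset generating $M$ as a $Q_\chi$-group. Since $T$ is compact and the $p_nN$ form an increasing open cover of $M$, we get $T\subseteq p_nN$ for some $n$. But $p_nN$ is a $Q_\chi$-invariant subgroup containing $T$, so it contains the $Q_\chi$-subgroup generated by $T$, which is $M$. Thus $p_nN=M$, and applying the automorphism $p_n^{-1}$ of $M$ yields $N=M$, as required.

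The main idea is that translating $N$ by elements of very negative $\chi$-value exhausts $M$, after which compactness of $T$ lets us push back. The step most likely to be glossed over, but which is genuinely essential, is the conjugation-stability of $Q_\chi$ guaranteeing that each translate $p_nN$ remains $Q_\chi$-invariant; this is exactly where the hypothesis that $\chi$ is $\R$-valued enters. Everything else is a short bookkeeping argument.
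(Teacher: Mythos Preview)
Your proof is correct and follows essentially the same route as the paper: form the open $Q_\chi$-subgroup $N$ generated by $S$, exhaust $M$ by translates of $N$ along elements of increasingly negative $\chi$-value, and use compact generation over $Q_\chi$ to force the exhaustion to stabilize. The paper uses powers $u^n$ of a single element with $\chi(u)<0$ rather than an arbitrary sequence $(p_n)$, and argues the ascending union is constant rather than explicitly invoking a compact generating set $T$; these are cosmetic differences. Your explicit check that $Q_\chi$ is conjugation-stable (so each translate $p_nN$ remains $Q_\chi$-invariant) is a point the paper leaves implicit, and is indeed needed when $Q$ is non-abelian.
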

\begin{proof}
If $\chi\neq 0$ there is nothing to prove; assume otherwise. Let $N$ be the $Q_\chi$-subgroup generated by $S$; it is open. Fix $u\in Q$ with $\chi(u)<0$. Then $M=\bigcup_{n\ge 0}u^{-n}(N)$. This ascending union is either constant or strictly increasing, and the latter is excluded since $M$ is compactly generated as $Q_\chi$-group. Hence $N=M$.
\end{proof}

\begin{proof}This is trivial if $\chi=0$; assume otherwise.
Pick $q\in Q\smallsetminus Q_v$ (so $v(q)<0$). Let $N$ be the $Q_v$-submodule generated by $S$. Then we have $M=\bigcup_{n\ge 0} q^nN$, where each $q^nN$ is an open $Q_v$-submodule of $M$, and $q^nN\subseteq q^{n+1}N$ for each $n$. Since $M$ is compactly generated as $Q_v$-module, we have $q^nN=q^{n+1}N$ for some $n$ and hence $N=qN$; thus $N$ is a $Q$-submodule and $N=M$. 
\end{proof}

\begin{rem}
The particular case when $M$ is abelian and $M$ is discrete was obtained by Bieri and Strebel \cite[Proposition 2.1]{BS}. The above proof differs from the original proof: indeed, the latter relies on a criterion (written in \cite[Proposition 2.1]{BS}) for $M$ to be compactly generated over $Q_v$, which is stated as the existence of a certain nonzero element (with suitable properties) in the annihilator in $\Z[Q]$ of $M$. This criterion does not carry over to the non-discrete case, since $M$ can be compactly generated over $Q_v$ and have a trivial annihilator: for instance consider $M=\Q_p$ and $Q=\langle t\rangle$ infinite cyclic, where $t$ acts on $\Q_p$ by multiplication by some transcendental element of positive valuation, and $v$ defined by $v(t)=-1$. However, we will use this criterion in the proof of Lemma \ref{opega}.
\end{rem}

\begin{rem}
If we assume that $Q$ is countable, or more generally, $Q$ is a $\sigma$-compact locally compact group and the $Q$-action is continuous, we can, by a simple application of Baire's theorem, remove the assumption that $S$ has nonempty interior.
\end{rem}

We now need two lemmas of abstract group theory. If $G$ is a group and $u\in G$, define an $u^+$-subgroup to be a subgroup stable under the left conjugation endomorphism $x\mapsto uxu^{-1}$ and similarly define an $u^-$-subgroup; define an $u$-subgroup as a subgroup stable under conjugation by $u$ and $u^{-1}$.


%


\begin{lem}\label{condn}
Consider a group $G$ with a generating subset $S\cup\{u\}$ with $S$ symmetric. Let $W_0\subseteq G$ be a subgroup and define $W_n=u^nWu^{-n}$ for $n\in\Z$.
Assume that the following conditions hold
\begin{enumerate}
\item\label{rel11} for every $n\ge 0$, $S$ normalizes $W_n$;
\item\label{rel12} $uWu^{-1}\subseteq W_0$;
\item\label{rel2} $uSu^{-1}$ is contained in the $u^-$-subgroup generated by $S$.
\end{enumerate}
Then the normal subgroup $N$ generated by $W_0$ is the ascending union $W_\infty=\bigcup_{n\ge 0} u^{-n}W_0u^n$.


\end{lem}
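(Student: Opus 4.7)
The plan is to prove $W_\infty=N$ by two inclusions. Since $N$ is normal and contains $W_0$, it contains every conjugate $u^{-n}W_0u^n=W_{-n}$, and hence $W_\infty\subseteq N$. For the reverse direction, it suffices to show that $W_\infty$ is itself a normal subgroup of $G$, for then it is a normal subgroup containing $W_0$ and so contains $N$. Iterating condition (2) gives $W_1\subseteq W_0$, and conjugating by $u^{-n}$ yields the ascending chain $W_0\subseteq W_{-1}\subseteq W_{-2}\subseteq\cdots$; so $W_\infty$ is a subgroup, and the identity $uW_\infty u^{-1}=W_\infty$ is immediate from $W_1\subseteq W_0$, whence $\langle u\rangle$-invariance.

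Let $T$ denote the $u^-$-subgroup of $G$ generated by $S$, that is, the subgroup generated by $\{u^{-k}su^k:k\geq 0,\ s\in S\}$. Since $S\subseteq T$ we have $G=\langle T,u\rangle$, so normality of $W_\infty$ in $G$ reduces to showing that $T$ normalizes $W_\infty$. I would establish the stronger statement
\[
(\ast)\qquad\text{for every }m\geq 0,\ T\text{ normalizes }W_{-m},
\]
by induction on $m$. For a generator $g=u^{-k}su^k$ of $T$, the basic identity is
\[
gW_{-m}g^{-1}=u^{-k}\,sW_{k-m}s^{-1}\,u^k.
\]
When $k\geq m$ the index $k-m$ is non-negative and (1) gives $sW_{k-m}s^{-1}=W_{k-m}$, whence $gW_{-m}g^{-1}=W_{-m}$; this settles the base $m=0$ and the case $k\geq m$ in general. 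When $0<k<m$, the expression reduces to $sW_{-(m-k)}s^{-1}$ with $m-k<m$, and since $s\in T$, the inductive hypothesis gives $sW_{-(m-k)}s^{-1}=W_{-(m-k)}$, so again $gW_{-m}g^{-1}=W_{-m}$. In the remaining case $k=0$, i.e., $g=s\in S$, condition (3) supplies $t:=usu^{-1}\in T$, so $su^{-1}=u^{-1}t$, giving
\[
sW_{-m}s^{-1}=u^{-1}\,tW_{-(m-1)}t^{-1}\,u,
\]
and the inductive hypothesis applied at $m-1$ yields $tW_{-(m-1)}t^{-1}=W_{-(m-1)}$, hence $sW_{-m}s^{-1}=W_{-m}$. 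Taking unions in $m$, $T$ normalizes $W_\infty$; combined with $\langle u\rangle$-invariance, $W_\infty$ is normal in $G$, completing $W_\infty=N$.

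The main obstacle is precisely the case $k=0$ in the inductive step: condition (1) only gives normalization of $W_n$ for $n\geq 0$, so there is no direct way to pass $s\in S$ across $u^{-1}$ while keeping the inside of the expression of the form $W_{n'}$ with $n'\geq 0$. Condition (3) is the device that overcomes this, at the cost of replacing $s$ by a (potentially complicated) element $t\in T$. This is what forces the induction to be stated with respect to the whole group $T$ rather than just $S$, even though the statement of the lemma only references the generating set $S\cup\{u\}$.
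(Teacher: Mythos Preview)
Your proof is correct. The paper takes a somewhat different, more conceptual route. It first observes, by a short induction from condition~(\ref{rel2}), that $u^nSu^{-n}\subseteq T$ for all $n\ge 0$; hence $T$ coincides with the normal closure $\langle\!\langle S\rangle\!\rangle$ of $S$ in $G$. Then a one-line computation using only condition~(\ref{rel11}) shows that each generator $u^{-n}su^n$ of $T$ normalizes $W_0$, so $T$ normalizes $W_0$. Since $T=\langle\!\langle S\rangle\!\rangle$ is normal in $G$, it automatically normalizes every $G$-conjugate of $W_0$, in particular every $W_{-m}$, and the conclusion follows.

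Thus the paper cleanly separates the two ingredients: condition~(\ref{rel11}) gives ``$T$ normalizes $W_0$'', and condition~(\ref{rel2}) gives ``$T$ is normal in $G$'', from which normalization of all $W_{-m}$ is free. Your induction on $m$ achieves the same end but entwines the two conditions; your case $k=0$ is, in effect, one step of the normality argument unrolled. The paper's version is shorter; yours has the virtue of never needing to identify $T$ with $\langle\!\langle S\rangle\!\rangle$.

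One small expository slip: in the case $0<k<m$ the full expression is $u^{-k}\,sW_{-(m-k)}s^{-1}\,u^k$, not literally $sW_{-(m-k)}s^{-1}$; but your intended argument (apply the induction hypothesis to the inner factor, then conjugate back by $u^{-k}$) is clear and valid.
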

\begin{proof}
Observe that because of Condition (\ref{rel12}), the union $W_\infty=\bigcup_{n\ge 0} W_{-n}$ is indeed ascending, and hence is a subgroup since $W_n$ is a subgroup.

Next, we see by a straightforward induction (only based on (\ref{rel2})) that for every $y\in S$ and $n\ge 0$, the element $u^nyu^{-n}$ belongs to the $u^-$-subgroup generated by $S$.


Observe that for every $n\ge 0$, $u^{-n}Su^n$ normalizes $W_0$: indeed, for $s\in S$ and $w\in W_0$, we have
\[^{u^{-n}su^n}W_0={}^{u^{-n}s}W_n={}^{u^{-n}}W_n=W_0.\]
Since by the previous fact, the normal subgroup $\langle\!\langle S\rangle\!\rangle$ generated by $S$ is generated by $\bigcup_{n\ge 0}u^{-n}Su^n$, we deduce that $\langle\!\langle S\rangle\!\rangle$ normalizes $W_0$. Since $\langle\!\langle S\rangle\!\rangle$ is normal, it therefore normalizes every conjugate of $W$, hence normalizes each $W_n$, and hence normalizes $W_\infty$. Since $G=\langle u\rangle \langle\!\langle S\rangle\!\rangle$, it therefore normalizes $W_\infty$.
\end{proof}




\begin{lem}\label{condn2}
Keep the assumptions of the preceding lemma. Assume that $G$ carries a topology of locally compact group such that $S,W$ are compact, $N$ is closed, and $S$ has non-empty interior. Then if $G/N$ is compactly presentable, so is $G$. 
\end{lem}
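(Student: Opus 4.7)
The plan is to reduce Lemma~\ref{condn2} to the standard extension theorem for compact presentability of locally compact groups. The key fact (for instance, \cite[Prop.~8.A.10]{CH}) is: given a short exact sequence $1\to N\to G\to Q\to 1$ of locally compact groups with $N$ closed, if $G$ is compactly generated, $N$ is generated as a normal subgroup of $G$ by a compact subset, and $Q$ is compactly presentable, then $G$ itself is compactly presentable. My strategy is to verify these three hypotheses in the present setting and apply the theorem.

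First I would check compact generation of $G$: the set $S\cup\{u,u^{-1}\}$ is compact (since $S$ is compact) and generates $G$ by the standing assumption from Lemma~\ref{condn}. Next, $N$ is closed by hypothesis, and compactly generated as a normal subgroup because, by its very definition in Lemma~\ref{condn}, it is the normal subgroup generated by the compact set $W_{0}=W$. Finally, $G/N$ is compactly presentable by assumption. The extension theorem then directly yields that $G$ is compactly presentable.

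It is perhaps worth pointing out that the refined combinatorial description $N=W_\infty=\bigcup_{n\ge 0}u^{-n}W_0u^{n}$ obtained in Lemma~\ref{condn} is not strictly needed to run this argument; what is used is only the setup of Lemma~\ref{condn}, namely that $W_{0}=W$ normally generates $N$, together with the compactness of $W$. Accordingly, there is no substantive obstacle: the whole work is packaged into the standard extension result, and the role of the preceding lemma is mainly to provide the specific $W$ which compactly normally generates $N$. (The hypothesis that $S$ has non-empty interior is typical for passing between ``abstractly generates'' and ``topologically generates'' in LC groups; it plays no role beyond ensuring the quotient $G/N$ inherits a nice topological structure from $G$.)
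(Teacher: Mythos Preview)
Your argument has a genuine gap: the ``standard extension theorem'' you invoke is false as stated. The hypotheses that $G$ is compactly generated, $N$ is compactly normally generated, and $G/N$ is compactly presentable do \emph{not} imply that $G$ is compactly presentable. For a discrete counterexample, take any finitely generated, non--finitely-presentable group $H$ (say the lamplighter group $(\Z/2\Z)\wr\Z$), set $G=H\times\Z$ and $N=H\times\{0\}$. Then $N$ is finitely generated (hence finitely normally generated), $G/N\simeq\Z$ is finitely presented, yet $G$ is not finitely presented since $H$ is a retract of $G$. The reference \cite[Prop.~8.A.10]{CH} gives the \emph{converse} direction (compact presentability of $G/N$ forces $N$ to be compactly normally generated), not the implication you need.

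Accordingly, your claim that the description $N=W_\infty=\bigcup_{n\ge 0}u^{-n}W_0u^{n}$ from Lemma~\ref{condn} ``is not strictly needed'' is exactly backwards: this is the heart of the matter. The paper builds an explicit compact presentation by writing down bounded relators (lifts of relators of $G/N$, the conjugation relations $xwx^{-1}=w'$ for $x\in S\cup\{u\}$, and relations expressing $uSu^{-1}$ in terms of $\bigcup_{k\le k_0}u^{-k}Su^k$), defines the abstractly presented group $G'$, and then applies Lemma~\ref{condn} \emph{inside $G'$}. Because the lemma is purely group-theoretic, it tells us that the normal closure of $W$ in $G'$ is again $\bigcup_{n\ge 0}u^{-n}Wu^{n}$; since each $u^{-n}Wu^{n}$ maps bijectively to its image in $G$, the whole normal closure injects into $G$. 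Together with the fact that $G'/N'\to G/N$ is an isomorphism (by choice of relators), this forces $G'\to G$ to be an isomorphism. Without the ascending-union description, there is no control over the kernel of $G'\to G$, and the argument collapses --- precisely as the lamplighter example shows it must in general.
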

\begin{proof}
We can suppose that $W\subseteq S$, and fix $L=S\cup\{u\}$ as generating subset. Consider relators of bounded length for $G/N$, and lift them to relators of the form 
$r\equiv u^{-n}wu^{n}$ with $n$ bounded.

By a compactness argument, there exist $n_0$ and $k_0$ such that $uSu^{-1}$ is contained in the set of products of at most $n_0$ elements of the form $\bigcup_{0\le k\le k_0}u^{-k}Su^k$. Prescribe them as relators as well.

Include the relators $xwx^{-1}=w'$ for $(x,w,w')\in L\times W^2$ as well.

Let $G'$ be the group defined by this presentation. By construction, the canonical projection $p:G'\to G$ is injective on each of $S$, and $W$ (which are viewed both as subsets of $G'$ and $G$, as well as the element $u$). By Lemma \ref{condn}, the normal subgroup $N$ of $G'$ generated by $W$ is the ascending union $\bigcup_{n\ge 0}u^{-n}Wu^n$. It follows that $N$ is mapped injectively onto its image in $G$, also called $N$. Since the relators include lifts of relators of $G/N$, the mapped $G'/N\to G/N$ induced by projection $p$ is an isomorphism. Therefore $p$ is a group isomorphism as well. Since relators have bounded size, this shows that $G$ is compactly presentable.
\end{proof}

\subsection{The Bieri-Strebel geometric invariant}

Let us introduce some basic definitions in the locally compact setting. If $G$ is a locally compact group, denote $G^*=\Hom(G,\R)$ the set of continuous homomorphisms $G\to\R$, endowed with the compact-open topology. When $G$ is compactly generated, this is a finite-dimensional real vector space, with its usual topology. If $\chi\in G^*$, define $G_\chi=\{g\in G:\;\chi(G)\ge 0\}$. 


\begin{defn}[Bieri-Neumann-Strebel invariant in the locally compact setting]
Let $H$ be a locally compact group with a continuous action of $G$ by automorphisms. Define
\begin{gather*} \Gamma_G(H)=\{0\}\cup\{\chi\in G^*:\;H\textnormal{ is not compactly generated}\\
\textnormal{over any compactly generated submonoid of }G_\chi\},
\end{gather*}
In particular, define
\[\Gamma(G)=\Gamma_G(\overline{[G,G]}).\]
\end{defn}

This is introduced in \cite{BNS} when $G,H$ are discrete,, in which case this is a closed subset of $G^*$, stable under positive homotheties \cite[Theorem A]{BNS}.

(The invariant defined in \cite{BS,BNS} is rather the quotient of $G^*\smallsetminus\Gamma(G)$ by the group of positive homotheties; while these are obviously equivalent data, it is much more practical not to use the complement, for instance so as to state the next lemma.)

We will check (Lemma \ref{eqmet}) that for $G$ metabelian, this matches with the original definition in \cite{BS}, namely, for a closed abelian normal subgroup $M$ with $Q=G/M$ abelian, we have a natural identification $\Gamma(G)=\Gamma_Q(M)$. (Such an identification would not be possible by working with the open complement in the definition!)


\begin{lem}\label{cgsousq}
Let $W,Q'\subseteq Q$ be closed subgroups, with $W$ compact, such that $Q=W+Q'$. Let $\chi:Q\to\R$ be a continuous homomomorphism.
Then every LC $Q$-module $M$ that is compactly generated as $Q_\chi$-module is compactly generated as $Q'_\chi$-module.
\end{lem}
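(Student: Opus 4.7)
The plan is to take a compact $Q_\chi$-generating set $S$ of $M$ and manufacture from it a compact $Q'_\chi$-generating set. Every $q\in Q_\chi$ admits a (non-unique) decomposition $q=w+q'$ with $w\in W$, $q'\in Q'$, and since $Q$ is abelian the action satisfies $q\cdot s=q'\cdot(w\cdot s)$, so $q\cdot s$ lies in $Q'\cdot T$ where $T:=W\cdot S$ is compact by continuity of the action. The obstacle is that $\chi(q')=\chi(q)-\chi(w)$ may be negative, so $q'$ need not lie in $Q'_\chi$. Its negativity is, however, bounded: $\chi(q')\ge -c$ where $c:=\max\chi(W)<\infty$, since $W$ is compact and $\chi$ is continuous.

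The degenerate case $\chi|_{Q'}=0$ is trivial: the quotient $Q/Q'$ is a quotient of $W$, hence compact, so a continuous homomorphism $Q\to\R$ vanishing on $Q'$ is forced to be zero; then $Q_\chi=Q$, $Q'_\chi=Q'$, and $T$ itself is a compact $Q'$-generating set for $M$.

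Otherwise pick $q_0\in Q'$ with $\chi(q_0)>0$ and an integer $N$ with $N\chi(q_0)\ge c$. For every $q'\in Q'$ with $\chi(q')\ge -c$ there exists $n\in\{0,1,\dots,N\}$ such that $r:=q'+nq_0\in Q'_\chi$, and we rewrite
\[q\cdot s=r\cdot\bigl(q_0^{-n}\cdot(w\cdot s)\bigr),\]
displaying $q\cdot s$ as the action of an element of $Q'_\chi$ on an element of the compact set
\[S':=\bigcup_{n=0}^{N}q_0^{-n}\cdot W\cdot S.\]
Hence $S'$ is a compact $Q'_\chi$-generating set for $M$. The only subtle point, and arguably the only obstacle, is recognizing that $\chi\neq 0\Rightarrow\chi|_{Q'}\neq 0$ (by compactness of $Q/Q'$), which guarantees a positive element $q_0\in Q'$ available to absorb the bounded negativity of $\chi(q')$; once this is in hand the construction is a finite-translate enlargement of $T$.
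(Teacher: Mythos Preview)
Your proof is correct, but you have overlooked a simplification that collapses most of your work. Since $W$ is a compact \emph{subgroup} (not merely a compact subset) and $\chi:Q\to\R$ is a continuous homomorphism, the image $\chi(W)$ is a compact subgroup of $\R$, hence $\chi(W)=\{0\}$. Thus your constant $c=\max\chi(W)$ is always zero, and in the decomposition $q=w+q'$ one has $\chi(q')=\chi(q)\ge 0$ automatically, so $q'\in Q'_\chi$ with no correction needed. The entire apparatus with $q_0$, $N$, and the enlarged set $S'$ is then unnecessary: your $T=W\cdot S$ already works, since $Q_\chi\cdot S=(Q'_\chi+W)\cdot S=Q'_\chi\cdot(W\cdot S)$.

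This is exactly the paper's argument, which dispatches the lemma in two lines once it observes that $\chi$ vanishes on $W$. Your version would have the merit of working when $W$ is only assumed to be a compact subset rather than a subgroup, but under the stated hypotheses the shortcut is available and preferable.
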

\begin{proof}
Since $\chi$ vanishes on $W$, we have $Q_\chi=Q'_\chi+W$. Let $K$ be a compact subset generating $M$ as $Q_\chi$-module. Then $WK$ is compact and generates $M$ as $Q'$-module. 
\end{proof}

\begin{lem}\label{congesu}
Let $Q$ be a $\sigma$-compact LC abelian group and $M$ a locally compact group with a continuous action of $Q$. Consider $\chi\in Q^*$. Suppose that $M$ is compactly generated as a $Q_\chi$-group. Then $M$ is compactly generated over some compactly generated subsemigroup of $Q_\chi$.
\end{lem}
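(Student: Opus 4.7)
My plan is to combine a $\sigma$-compactness exhaustion of $Q_\chi$ with the stabilization argument used in the proof of Lemma~\ref{genqgen}. First I would enlarge the given compact generating set $K$ of $M$ as a $Q_\chi$-group by unioning it with a compact symmetric identity neighborhood of $M$, so that the enlarged $K$ has nonempty interior while still generating $M$ as a $Q_\chi$-group.

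Next, since $Q$ (and hence $Q_\chi$) is $\sigma$-compact, I would write $Q_\chi=\bigcup_n L_n$ as an ascending union of compact subsets containing $0$, and define $S_n:=\langle L_n\rangle_{\mathrm{sgp}}$ to be the subsemigroup of $Q_\chi$ generated by $L_n$. Each $S_n$ is then a compactly generated subsemigroup of $Q_\chi$, the sequence $(S_n)$ is ascending, and $\bigcup_n S_n=Q_\chi$. Setting $M_n:=\langle S_n\cdot K\rangle$ yields an ascending sequence of open subgroups of $M$ (openness from $K$ having nonempty interior and $0\in S_n$), with $\bigcup_n M_n=\langle Q_\chi K\rangle=M$.

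The main step is to show that $M_n=M$ for some $n$; taking $S:=S_n$ and $K':=K$ then gives the conclusion. Assuming $\chi\neq 0$ (the case $\chi=0$ reduces by exhausting $Q$ by open compactly generated subgroups and noting that $K$ is compact), I would fix $u\in Q$ with $\chi(u)<0$, so that $u^{-1}\in Q_\chi$ has positive $\chi$. Following the proof of Lemma~\ref{genqgen}, one examines the ascending chain $\bigcup_m u^{-m}(M_n)$, which covers the $\langle u^{-1},S_n\rangle_{\mathrm{sgp}}$-subgroup of $M$ generated by $K$. Once $n$ is large enough that $u^{-1}\in L_n$, this subgroup stabilizes at $M_n$ itself, and the compactness of $K$ (forcing $K\subseteq u^{-m_0}(M_n)$ for some $m_0$) combined with the $S_n$-invariance of $M_n$ yields the desired equality $M_n=M$ for $n$ sufficiently large.

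The hardest part will be making the stabilization step rigorous. The subgroup $M_n$ is only $S_n$-invariant, not $Q_\chi$-invariant, so Lemma~\ref{genqgen} does not apply verbatim; the argument must carefully exploit the fact that the exhaustion $(L_n)$ eventually contains enough elements of $Q_\chi$ (including $u^{-1}$ and any finite set of ``directions'' needed to express the generators of $M$ coming from the compact generation hypothesis) for the $S_n$-invariance of $M_n$ to substitute effectively for $Q_\chi$-invariance.
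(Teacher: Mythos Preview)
Your overall architecture matches the paper's: exhaust $Q_\chi$ by compactly generated subsemigroups $S_n$, set $M_n=\langle S_n\cdot K\rangle$, and prove that the ascending union of open subgroups $M_n$ is eventually all of $M$. The problem is in the stabilization step, where you are pushing in the wrong direction.

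You iterate $u^{-1}$, which lies in $Q_\chi$. Once $u^{-1}\in S_n$ the subgroup $M_n$ is already $u^{-1}$-stable, so the chain $\bigcup_m u^{-m}(M_n)$ collapses to $M_n$ trivially; this gives no information whatsoever about whether $M_n=M$. Likewise ``$K\subseteq u^{-m_0}(M_n)$ for some $m_0$'' is vacuous, since $K\subseteq M_n$ from the outset. The semigroup $\langle u^{-1},S_n\rangle_{\mathrm{sgp}}$ is still contained in $Q_\chi$, so the subgroup it generates from $K$ cannot exceed what $Q_\chi$ already gives you.

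What the paper does (modulo a sign typo: its $u^{-1}$ should read $u$) is use the \emph{forbidden} direction. One first arranges that $P_k\cup\{u\}$ generates $Q$ as a semigroup, so that $M=\bigcup_{m\ge 0}u^{m}N_k$. The compact set $uS$ lies in some $N_k$ because $\bigcup_k N_k=M$; then the identity $u^{m}N_k=\langle P_k\,(u^{m}S)\rangle$ and an induction on $m$ (using commutativity of $Q$) give $u^{m}S\subseteq N_k$ for every $m\ge 0$, whence $u^m N_k\subseteq N_k$ and $N_k=M$. The essential point you are missing is that the single compactness fact ``$uK\subseteq M_n$ for some $n$'' --- with $u$ acting \emph{against} $Q_\chi$ --- is what bootstraps, not anything about $u^{-1}$.
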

\begin{proof}
Let $S$ be a compact generating subset of $M$ as a $Q$-group, with non-empty interior. By Lemma \ref{genqgen}, $S$ generates $M$ as a $Q_\chi$-group. Fix $u\in Q$ with $\chi(u)<0$. 
Write $Q_\chi$ as an ascending union of compactly generated subsemigroups $P_k$, such that $P_0\cup\{u\}$ generates $Q$ as a semigroup. Thus if $N_k$ is the $P_k$-subgroup generated by $S$, we have $M=\bigcup_{n\ge 0} u^{-n}N_k$ for each $k$. Besides, we have $\bigcup_{k\ge 0} N_k=M$. In particular, by compactness, $u^{-1}S$ is contained in $M_k$ for some $k$. By an immediate induction (using that $Q$ is abelian), it follows that $u^{-n}S$ is contained in $N_k$ for all $n$. We deduce that $N_k=M$.
\end{proof}

The following two lemmas will appear as particular cases of Theorem \ref{chisous}, but we need them for the moment.

\begin{lem}\label{opega}
Let $Q$ be a compactly generated abelian group and $M$ a compactly generated $\Z[Q]$-module. Fix $\chi\in Q^*$ and suppose that $M$ is compactly generated as a $Q_\chi$-module. Let $N$ be an open $\Z[Q]$-submodule of $M$. Then $N$ is compactly generated as a $Q_\chi$-module.
\end{lem}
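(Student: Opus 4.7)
The plan is to pass to the discrete quotient $M/N$, apply the Bieri--Strebel annihilator criterion there to produce a ``matrix'' of the $Q_\chi$-action on a finite generating set, and then combine lifts of these relations with a compact piece of the given generating set of $M$.

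By Lemma \ref{congesu}, I fix a compactly generated submonoid $Q'\subseteq Q_\chi$ and a compact subset $T\subseteq M$ such that $T$ generates $M$ as a $\Z Q'$-module. Since $N$ is open in $M$, the quotient $M/N$ is discrete; and because $M$ is compactly generated as a $\Z Q$-module (Lemma \ref{cgcg} combined with the hypothesis), $M/N$ is a finitely generated $\Z Q$-module in the usual algebraic sense, and the finite image $\pi(T)\subseteq M/N$ generates $M/N$ as a $\Z Q'$-module. After enlarging $T$ by finitely many elements, I may arrange that $\pi(T)=\{\bar m_1,\ldots,\bar m_k\}$ and that chosen lifts $m_i\in M$ of $\bar m_i$ lie in $T$.

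Now the Bieri--Strebel criterion (Proposition 2.1 in \cite{BS}, applied to the discrete finitely generated $\Z Q$-module $M/N$, which is finitely generated over $\Z Q_\chi$) yields $q_0\in Q$ with $\chi(q_0)>0$ and $\lambda_{ij}\in\Z Q_\chi$ satisfying
\[q_0\bar m_i=\sum_{j=1}^k\lambda_{ij}\bar m_j \quad\text{in }M/N,\qquad 1\le i\le k.\]
Enlarging $Q'$ so as to contain $q_0$ and the finite supports of all $\lambda_{ij}$, I set
\[n_i:=q_0 m_i-\sum_{j=1}^k\lambda_{ij}m_j\in N,\qquad r_t:=t-m_{j(t)}\in N,\]
where $j(t)$ is defined by $\pi(t)=\bar m_{j(t)}$. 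The set $R:=\{r_t:t\in T\}$ is compact.

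I claim that $N$ is compactly generated as $\Z Q'$-module by the compact set $R\cup\{n_1,\ldots,n_k\}$. For $x\in N$, approximate $x$ by finite $\Z Q'$-combinations $\sum_l\nu_l t_l$ of elements of $T$ and substitute $t_l=m_{j(t_l)}+r_{t_l}$ to obtain
\[x=\sum_{j=1}^k\mu_j m_j + z,\qquad z\in\overline{\Z Q'\cdot R},\qquad \mu_j\in\Z Q'.\]
Then $y:=\sum_j\mu_j m_j=x-z\in N$, so the tuple $(\mu_j)$ is a $\Z Q'$-syzygy of $(\bar m_j)$ in $M/N$. The heart of the argument is to show that $y$ itself lies in $\overline{\Z Q'\cdot\{n_1,\ldots,n_k\}}$: using the matrix relation above, each occurrence of $q_0 m_i$ in $y$ can be replaced by $\sum_j\lambda_{ij}m_j+n_i$, absorbing a $\Z Q'$-multiple of $n_i$ while strictly decreasing the $\chi$-support of the remaining combination; after finitely many such reductions the support is confined to a compact region, where the syzygy condition allows the residual expression to be folded into $\overline{\Z Q'\cdot R}$.

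The main obstacle I anticipate is making this $\chi$-support descent rigorous when $Q$ has a connected (Euclidean) factor, so that $Q'$ is not a discrete monoid and ``finite support'' of $\mu_j$ must be understood through compactness rather than combinatorially. This is precisely where the sharper \emph{annihilator} form of the Bieri--Strebel criterion is indispensable: the element $\det(q_0 I-(\lambda_{ij}))\in\Z Q'$ annihilates $M/N$ and sends any element of $\sum_j\Z Q'\cdot m_j$ into the $\Z Q'$-span of the $n_i$'s, converting the would-be infinite iteration into a single-relation descent compatible with taking closures.
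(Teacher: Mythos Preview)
Your strategy and the paper's rest on the same pivot: since $M/N$ is discrete, the Bieri--Strebel criterion produces an element of the group ring killing $M/N$ with $\chi$-support bounded away from $0$, and this is the lever that forces $N$ to be compactly generated over $Q_\chi$. The difference is entirely in how the lever is applied.

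The paper first passes to a \emph{finitely generated subgroup} $Q'\subseteq Q$ (not a submonoid of $Q_\chi$) with cocompact closure, so that \cite[Prop.\ 2.1]{BS} applies verbatim and all group-ring manipulations are finite sums. With the annihilator $1-m$ of $M/N$ in hand (taking a power so that $\chi(m)>-\chi(u)$ for a fixed $u$ with $\chi(u)<0$), the paper sets $S=S_0\cup(1-m)T$, with $S_0$ a compact neighborhood of $0$ in $N$ and $T$ finite, and shows in one line that $uS\subseteq\Z[Q'_\chi]S$. By induction $u^nS\subseteq\Z[Q'_\chi]S$ for all $n\ge0$, so the $Q'_\chi$-submodule generated by $S$ is already a $Q'$-submodule, hence all of $N$. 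No elementwise decomposition, no descent, no limits.

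Your route instead tries to exhibit each $x\in N$ explicitly, and this is where it breaks. The sentence ``approximate $x$ by finite $\Z Q'$-combinations $\sum_l\nu_l t_l$\ldots to obtain $x=\sum_j\mu_j m_j+z$'' does not produce well-defined $\mu_j$: the approximating coefficients have no reason to converge. Even granting this, the $\chi$-descent on $y=\sum_j\mu_j m_j$ is only sketched, and your proposed fix via the determinant $d=\det(q_0I-(\lambda_{ij}))$ gives $d\cdot m_j\in\Z Q'\{n_i\}$, i.e.\ information about $d\cdot y$, not about $y$; it is not explained how this collapses the iteration. Finally, you invoke \cite[Prop.\ 2.1]{BS} for $Q$ itself, which need not be finitely generated abelian; this is repairable (the action on $M/N$ factors through a discrete finitely generated quotient), but the reduction has to be made explicit, and one must check $\chi$ still separates. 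The paper's $u$-stability trick sidesteps all three issues at once and is the cleaner way to finish.
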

\begin{proof}
Let $W$ be the maximal compact subgroup in $Q$. Let $Q'$ be a finitely generated subgroup of $Q$ whose projection in $Q/W$ is dense.

Fix $u\in Q'$ with $\chi(u)<0$. Following \cite{BS}, for $m\in\Z[Q']$, define $\chi(m)$ as the infimum of $\inf\{\chi(q): q\in Q', m(q)\neq 0\}$. Here $m$ is viewed as a finitely supported function $Q'\to\Z$. (We agree that $\chi(0)=+\infty$.)

 By Lemma \ref{cgsousq}, $M$ is compactly generated as $\overline{Q'}_\chi$-module, and hence as $Q'_\chi$-module.
 Hence $M/N$ is finitely generated as a $Q'_\chi$-module. Therefore, by \cite[Proposition 2.1]{BS}, there exists $m\in\Z[Q']$ such that $m$ acts on $M/N$ as the identity and $\chi(m)>0$. Replacing $m$ by a large enough power, we can suppose $\chi(m)>-\chi(u)$.
 

Let $S_0$ be a compact neighborhood of $0$ in $N$, generating $N$ as $\Z[Q]$-submodule. Let $T$ be a finite subset of $M$ such that $S\cup T$ generates $M$ as a $Q_\chi$-module. Write $S=S_0\cup (1-m)T$, and observe that $S\subseteq N$. Let $N'$ be the $Q_\chi$-submodule of $N$ generated by $S$. We claim that $N'=N$.

If $s_0\in S$, using that $S\cup T$ generates $M$ as a $Q'_\chi$-module, write a finitely supported decomposition $us_0=\sum_{s\in S}a_ss+\sum_{t\in T}b_tt$, with $a_s,b_t\in\Z[Q'_\chi]$. Write $(1-m)t=\sum_{s\in S}c_ss$, so that $(1-m)\sum_{t\in T}b_tt=
\sum_{s\in S}\sum_{t\in T} b_t c_s s$. Write $d_s=(1-m)a_s+c_s\sum_{t\in T}b_t$. Then $(1-m)ux=\sum_{s\in S}d_ss$. Thus $us_0=mus_0+\sum_{s\in S}d_ss$. This shows that $uS\subseteq N'=\Z[Q'_\chi]S$. By induction, $u^nS\subseteq N'$ for all $n\ge 0$. Since every element of $Q'$ can be written as $u^nq$ with $q\in Q'_\chi$ and $n\ge 0$, we deduce that $N'$ is a $\Z[Q']$-submodule, hence equals $N$.
\end{proof}

\begin{lem}\label{closedcotrivial}
Let $Q$ be a compactly generated abelian group and $M$ a compactly generated $\Z[Q]$-module. Fix $\chi\in Q^*$ and suppose that $M$ is compactly generated as a $\Z[Q_\chi]$-module. Let $N$ be a closed $\Z[Q]$-submodule of $M$ such that $Q$ acts trivially on $M/N$. Then $N$ is compactly generated as a $\Z[Q_\chi]$-module.
\end{lem}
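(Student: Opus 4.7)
The plan is to adapt the argument of Lemma \ref{opega}, handling two extra difficulties: here $N$ need not be open, and $M/N$ need not be discrete. The saving grace is that the hypothesis that $Q$ acts trivially on $M/N$ makes the construction of the key auxiliary element $m$ (which in Lemma \ref{opega} demanded the Bieri--Strebel annihilator criterion) completely elementary.

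First I would dispose of the trivial case $\chi=0$: then $Q_\chi=Q$ and the conclusion is exactly Corollary \ref{scgcg}. Assuming $\chi\neq 0$, I would fix $u\in Q$ with $\chi(u)<0$ and $q\in Q$ with $\chi(q)>0$, and pick an integer $k$ large enough that $k\chi(q)\ge-\chi(u)$. Set $m=q^k\in\Z[Q_\chi]$. Since $Q$ acts trivially on $M/N$, the action of $\Z[Q]$ on $M/N$ factors through the augmentation $\epsilon\colon\Z[Q]\to\Z$, and $\epsilon(m)=1$; hence $m$ acts as the identity on $M/N$, i.e.\ $(1-m)M\subseteq N$. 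Moreover the choice of $k$ gives $\chi(mu)=k\chi(q)+\chi(u)\ge 0$, so $mu\in\Z[Q_\chi]$.

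For a generating set of $N$ I would invoke Corollary \ref{scgcg} to obtain a compact $T\subseteq N$ generating $N$ as a $\Z[Q]$-module (this is the substitute for the ``compact neighbourhood of $0$'' used in the proof of Lemma \ref{opega}, which was available only because $N$ was open there). With $S^{\ast}$ a compact generating set for $M$ as $\Z[Q_\chi]$-module, I would put $S=T\cup(1-m)S^{\ast}\subseteq N$ and claim that $N'=\Z[Q_\chi]\cdot S$ equals $N$. Since $T\subseteq N'$ already generates $N$ over $\Z[Q]$, this reduces to showing that $N'$ is $\Z[Q]$-stable. Writing any $q'\in Q$ as $u^nq''$ with $n\ge 0$ and $q''\in Q_\chi$ (possible since $\chi(u)<0$), this in turn reduces to $uN'\subseteq N'$, and ultimately to $uS\subseteq N'$.

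To verify $uS\subseteq N'$ one runs the standard computation from Lemma \ref{opega}: for $s\in S$ write $us=mus+(1-m)us$; the first summand lies in $\Z[Q_\chi]\cdot S=N'$ since $mu\in\Z[Q_\chi]$; for the second, expand $us=\sum_{s^{\ast}\in S^{\ast}}a_{s^{\ast}}s^{\ast}$ with $a_{s^{\ast}}\in\Z[Q_\chi]$ and use commutativity of $\Z[Q]$ to rewrite
\[(1-m)us=\sum_{s^{\ast}}a_{s^{\ast}}(1-m)s^{\ast}\in\Z[Q_\chi]\cdot(1-m)S^{\ast}\subseteq N'.\]
The main anticipated obstacle is precisely the construction of an ``identity-inducing'' $m\in\Z[Q_\chi]$ with $\chi(m)$ as large as wanted: in Lemma \ref{opega} this rested on the Bieri--Strebel criterion applied to the finite discrete quotient $M/N$, whereas here the trivial-action hypothesis collapses the requirement to ``$\epsilon(m)=1$ and $\chi(m)$ large'', which the single monomial $q^k$ handles.
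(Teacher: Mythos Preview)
Your argument is correct and is genuinely different from the paper's. The paper proves this lemma by invoking the full structural machinery developed earlier: it mods out by $\Ell(N)$ and $\Ell(M)$ (Corollary~\ref{maxcp}), applies the sheer decomposition $\Omega^\sharp(M)=P\times D$ of Theorem~\ref{sheerdec1}, uses the trivial action only to conclude $P'=P$, and then pieces the result together from Lemma~\ref{opega} applied to the open submodule $\Omega^\sharp(M)$, the Euclidean part $D$, and the discrete quotient $N/\Omega^\sharp(N)$. Your route bypasses all of this: you run the Lemma~\ref{opega} computation directly, observing that the trivial action collapses the Bieri--Strebel annihilator problem to the triviality ``find a monomial in $Q_\chi$ of large $\chi$-value'', which $m=q^k$ solves. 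This is both shorter and more elementary; it uses only Corollary~\ref{scgcg} from the structure theory. One small point worth making explicit: when you take $S^\ast$ and $T$ as compact generating sets, you should arrange that each contains a neighbourhood of $0$ (in $M$ and in $N$ respectively), so that the algebraic $\Z[Q_\chi]$- and $\Z[Q]$-submodules they generate are open, hence closed, hence equal to $M$ and $N$; this is what justifies the finite expansion $us=\sum a_{s^\ast}s^\ast$ and the final step $N'\supseteq\Z[Q]\cdot T=N$.
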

\begin{proof}
By Corollary \ref{scgcg}, $N$ is compactly generated as $Q$-module. By Corollary \ref{maxcp}, $\Ell(N)$ and $\Ell(M)$ are compact. Modding out if necessary, we can suppose that $\Ell(N)=\{0\}$. Then $N\cap\Ell(M)=\{0\}$, so, modding out again, we can also suppose that $\Ell(M)=\{0\}$. 
So both $N,M$ are lower-sheer. Using Theorem \ref{sheerdec1}, write $\Omega^\sharp(M)=P\times D$, $\Omega^\sharp(M)=P\times D$ with $P,P'$ polycontractable, $D,D'$ distal Euclidean as $Q$-modules. Then $P'\subseteq P$ and $D'\subseteq D$. Then $P/P'$ embeds into $M/N$ which is a module with trivial action. Since $P$ is generated by contractable submodules, it has no nonzero homomorphism into $M/N$, which means that $P\subseteq N$ and thus $P=P'$. By Lemma \ref{opega}, the open submodule $\Omega^\sharp(M)$ is compactly generated as $Q_\chi$-module, and hence so is its quotient $P$. Hence it is enough to show that $N/P$ is compactly generated as $Q_\chi$-module. Indeed, $\Omega^\sharp(N)/P$ is closed in $D$ and hence compactly generated as abelian group. Finally, the discrete module $N/\Omega^\sharp(N)$ is open in $M/\Omega^\sharp(M)$ and therefore is also, by (the discrete case of) Lemma \ref{opega}, compactly generated.
\end{proof}

\begin{lem}\label{eqmet}
Let $G$ be a compactly generated metabelian group in an exact sequence 
\[1\to M\to G\to Q\to 1\]
with $M,Q$ abelian and $M$ closed. Then $M$ is compactly generated as a $Q$-module, and 
\[\Gamma(G)=\Gamma_Q(M),\]
where $Q^*$ is naturally included in $G^*$.
\end{lem}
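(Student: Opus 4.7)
The first assertion is immediate from Lemma~\ref{cgcg}. For the equality $\Gamma(G)=\Gamma_Q(M)$, my plan is to split into two subclaims: (a) $\Gamma(G)\subseteq Q^*$ inside $G^*$, so that the identification makes sense, and (b) for any $\chi\in Q^*$, one has $\chi\in\Gamma(G)$ if and only if $\chi\in\Gamma_Q(M)$. The strategic point is that although $\Gamma(G)$ is defined via $\overline{[G,G]}$ and $\Gamma_Q(M)$ via $M$, the quotient $M/\overline{[G,G]}$ is a $G$-central LCA quotient of $M$, putting us precisely in the setting of Lemma~\ref{closedcotrivial}, which will bridge the gap.

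For (a), I would handle a character $\chi\in G^*\setminus Q^*$, so $\psi:=\chi|_M$ is a nonzero continuous homomorphism $M\to\R$. Fix $m_0\in M$ with $\psi(m_0)>0$ and a compact symmetric generating set $\Sigma$ of $G$. By compactness of $\Sigma$, there is a uniform $k_0$ with $sm_0^{k_0}\in G_\chi$ for every $s\in\Sigma$. The submonoid $P'$ of $G_\chi$ generated by $\{sm_0^{k_0}:s\in\Sigma\}\cup\{m_0\}$ is compactly generated, and its image $q(P')$ in $Q$ equals the submonoid generated by the symmetric set $q(\Sigma)$; since any submonoid generated by a symmetric set coincides with the subgroup it generates, and $q(\Sigma)$ generates $Q$ as a group, we have $q(P')=Q$. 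Now $\overline{[G,G]}$ is a closed submodule of the compactly generated $Q$-module $M$ (Lemma~\ref{cgcg}), hence itself compactly generated as $Q$-module by Corollary~\ref{scgcg}; since the $P'$-action on $\overline{[G,G]}$ factors through $q(P')=Q$, a compact $Q$-generating set is also a $P'$-generating set, witnessing $\chi\notin\Gamma(G)$.

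For (b), fix $\chi\in Q^*$ and note $G_\chi=q^{-1}(Q_\chi)\supseteq M$. For the direction $\chi\notin\Gamma_Q(M)\Rightarrow\chi\notin\Gamma(G)$: from $M$ compactly generated over some compactly generated submonoid of $Q_\chi$ I would apply Lemma~\ref{closedcotrivial} to get $\overline{[G,G]}$ compactly generated as $Q_\chi$-module (the hypothesis of trivial $Q$-action on $M/\overline{[G,G]}$ being automatic), descend to a compactly generated submonoid $P\subseteq Q_\chi$ via Lemma~\ref{congesu}, and then lift $P$ to a compactly generated submonoid $P'$ of $G_\chi$ along the surjection $G_\chi\twoheadrightarrow Q_\chi$. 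Conversely, if $\overline{[G,G]}$ is compactly generated over a compactly generated submonoid $P'\subseteq G_\chi$, then $P:=q(P')\subseteq Q_\chi$ is compactly generated and $\overline{[G,G]}$ is compactly generated as $P$-module; since $M/\overline{[G,G]}$ has trivial $Q$-action and is compactly generated as an LCA group (because $M$ is compactly generated as $Q$-module), a standard extension argument yields $M$ compactly generated as $P$-module, hence as $Q_\chi$-module.

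The main obstacle is Subclaim~(a): producing a compactly generated submonoid of $G_\chi$ whose image generates all of $Q$, despite $\chi$ not descending. The key trick is to absorb the ``wrong sign'' of $\chi$ on generators of $G$ by translation in the direction of a fixed $m_0\in M$ with $\psi(m_0)>0$, using compactness of the generating set to bound the needed translation uniformly. Once (a) is in hand, the equivalence in (b) reduces to packaging Lemma~\ref{closedcotrivial} (to pass from $M$ to $\overline{[G,G]}$) together with the extension argument (to pass back).
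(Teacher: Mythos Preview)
Your proof is correct and follows essentially the same route as the paper: reduce to $\Gamma(G)\subseteq Q^*$, then compare $\Gamma(G)$ and $\Gamma_Q(M)$ inside $Q^*$ using that $M/\overline{[G,G]}$ carries the trivial $Q$-action and is compactly generated as an LCA group, with Lemma~\ref{closedcotrivial} supplying the nontrivial direction. The one place where you differ is subclaim~(a): the paper argues conceptually that $\chi|_M\neq 0$ forces $MG_\chi=G$, so $G_\chi$-invariance and $G$-invariance coincide on closed subgroups of $M$, whereas you build the compactly generated submonoid $P'\subseteq G_\chi$ with $q(P')=Q$ by hand via the translation trick $s\mapsto sm_0^{k_0}$. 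Your version has the advantage of addressing the ``compactly generated submonoid'' clause in the definition of $\Gamma$ explicitly (the paper leaves the descent via Lemma~\ref{congesu} and the lift to $G_\chi$ implicit at a couple of points), but the underlying idea---that the conjugation action on $\overline{[G,G]}$ factors through $Q$ and that $G_\chi$ already surjects onto $Q$ when $\chi|_M\neq 0$---is the same.
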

\begin{proof}
That $M$ is a compactly generated $Q$-module is already asserted in Lemma \ref{cgcg}.

Let us first check $\Gamma_Q(M)\subseteq\Gamma(G)$. Let $\chi$ belong to $\Gamma_Q(M)$. If by contradiction $\chi\notin\Gamma(G)$, then $\chi\neq 0$ and $\overline{[G,G]}$ is compactly generated over some compactly generated subgroup of $G_\chi$, and hence over $Q_\chi$.

Since $Q$ is abelian, we have $\overline{[G,G]}\subseteq M$. In particular, $M$ being abelian, it centralizes $\overline{[G,G]}$. Hence $\overline{[G,G]}$ is compactly generated over $Q_\chi$. Since $M/\overline{[G,G]}$ is isomorphic to a closed subgroup of $G/\overline{[G,G]}$, it is compactly generated; hence $M$ is compactly generated over $Q_\chi$. This contradicts the assumption, so $\chi\in\Gamma(G)$.

Conversely, let us show $\Gamma(G)\subseteq\Gamma_Q(M)$. We first check $\Gamma(G)\subseteq Q^*$. Indeed, let $\chi$ belong to $G^*\smallsetminus Q^*$. This means that $\chi$ is not zero on $M$. In particular, $MG_\chi=G$. Thus a closed subgroup of $M$ is $G_\chi$-invariant if and only if it is $G$-invariant. Since $M$ is compactly generated over $G$, this implies that it is also compactly generated over $G_\chi$, and hence $\chi\notin\Gamma(G)$. We have proved $\Gamma(G)\subseteq Q^*$.
 
Now let $\chi$ belong to $Q^*\smallsetminus\Gamma_Q(M)$. Hence $\chi\neq 0$ and $M$ is compactly generated as a $Q_\chi$-module. Since $\overline{[G,G]}$ is a closed submodule of $M$ and the action on the quotient $M/\overline{[G,G]}$ is trivial, Lemma \ref{closedcotrivial} ensures that $\overline{[G,G]}$ is compactly generated as $Q_\chi$-module. Thus $\chi\notin\Gamma(G)$.
\end{proof}

Theorem \ref{chisous} will remove the assumption below that $N$ is open.

\begin{lem}\label{extgam}
Let $Q$ be a compactly generated abelian group and $M$ a compactly generated $\Z[Q]$-module. Assume that $N$ is a closed submodule. Then \[\Gamma_Q(M/N)\subseteq \Gamma_Q(M)\subseteq\Gamma_Q(N)\cup\Gamma_Q(M/N).\] If $N$ is open in $M$, then $\Gamma_Q(M)=\Gamma_Q(N)\cup\Gamma_Q(M/N)$.
\end{lem}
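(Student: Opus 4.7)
My plan is to first reformulate the condition $\chi \notin \Gamma_Q(X)$ for a $Q$-module $X$ as a condition of compact generation over $Q_\chi$. For $\chi = 0$, the three containments in the statement hold trivially since $0$ belongs to $\Gamma_Q(X)$ for every $X$ by convention. For nonzero $\chi$, the defining condition $\chi\notin\Gamma_Q(X)$ (that $X$ is compactly generated over some compactly generated submonoid of $Q_\chi$) is equivalent, by Lemma \ref{congesu} applied to the $\sigma$-compact group $Q$, to the simpler statement that $X$ is compactly generated as $Q_\chi$-module. With this reformulation in hand, the lemma becomes a routine exercise on preservation of compact generation through short exact sequences.

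For the inclusion $\Gamma_Q(M/N)\subseteq\Gamma_Q(M)$, I would observe that if $S\subseteq M$ is compact and generates $M$ as $Q_\chi$-module, then the continuous image $f(S)$ in the quotient $M/N$ (where $f:M\to M/N$ is the projection) is compact, and its closed $Q_\chi$-submodule contains $f(M)=M/N$, so it generates $M/N$ as $Q_\chi$-module. For the inclusion $\Gamma_Q(M)\subseteq\Gamma_Q(N)\cup\Gamma_Q(M/N)$, suppose $\chi\neq 0$ is outside both $\Gamma_Q(N)$ and $\Gamma_Q(M/N)$. I would pick a compact $S_N\subseteq N$ generating $N$ as $Q_\chi$-module, and a compact $T\subseteq M$ whose projection generates $M/N$ as $Q_\chi$-module, then verify that the closed $Q_\chi$-submodule $M'$ of $M$ generated by $S_N\cup T$ is all of $M$: its image in $M/N$ equals $M/N$, so $M'+N=M$, and $M'\supseteq N$ since $M'$ contains $S_N$, hence $M'=M$, and $\chi\notin\Gamma_Q(M)$.

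For the equality when $N$ is open, the missing ingredient is the reverse inclusion $\Gamma_Q(N)\cup\Gamma_Q(M/N)\subseteq\Gamma_Q(M)$. The part involving $M/N$ is the first inclusion already established, while the containment $\Gamma_Q(N)\subseteq\Gamma_Q(M)$ says exactly that if $M$ is compactly generated as $Q_\chi$-module then so is its open submodule $N$. This is the content of Lemma \ref{opega}, and is the only place where openness of $N$ intervenes. The main work is really concentrated in that lemma; once the reformulation via Lemma \ref{congesu} is available, the rest is a direct, short exercise with no real obstacle.
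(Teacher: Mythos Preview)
Your proposal is correct and follows essentially the same approach as the paper. The paper's proof is terser: it declares $\Gamma_Q(M/N)\subseteq\Gamma_Q(M)$ clear, obtains $\Gamma_Q(M)\subseteq\Gamma_Q(N)\cup\Gamma_Q(M/N)$ from the fact that an extension of compactly generated $Q_\chi$-modules is compactly generated, and invokes Lemma \ref{opega} for $\Gamma_Q(N)\subseteq\Gamma_Q(M)$ in the open case --- exactly the three ingredients you spell out, with your explicit appeal to Lemma \ref{congesu} making precise a reformulation the paper uses tacitly.
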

\begin{proof}
Clearly $\Gamma_Q(M/N)\subset\Gamma_Q(M)$. The inclusion $\Gamma_Q(N)\subset\Gamma_Q(M)$ follows from Lemma \ref{opega} (we only use there that $N$ is open). The reverse inclusion follows from the fact that an extension of compactly generated modules (over $Q_\chi$ for given $\chi$) is compactly generated.
\end{proof}

\begin{defn}If $Q$ is a compactly generated abelian group, we have an inclusion $\Hom(Q,\Q)\subset\Hom(Q,\R)=Q^*$ (with $\Q$ endowed with the discrete topology), $\Hom$ denoting continuous homomorphisms; we call its elements the {\bf rational characters}; if $\chi$ is a rational character, the closed half-line $\R_+\chi$ is called a {\bf rational half-line} in $Q^*$. 
\end{defn}

In general, the closure in $Q^*$ of the set of rational characters is the subspace $(Q/Q^\circ)^*$ of $Q^*$ consisting of homomorphisms vanishing on the component $Q^\circ$. Note that $\chi\in Q^*$ is a scalar multiple of a rational character if and only if has a discrete image in $\R$.

\begin{lem}\label{gamono}
Let $Q$ be a compactly generated abelian group. Let $M$ be a monotypic contractable $Q$-module, with simple quotient $\K$, a nondiscrete locally compact field endowed with a continuous homomorphism $\phi:Q\to\K^*$. Write $\chi_0=-\log |\phi|$. Then
\begin{itemize}\item if $M$ is connected, $\Gamma_Q(M)=\{0\}$;
\item if $M$ is totally disconnected then $\Gamma_Q(M)=\R_+\chi_0$, which is a rational half-line. Moreover, $M$ admits an open compact $Q_{\chi_0}$-submodule.
\end{itemize}
\end{lem}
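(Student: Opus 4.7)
\emph{Proof plan.} The proof splits according to whether $M$ is connected or totally disconnected. When $M$ is connected, its simple quotient $\K$ is Archimedean, so $\K \in \{\R, \C\}$ and $M$ is a finite-dimensional real vector space (a finite-length module over the field $\K$, itself a finite extension of $\R$). In particular $M$ is compactly generated as an abelian group, so for every nonzero $\chi \in Q^*$ it is trivially compactly generated over the trivial submonoid $\{0\} \subseteq Q_\chi$, and $\Gamma_Q(M) = \{0\}$.

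Henceforth assume $M$ totally disconnected, so $\K$ is a non-Archimedean local field. Its value group $|\K^*|$ is an infinite cyclic subgroup of $\R^*_{>0}$, so $\chi_0 = -\log|\phi|$ has image in a cyclic subgroup of $\R$ and is therefore a positive scalar multiple of a rational character; it is nonzero because any contracting $\alpha \in Q$ satisfies $|\phi(\alpha)| < 1$, whence $\chi_0(\alpha) > 0$. Thus $\R_+\chi_0$ is a rational half-line. The structural crux of the argument, and the principal obstacle, is the construction of a compact open $Q_{\chi_0}$-submodule $\Omega \subseteq M$. I would pass to the closed subring $R_0 = \overline{\Z[Q_{\chi_0}]}$ of $R = \overline{\Z[Q]} \subseteq \End_Q(M)$, where by Theorem~\ref{afl1} $R$ is an Artinian local ring with residue field $\K$ and $M$ has finite length over $R$. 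The image of $R_0$ in $R/\mathfrak{m} = \K$ is the closure of $\Z[\phi(Q_{\chi_0})]$, which lies in $\mathcal{O}_\K$ (since $|\phi(u)| \leq 1$ for $u \in Q_{\chi_0}$) and is therefore compact, while $\alpha \in R_0$ acts on $R_0$ by multiplication as a contracting endomorphism (post-composition with the contracting $\alpha \in \End(M)$ contracts the compact-open topology on $\End(M)$). Combining these via the filtration of $R_0$ by the ideals $R_0 \cap \mathfrak{m}^i$, each of whose graded pieces embeds into the finite-dimensional $\K$-vector space $\mathfrak{m}^i/\mathfrak{m}^{i+1}$ as a closed submodule over the compact residue ring, yields that $R_0$ is compact. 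Since $R = \bigcup_{n \geq 0} \alpha^{-n} R_0$, the compact $R_0$ is automatically open in $R$. Hence for any $R$-generators $x_1, \ldots, x_m$ of $M$, the sum $\Omega = R_0 x_1 + \cdots + R_0 x_m$ is a compact $Q_{\chi_0}$-submodule of $M$, open because $M/\Omega$ is a quotient of the discrete $(R/R_0)^m$.

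Granted $\Omega$, the equality $\Gamma_Q(M) = \R_+ \chi_0$ is routine. To see $\chi_0 \in \Gamma_Q(M)$: if $M$ were compactly generated over some compactly generated submonoid of $Q_{\chi_0}$, then the discrete $Q_{\chi_0}$-module $M/\Omega$ would be finitely generated over $\Z[Q_{\chi_0}]$; by contractivity of $\alpha$, a common power $\alpha^N$ would kill every generator, forcing $\alpha^N M \subseteq \Omega$ and hence $M \subseteq \alpha^{-N}\Omega$ compact, absurd since $M \neq 0$ is totally disconnected and contractable. Conversely, for $\chi \in Q^* \setminus (\R_+\chi_0 \cup \{0\})$ I would exhibit $u \in Q_\chi$ whose inverse contracts $M$: then $M = \bigcup_{n \geq 0} u^n \Omega$ displays $M$ as compactly generated over $\langle u \rangle \subseteq Q_\chi$, so $\chi \notin \Gamma_Q(M)$. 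If $\chi(\alpha) \leq 0$, take $u = \alpha^{-1}$; if $\chi(\alpha) > 0$, non-proportionality of $\chi$ to $\chi_0$ makes the open cone $\{\chi > 0\} \cap \{\chi_0 < 0\}$ in $Q \otimes \R$ non-empty, and any $q \in Q$ landing in this cone has $|\phi(q)| > 1$, so $q^{-1}$ has all eigenvalues on the composition factors of modulus $< 1$ and by Siebert's criterion is a contraction on $M$; thus $u = q$ works.
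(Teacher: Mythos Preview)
Your overall architecture is sound, and the connected case, the rationality of $\R_+\chi_0$, and the verification of $\Gamma_Q(M)=\R_+\chi_0$ \emph{given} the compact open $Q_{\chi_0}$-submodule $\Omega$ are all fine. The gap is in your construction of $\Omega$, specifically in the filtration argument for compactness of $R_0=\overline{\Z[Q_{\chi_0}]}$. You assert that each graded piece $(R_0\cap\mathfrak m^i)/(R_0\cap\mathfrak m^{i+1})$ embeds into $\mathfrak m^i/\mathfrak m^{i+1}\cong\K^{d_i}$ ``as a closed submodule over the compact residue ring'' and conclude compactness. Two problems: first, you have not shown the image is closed---the injection of LC abelian groups induced by $R_0\cap\mathfrak m^i\hookrightarrow\mathfrak m^i$ need not be a topological embedding onto a closed subgroup. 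Second, even a genuinely closed $\mathcal O_\K$-submodule of $\K^{d_i}$ need not be compact ($\K$ itself is one), so you would additionally need the graded piece to be finitely generated over $\mathcal O_\K$, which you have not argued. Controlling the image of $R_0$ in $R/\mathfrak m$ by $\mathcal O_\K$ says nothing about the higher graded pieces; the $\mathfrak m$-adic filtration does not interact with the monoid $Q_{\chi_0}$ in any evident way.

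The paper sidesteps this entirely. For $\chi_0\in\Gamma_Q(M)$ it works directly on the simple quotient $\K$: balls in $\K$ are $Q_{\chi_0}$-stable, so $\K$ (hence $M$) is not compactly generated over $Q_{\chi_0}$---no $\Omega$ needed. For $\chi\notin\R_+\chi_0$ it finds a single $\alpha\in Q$ with $\chi(\alpha)>0$ and $|\phi(\alpha)|>1$, so $\K$ and then (by iterated extension) $M$ is compactly generated over $Q_\chi$. For the compact open $Q_{\chi_0}$-submodule, the paper argues that $M$ is \emph{amorphic} as a $\Ker(\chi_0)$-module (each composition factor $\K$ visibly is, since $\Ker(\chi_0)$ acts through norm-one units), so there is a compact open $\Ker(\chi_0)$-submodule $L$; picking $u$ with $\chi_0(u)$ the positive generator of $\chi_0(Q)$, contractivity gives $u^nL\subseteq L$ for large $n$, and $\sum_{n\ge0}u^nL$ is the desired $\Omega$. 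This bypasses the ring $R_0$ altogether. If you want to rescue your ring-theoretic route, you would need an independent argument that $R_0$ is compact---for instance by first proving the lemma for $M=\K$ (where $R_0\subseteq\mathcal O_\K$ is immediate) and then bootstrapping via the composition series of $M$, but at that point you are essentially reproducing the paper's extension argument inside $\End(M)$ rather than inside $M$.
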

\begin{proof}
(The description of $\K$ is provided by Corollary \ref{sico}.)

Note that being monotypic contractable, $M$ is either connected or totally disconnected. If $M$ is connected it is compactly generated as locally compact abelian group and the conclusion is clear, so assume that $M$ is totally disconnected.


Each ball in $\K$ is preserved by $Q_{\chi_0}$, since the latter acts on $K$ by elements of norm $\le 1$. Hence $\K$ is not a compactly generated $Q_{\chi_0}$-module. Since $\K$ is a quotient of $M$ as $Q_{\chi_0}$-module, the latter is not compactly generated. So $\chi_0\in\Gamma_Q(M)$.

Conversely, let $\chi$ be an element in $Q^*\smallsetminus \R_+\chi_0$. Then there exists $\alpha\in Q$ with $\chi(\alpha)>0$ (i.e., $\alpha\in Q_\chi$) and $\chi_0(\alpha)<0$ (i.e., $|\phi(\alpha)|>1$). Then for every compact neighborhood $V$ of $0$ in $\K$ we have $\K=\bigcup_{n\ge 0}\phi(\alpha^n)V$. In particular, $\K$ is compactly generated over $Q_\chi$. Since $M$ is an iterated extension of copies of the $Q$-module $\K$, we deduce that $M$ is compactly generated over $Q_\chi$ as well, i.e., $\chi\notin\Gamma_Q(M)$.

Since $\chi_0$ has a discrete image, $\R_+\chi_0$ is a rational half-line.

Let us check the additional assertion of existence of an open $Q_{\chi_0}$-submodule. Choose $u\in Q$ such that $\chi_0(u)$ is the positive generator of $\chi_0(Q)$. Then $Q_{\chi_0}$ is generated by $\Ker(\chi_0)\cup\{u\}$ as a semigroup. As a $\Ker(\chi_0)$-module, $\K$ and hence $M$ is amorphic. So there is a compact open $\Ker(\chi_0)$-submodule $L\subseteq M$. Since $u$ acts as a contraction on $\K$ and hence on $M$, for large enough $n$, the submodule $u^nL$ is contained in $L$. Hence $\sum_{n\ge 0}u^nL$ is a compact open $Q_{\chi_0}$-submodule of $M$.
\end{proof}

\begin{prop}\label{upsheg}
Let $Q$ be a compactly generated abelian group. Let $M$ be an upper-sheer compactly generated locally compact $Q$-module. Then $\Gamma_Q(M)$ is a finite union of rational half-lines (emanating from 0). More precisely, for $\chi\in Q^*\smallsetminus\{0\}$, $\chi\in\Gamma_Q(M)$ if and only if there is a totally disconnected simple contractable closed submodule $\K$ of $M$ with a normed field structure, such that the action is given by some homomorphism $\phi:Q\to\K^*$, and such that $\chi$ is positively proportional to $-\log|\phi|$.
\end{prop}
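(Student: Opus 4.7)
The plan is to use the canonical decompositions of an upper-sheer compactly generated module to reduce the computation of $\Gamma_Q(M)$ to the monotypic contractable case handled by Lemma \ref{gamono}. Since $\Ell^\flat(M)$ is compact (Theorem \ref{cafi}), a compact generating set of $M$ over a subsemigroup of $Q_\chi$ projects to one in $M/\Ell^\flat(M)$, and conversely a compact generating set of the quotient can be lifted and combined with (all of) $\Ell^\flat(M)$ to yield one for $M$. Hence $\Gamma_Q(M) = \Gamma_Q(M/\Ell^\flat(M))$ and I may assume $M$ is sheer. Theorem \ref{sheerdec1} then writes $M = P \oplus E \oplus V$ with $P$ polycontractable, $E$ amorphic, $V$ distal Euclidean, and each factor is compactly generated as a quotient of $M$.

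Next I record that $\Gamma_Q$ is additive on direct sums: in $M_1 \oplus M_2$, each summand is simultaneously a closed submodule and a quotient (via the projection killing the other), so combining the two inclusions in Lemma \ref{extgam} gives $\Gamma_Q(M_1 \oplus M_2) = \Gamma_Q(M_1) \cup \Gamma_Q(M_2)$. Thus $\Gamma_Q(M) = \Gamma_Q(P) \cup \Gamma_Q(E) \cup \Gamma_Q(V)$. The Euclidean factor $V \simeq \R^d$ is compactly generated as an LCA group, so $\Gamma_Q(V) = \{0\}$. The amorphic factor $E$ has a compact open submodule $\Omega$; the discrete quotient $E/\Omega$ is locally finite as $Q$-module and compactly generated (hence finitely generated), therefore finite, so $E$ is compact and $\Gamma_Q(E) = \{0\}$.

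Now write $P = \bigoplus_{i=1}^n M_i$ with $M_i$ monotypic contractable and pairwise disjoint (Proposition \ref{monotypic}). By the additivity just recorded, $\Gamma_Q(P) = \bigcup_i \Gamma_Q(M_i)$. Lemma \ref{gamono} yields $\Gamma_Q(M_i) = \{0\}$ when $M_i$ is connected, and $\Gamma_Q(M_i) = \R_+\chi_i$ with $\chi_i = -\log|\phi_i|$ a rational half-line when $M_i$ is totally disconnected, $\phi_i : Q \to \K_i^*$ being the action on its type $\K_i$. This gives the displayed description of $\Gamma_Q(M)$ as a finite union of rational half-lines.

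For the refined ``if and only if'' statement, observe that each totally disconnected monotypic summand $M_i$ has, by Theorem \ref{afl1} together with the classical structure of finite-length modules over an artinian ring, a simple closed submodule isomorphic to its type $\K_i$; this simple submodule lies in $M$ and realizes the half-line $\R_+\chi_i$, which proves one direction. Conversely, any simple totally disconnected contractable closed submodule $\K \subseteq M$ has zero image in both $V$ and $E$ by Lemma \ref{homzero}, and zero projection in every $M_j$ whose type is not isomorphic to $\K$ by the disjointness in Proposition \ref{monotypic}; therefore $\K$ sits inside the unique $M_i$ of matching type, and $-\log|\phi|$ is then positively proportional to $\chi_i$. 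The substantive content has already been packaged in the quoted structural theorems; the only real bookkeeping step here is the additivity of $\Gamma_Q$ across a direct-sum decomposition, which is the main (and quite mild) obstacle.
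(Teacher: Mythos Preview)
Your proof is correct and follows essentially the same strategy as the paper's: reduce to the sheer case by killing a compact submodule, invoke the decomposition of Theorem \ref{sheerdec1}, and then apply Lemma \ref{gamono} to the monotypic contractable pieces. The only differences are cosmetic. The paper mods out by $\Ell(M)$ (invoking Corollary \ref{maxcp}) so that the amorphic summand disappears outright, whereas you mod out by the smaller $\Ell^\flat(M)$ and then argue separately that the amorphic summand $E$ is compact; your argument for this (compact open submodule with finite locally finite discrete quotient) is exactly the content of Corollary \ref{maxcp} specialized to $E$. You also make the direct-sum additivity $\Gamma_Q(M_1\oplus M_2)=\Gamma_Q(M_1)\cup\Gamma_Q(M_2)$ explicit via Lemma \ref{extgam}, and you spell out the ``more precisely'' clause more carefully than the paper does.
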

\begin{proof}
If $M$ is polycontractable, it is a finite direct product of contractable monotypic modules and the result follows from Lemma \ref{gamono}.

In general, modding out by $\Ell(M)$ (which is compact, by Corollary \ref{maxcp}), we can suppose that $\Ell(M)=\{0\}$. So $M$ is sheer and by Theorem \ref{sheerdec1} it can be written as $P\times D$, with $P$ polycontractable and $D$ distal Euclidean. Since $\Gamma_Q(M)=\Gamma_Q(M/D)=\Gamma_Q(P)$, we are done.
\end{proof}

\begin{prop}\label{ratpo}
Let $Q$ be a compactly generated abelian locally compact group. Let $M$ be a compactly generated locally compact $Q$-module.
Then $\Gamma_Q(M)$ is a rational polyhedral cone, i.e., is the union of finitely many convex polyhedral cones (based at one) whose faces have rational equations.
\end{prop}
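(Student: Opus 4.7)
The plan is to combine the canonical filtration (Theorem~\ref{cafi}) with the two cases already handled separately: the upper-sheer case via Proposition~\ref{upsheg}, and the classical discrete, finitely generated case due to Bieri--Strebel~\cite{BS}. Namely, setting $N=\Omega^\sharp(M)$, Theorem~\ref{cafi} makes $N$ upper-sheer (Proposition~\ref{omush}) and open in $M$ (since $M/N$ is purely discrete), while Corollary~\ref{scgcg} ensures $N$ is compactly generated. Since $N$ is open, Lemma~\ref{extgam} yields
\[\Gamma_Q(M)=\Gamma_Q(N)\cup\Gamma_Q(M/N),\]
and it suffices to check that each piece is a rational polyhedral cone.

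For $N$, Proposition~\ref{upsheg} directly gives that $\Gamma_Q(N)$ is a finite union of rational half-lines, which is already a rational polyhedral cone. For the discrete quotient $M/N$ (finitely generated as a $Q$-module), I would observe that the action factors through a discrete, finitely generated abelian quotient $\bar Q=Q/K$, where $K$ is the open pointwise stabilizer of a finite generating set. The Bieri--Strebel theorem then asserts that $\Gamma_{\bar Q}(M/N)\subseteq\bar Q^*$ is a rational polyhedral cone. I then identify $\Gamma_Q(M/N)$ with the image of $\Gamma_{\bar Q}(M/N)$ in $Q^*$ via the natural inclusion $\bar Q^*\hookrightarrow Q^*$ dual to $Q\twoheadrightarrow\bar Q$: for $\chi$ with $\chi|_K=0$, $\chi$ factors as some $\bar\chi\in\bar Q^*$ and compact generation over $Q_\chi$ is equivalent to that over $\bar Q_{\bar\chi}$; while for $\chi|_K\neq 0$, the subgroup $\chi(K)\subseteq\R$ is nontrivial, hence unbounded above, so that $Q_\chi$ surjects onto $\bar Q$ and $M/N$ (already finitely generated over $\bar Q$) is automatically compactly generated over $Q_\chi$, placing $\chi$ outside $\Gamma_Q(M/N)$.

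The main obstacle lies in this last identification: one must verify that the embedding $\bar Q^*\hookrightarrow Q^*$ transports rational polyhedral structure to rational polyhedral structure. This should follow from the fact that $\bar Q^*=\{\chi:\chi|_K=0\}$ is cut out in $Q^*$ by rational equations coming from elements of $Q$ projecting to a generating set of $\bar Q$, and that every face equation of the Bieri--Strebel cone, being given by an element of $\bar Q$, lifts to an equation given by an element of $Q$ and so remains rational when viewed in $Q^*$. A finite union of rational polyhedral cones being itself rational polyhedral, the proof then concludes.
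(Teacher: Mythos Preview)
Your proof is correct and follows essentially the same route as the paper's: split via the open upper-sheer submodule $N=\Omega^\sharp(M)$, apply Lemma~\ref{extgam}, handle $N$ by Proposition~\ref{upsheg}, and handle the discrete quotient $M/N$ by the classical result. One correction: the rational polyhedrality of the invariant for a finitely generated module over a finitely generated abelian group is due to Bieri--Groves~\cite{BG}, not Bieri--Strebel~\cite{BS}; the paper invokes~\cite{BG} at precisely this point.

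Your explicit reduction from $Q$ to a finitely generated discrete quotient $\bar Q$ (via the open pointwise stabilizer $K$ of a finite generating set of $M/N$) is a step the paper leaves implicit, and your treatment of it is sound. The observation that for $\chi|_K\neq 0$ the monoid $Q_\chi$ already surjects onto $\bar Q$ is exactly what is needed to confine $\Gamma_Q(M/N)$ to the subspace $\bar Q^*\subset Q^*$. As for transporting the rational polyhedral structure: since the surjection $Q\twoheadrightarrow\bar Q$ restricts to a surjection between cocompact lattices $\Z^d\twoheadrightarrow\Z^e$, the dual inclusion $\bar Q^*\hookrightarrow Q^*$ is a rational linear embedding, and rational polyhedral cones push forward to rational polyhedral cones under such maps; your sketch of this is adequate.
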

\begin{proof}
Let $N$ be an upper sheer open submodule of $M$. Then $\Gamma_Q(M/N)$ is rational polyhedral by Bieri-Groves's theorem \cite{BG}, and $\Gamma_Q(N)$ is a finite union of rational half-lines by Proposition \ref{upsheg}. Then Lemma \ref{extgam} ensures that $\Gamma(M)$ is the union of $\Gamma_Q(M/N)$ with a finite union of rational half-lines.
\end{proof}

\begin{lem}\label{gammasheer}
Let $Q$ be a compactly generated abelian group. Let $M$ be a sheer module and $N$ a closed submodule. Then $\Gamma_A(N)\subseteq\Gamma_A(M)$.
\end{lem}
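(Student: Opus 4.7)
The plan is to split $N$ via its sheer core $N^\boxminus = \Omega^\sharp(N)$, which is sheer and open in $N$ with purely discrete quotient $N/N^\boxminus$ (by Theorems \ref{cafi} and \ref{corenv}). Applying Lemma \ref{extgam} to this open submodule of $N$ yields
\[
\Gamma_Q(N) = \Gamma_Q(N^\boxminus) \cup \Gamma_Q(N/N^\boxminus),
\]
so the task reduces to showing both pieces are contained in $\Gamma_Q(M)$.

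For the sheer piece, $N^\boxminus$ is a sheer closed submodule of the sheer module $M$, hence by Theorem \ref{montotypic-i} it splits along the canonical decomposition $M = \big(\bigoplus_i P_i\big) \oplus E \oplus V$, giving $N^\boxminus = \bigoplus_i (N^\boxminus \cap P_i) \oplus (N^\boxminus \cap E) \oplus (N^\boxminus \cap V)$. Proposition \ref{upsheg}, applicable since $N^\boxminus$ is sheer hence upper-sheer, describes $\Gamma_Q(N^\boxminus)$ as a finite union of rational rays, each arising from a simple totally disconnected contractable closed submodule sitting in some $N^\boxminus \cap P_i$. Because $P_i$ is monotypic, such a simple submodule has the same type as $P_i$ itself, and the corresponding ray already appears in $\Gamma_Q(M)$ (via $P_i$). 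This yields $\Gamma_Q(N^\boxminus) \subseteq \Gamma_Q(M)$.

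For the discrete piece, $N/N^\boxminus$ embeds as a closed submodule of $M/N^\boxminus$, which is sheer by Theorem \ref{sheersub}. Since Lemma \ref{extgam} gives $\Gamma_Q(M/N^\boxminus) \subseteq \Gamma_Q(M)$, it suffices to treat the case where $N$ itself is a purely discrete closed submodule $D$ of a sheer module $M'$. This is the heart of the argument, and the place where I expect the main difficulty. Proposition \ref{upsheg} characterizes $\Gamma_Q(M')$ via simple totally disconnected contractable submodules of $M'$, but a purely discrete $D$ contains no such submodule directly. My plan for this residual case is to associate to each $\chi \in \Gamma_Q(D) \setminus \{0\}$ a cyclic $\Z Q$-subquotient $C$ of $D$ coming from an associated prime of $D$ (using noetherianity of $\Z Q$ when $Q$ is finitely generated, with a straightforward reduction to that setting via a cocompact lattice), and then to examine the closure of $C$ inside $M'$. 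This closure lands in a sheer subquotient of $M'$, and applying Proposition \ref{upsheg} there should identify a simple totally disconnected contractable piece whose log-norm character is positively proportional to $\chi$, matching the ray $\R_+\chi$ with one already in $\Gamma_Q(M')$. Thus the main obstacle is to pin down the correspondence, via Bieri--Strebel-type arguments for the discrete module $D$, between associated primes of $D$ and simple totally disconnected contractable subquotients of the sheer ambient $M'$.
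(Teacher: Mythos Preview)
Your reduction in steps 1--4 is sound: splitting off the sheer core $N^\boxminus$ via Lemma \ref{extgam}, handling $\Gamma_Q(N^\boxminus)$ through the monotypic decomposition (Theorem \ref{montotypic-i}) and Proposition \ref{upsheg}, and passing to $M' = M/N^\boxminus$ is all correct. But step 5 is not a proof. You are left with a purely discrete $D$ inside a sheer $M'$ and want $\Gamma_Q(D) \subseteq \Gamma_Q(M')$; your plan to match associated primes of $D$ to simple contractable pieces of $M'$ via ``examining closures'' is only a hope, not an argument. The difficulty is real: the closure of a cyclic subquotient of $D$ inside $M'$ may spread across several monotypic pieces (as in Example \ref{exzp1}(1), where the closure of $\F_p[t,t^{-1}]$ is all of $M$), and nothing in your sketch explains why the particular ray $\R_+\chi$ must appear among the rays of those pieces.

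The paper takes the opposite route: it uses the sheer \emph{envelope} $N^\boxplus$ rather than the core. Since $N^\boxplus$ is sheer, Proposition \ref{upsheg} gives $\Gamma_Q(N^\boxplus) \subseteq \Gamma_Q(M)$ immediately (essentially your step-2 argument), reducing to the case $N^\boxplus = M$, i.e.\ $N$ cocompact in $M$. For a given $\chi \in \Gamma_Q(N)$, the paper then decomposes $M$ as a $\Ker(\chi)$-module into monotypic pieces $M_i$, takes a proper open $Q_\chi$-submodule $L \subset N$ witnessing $\chi \in \Gamma_Q(N)$, and studies the closures $L_i$ of its projections. The cocompactness criterion (Theorem \ref{coccri}) is the key tool you are missing: since $N$ is cocompact in $M$, its projections to each $M_i$ are dense; if all $L_i = M_i$ then Theorem \ref{coccri}(2) would force $L$ cocompact in $M$, contradicting that $L$ has infinite index in $N$. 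Hence some $L_i \subsetneq M_i$, giving $\chi \in \Gamma_Q(M_i) \subseteq \Gamma_Q(M)$. Your approach could be salvaged by applying this envelope-plus-cocompactness argument to $D$ inside $M'$ at the end, but as written, the central mechanism is absent.
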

\begin{proof}
The first case is when $N$ is polycontractable. This case directly follows from Proposition \ref{upsheg}.

In general, let $N^\boxplus$ be the sheer envelope, as in Theorem \ref{corenv}. By the polycontractable case, $\Gamma_A(N^\boxplus)\subseteq\Gamma_A(M)$. This thus reduces to the case when $N^\boxplus=M$, which we now assume.




Let $\chi$ be an element of $\Gamma_Q(N)$, and write $Q'=\Ker(\chi)$. Since $M$ is a sheer $Q$-module, it is a sheer $Q'$-module. Using Theorem \ref{sheerdec1}, write $N=D\oplus M_0\oplus P$ with $D$ distal Euclidean $Q'$-module, $M_0$ amorphic $Q'$-module, and $P$ polycontractable $Q'$-module. In turn, write the monotypic decomposition (Proposition \ref{monotypic}) of the $Q'$-module $P$ as $P=\bigoplus_{i=1}^nM_i$. Write $D=M_{-1}$.

Let $u$ be an element of $Q$ with $\chi(u)<0$. Then there is a proper $Q_\chi$-submodule $L$ of $N$ such that $N=\bigcup_{n\ge 0}u^{-n}L$ (namely, one can take $Q_\chi$-submodule of $N$ generated by any compact generating subset of $N$ as $Q$-module with nonmpty interior in $N$). For $i\ge 0$, let $L_i$ (resp.\ $N_i$) be the closure of the projection of $L$ (resp.\ $N$) on $M_i$. For $i=-1$, we define $L_{-1}$ (resp.\ $N_{-1}$) be the span of the projection of $L$ (resp.\ $N$) on $M_{-1}$. By Theorem \ref{coccri}(1), $N_i=M_i$ for all $i\ge -1$.

 Then $\bigcup_{n\ge 0}u^{-n}L_i=N_i=M_i$, and thus $L_i$ is an open $Q_\chi$-submodule of $M_i$. 

Suppose by contradiction that for all $i$, we have $L_i=M_i$. By Theorem \ref{coccri}(2), $L$ is cocompact in $M$. But $L$ is open of infinite index in $N$. We reach a contradiction. 

Hence, there exists $i\ge -1$ such that $L_i\neq M_i$. Then $M_i$, being the strictly increasing union of $Q_\chi$-submdules $\bigcup_{n\ge 0}u^{-n}L_i$, is not compactly generated as a $Q_\chi$-module. We deduce $\chi\in\Gamma_Q(M_i)\subseteq\Gamma_Q(M)$ (because $M_i$ is a quotient of $M$), and we are done. 
\end{proof}

\begin{thm}\label{chisous}
Let $Q$ be a compactly generated abelian group. Let $M$ be a LC module and $N$ a closed submodule. Then $\Gamma_A(N)\subseteq\Gamma_A(M)$.
\end{thm}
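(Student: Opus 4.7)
My approach would reduce the general statement to two cases already handled: the sheer case in Lemma~\ref{gammasheer} and the discrete case via Noetherianity, with the canonical filtration of Theorem~\ref{cafi} as the bridge.

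First I would dispose of trivialities: $\chi=0$ is immediate from convention, and if $M$ is not compactly generated then $\Gamma_Q(M)=Q^*$ and the inclusion is vacuous. So assume $M$ is compactly generated; then so is $N$, by Corollary~\ref{scgcg}. Next I would reduce to the case $\Ell^\flat(M)=\{0\}$. Indeed $\Ell^\flat(M)$, and hence $N\cap\Ell^\flat(M)$, are compact (Theorem~\ref{fsheer}), and $\Gamma_Q$ of a compact module equals $\{0\}$. The two inclusions of Lemma~\ref{extgam} therefore collapse to the equalities $\Gamma_Q(M)=\Gamma_Q(M/\Ell^\flat(M))$ and $\Gamma_Q(N)=\Gamma_Q(N/(N\cap\Ell^\flat(M)))$, so I can replace $M,N$ by their images in $M/\Ell^\flat(M)$.

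With $\Ell^\flat(M)=\{0\}$, the module $\Omega^\sharp(M)$ is sheer (upper-sheer by Proposition~\ref{omush}, and lower-sheer because $\Ell^\flat(\Omega^\sharp(M))\subseteq\Ell^\flat(M)=\{0\}$) and is open in $M$ by Theorem~\ref{cafi}. Set $N_0=N\cap\Omega^\sharp(M)$, which is open in $N$. The open-submodule equality of Lemma~\ref{extgam} gives
$$\Gamma_Q(N)=\Gamma_Q(N_0)\cup\Gamma_Q(N/N_0),$$
and it suffices to bound each summand by $\Gamma_Q(M)$.

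For the sheer piece $N_0$, I would simply invoke Lemma~\ref{gammasheer}: $N_0$ is a closed submodule of the sheer module $\Omega^\sharp(M)$, so $\Gamma_Q(N_0)\subseteq\Gamma_Q(\Omega^\sharp(M))$, and the open-submodule inclusion of Lemma~\ref{extgam} gives $\Gamma_Q(\Omega^\sharp(M))\subseteq\Gamma_Q(M)$. For the discrete piece, $N/N_0\hookrightarrow M/\Omega^\sharp(M)$ is an injection of compactly generated purely discrete $Q$-modules, and I would prove $\Gamma_Q(N/N_0)\subseteq\Gamma_Q(M/\Omega^\sharp(M))$ by a direct Noetherian argument: if $\chi\notin\Gamma_Q(M/\Omega^\sharp(M))$, then $M/\Omega^\sharp(M)$ is finitely generated as $\Z[P]$-module for some finitely generated submonoid $P\subseteq Q_\chi$; since $P$ is a finitely generated commutative monoid, $\Z[P]$ is a quotient of a polynomial ring and hence Noetherian (Hilbert's basis theorem), so the submodule $N/N_0$ is also finitely generated over $\Z[P]$, giving $\chi\notin\Gamma_Q(N/N_0)$. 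The quotient inclusion in Lemma~\ref{extgam} then yields $\Gamma_Q(M/\Omega^\sharp(M))\subseteq\Gamma_Q(M)$, closing the chain.

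The hard work of the whole theorem was already absorbed into Lemma~\ref{gammasheer}, whose proof depended on the full sheer decomposition (Theorem~\ref{sheerdec1}) and the cocompactness criterion (Theorem~\ref{coccri}); once that lemma is available, the present reduction is canonical-filtration bookkeeping combined with the classical Noetherianity of finitely generated commutative monoid rings.
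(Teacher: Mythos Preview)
Your argument is correct and follows essentially the same architecture as the paper's: kill a compact submodule to make $M$ lower-sheer (you mod out $\Ell^\flat(M)$, the paper mods out the full $\Ell(M)$ via Corollary~\ref{maxcp}; either works), split $\Gamma_Q(N)$ along the open submodule $N_0=N\cap\Omega^\sharp(M)$ using Lemma~\ref{extgam}, and bound the sheer piece by Lemma~\ref{gammasheer} and the discrete piece separately.

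The one place where you diverge from the paper is the discrete piece $N/N_0\hookrightarrow M/\Omega^\sharp(M)$. The paper simply observes that every submodule of a discrete module is open and invokes Lemma~\ref{opega} again. You instead run a direct Noetherian argument over $\Z[P]$. That is fine in spirit, but one sentence is understated: you assert $P$ can be taken \emph{finitely} generated, whereas the definition of $\Gamma_Q$ only hands you a \emph{compactly} generated submonoid $P\subseteq Q_\chi$, and for non-discrete $Q$ these are not the same. The fix is short: since $M/\Omega^\sharp(M)$ is discrete and finitely generated as a $Q$-module, the kernel of the $Q$-action on it is an open subgroup (intersection of the open stabilizers of finitely many generators), so the action factors through a discrete finitely generated quotient $\bar Q$; the image $\bar P$ of $P$ in $\bar Q$ is then a finitely generated monoid (the image of a compact generating set is finite), $\Z[\bar P]$ is Noetherian by Hilbert, and your argument goes through over $\bar P$. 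With that line added, your proof is complete and is a legitimate alternative to the paper's second appeal to Lemma~\ref{opega}.
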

\begin{proof}
The proof relies on two main particular cases: when $N$ is open (Lemma \ref{opega}), and when $M$ is sheer (Lemma \ref{gammasheer}).

The result is trivial if $M$ is not compactly generated, since then $\Gamma_Q(M)=Q^*$. Assume that $M$ is compactly generated. By Corollary \ref{maxcp}, $\Ell(M)$ is compact. Note that $\Ell(N)\subseteq\Ell(M)$. We have the closed embedding $N/\Ell(N)\subseteq M/\Ell(M)$, and since $\Gamma_Q(-)$ does not change when modding out by a compact submodule, we can suppose that $\Ell(N)=\Ell(M)=\{0\}$. Thus $M$ is lower-sheer.

Write $\Omega=\Omega^\sharp(M)$, so $\Omega$ is sheer. Then $\Gamma_Q(N)\subseteq\Gamma_Q(\Omega\cap N)\cup\Gamma_Q(N/(\Omega\cap N))$ by Lemma \ref{extgam}. So it is enough to show that the latter two are contained in $\Gamma_Q(M)$. Indeed, first, we have $\Gamma_Q(\Omega\cap N)\subseteq \Gamma_Q(\Omega)\subseteq\Gamma_Q(M)$: here the first inclusion follows from Lemma \ref{gammasheer}, and the second from Lemma \ref{opega}. Second, we have $\Gamma_Q(N/N\cap\Omega)=\Gamma_Q(N+\Omega/\Omega)\subseteq\Gamma_Q(M/\Omega)\subseteq \Gamma_Q(M)$. Here the first inclusion follows from Lemma \ref{opega}, and the second one is trivial.
\end{proof}

\subsection{Characterization of compact presentability}

\begin{thm}
Let $G$ be a compactly generated locally compact metabelian group. Then $G$ is compactly presentable if and only if $\Gamma(G)$ contains no line (i.e.\ $\Gamma(G)\cap -\Gamma(G)=\{0\}$).
\end{thm}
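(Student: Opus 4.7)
The plan is to identify $\Gamma(G)$ with $\Gamma_Q(M)$ via Lemma \ref{eqmet}, where $M$ is a closed normal abelian subgroup of $G$ with abelian quotient $Q = G/M$; by Lemma \ref{cgcg}, $M$ is compactly generated as a $\Z[Q]$-module.

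For the \emph{necessity} direction, I would reduce to the discrete case. By Proposition \ref{fgco}, $G$ has a finitely generated subgroup $\Gamma_0$ with cocompact closure, and since $G$ is metabelian, so is $\Gamma_0$. A standard approximation argument shows that a dense cocompact finitely generated subgroup of a compactly presented group can be arranged to be finitely presented by adjoining finitely many lifts of relators. Applying the classical Bieri-Strebel theorem to $\Gamma_0$ gives $\Gamma(\Gamma_0) \cap -\Gamma(\Gamma_0) = \{0\}$. A comparison of characters (those of $G$ factor through $Q$, those of $\Gamma_0$ through its dense image in $Q$) then transports this to $\Gamma(G) \cap -\Gamma(G) = \{0\}$.

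For the \emph{sufficiency} direction, assume $\Gamma(G) \cap -\Gamma(G) = \{0\}$. Since $\Ell^\flat(M)$ is a compact normal subgroup of $G$ (Theorem \ref{cafi}), modding it out preserves compact presentability, so I may assume $M$ is lower-sheer. Set $N = \Omega^\sharp(M)$, an open $\Z[Q]$-submodule of $M$ that is sheer (Theorem \ref{cafi}). By Lemma \ref{extgam}, $\Gamma_Q(M/N) \subseteq \Gamma_Q(M) = \Gamma(G)$ contains no line; since $M/N$ is purely discrete, the discrete Bieri-Strebel theorem applies to $G/N$, producing a finite presentation.

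To bootstrap this to a compact presentation of $G$, I would apply Lemma \ref{condn2}. Using the sheer decomposition $N = P \oplus E \oplus V$ of Theorem \ref{sheerdec1} and Proposition \ref{upsheg}, together with the no-line assumption, one selects a character $\chi$ and an element $u \in G$ mapping to $\alpha \in Q$ with $\chi(\alpha) > 0$ such that $N$ has a compact $\Z[Q_\chi]$-generating set $W$ with nonempty interior that is contracted into itself by conjugation by $u$. The invertibility of $\alpha - 1$ on each contractable summand (Lemma \ref{difpro1}) ensures that the ascending union $\bigcup_{n \ge 0} u^{-n} W u^n$ covers $N$. Taking $S$ to consist of $W$ together with lifts of a finite generating set of $G/N$, the three conditions \ref{rel11}, \ref{rel12}, \ref{rel2} of Lemma \ref{condn} can be verified, and Lemma \ref{condn2} then yields a compact presentation of $G$ from the finite presentation of $G/N$.

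The main obstacle will be simultaneously verifying the three hypotheses of Lemma \ref{condn2}, which amounts to finding a single character $\chi$ and contracting element $u$ compatible with both the sheer part $N$ (through its polycontractable summands) and the discrete part $M/N$ (through its Bieri-Strebel polyhedral structure). The no-line hypothesis on $\Gamma(G)$, combined with Proposition \ref{ratpo} expressing $\Gamma_Q(M)$ as a rational polyhedral cone, is precisely what guarantees the existence of such a $\chi$ lying strictly on the positive side of every half-line in $\Gamma(G)$.
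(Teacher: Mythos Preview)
Your sufficiency argument has a genuine gap: you attempt to kill all of $N=\Omega^\sharp(M)$ in one shot via Lemma \ref{condn2}, using a single element $u$ and a single compact $W$ with $uWu^{-1}\subseteq W$ and $\bigcup_{n\ge 0}u^{-n}Wu^n=N$. This forces $u$ to act as a compacting automorphism on every monotypic contractable summand of $P$ simultaneously, i.e.\ $\chi_i(u)>0$ for every half-line $\R_{\ge 0}\chi_i$ appearing in $\Gamma_Q(P)$. The no-line hypothesis does \emph{not} guarantee such a $u$ exists: take $Q=\Z^2$ and three totally disconnected simple contractable summands whose characters $\chi_1,\chi_2,\chi_3$ point (up to positive scalar) at $(1,0)$, $(0,1)$, $(-1,-1)$. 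Then $\Gamma_Q(M)$ is the union of three half-lines, none opposite to another, yet the system $u_1>0$, $u_2>0$, $-u_1-u_2>0$ is infeasible. Your final paragraph asserts that Proposition \ref{ratpo} plus the no-line condition produce a $\chi$ ``strictly on the positive side of every half-line'', but that is exactly what fails here. The paper avoids this by induction on the number of monotypic summands: one peels off a single monotypic $V$, applies Lemma \ref{condn2} with $W$ a compact open $Q_{\chi_V}$-submodule of $V$ alone (so only the single character $\chi_V$ is in play), and uses the inductive hypothesis on $G/V$. Condition (\ref{rel2}) is then checked using only $-\chi_V\notin\Gamma(G)$.

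Two smaller points. First, you mod out by $\Ell^\flat(M)$ rather than $\Ell(M)$; this leaves the amorphic summand $E$ of $\Omega^\sharp(M)$ possibly nonzero, and you never dispose of it (the paper mods out by $\Ell(M)$, compact by Corollary \ref{maxcp}, and also passes to $G/G^\circ$, so that $\Omega^\sharp(M)=P$ is purely polycontractable). Second, your necessity argument is sketchy: the finitely generated $\Gamma_0$ from Proposition \ref{fgco} has cocompact \emph{closure} but is typically neither closed nor a lattice, so the ``standard approximation argument'' that it inherits finite presentability from $G$ is not available, and the transfer of $\Gamma(\Gamma_0)$ back to $\Gamma(G)$ is not justified. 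The paper instead uses Proposition \ref{ratpo} to find a rational $\chi$ with $\pm\chi\in\Gamma(G)$ and invokes the locally compact first Bieri--Strebel theorem \cite[Cor.~8.C.4]{CH} directly.
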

\begin{proof}
We can suppose that $G$ is totally disconnected, since replacing $G$ by $G/G^\circ$ does not affect being compactly presentable, and does not modify $\Gamma(G)$ (it would modify its open complement!).
Let $M$ be a closed abelian normal subgroup of $G$ such that $Q=G/N$ is abelian.

By Proposition \ref{ratpo}, $\Gamma(G)\cap -\Gamma(G)$ is a rational polyhedral cone. Hence if not reduced to $\{0\}$, it contains a rational half-line; some element in this line is a homomorphism $\chi$ onto $\Z$; both $\chi$ and $-\chi$ belong to $\Gamma(G)$. Then $\Gamma(G)=\Gamma_Q(M)$ by Lemma \ref{eqmet}, and hence $\pm\chi\notin\Gamma_Q(M)$. The first Bieri-Strebel theorem \cite{BS78}, in its locally compact version \cite[Cor.\ 8.C.4]{CH} implies that $G$ is not compactly presentable.

Conversely suppose that $\Gamma(G)\cap -\Gamma(G)=\{0\}$ and let us show that $G$ is compactly presentable. So $M$ is a compactly generated $Q$-module, and by Corollary \ref{maxcp} $\Ell(M)$ is compact. We can mod out and suppose that $\Ell(M)$ is reduced to zero. Since $M$ is totally disconnected and $\Ell(M)$ is zero, Theorem \ref{sheerdec1} ensures that the open submodule $N=\Omega^\flat(M)$ is polycontractable. 

By Proposition \ref{monotypic}, there exists $k\ge 0$ such that $N$ is the direct product of $k$ nonzero monotypic contractable modules. We prove that $G$ is compactly presentable by induction on $k$.

If $k=0$, we have $N=\{0\}$ and thus $G$ is discrete. By Bieri-Strebel's characterization of finitely presentable metabelian groups \cite{BS}, $G$ is finitely presentable.

%

If $k\ge 1$, let $V$ be one of these monotypic submodules By induction, $G/V$ is compactly presentable. Let $\K$ be the simple quotient of $V$. By Corollary \ref{sico}, $\K$ has a structure of non-discrete totally disconnected locally compact field, such that the $Q$-action is given by a homomorphism $\phi:G\to\K^\times$ (factoring through $Q=G/M$); define $\chi=-\log|\phi|$. Thus $\Gamma_Q(V)$ is the half-line generated by $\chi$ (Lemma \ref{gamono}). Fix $u\in G$ with $\chi(u)>0$ equal to the positive generator of $\chi(G)$; thus $0<|\phi(u)|<1$.

By Lemma \ref{gamono}, there is a compact open $Q_{\chi}$-submodule $W$ of $N$. Thus $uW\subseteq W$ and $\bigcup_{n\ge 0}u^{-n}W=N$. Consider a symmetric subset $S\subset\Ker(\chi)$ with non-empty interior such that $S\cup\{u\}$ generates $G$. Let us check the assumptions of Lemma \ref{condn2}: those written in Lemma \ref{condn2} are fulfilled; among those in Lemma \ref{condn}, (\ref{rel11}), (\ref{rel12}) hold, and let us check (\ref{rel2}), namely that $uSu^{-1}$ is contained in the $u^{-}$-subgroup $H$ generated by $S$. Indeed, clearly $H$ is open; we have $G=\bigcup_{n\ge 0} u^nHu^{-n}$ (ascending union). Since $-\chi\notin\Gamma(G)$, the normal subgroup $\overline{[G,G]}$ is compactly generated as a $G_{-\chi}$-group, and hence so is $H$ (since $H/ \overline{[G,G]}$ is a compactly generated abelian group). Therefore the above ascending union stabilizes, which means that $uHu^{-1}=H$, and hence that $G=H$. Therefore Lemma \ref{condn2} implies that $G$ is compactly presentable.
\end{proof}


\section{A few counterexamples}

The following gathers ``counterexamples" to the above results, when one assumption is relaxed. Especially, we consider cases when the acting group $A$ is not compactly generated.

Here we fix $A$ discrete free abelian of countable rank with basis $(e_i)_{i\ge 1}$. In each case, $p$ is a fixed prime (and $A$ could also be replaced with $A/pA$).

\begin{ex}[Theorem \ref{eclo} fails without compact generation of $A$.]
Consider the additive group $M$ of $\F_p\lp t\rp$. Let $A$ act as follows: \[e_i\cdot\sum_n a_nt^n=\sum_{n\neq\pm i}a_nt^n+a_it^{-i}+a_{-i}t^i.\]
In other words, $e_i$ exchanges $t^i$ and $t^{-i}$, fixes other monomials, and is extended by continuity.
Then $\Ell_A(M)=\F_p[t,t^{-1}]$ is not closed. 
\end{ex}

\begin{ex}[Corollary \ref{wcompact-i} fails without compact generation of $A$.]
Let $I$ be the ideal of $\Z/p\Z[A]$ generated by all $e_i$. Consider the discrete module $M=\Z/p\Z[A]/I^2$. It is finitely generated, i.e., compactly generated. Then $\Ell_A(M)=I/I^2$ is infinite, i.e., not compact.
\end{ex}


\begin{ex}[Theorem \ref{cafi} fails without compact generation of $A$.]
Let $(X_n)_{n\ge 1}$ be a partition of $\Z$ such that for each $n$, $X_n$ is infinite and $X_n\cap\N_{\ge 1}=\{n\}$. Let $\sigma_n$ be a permutation of $\Z$ acting as a single infinite cycle on $X_n$ and identity elsewhere. Let $M$ be the additive group  of $\F_p\lp t\rp$. Let $A$ act as follows:
\[e_i\cdot\sum_n a_nt^n=\sum_n a_nt^{\sigma_i(n)}.\]
This is well-defined because $\sigma_i$ maps every lower-bounded subset to a lower-bounded subset, and continuous because $\sigma_i$ is identity near $+\infty$. Since the $\sigma_i$ have pairwise disjoint supports, they commute and thus this defines an action of $A$. Then, as a $A$-module, $M$ is residually purely discrete: indeed it embeds into the product $\prod_n\F_p^{(X_n)}$. Hence, $\Omega^\sharp(M)=\{0\}$. But $M$ is not discrete. Thus $\Omega^\sharp(M)$ is not open. 

If $N$ is the Pontryagin dual, then $N$ is the closure of $\Ell^\flat(N)$ but $N$ is not compact. 
\end{ex}


\begin{ex}[Non-compactly generated metabelian group $G$ in which $\Ell(G)$ is not closed]
This is a classical example \cite[\S 6, Example 1]{WY}. Let $(p_i)$ be a sequence of odd primes (it can be constant or injective), and $G=\bigoplus_i \F_{p_i}\rtimes \prod_i \F_{p_i}^*$ (as an abstract group, this can be viewed as a subgroup of $\prod_i \F_{p_i}\rtimes\F_{p_i}^*$, namely consisting of those sequences for which the additive term is eventually zero). Then $G$ is metabelian and $\Ell(G)$ is not closed and actually dense, namely equal to $\bigoplus_i \F_{p_i}\rtimes\F_{p_i}^*$.  
\end{ex}

\begin{ex}[Theorem \ref{cgmmax}(\ref{tm2}) and Theorem \ref{sheerdec1} fails without compact generation of $A$]
Let $M$ be the group of adeles, namely the sequences $(n_p)$, with $n_p\in\Q_p$ and $n_p\in\Z_p$ for all but finitely many $p$, with compact open subgroup $\prod_p\Z_p$. Define $G=M\rtimes\prod_p\langle p\rangle$ with the natural action. Then $[G,G]$ equals the dense subgroup $\bigoplus_p\Q_p$ of $M$, hence is not closed, although $\Ell(G)$ is trivial (thus Theorem \ref{cgmmax}(\ref{tm2}) ``fails" here).

Also, this module is sheer, has no nonzero Euclidean or amorphic submodule, but is not polycontractable; hence Theorem \ref{sheerdec1} ``fails" too.
\end{ex}

\begin{ex}
Theorem \ref{cgmmax}(\ref{tm2}) is not true without the ``totally disconnected" assumption. Indeed, fix a dense proper subgroup $\Lambda$ of $\R$, e.g., $\Z[\sqrt{2}]$. Consider the semidirect product $G=(\R\times\Lambda)\rtimes\Z$ where the action is by powers of the automorphism $(x,t)\mapsto (x+t,t)$. Then $[G,G]$ is the non-closed subgroup $\Lambda\times\{0\}$ of $\R\times\Lambda$.
\end{ex}

\end{document}